\def\revision#1{#1}%{{\color{red}{#1}}}%
\DeclareMathOperator{\mmid}{mid}
\DeclareMathOperator{\ddiv}{div}
\DeclareMathOperator{\curl}{curl}
\DeclareMathOperator{\dev}{dev}
\DeclareMathOperator{\tr}{tr}
\def\A{{\mathbb A}}
\def\B{{\mathbb B}}
\def\H{\widetilde{H}}
\def\R{{\mathbb R}}
\def\N{{\mathbb N}}
\def\O{{\mathbb O}}
\def\EE{{\mathcal E}}
\def\II{{\mathcal I}}
\def\KK{{\mathcal K}}
\def\MM{{\mathcal M}}
\def\OO{{\mathcal O}}
\def\PP{{\mathcal P}}
\def\RR{{\mathcal R}}
\def\SS{{\mathcal S}}
\def\TT{{\mathcal T}}
\def\XX{{\mathcal X}}
\def\YY{{\mathcal Y}}
\def\res{{r}}
\newcommand{\dist}[3][]{{\rm d\!l}[\ifthenelse{\equal{#1}{}}{}{#1;}#2,#3]}
\newcommand{\eff}[3][]{{\rm osc}\ifthenelse{\equal{#1}{}}{}{_{#1}}(#2;#3)}
\def\diam{{\rm diam}}
\def\diam{{\rm diam}}
\def\bform#1#2{b(#1\,,\,#2)}
\def\Bform#1#2#3{B(#3;\,#1\,,\,#2)}
\def\norm#1#2{\|#1\|_{#2}}
\def\set#1#2{\big\{#1\,:\,#2\big\}}
\def\eps{\varepsilon}
\def\slp{\mathfrak{V}} % simple-layer potential
\def\dlp{\mathfrak{K}} % double-layer potential
\def\hyp{\mathfrak{W}} % hypersingular integral operator
\newcommand{\normLtwo}[3][]{#1\|#2#1\|_{L^2(#3)}}
\newcommand{\normHeh}[3][]{#1\|#2#1\|_{H^{1/2}(#3)}}
\newcommand{\enorm}[1]{\|#1\|}
\newcommand{\quasinorm}[2]{|\!|\!|#1|\!|\!|_{#2}}
\def\normL2#1#2{\|#1\|_{L^2(#2)}}
\def\normHme#1#2{\|#1\|_{\widetilde{H}^{-1}(#2)}}
\newcommand{\dual}[3][]{#1\langle#2\,,\,#3#1\rangle}
\newcounter{constantsnumber}
\def\namec#1#2{%
 \def\flag{0}
% \ifthenelse{\equal{\flag}{0}}{\ifthenelse{\equal{#1}{loc}}{C_{\rm loc}\def\flag{1}}}{}
 \ifthenelse{\equal{#1}{triangle}}{C_{\mbox{\scriptsize$\Delta$}}}{%
 \ifthenelse{\equal{#1}{rel}}{C_{\rm rel}}{%
  \ifthenelse{\equal{#1}{mesh}}{C_{\rm mesh}}{%
  \ifthenelse{\equal{#1}{sz}}{C_{\rm sz}}{%
  \ifthenelse{\equal{#1}{dislocrel}}{C_{\rm dlr}}{%
  \ifthenelse{\equal{#1}{eff}}{C_{\rm eff}}{%
  \ifthenelse{\equal{#1}{main}}{C_{\rm V}}{%
  \ifthenelse{\equal{#1}{opt}}{C_{\rm opt}}{%
  \ifthenelse{\equal{#1}{normequiv}}{C_{\rm norm}}{%
  \ifthenelse{\equal{#1}{reliable}}{C_{\rm rel}}{%
  \ifthenelse{\equal{#1}{efficient}}{C_{\rm eff}}{%
  \ifthenelse{\equal{#1}{dlr}}{C_{\rm drel}}{%
  \ifthenelse{\equal{#1}{stable}}{C_{\rm stab}}{%
  \ifthenelse{\equal{#1}{stab}}{\widetilde{C}_{\rm stab}}{%
  \ifthenelse{\equal{#1}{stab2}}{\overline{C}_{\rm stab}}{%
  \ifthenelse{\equal{#1}{lbb}}{C_{\rm LBB}}{%
  \ifthenelse{\equal{#1}{reduction}}{C_{\rm red}}{%
   \ifthenelse{\equal{#1}{unibound}}{C_{\rm hot}}{%
    \ifthenelse{\equal{#1}{hotConst}}{C_{\rm osc}}{%
   \ifthenelse{\equal{#1}{inverseK}}{C_{\rm K}}{%
  \ifthenelse{\equal{#1}{refined}}{C_{\rm ref}}{%
  \ifthenelse{\equal{#1}{estconv}}{C_{\rm est}}{%
  \ifthenelse{\equal{#1}{optimal}}{C_{\rm opt}}{%
  \ifthenelse{\equal{#1}{conv}}{C_{\rm conv}}{%
    \ifthenelse{\equal{#1}{ieRconv}}{C_{\rm conv}}{%
    \ifthenelse{\equal{#1}{equivRconv}}{C_{\rm conv}}{%
  \ifthenelse{\equal{#1}{mon}}{C_{\rm mon}}{%
  \ifthenelse{\equal{#1}{cea}}{C_{\mbox{\scriptsize\rm C\'ea}}}{%
  \ifthenelse{\equal{#1}{Bcont}}{C_{\rm cont}}{%
  \ifthenelse{\equal{#1}{qosum}}{{C_{\rm qo}}}{%
\ifthenelse{\equal{#1}{lemopthelp}}{2}{%
%\ifthenelse{\equal{#1}{loc}{C_{\rm loc}}{%
%\ifthenelse{\equal{#1}{patchbound}}{C_{\rm patch}}{%
  \ifthenelse{\equal{#2}{newcounter}}{\refstepcounter{constantsnumber}\label{const#1}}{}C_{\ref{const#1}}}%
}}}}}}}}}}}}}}}}}}}}}}}}}}}}}}}
\def\setc#1{\namec{#1}{newcounter}}
\def\c#1{\namec{#1}{reference}}
\def\definec#1{\refstepcounter{constantsnumber}\label{const#1}}%
\newcounter{contractionnumber}
\def\nameq#1#2{%
  \ifthenelse{\equal{#1}{reduction}}{\rho_{\rm red}}{%
  \ifthenelse{\equal{#1}{estconv}}{\rho_{\rm est}}{%
  \ifthenelse{\equal{#1}{cea}}{\rho_{\mbox{\scriptsize C\'ea}}}{%
  \ifthenelse{\equal{#1}{conv}}{q_{\rm conv}}{%
 \ifthenelse{\equal{#1}{htc}}{\rho_{\rm h}}{%
  \ifthenelse{\equal{#2}{newcounter}}{\refstepcounter{contractionnumber}\label{contraction#1}}{}q_{\ref{contraction#1}}}%
}}}}}
\def\setq#1{\nameq{#1}{newcounter}}
\def\q#1{\nameq{#1}{reference}}
\def\namer#1#2{%
  \ifthenelse{\equal{#1}{reduction}}{\rho_{\rm red}}{%
  \ifthenelse{\equal{#1}{estconv}}{\rho_{\rm est}}{%
  \ifthenelse{\equal{#1}{cea}}{\rho_{\mbox{\scriptsize C\'ea}}}{%
  \ifthenelse{\equal{#1}{qo}}{\boldsymbol{\color{blue}{\rho_{\rm qo}}}}{%
  \ifthenelse{\equal{#1}{conv}}{\rho_{\rm conv}}{%
    \ifthenelse{\equal{#1}{ieRconv}}{\rho_{\rm conv}}{%
    \ifthenelse{\equal{#1}{equivRconv}}{\rho_{\rm conv}}{%
        \ifthenelse{\equal{#1}{refinereduction}}{\rho_{\rm refine}}{%
  \ifthenelse{\equal{#2}{newcounter}}{\refstepcounter{contractionnumber}\label{contraction#1}}{}\rho_{\ref{contraction#1}}}%
}}}}}}}}
\def\setr#1{\namer{#1}{newcounter}}
\def\r#1{\namer{#1}{reference}}
\def\osc{{\rm osc}}
\def\hot{{\rm hot}}
\newtheorem{theorem}{Theorem}
\newtheorem{proposition}[theorem]{Proposition}
\newtheorem{lemma}[theorem]{Lemma}
\newtheorem{corollary}[theorem]{Corollary}
\newtheorem{algorithm}[theorem]{Algorithm}
\newtheorem{remark}[theorem]{Remark}
\newtheorem{consequence}[theorem]{Consequence}
\numberwithin{theorem}{section}
\def\epsqo{{\eps_{\rm qo}}}
\def\T{\mathbb T}
\numberwithin{equation}{section}
\begin{document}%%%%%%%%%%%%%%%%%%%%%%%%%%%%%%%%%%%%%%%%%%%%%%%%%%%%%

\title{Axioms of Adaptivity}
%\date{\today}
\author[b]{C.~Carstensen}
\ead{cc@math.hu-berlin.de}
\author[v]{M.~Feischl\corref{cor1}}
\ead{michael.feischl@tuwien.ac.at}
\author[v]{M.~Page}
\ead{marcus.page@tuwien.ac.at}
\author[v]{D.~Praetorius}
\ead{dirk.praetorius@tuwien.ac.at}

\cortext[cor1]{Corresponding author}

\address[b]{Institut f\"ur Mathematik, Humboldt Universit\"at zu Berlin, Unter den Linden 6, 10099 Berlin, Germany; Department of Computational Science and Engeneering, Yonsei University, 120-749 Seoul, Korea}%
\address[v]{Institute for Analysis and Scientific Computing,
      Vienna University of Technology,
      Wiedner Hauptstra\ss{}e 8-10,
      A-1040 Wien, Austria}

%\author{C.~Carstensen}
%\author{M.~Feischl}
%\author{M.~Page}
%\author{D.~Praetorius}

%\address{Institute for Analysis and Scientific Computing,
%      Vienna University of Technology,
%      Wiedner Hauptstra\ss{}e 8-10,
%      A-1040 Wien, Austria}
      
%\email{\{Marcus.Page,\,Dirk.Praetorius\}@tuwien.ac.at}
%\email{Michael.Feischl@tuwien.ac.at\quad\rm(corresponding author)}

%\address{Institut f\"ur Mathematik, Humboldt Universit\"at zu Berlin, Unter den Linden 6, 10099 Berlin, Germany; Department of Computational Science and Engeneering, Yonsei University, 120-749 Seoul, Korea}
%\email{cc@math.hu-berlin.de}

\begin{keyword}
  finite element method\sep boundary element method\sep a~posteriori error estimators\sep adaptive algorithm\sep local mesh-refinement \sep convergence\sep optimality\sep iterative solvers 
 \end{keyword}

%\subjclass[2000]{65N30, 65N15, 65N38}

%\def\subsection#1{\bigskip\refstepcounter{subsection}
%
%{\bf \thesubsection.~#1.~~}}
%
%\def\subsubsection#1{\smallskip\refstepcounter{subsubsection}
%
%{\bf \thesubsubsection.~#1.~~}}
%%%%%%%%%%%%%%%%%%%%%%%%%%%%%%%%%%%%%%%%%%%%%%%%%%%%%%%%%%%%%%%%%%%%

\begin{abstract}
This paper aims first at a simultaneous axiomatic presentation of the proof of optimal convergence rates for adaptive finite element methods
and second at some refinements of particular questions like 
the avoidance of (discrete) lower bounds,
inexact solvers, 
inhomogeneous boundary data, or the use of equivalent error estimators. Solely four axioms guarantee the optimality in terms of the error estimators. %The first 
%four axioms~\eqref{A:stable}--\eqref{A:reliable} guarantee convergence and 
%the second set~\eqref{A:dlr}--\eqref{A:efficient} optimality in the sense of 
%approximation classes for residual-based refinement indicators.

Compared to the state of the art in the temporary literature, the improvements of this article can be 
summarized as follows: 
First, a general framework is presented which covers the existing literature 
on optimality of adaptive schemes.
The abstract analysis covers linear as 
well as nonlinear problems and is independent of the underlying finite element or
boundary element method.
Second, efficiency of the error estimator is neither needed to prove
convergence nor quasi-optimal convergence behavior of the error estimator. In this paper, efficiency exclusively
characterizes the approximation classes involved in terms of the best-approximation error and data resolution and so the upper bound on the optimal marking parameters does not depend on the
efficiency constant.
Third, some general quasi-Galerkin orthogonality is not only 
sufficient, but also necessary for the $R$-linear convergence of 
the error estimator, which is a fundamental ingredient in the current
quasi-optimality analysis due to Stevenson 2007.
Finally, the general analysis allows for equivalent error estimators 
and inexact solvers as well as different non-homogeneous and mixed boundary conditions.
\end{abstract}
\maketitle

%%%%%%%%%%%%%%%%%%%%%%%%%%%%%%%%%%%%%%%%%%%%%%%%%%%%%%%%%%%%%%%%%%%%%%%%%%%%%%%%%%%%%%%%%%%%%%%%%%%%%%%%%%%%%%%%%%%%%%%
%%%%%%%%%%%%%%%%%%%%%%%%%%%%%%%%%%%%%%%%%%%%%%%%%%%%%%%%%%%%%%%%%%%%%%%%%%%%%%%%%%%%%%%%%%%%%%%%%%%%%%%%%%%%%%%%%%%%%%%
%%%%%%%%%%%%%%%%%%%%%%%%%%%%%%%%%%%%%%%%%%%%%%%%%%%%%%%%%%%%%%%%%%%%%%%%%%%%%%%%%%%%%%%%%%%%%%%%%%%%%%%%%%%%%%%%%%%%%%%
%%%%%%%%%%%%%%%%%%%%%%%%%%%%%%%%%%%%%%%%%%%%%%%%%%%%%%%%%%%%%%%%%%%%%%%%%%%%%%%%%%%%%%%%%%%%%%%%%%%%%%%%%%%%%%%%%%%%%%%
%==============================================================================
\section{Introduction \& Outline}
%==============================================================================

%------------------------------------------------------------------------------
\subsection{State of the art}
%------------------------------------------------------------------------------
%
The impact of adaptive mesh-refinement in computational partial differential 
equations (PDEs) cannot be overestimated. Several books in the area provide 
sufficient evidence of the success in many practical applications in the 
computational sciences and engineering. Related books from the mathematical
literature, e.g., ~\cite{ao00,v96, babs,repin,repinalleinzuhaus} provide many a posteriori error estimators 
which compete in~\cite{CCMer,CBK}, and overview 
articles~\cite{Carstensen:2005:Unifying,CCJH,CCEigHop} outline an abstract framework for their derivation.

This article contributes to the theory of optimality of adaptive algorithms in 
the spirit of~\cite{d1996,mns,bdd,stevenson07,ckns,ks,cn,nonsymm}
for conforming finite element methods (FEMs) and exploits the overall 
mathematics for nonstandard FEMs like nonconforming methods~\cite{ch06b,rabus10,BeMao10,bms09,cpr13,HX07,mzs10}
and mixed formulations~\cite{ch06a,LCMHJX,CR2012,HuangXu} as well as boundary element methods (BEMs)~\cite{fkmp,gantumur,affkp,ffkmp:part1,ffkmp:part2} and possibly non-homogeneous or mixed boundary 
conditions~\cite{mns03,dirichlet2d,dirichlet3d}.

Four main arguments compose the set of axioms and \revision{identify} necessary conditions for optimal convergence of adaptive mesh-refining algorithms.
%in various standard and nonstandard finite element and boundary element methods. 
This abstract framework answers questions like: What is the state-of-the-art technique for the design of an optimal adaptive mesh-refining strategy,
and which ingredients are really necessary to guarantee quasi-optimal rates? The overall format of the adaptive algorithm follows the standard loop
%\begin{center}
%\fbox{\texttt{solve}} $\longrightarrow$ \fbox{\texttt{estimate}} $\longrightarrow$ \fbox{\texttt{mark}} $\longrightarrow$ \fbox{\texttt{refine}}
%\end{center}
\begin{center}
%%%%%%%%%%%%%%%%%%%%%%%%%%%%%%%%%%%%%%%%%%%%%%%%%%%%%%%%%%%%%%%%%%%%%%%%%%%%%%%%%%%%%%%%%%%%%%%%%%%%%%
\newcommand\psBox[4][white]{\rput(#2){\rnode[rc]{#4}{%
  \psframebox[fillcolor=#1]{\tiny{\makebox{\tabular{c}#3\endtabular}}}}}}%
\newcommand\psbox[4][white]{\rput(#2){\rnode{#4}{%
  \psframebox[fillcolor=#1]{\tiny{\makebox{\tabular{c}#3\endtabular}}}}}}%
\newcommand\psBoxT[4][white]{\rput(#2){\rnode[lc]{#4}{%
  \psframebox[fillcolor=#1]{\tiny{\makebox{\tabular{c}#3\endtabular}}}}}}%
%%%%%%%%%%%%%%%%%%%%%%%%%%%%%%%%%%%%%%%%%%%%%%%%%%%%%%%%%%%%%%%%%%%%%%%%%%%%%%%%%%%%%%%%%%%%%%%%%%%%%%
\psset{framearc=0.2,shadow=true,xunit=1mm,yunit=1mm,fillstyle=solid,shadowcolor=black!55}%
\begin{pspicture}(-10,0)(150,20)%
\psBoxT[blue!30]{10,15}{\large\texttt{SOLVE}}{a1}%
\psBox[blue!30]{50,15}{\large\texttt{ESTIMATE}}{a2}%
\psBox[blue!30]{90,15}{\large\texttt{MARK}}{a3}%
\psBox[blue!30]{130,15}{\large\texttt{REFINE}}{a4}%
\psset{nodesep=3pt,shadow=false,linewidth=0.061,fillstyle=none,arrowinset=0,arrowlength=0.8}%
\ncloop[angleA=180,loopsize=1,arm=.5,linearc=.2]{<-}{a1}{a4}
\psset{nodesep=3pt,arrows=c->,shortput=nab,shadow=false,linewidth=0.061,arrowinset=0,arrowlength=0.8}%
\ncline{a1}{a2}%
\ncline{a2}{a3}%
 \ncline{a3}{a4}%
\end{pspicture}
\end{center}
in the spirit of the pioneering works~\cite{bv,bm87}. This is the most popular version of 
adaptive FEM and BEM in practice. 
While earlier works~\cite{msv,siebert,nc2010} which faced an abstract framework for adaptivity were only concerned with convergence of adaptive conforming FEM, the present article provides a problem and discretization independent framework for convergence and quasi-optimal rates of adaptive algorithms. In particular, this includes adaptive FEM and BEM with conforming, nonconforming,
as well as mixed methods.

%------------------------------------------------------------------------------
\subsection{Contributions of this work}
%------------------------------------------------------------------------------
The contributions
in this paper have the flavour of a survey and a general 
description in the first half comprising Sections~\ref{sec:setting}--\ref{section:examples2}, although the strategy is different from the  
main stream of, e.g.,~\cite{stevenson07,ckns,ks,cn} and the overview articles like~\cite{primer,afem}: The efficiency is not 
used and data approximation terms do no enter in the beginning. 
Instead, the optimality is firstly  proved in terms of the a~posteriori error
estimators. This approach of~\cite{dirichlet3d,nonsymm} appears natural as the algorithm only concerns the estimator rather than 
the unknown error. Efficiency solely enters in a second step, where this first 
notion of optimality is shown to be equivalent to optimality in terms of nonlinear approximation 
classes which include best approximation error plus data approximation 
terms~\cite{ckns}. In our opinion, this strategy enjoys the following advantages (a)--(b): 

(a) Unlike~\cite{stevenson07,ckns,ks,cn}, the upper bound for adaptivity 
parameters which guarantee quasi-optimal convergence rates, is independent 
of the efficiency constant. Such an observation might be a first step to the 
mathematical understanding of the empirical observation that each adaptivity parameter $0<\theta<1$ yields optimal convergence
rates in the asymptotic regime. %Moreover, the analysis shows that it is not necessary to separate occurring data oscillations from the estimator 
%and treat them differently. 

(b) Besides boundary element methods, see e.g.~\cite{cs96,cms,fkmp}, there might be other (nonlinear) problems, where an optimal efficiency 
estimate is unknown or cannot even be expected.
Then, our approach guarantees at least
that the adaptive strategy will lead to the best possible convergence 
behaviour
%, where ``best possible'' is understood 
with respect to the 
computationally available a~posteriori error estimator.

The first half of this paper discusses a small set of rather general 
axioms~\eqref{A:stable}--\eqref{A:dlr} and therefore involves 
several simplifying restrictions such as an exact solver. 
Although the axioms are motivated from the literature on adaptive FEM for linear problems and \revision{constitute the} main ingredients for any optimality proof in literature so far, we are able to show that this minimal set of four axioms is sufficient and, in some sense, even necessary to \revision{prove} optimality.
Unlike the overview articles~\cite{primer,afem}, the analysis is not bound to a particular model problem, but applies to any problem within the framework of Section~\ref{sec:setting} and therefore sheds new light onto the theory of adaptive algorithms.
%Implicitly, our
%axioms are those extracted from the literature on adaptive FEM for linear problems 
%with homogeneous Dirichlet boundary conditions and residual-based error 
%estimators. Put explicitly, in one way or another, those six axioms are the main ingredients for any optimality proof in literature so far. 
In Section~\ref{section:examples1}, 
these axioms are met for different formulations of the Poisson
model problem and allow to reproduce and even improve the state-of-the-art
results from the literature for conforming AFEM~\cite{stevenson07,ckns},
nonconforming AFEM~\cite{ch06a,rabus10, bms09, mzs10}, mixed AFEM~\cite{ch06b,LCMHJX,HuangXu},
and ABEM for weakly-singular~\cite{fkmp,ffkmp:part1,gantumur} and 
hyper-singular integral equations~\cite{ffkmp:part2,gantumur}. 
Moreover,
further examples from Section~\ref{section:examples2} show that our frame
also covers conforming AFEM for non-symmetric problems~\cite{cn,nonsymm,mn2005},
linear elasticity~\cite{CR2012,braess,SR} and different formulations of the
Stokes problem~\cite{braess,SR,brezzi-fortin,Gi_Ra,CR73,CKP11}. We thus provide a general framework of 
four axioms that unifies the diversity of the quasi-optimality analysis from the literature. Given any adaptive scheme that fits into the above frame, the validity of those four axioms \revision{guarantee} optimal convergence behaviour independently of the concrete setup.

To illustrate the extensions and applicability of our axioms of 
adaptivity~\eqref{A:stable}--\eqref{A:dlr},
the second half of this paper treats further advanced topics and contributes
with new mathematical insight in the striking performance of adaptive schemes.

First, Section~\ref{section:inexact} generalizes~\cite{BeMao10} and analyzes the influence of inexact
  solvers, which are important for iterative solvers, especially for nonlinear 
problems. This also gives a mathematically satisfactory explanation of
the stability of adaptive schemes against computational noise as e.g.\
rounding errors in computer arithmetics.

Second, the historic development of adaptive algorithms focused on 
residual-based a~posteriori error estimators, but all kinds of locally 
equivalent a~posteriori error estimators can be exploited as refinement
indicators as well. Section~\ref{section:locequiv} provides the means to show optimal 
convergence behaviour even in this case and
extends~\cite{ks} which is restricted to a patch-wise marking strategy with unnecessary refinements. The refined analysis \revision{in} this paper is essentially based on a novel equivalent mesh-size
function. It provides a mathematical background for the standard AFEM 
algorithm with facet-based and/or non-residual error estimators. To illustrate the analysis
from Section~\ref{section:locequiv}, Section~\ref{section:examples3} provides
several examples with facet-based formulations of the residual estimators as well as non-residual 
error estimators like the ZZ-estimator in the frame of the Poisson model problem.

Third, only few is known about optimal convergence behaviour of adaptive
FEM in the frame of nonlinear problems. \revision{To the authors' best} knowledge, the
following works provide all results available and analyze adaptive lowest-order 
Courant finite elements for three
particular situations: The work~\cite{bdk} considers the $p$-Laplacian,
while~\cite{nonsymm,gmz12}
consider model problems in the frame of strongly-monotone operators. In Section~\ref{section:nonlinear}, the abstract
framework of Section~\ref{section:optimality} and Section~\ref{section:locequiv}
is used to reproduce these results. As for the linear problems considered and unlike~\cite{bdk}, efficiency is only used to characterize the approximation class, but avoided for the quasi-optimality analysis.

Finally, the development of adaptive algorithms focused on
homogeneous Dirichlet problems. Section~\ref{section:boundary} considers 
inhomogeneous boundary conditions of mixed Dirichlet-Neumann-Robin type.
In particular, the issue of inhomogeneous Dirichlet data, at a first glance regarded 
as a minor technical detail, introduces severe technical difficulties arising from 
the additional approximation of the non-homogeneous Dirichlet data in
the fractional-order trace space $H^{1/2}$. While a first convergence result
for 2D AFEM is already found in~\cite{mns03}, quasi-optimal convergence rates
have been derived only recently in~\cite{dirichlet2d} for lowest-order elements
in 2D and more general in~\cite{dirichlet3d}. The last work, however, 
proposes an artificial two-step D\"orfler marking, while the present 
refined analysis now provides optimal convergence behavior even in case of 
the common adaptive loop and standard D\"orfler marking. We refer to Section~\ref{section:boundary} for details.

%------------------------------------------------------------------------------
\subsection{Brief discussion of axioms}
%------------------------------------------------------------------------------
The abstract framework is independent of the precise application and its respective discretization. Let $\XX$ be a vector space,
where $u\in\XX$ denotes the target to be approximated. 
This general assumption includes the cases where $u$ is some (possibly non-unique) solution of a variational equality or inequality.
For any shape-regular triangulation $\TT$ from some mesh-refining algorithm,
let $\XX(\TT)$ be a discrete space, which may be nonconforming 
in the sense that $\XX(\TT)$ is not necessarily a subspace of $\XX$. Let $U(\TT)\in\XX(\TT)$ denote some discrete approximation returned by the
numerical solver at hand. 
Finally, assume that $\XX\cup\XX(\TT)$ is
equipped with some quasi-metric $\dist[\TT]\cdot\cdot$. In most applications this will either be a norm or a quasi-norm in some suitable Banach space. Notice that uniqueness of continuous and discrete 
solution $u$ resp.\ $U(\TT)$ is \emph{not}
explicitly assumed or required.

In this rather general setting, the local contributions
\begin{align*}
 \eta_T(\TT;\cdot)\,:\,\XX(\TT )\to[0,\infty)
 \quad\text{for all } T\in\TT
\end{align*}
of an a~posteriori error estimator
\begin{align*}
 \eta(\TT;V) = \Big(\sum_{T\in\TT}\eta_T(\TT;V)^2\Big)^{1/2}
 \quad\text{for all }V\in\XX(\TT)
\end{align*}
serve as refinement indicators in the module \texttt{MARK} of the 
adaptive scheme. 
To single out the elements $T$ for refinement of the adaptively generated meshes $\TT_\ell$, the D\"orfler
marking strategy~\cite{d1996} determines a set 
$\MM_\ell\subseteq\TT_\ell$ of minimal cardinality such that
\begin{align}
 \theta\,\eta(\TT_\ell;U(\TT_\ell))^2
 \le \sum_{T\in\MM_\ell}\eta_T(\TT_\ell;U(\TT_\ell))^2
\end{align}
for some fixed bulk parameter $0<\theta<1$.
The following four axioms are sufficient for optimal convergence.
They are formally defined in Section~\ref{section:axioms} below and outlined here for a convenient reading and overview. 

The first axiom~\eqref{A:stable} asserts {\em stability on non-refined elements} 
in the sense that 
\begin{align}\tag{\ref{A:stable}}
 \Big|\Big(\sum_{T\in\mathcal{S}}\eta_T(\widehat\TT;\widehat V)^2\Big)^{1/2}
 -\Big(\sum_{T\in\mathcal{S}}\eta_T(\TT;V)^2\Big)^{1/2}\Big|
 \leq \c{stable}\dist[\widehat\TT]{\widehat V}{V}
\end{align}
holds for any subset $\mathcal{S}\subseteq\TT\cap\widehat\TT$ of non-refined element domains, for all admissible refinements $\widehat\TT$ of a 
triangulation $\TT$, and for all \revision{corresponding} discrete functions 
$V\in\XX(\TT)$ and  $\widehat V\in\XX(\widehat\TT)$. In practice, this
axiom is easily verified by the triangle inequality and appropriate
inverse estimates.

The second axiom~\eqref{A:reduction} asserts a
{\em reduction property on refined elements} in the sense that 
\begin{align}\tag{\ref{A:reduction}}
 \sum_{T\in\widehat\TT\setminus\TT}\eta_T(\widehat\TT;U(\widehat\TT))^2
 \leq \q{reduction} \sum_{T\in\TT\setminus\widehat\TT}\eta_T(\TT;U(\TT))^2 
 + \c{reduction}\dist[\widehat\TT]{U(\widehat\TT)}{U(\TT)}^2
\end{align}
holds for any admissible refinement $\widehat\TT$ of a triangulation 
$\TT$ and their \revision{corresponding} discrete approximations $ U(\widehat\TT)$ and $U(\TT)$.
Such an estimate is the contribution of~\cite{ckns} and follows 
from the observation that the contributions of the error estimators
are weighted by the local mesh-size \revision{which uniformly decreases} on each
refined element. Together with the triangle inequality, an appropriate 
inverse estimate then proves~\eqref{A:reduction}. 

The third axiom~\eqref{A:qosum} asserts an appropriate 
{\em quasi-orthogonality} which generalizes the Pythagoras theorem
\begin{align}\tag{{A3$^\star$}}\label{dpr:a3}
 \dist{u}{U(\TT_{\ell+1})}^2
 + \dist{U(\TT_{\ell+1})}{U(\TT_\ell)}^2
 =  \dist{u}{U(\TT_{\ell})}^2
\end{align}
met for conforming methods in a Hilbert space setting, 
where $\dist{u}{v} = \norm{u-v}\XX$ stems from the Hilbert space norm
and $\dist\cdot\cdot = \dist[\TT_{\ell}]\cdot\cdot = \dist[\TT_{\ell+1}]\cdot\cdot$. 
The Pythagoras theorem~\eqref{dpr:a3} implies the
quasi-orthogonality axiom~\eqref{A:qosum}.
Our
formulation generalizes the quasi-orthogonalities 
found in the literature~\cite{ch06a,ch06b,cn,mn2005},
see Section~\ref{section:quasiorth}. Moreover, Proposition~\ref{prop:Rconv} 
below shows that~\eqref{A:qosum} is essentially equivalent to linear 
convergence of the adaptive algorithm. 
In particular, we shall see below that our quasi-orthogonality axiom~\eqref{A:qosum} cannot be weakened further if one aims to follow the state-of-the-art proofs of quasi-optimal convergence rates which go back to~\cite{stevenson07,ckns}.

A common property of error estimators is reliability~\eqref{intro:reliable}
\begin{align}\label{intro:reliable}
 \dist[\TT]{u}{U(\TT)}
 \leq \c{reliable} \eta(\TT;U(\TT))
\end{align}
for all triangulations $\TT$ and the corresponding discrete 
solution \revision{$U(\TT_\ell)$.} As stated below, reliability is implied by the fourth axiom~\eqref{A:dlr} and is therefore not an axiom itself.

With those three axioms~\eqref{A:stable}--\eqref{A:qosum} and reliability~\eqref{intro:reliable}, the adaptive algorithm leads to linear convergence in the sense of
\begin{align}\label{intro:convergence}
 \eta(\TT_{\ell+k};U(\TT_{\ell+k}))^2
 \le \c{conv}\,\r{conv}^k\,\eta(\TT_\ell;U(\TT_\ell))^2
 \quad\text{for all }k,\ell\in\N_0:=\{0,1,2,\ldots\}
\end{align}
and some constants $0<\r{conv}<1$ and $\c{conv}>0$, 
cf.\ Theorem~\ref{thm:main}~(i) below. Some short remarks are in order
to stress the main differences to the nowadays main stream 
literature. Unlike~\cite{ckns,ks}, we do \emph{not} consider the
\emph{quasi-error} which is the weighted sum of error plus error estimator. Unlike~\cite{mn2005,stevenson07,cn}, we do \emph{not} consider the
\emph{total error} which is the weighted sum of error plus oscillations. The analysis of this paper avoids the use of any lower bound of the error,
while~\cite{mn2005,stevenson07,cn} build on some (even stronger) discrete 
local lower bound.
Instead, we generalize and extend the approach of the recent
 work~\cite{nonsymm} and only rely on the error estimator and the reliability 
estimate~\eqref{intro:reliable}.

The final axiom~\eqref{A:dlr} asserts \emph{discrete reliability} of
the error estimator: For any admissible refinement $\widehat\TT$ of a 
triangulation $\TT$ and their respective discrete approximation $U(\widehat\TT)$
and $U(\TT)$, we assume that
\begin{align}\tag{\ref{A:dlr}}
 \dist[\widehat\TT]{U(\widehat\TT)}{U(\TT)}
 \le\c{dlr}\,\Big(\sum_{T\in\RR(\TT,\widehat\TT)}\eta_T(\TT;U(\TT))^2\Big)^{1/2},
\end{align}
where $\RR(\TT,\widehat\TT) \subseteq \TT$ is a ``small'' superset
of the set of refined elements, i.e.\ $\TT\backslash\widehat\TT\subseteq\RR(\TT,\widehat\TT)$
and $\RR(\TT,\widehat\TT)$ contains up to a fixed multiplicative constant the same
number of elements as $\TT\backslash\widehat\TT$. Such a property has first been
shown in~\cite{stevenson07}, where $\RR(\TT,\widehat\TT)$ denotes $\TT\backslash\widehat\TT$
plus one additional layer of elements. By means of this property, it is shown that
the D\"orfler marking strategy used to single out the elements for refinement,
is not only sufficient for linear convergence~\eqref{intro:convergence}, but in
some sense even necessary. We refer to Proposition~\ref{prop:doerfler} below for
a precise statement of this ``equivalence''. Lemma~\ref{lemma:dlr2rel} shows that discrete reliability~\eqref{A:dlr} implies reliability~\eqref{intro:reliable}.

With the axioms~\eqref{A:stable}--\eqref{A:dlr}, we prove in Theorem~\ref{thm:main}~(ii) that the
adaptive algorithm leads to the best possible algebraic convergence order for the
error estimator in the sense that
\begin{align*}
\norm{(\eta(\cdot),U(\cdot))}{\B_s}
&:=\sup_{N\in\N_0}\inf_{|\TT|-|\TT_0|\leq N}\eta(\TT;U(\TT))\,(N+1)^s
\\&
\simeq \sup_{\ell\in\N_0}\eta(\TT_\ell;U(\TT_\ell))
\,(|\TT_\ell|-|\TT_0|+1)^{s}
\end{align*}%
for all $s>0$. The use of $N+1$ instead of $N$ above is just a minor detail which avoids division by zero.
By definition, $\norm{(\eta(\cdot),U(\cdot))}{\B_s}<\infty$ means that an algebraic convergence
$\eta(\TT;U(\TT))=\OO(N^{-s})$ is theoretically possible if the optimal meshes
$\TT$ with $N$ elements are chosen.
In explicit terms, this means that the adaptive algorithm will asymptotically regain
the best convergence rate and hence quasi-optimal meshes
with respect to the error estimator $\eta(\cdot)$.

To relate quasi-optimal estimator convergence with convergence rates of the error, we consider efficiency of the error estimator in the sense that
\begin{align}\label{intro:efficient}
 \c{efficient}^{-1}\,\eta(\TT;U(\TT)) 
 \le \dist[\TT]{u}{U(\TT)} + \eff{\TT}{U(\TT)}
\end{align}
for all triangulations $\TT$ and the corresponding discrete 
solution \revision{$U(\TT)$.} Here, $\eff{\TT}{U(\TT)}$ denotes certain data oscillation terms which 
are ---for simple model problems--- of higher order. By use 
of~\eqref{intro:efficient}, the approximability  $\norm{\cdot}{\B_s}$ can equivalently be
formulated in the form of nonlinear approximation classes found e.g.\ in~\cite{ckns,cn,ks}. Details are given in Section~\ref{section:approxclass}. Moreover, 
if $\eff{\TT}{U(\TT)}$ satisfies 
\begin{align*}
\norm{\osc(\cdot)}{\O_s}:=\sup_{N\in\N}\inf_{|\TT|-|\TT_0|\leq N}\eff{\TT}{U(\TT)}(N+1)^s<\infty
\end{align*}
and the error is quasi-monotone, the approximability $\norm{\cdot}{\B_s}$ can be related to
\begin{align*}
 \norm{(u,U(\cdot))}{\A_s}:=\sup_{N\in\N}\inf_{|\TT|-|\TT_0|\leq N}\dist[\TT]{u}{U(\cdot)}(N+1)^s,
\end{align*}
which characterizes the discretization error only. Theorem~\ref{thm:mainerr} then states 
\begin{align}
\norm{(u,U(\cdot))}{\A_s} +\norm{\osc(\cdot)}{\O_s}\simeq \sup_{\ell\in\N_0}\frac{\dist[\TT_\ell]{u}{U(\TT_\ell)}}{(|\TT_\ell|-|\TT_0|+1)^{-s}}+\norm{\osc(\cdot)}{\O_s}.
\end{align}%
and proves that the adaptive algorithm will asymptotically
recover the best optimal convergence rate and hence quasi-optimal meshes
with respect to the discretization error.
In particular, the adaptive scheme then performs as good 
as or 
even better than any other adaptive mesh-refining scheme based on the same
mesh-refinement.

\subsection{Outline}
The remaining parts of this paper are organized as follows. 

{\Large\gray$\bullet$} Section~\ref{sec:setting} 
(page~\pageref{sec:setting}ff.)
introduces the abstract setting and specifies the assumptions posed 
on the continuous space $\XX$ and on the discrete space
$\XX(\TT)$. Moreover, the adaptive algorithm is formally stated, and 
admissible mesh-refinement strategies are discussed. 

{\Large\gray$\bullet$} 
Section~\ref{section:axioms} 
(page~\pageref{section:axioms}ff.) 
starts with the precise statement of the 
four axioms~\eqref{A:stable}--\eqref{A:dlr} required and analyzes
relations between those.
\revision{The} short historical overview \revision{emphasises} where the respective axioms have appeared
first in the literature. 

{\Large\gray$\bullet$} 
Section~\ref{section:optimality} (page~\pageref{section:optimality}ff.) 
 states and proves the main theorem on 
convergence and quasi-optimal rates of the adaptive algorithm in the abstract 
framework.

{\Large\gray$\bullet$} 
Section~\ref{section:examples1}
(page~\pageref{section:examples1}ff.)
exemplifies
the abstract theory for 
different discretizations of the Laplace model problem. We consider 
conforming FEM (Section~\ref{section:ex:poisson}), nonconforming FEM
(Section~\ref{section:ex:nonconforming}), and mixed FEM 
(Section~\ref{section:ex:mixed}), as well as conforming BEM for 
weakly-singular integral equations (Section~\ref{section:ex:symm})
and hyper-singular integral equations (Section~\ref{section:ex:hypsing}).

{\Large\gray$\bullet$} 
Section~\ref{section:examples2}
(page~\pageref{section:examples2}ff.)
considers further examples 
from the frame of second-order elliptic PDEs. 
Besides conforming FEM for non-symmetric PDEs 
(Section~\ref{section:ex:nonsymm}), we consider nonconforming and mixed FEM
for the Stokes system (Section~\ref{example:stokes} and 
Section~\ref{example:mixedstokes})
as well as mixed FEM for the Lam\'e system \revision{for} linear elasticity (Section~\ref{example:lame}).
%Surprisingly, all these examples are covered within one abstract framework.

{\Large\gray$\bullet$} 
Section~\ref{section:inexact} (page~\pageref{section:inexact}ff.) extends
 the abstract framework to include
inexact solvers into the analysis. 

{\Large\gray$\bullet$} 
Section~\ref{section:locequiv} (page~\pageref{section:locequiv}ff.) 
further extends the analysis to cover 
a~posteriori error estimators which are not weighted by the local
mesh-size $h$, but 
are locally equivalent to an error estimator which 
satisfies~\eqref{A:stable}--\eqref{A:dlr}. A prominent example
of this estimator class are recovery-based error estimators (Section~\ref{section:est:zz}) for FEM
which are occasionally also called ZZ-estimators after Zienkiewicz and Zhu~\cite{zz}.
For these estimators, the reduction property~\eqref{A:reduction} can hardly
be proved. Still, one can prove convergence even with quasi-optimal convergence
rates. The technical heart of the matter is a novel mesh-width function which
is pointwise equivalent to the usual local mesh-width, but contractive on the entire
patch of a refined element (Proposition~\ref{prop:htilde}).

{\Large\gray$\bullet$} 
Section~\ref{section:examples3} (page~\pageref{section:examples3}ff.)
provides several examples for locally equivalent FEM error estimators 
for the Poisson model problem. This includes facet-based formulations of the residual error estimator (Section~\ref{section:dp:facet-based}) as well as recovery-based error estimators (Section~\ref{section:est:zz}).

{\Large\gray$\bullet$} 
Section~\ref{section:nonlinear} (page~\pageref{section:nonlinear}ff) applies the abstract analysis to nonlinear FEM model problems. We consider adaptive FEM for strongly monotone operators (Section~\ref{section:nonlinear}), the $p$-Laplace problem (Section~\ref{section:ex:plaplace}), and an elliptic eigenvalue problem (Section~\ref{section:eigenvalue}).

{\Large\gray$\bullet$} 
The final Section~\ref{section:boundary}
(page~\pageref{section:boundary}ff.)
 aims to analyze non-homogeneous
boundary conditions in adaptive FEM computations. As model problem serves
the Laplace equation with mixed Dirichlet-Neumann-Robin boundary conditions. 
\revision{Emphasis is on} inhomogeneous Dirichlet conditions, where an additional 
discretization is required, since discrete functions cannot satisfy continuous
Dirichlet conditions. Our analysis generalizes and improves the recent 
works~\cite{dirichlet2d,dirichlet3d}.

\subsection{Notation}
Some practical guide to the notation concludes this introduction. Lower case 
letters denote quantities on the continuous level like the solution $u$, 
while upper case letters denote their discrete 
counterparts usually labelled with respect to the triangulation at hand like 
the discrete approximation $U(\TT)$.

The symbol $|\cdot|$ has multiple meanings which, however, cannot lead to
ambiguities. For vectors and scalars, $|x|$ denotes the Euclidean length.
For finite sets $M$, $|M|$ denotes the number of elements. 
Finally, for subsets and elements $T\subset\R^d$, $|T|$ denotes either 
the $d$-dimensional Lebesgue measure or the $(d-1)$-dimensional 
surface measure. This will be clear from the context.

Throughout all statements, all constants as well as their dependencies are 
explicitly given. In proofs, we may abbreviate notation by use of the
symbol $\lesssim$ which indicates $\le$ up to some multiplicative constant
which is clear from the context. Moreover, the symbol $\simeq$ states that
both estimates $\lesssim$ as well as $\gtrsim$ hold.

\begin{table}[t]
\begin{tabular}{p{3cm}|p{4cm}||p{3cm}|p{4cm}}
\textbf{name} & \textbf{first appearance}&\textbf{name} & \textbf{first appearance}\\
\hline\hline
$ \c{triangle} $ & Section~\ref{section:setting:continuous} & $ \c{patchbound} $ & Equation~\eqref{eq:patchbound}\\
$ C_{\rm{}min} $ & Remark~\ref{rem:adaptivealgorithm} & $ C_{\rm{}ie} $ & Theorem~\ref{thm:mainerr2}\\
$ \c{mesh},C_{\rm{}son},\gamma $ & Section~\ref{section:refinement} & $ \widetilde{C}_{\rm{}rel} $ & Equation~\eqref{eq:iereliable}\\
$ \c{stable},\c{reduction} $ & Section~\ref{subsection:axioms} & $ \c{lbb} $ & Equation~\eqref{eq:inexact:lbb}\\
$ \c{hotConst},\c{dlr} $ & Section~\ref{subsection:axioms} & $ \c{dp1},\c{dp2} $ & Section~\ref{section:index}\\
$ \c{refined},\c{qosum} $ & Section~\ref{subsection:axioms} & $ \c{dp3} $ & Equation~\eqref{eq:glob}\\
$ \c{mon} $ & Equation~\eqref{eq:mon} & $ \c{dp4} $ & Equation~\eqref{dp:doerfler}\\
$ \c{cea} $ & Equation~\eqref{eq:cea} & $ \c{loc} $ & Equation~\eqref{eq:loc}\\
$ \c{qo},\c{muell} $ & Section~\ref{section:quasiorth} & $ \c{stab2} $ & Equation~\eqref{A:kernelstab}\\
$ \c{optimal},\c{conv} $ & Theorem~\ref{thm:main},~\ref{thm:main2},~\ref{thm:main3} & $ \c{htildeequiv} $ & Proposition~\eqref{prop:htilde}\\
$ C_{\rm{}emon} $ & Theorem~\ref{thm:mainerr} & $ \c{meshbound} $ & Equation~\eqref{eq:meshbound}\\
$ C_{\rm{}apx} $ & Proposition~\ref{prop:charAprox} & $ \c{errestequiv} $ & Lemma~\ref{lem:errestequiv}\\
$ \c{estconv} $ & Lemma~\ref{lem:estconv} & $ C_{\rm{}Taylor} $ & Equation~\eqref{eq:taylordef}\\
$ \c{estsum},\c{invsum},\c{Rconv} $ & Lemma~\ref{lem:Rconv} & $ C_{\rm{}pyth} $ & Equation~\eqref{eq:dirichlet3d}\\
$ \c{lemopthelp2} $ & Lemma~\ref{lem:optimality} &  & \\
\hline
\end{tabular}
\medskip
\caption{Important constants $C>0$ and their first appearance in the manuscript.}
\label{table:constants:C}
\end{table}

\begin{table}[t]
\begin{tabular}{p{3cm}|p{4cm}}
\textbf{name} & \textbf{first appearance}\\
\hline\hline
$ \q{reduction},\epsqo $ & Section~\ref{subsection:axioms} \\
$ \q{estconv} $ & Lemma~\ref{lem:estconv} \\
$ \r{Rconv} $ & Lemma~\ref{lem:Rconv} \\
$ \r{conv} $ & Theorem~\ref{thm:main},~\ref{thm:main2},~\ref{thm:main3} \\
$ \q{htc} $ & Proposition~\ref{prop:htilde} \\
\hline
\end{tabular}
\medskip
\caption{Important contraction constants $0<\rho<1$ and their first appearance in the manuscript.}
\label{table:constants:rho}
\end{table}

Finally, the symbols $C>0$ and $\gamma>0$ denote positive constants, while
$0<\rho<1$ denote contraction constants. To improve readability, the most important constants as well as their respective first appearances are collected 
in Table~\ref{table:constants:C} and Table~\ref{table:constants:rho}.

%%%%%%%%%%%%%%%%%%%%%%%%%%%%%%%%%%%%%%%%%%%%%%%%%%%%%%%%%%%%%%%%%%%%%%%%%%%%%%%%%%%%%%%%%%%%%%%%%%%%%%%%%%%%%%%%%%%%%%%
%%%%%%%%%%%%%%%%%%%%%%%%%%%%%%%%%%%%%%%%%%%%%%%%%%%%%%%%%%%%%%%%%%%%%%%%%%%%%%%%%%%%%%%%%%%%%%%%%%%%%%%%%%%%%%%%%%%%%%%
%%%%%%%%%%%%%%%%%%%%%%%%%%%%%%%%%%%%%%%%%%%%%%%%%%%%%%%%%%%%%%%%%%%%%%%%%%%%%%%%%%%%%%%%%%%%%%%%%%%%%%%%%%%%%%%%%%%%%%%
%%%%%%%%%%%%%%%%%%%%%%%%%%%%%%%%%%%%%%%%%%%%%%%%%%%%%%%%%%%%%%%%%%%%%%%%%%%%%%%%%%%%%%%%%%%%%%%%%%%%%%%%%%%%%%%%%%%%%%%
\section{Abstract Setting}\label{sec:setting}
\noindent
This section is devoted to the definition of the problem
and the precise statement of the adaptive algorithm.

%%%%%%%%%%%%%%%%%%%%%%%%%%%%
\subsection{Adaptive approximation problem}\label{section:setting:continuous}
%%%%%%%%%%%%%%%%%%%%%%%%%%%%
Suppose that $\XX$ is a vector space. Based on some initial triangulation $\TT_0$, let $\T$ denote the set of all admissible 
refinements of $\TT_0$ as described  in Section~\ref{section:refinement} below. Each $\TT\in\T$ induces a finite dimensional space $\XX(\TT)$.
Suppose the existence of a numerical solver 
\begin{align}
 U(\cdot):\T\to\XX(\cdot)
\end{align}
which provides 
some discrete approximation $U(\TT)\in\XX(\TT)$ of some (unknown) limit
\begin{align}
 u\in\XX.
\end{align}
For adaptive error estimation, each element domain $T\in\TT$ admits a computable \emph{refinement indicator}
\begin{align*}
\eta_T(\TT;\cdot)\,:\,\XX(\TT)\to[0,\infty)
\end{align*}
which specifies the global \emph{error estimator}
\begin{align}\label{def:estimator}
\eta(\TT;V)^2:=\sum_{T\in\TT} \eta_T(\TT;V)^2\quad\text{for all }V\in \XX(\TT).
\end{align}
%%%%%%%%%%%%%%%%%%%%%%%%%%%%
\subsection{Error measure and further approximation property}
%%%%%%%%%%%%%%%%%%%%%%%%%%%%
We assume that $\XX\cup\XX(\TT)$ is equipped with some
error measure $\dist[\TT]\cdot\cdot$ which satisfies
the following properties for all $v,w,y\in\XX\cup\XX(\TT)$ and some universal constant $\setc{triangle}>0$, namely
\begin{itemize}
\item ({\bf non-negativity}) $\dist[\TT]{v}{w}\ge0$;
\item ({\bf quasi-symmetry}) $\dist[\TT]{v}{w} \le \c{triangle}\,\dist[\TT]{w}{v}$;
\item ({\bf quasi-triangle inequality}) $\c{triangle}^{-1}\,\dist[\TT]{v}{y}\le \dist[\TT]{v}{w}+\dist[\TT]{w}{y}$.
\end{itemize}
Suppose the following \emph{compatibility condition}: 
For any refinement $\widehat\TT$ of $\TT$, $\dist[\widehat\TT]\cdot\cdot$ is even
well-defined on $\XX\cup\XX(\TT)\cup\XX(\widehat\TT)$ with
$\dist[\widehat\TT]{v}{V} = \dist[\TT]{v}{V}$ for all $v\in\XX$ and $V\in\XX(\TT)$.
Suppose that each mesh $\TT\in\T$ allows for  the {\em further approximation property} of $u\in\XX$ in the sense that
for all $\eps>0$, there exists a refinement $\widehat\TT\in\T$ of $\TT$ such that
\begin{align}\label{eq:unifconv}
\dist[\widehat\TT]{u}{U(\widehat\TT)} \leq \eps.
\end{align}

\begin{remark} In many applications,~\eqref{eq:unifconv} holds for a sufficiently fine uniform refinement $\widehat\TT$ of $\TT$ and follows from
a~priori estimates for smooth functions and density arguments.
\end{remark}
% \begin{remark}\label{remark:uniformrefinement}
% In many applications, Assumption~\eqref{eq:unifconv} is verified as follows: Approximation
% properties of the discrete spaces $\XX(\TT^{(\ell)})$ which are generated by
% iterated uniform refinement yield
% \begin{align*}
%  \lim_{\ell\to\infty}\inf_{V\in\XX(\TT^{(\ell)})}
%  \dist[\TT^{(\ell)}]{u}{V} = 0
%  \quad\text{for all }u\in\XX,
% \end{align*}
% since the mesh-size tends to zero uniformly. In particular, this implies
% \begin{align}\label{eq:uniformrefinement}
%  \lim_{N\to\infty}\inf_{\TT\in\T(N)}\inf_{V\in\XX(\TT)}\dist[\TT]{u}{V} = 0
%  \quad\text{for all }u\in\XX.
% \end{align}
% Note that such a result is even necessary for Algorithm~\ref{algorithm} 
% to be meaningful, since otherwise one cannot expect convergence at all.
% Finally,~\eqref{eq:uniformrefinement} and the C\'ea lemma~\eqref{eq:cea} or the generalization~\eqref{eq:ceageneral} verify Assumption~\eqref{eq:unifconv}.
% \end{remark}
%%%%%%%%%%%%%%%%%%%%%%%%%%%%
%\subsection{Discrete setting}\label{section:setting:discrete}
%%%%%%%%%%%%%%%%%%%%%%%%%%%%

%Again, we stress that the abstract analysis is independent of the 
%precise discrete formulation.

%%%%%%%%%%%%%%%%%%
%\subsection{Error estimator}\label{section:errest}
%%%%%%%%%%%%%%%%%%

%%%%%%%%%%%%%%%%%%
\subsection{Adaptive algorithm}\label{section:algorithm}
%%%%%%%%%%%%%%%%%%%
Under the assumptions of Section~\ref{section:setting:continuous}, the general adaptive algorithm reads as follows.
\begin{algorithm}\label{algorithm}
\textsc{Input:} Initial triangulation $\TT_0$ and bulk parameter $0<\theta\leq 1$.\\
\textbf{Loop: }For $\ell=0,1,2,\ldots$ do ${\rm (i)}-{\rm(iv)}$.
\begin{itemize}
\item[\rm(i)] Compute discrete approximation $U(\TT_\ell)$.
\item[\rm(ii)] Compute refinement indicators $\eta_T(\TT_\ell;U(\TT_\ell))$ for all $T\in\TT_\ell$.
\item[\rm(iii)] Determine set $\MM_\ell\subseteq\TT_\ell$ of (almost) minimal cardinality such that
\begin{align}\label{eq:doerfler}
 \theta\,\eta(\TT_\ell;U(\TT_\ell))^2 \le \sum_{T\in\MM_\ell}\eta_T(\TT_\ell;U(\TT_\ell))^2.
\end{align}
\item[\rm(iv)] Refine (at least) the marked elements $T\in\MM_\ell$ to generate triangulation $\TT_{\ell+1}$.
\end{itemize}
\textsc{Output:} Discrete approximations $U(\TT_\ell)$ and error estimators
$\eta(\TT_\ell;U(\TT_\ell))$ for all $\ell\in\N_0$.
\end{algorithm}

\begin{remark}\label{rem:adaptivealgorithm}
Suppose that $\SS_\ell\subseteq\TT_\ell$ is some (not necessarily unique) set
of minimal cardinality which satisfies the D\"orfler marking criterion~\eqref{eq:doerfler}.
In step (iii) the phrase {\em almost minimal cardinality} means that 
$|\MM_\ell| \le C_{\rm min} \, |\SS_\ell|$ with some $\ell$-independent constant $C_{\rm min}\ge1$.
% In the remainder of the work, we will, however, assume minimal cardinality for $\MM_\ell$. 
\end{remark}

\begin{remark}
A greedy algorithm for~\eqref{eq:doerfler}, sorts the elements $\TT_\ell=\{T_1,\ldots,T_N\}$ such that
$\eta_{T_1}(\TT_\ell;U(\TT_\ell))\geq \eta_{T_2}(\TT_\ell;U(\TT_\ell))\geq \ldots\geq \eta_{T_N}(\TT_\ell;U(\TT_\ell))$ and takes the minimal $1\leq J\leq N$ such that $\theta\eta_\ell(\TT_\ell;U(\TT_\ell))^2\leq \sum_{ j=1}^J\eta_{T_j}(\TT_\ell;U(\TT_\ell))^2$. This results in logarithmic-linear growth of the complexity. The relaxation to almost minimal cardinality of $\MM_\ell$ allows to employ a sorting algorithm based on binning so that $\MM_\ell$ in~\eqref{eq:doerfler} can be determined in linear complexity~\cite[Section~5]{stevenson07}.
\end{remark}

\begin{remark}
Small adaptivity parameters $0<\theta\ll1$ lead to only few marked elements and so to possibly very local mesh-refinements. The other extreme, $\theta=1$ basically leads to uniform refinement, where (almost) all elements are refined.
\end{remark}

%%%%%%%%%%%%%%%%%%%%%%%%%%%%%%%%%%%%%%%%%%%%%%%%%%%%%%%%%%%%%%%%%%%%%%%%%%%%%%%%%%%%%%%%%%%%%%%%%%%%%%%%%%%%%%%%%%%%%%%%%%%%%%%%%
\subsection{Mesh-refinement}\label{section:refinement}
%%%%%%%%%%%%%%%%%%%%%%%%%%%%%%%%%%%%%%%%%%%%%%%%%%%%%%%%%%%%%%%%%%%%%%%%%%%%%%%%%%%%%%%%%%%%%%%%%%%%%%%%%%%%%%%%%%%%%%%%%%%%%%%%%
%The newest vertex bisection
%for triangular meshes in $\R^d$, $d\geq 2$, see e.g.~\cite[Chapter 4]{v96},~\cite{stevenson08}, or the bisection of elements for partitions of a 1D manifold from~\cite{resABem2d} serve as mesh-refining strategy.
For adaptive mesh-refinement, any strategy may be used if it fulfils the properties~\eqref{refinement:sons}--\eqref{refinement:overlay} specified below. %In particular, those properties are fulfilled by the newest vertex bisection for simplectical meshes in $\R^d$ from~\cite[Chapter 4]{v96},~\cite{stevenson08}.
From now on, we use an arbitrary, but fixed mesh-refinement strategy. Possible examples are found in Section~\ref{section:refinement:examples}.
Given an initial triangulation $\TT_0$, the set of admissible triangulations reads
\begin{align}\label{eq:triangulations}
 \T := \set{\TT}{\TT\text{ is an admissible refinement of }\TT_0}.
\end{align}
Moreover, the subset of all admissible triangulations in $\T$ which have at most $N\in\N$ elements more than the initial mesh $\TT_0$ reads
\begin{align*}
 \T(N) := \set{\TT\in\T}{|\TT|-|\TT_0|\le N},
\end{align*}
where $|\cdot|=\text{card}(\cdot)$ is the counting measure.
%For $d\ge2$, the meshes $\TT\in\T$ are regular in the sense of Ciarlet.
%The bisection for $d=1$ and the newest vertex bisection for $d\geq 2$ have in common that a 
Each refined element $T\in\TT$ is split into at least two and at most into $C_{\rm son}\geq 2$ sons. This implies the estimate
\begin{align}\label{refinement:sons}
|\TT\setminus\widehat\TT |\leq |\widehat\TT|-|\TT|
\end{align}
for all refinements $\widehat\TT\in\T$ of $\TT\in\T$ and for one-level refinements $\TT_{\ell+1}$ of $\TT_\ell$ 
\begin{align}\label{refinement:sons2}
 |\TT_{\ell+1}|-|\TT_\ell|\leq (C_{\rm son}-1)|\TT_\ell| 
\end{align}
%Then, recall the local \emph{mesh-width} function 
%$h(\TT)\in L^\infty(\bigcup \TT)$ which is defined as
%\begin{align}\label{eq:def:meshwidth}
%h(\TT)|_T:=|T|^{1/d}.
%\end{align}
%Note that this holds for almost any mesh-refinement strategy. In the 
%particular case of newest-vertex bisection for a triangulation of a Lipschitz 
%domain $\Omega\subset\R^d$, there holds $\r{refinereduction}=2^{-1/d}$.
%
The refinement strategy allows
for the closure estimate for triangulations generated by Algorithm~\ref{algorithm} in the sense that
\begin{align}\label{refinement:closure}
 |\TT_\ell| - |\TT_0|
 \le\c{mesh}\,\sum_{k=0}^{\ell-1}|\MM_{k}|\quad\text{for all }\ell\in\N
\end{align}
with some constant $\setc{mesh}>0$ which depends only on $\T$.
Finally, assume that for any two meshes $\TT,\TT'\in\T$ there is a coarsest common refinement
$\TT\oplus\TT'\in\T$ which satisfies
\begin{align}\label{refinement:overlay}
 |\TT\oplus\TT'|\le |\TT| + |\TT'| - |\TT_0|.
\end{align}

\begin{remark}
 The linear convergence~\eqref{eq:thm:main:conv1} of $\eta(\cdot)$ which is stated in Theorem~\ref{thm:main}~(i), is independent of~\eqref{refinement:sons2}--\eqref{refinement:overlay}. The optimal convergence rate of $\eta(\cdot)$ from~\eqref{eq:thm:main:opt1} which is stated in Theorem~\ref{thm:main}~(ii) requires the validity of~\eqref{refinement:sons} and~\eqref{refinement:closure}--\eqref{refinement:overlay} for the upper bound, while the lower bound relies only on~\eqref{refinement:sons2}.
\end{remark}

%%%%%%%%%%%%%%%%%%%%%%%%%%%%%%%%%%%%%%%%%%%%%%%%%%%%%%%%%%%%%%%%%%%%%%%%%%%%%%%%%%%%%%%%%%%%%%%%%%%%%%%%%%%%%%%%%%%%%%%%%%%%%%%%%
\subsection{Examples for admissible mesh-refinement strategies}\label{section:refinement:examples}
%%%%%%%%%%%%%%%%%%%%%%%%%%%%%%%%%%%%%%%%%%%%%%%%%%%%%%%%%%%%%%%%%%%%%%%%%%%%%%%%%%%%%%%%%%%%%%%%%%%%%%%%%%%%%%%%%%%%%%%%%%%%%%%%%
This short section, comments on admissible mesh-refinement strategies 
with properties~\eqref{refinement:sons}--\eqref{refinement:overlay}.

\begin{subequations}\label{refinement:shapereg}
For $d=1$, simple bisection satisfies~\eqref{refinement:sons}--\eqref{refinement:overlay}.
Since usual error estimates, however, rely on the boundedness of the $\gamma$-shape regularity in the
sense of
\begin{align}\label{eq:gamma:shaperegular}
 \max\set{|T|/|T^\prime|&}{T,T^\prime\in\TT,\,T\cap T^\prime\neq\emptyset}\leq \gamma,
\end{align}
additional bisections have to be imposed. 
Here, $|T|$ denotes the diameter of $T$.
We refer to~\cite{affkp} for some extended 1D
bisection algorithm with~\eqref{refinement:sons}--\eqref{refinement:overlay} as
well as~\eqref{eq:gamma:shaperegular} for all $\TT\in\T$. There, the 
mesh-refinement guarantees that only finitely many shapes of, e.g., node
patches $\omega(\TT;z):=\bigcup\set{T\in\TT}{z\in T}$ occur. In particular,
the constant $\gamma\ge1$ depends only on the initial mesh $\TT_0$.

Even though the above mesh-refinement strategy seems fairly arbitrary, 
to the best of our knowledge, the newest vertex bisection for 
$d \ge 2$ is the only refinement strategy known to fulfil~\eqref{refinement:closure}--\eqref{refinement:overlay} for regular triangulations.
The proof of~\eqref{refinement:overlay} is found 
in~\cite{stevenson07} for $d=2$ and~\cite{ckns} for $d\ge2$. 
For the proof of~\eqref{refinement:closure}, we refer to~\cite{bdd} for $d=2$ 
and~\cite{stevenson08} for $d\ge2$.
The proof of~\eqref{refinement:sons2} is obvious for newest vertex
bisection in 2D and is valid 
in any dimension (the proof follows  with arguments from~\cite{stevenson08}) as pointed out by R. Stevenson in a private communication.
The works~\cite{bdd,stevenson08} assume an appropriate labelling of the edges of the initial mesh $\TT_0$ to prove~\eqref{refinement:closure}. \revision{This poses a combinatorial problem on the initial mesh $\TT_0$ but does not concern any of the following meshes $\TT_\ell$, $\ell\geq 1$. For $d=2$, it can be proven }
that each conforming triangular 
mesh $\TT$ allows for such a labelling, while no efficient algorithm is known to compute this in linear
complexity. For $d\geq 3$, such a result is missing. However, it is known that an appropriate
uniform refinement of an arbitrary conforming simplicial mesh $\TT$ for $d\ge2$ allows for such a labelling~\cite{stevenson08}.
Moreover, for $d=2$, it has recently been proved in~\cite{kpp} that~\eqref{refinement:closure} even holds 
without any further assumption on the initial mesh $\TT_0$.

If one admits hanging 
nodes, also the red-refinement strategy from~\cite{bn} can be used, 
where the order of hanging nodes is bounded.
Both mesh-refinement strategies, the one
of~\cite{bn} as well as newest vertex bisection, guarantee uniform boundedness of the $\gamma$-shape regularity
in the sense of
\begin{align}\label{refinement:shaperegular}
\begin{split}
%\max\set{\diam(T)/\diam(T^\prime)&}{T,T^\prime\in\TT,\,T\cap T^\prime\neq\emptyset}\leq \gamma\quad\;\,\qquad \text{for }d=1,\\
 |T|^{1/d}&\leq \diam(T)\leq \gamma\,|T|^{1/d}\quad\text{for all }
 T\in\TT\in\T
\end{split}
\end{align}
with some fixed $\gamma\geq 1$ and the $d$-dimensional Lebesgue measure $|\cdot|$. As above, both mesh-refinement strategies
guarantee that only finitely many shapes of, e.g., node patches
$\omega(\TT;z):=\bigcup\set{T\in\TT}{z\in T}$ occur, and the constant $\gamma\ge1$
thus depends only on the initial mesh $\TT_0$.

Even the simple red-green-blue refinement from \cite{ccrgb} fails to 
satisfy~\eqref{refinement:overlay} as seen from a counterexample in~\cite[Satz 4.15]{pavbakk}.
\end{subequations}

%%%%%%%%%%%%%%%%%%%%%%%%%%%%%%%%%%%%%%%%%%%%%%%%%%%%%%%%%%%%%%%%%%%%%%%%%%%%%%%%%%%%%%%%%%%%%%%%%%%%%%%%%%%%%%%%%%%%%%%
%%%%%%%%%%%%%%%%%%%%%%%%%%%%%%%%%%%%%%%%%%%%%%%%%%%%%%%%%%%%%%%%%%%%%%%%%%%%%%%%%%%%%%%%%%%%%%%%%%%%%%%%%%%%%%%%%%%%%%%
%%%%%%%%%%%%%%%%%%%%%%%%%%%%%%%%%%%%%%%%%%%%%%%%%%%%%%%%%%%%%%%%%%%%%%%%%%%%%%%%%%%%%%%%%%%%%%%%%%%%%%%%%%%%%%%%%%%%%%%
%%%%%%%%%%%%%%%%%%%%%%%%%%%%%%%%%%%%%%%%%%%%%%%%%%%%%%%%%%%%%%%%%%%%%%%%%%%%%%%%%%%%%%%%%%%%%%%%%%%%%%%%%%%%%%%%%%%%%%%

% !TEX root = axioms.tex
%%%%%%%%%%%%%%%%%%%%%%%%%%%%%%%%%%%%%%%%%%%%%%%%%%%%%%%%%%%%%%%%%%%%%%%%%%%%%%%%%%%%%%%%
\section{The Axioms}\label{section:axioms}
%%%%%%%%%%%%%%%%%%%%%%%%%%%%%%%%%
\noindent%
This section, states a set axioms that are sufficient for quasi-optimal convergence  of  Algorithm~\ref{algorithm} 
from Section~\ref{section:algorithm}. In other words, any numerical algorithm that fits into the general framework of Algorithm~\ref{algorithm} will converge with optimal rate if it satisfies~\eqref{A:stable}--\eqref{A:dlr} below.
%The axioms stated in the section below are sufficient to prove quasi-optimal convergence of the adaptive algorithm.

%%%%%%%%%%%%%%%%%%%%%%%%%%%%%%%%%
\subsection{Set of axioms}\label{subsection:axioms}
%%%%%%%%%%%%%%%%%%%%%%%%%%%%%%%%%
The following four axioms for optimal convergence of Algorithm~\ref{algorithm} concern some fixed (unknown) limit $u\in\XX$ and the (computed) discrete approximation $U(\TT)\in\XX(\TT)$ for \revision{any} given mesh $\TT\in\T$.
%Although uniqueness of solutions is met in the examples of Section~\ref{section:examples1} and 
%Section~\ref{section:examples2}, this is not necessary in this section.
%The axioms~\eqref{A:stable}--\eqref{A:reliable} suffice for (R-linear) convergence of the adaptive scheme. The additional axiom~\eqref{A:dlr} is needed to show quasi-optimality in terms of the estimator $\eta(\cdot)$, while~\eqref{A:efficient} is used to characterize the approximation class in terms of the approximation error.
\revision{The constants in~\eqref{A:stable}--\eqref{A:dlr} satisfy 
$\setc{stable},\setc{reduction}$, $\setc{hotConst}, \setc{dlr},\setc{refined},\setc{qosum}(\epsqo)\geq 1$ as well as $0<\setq{reduction}<1$ and depend solely on $\T$.}

\renewcommand{\theenumi}{{\rm A\arabic{enumi}}}%
\renewcommand{\labelenumi}{({\rm A\arabic{enumi}})}
\begin{enumerate}
\item \label{A:stable}\textbf{Stability on non-refined element domains}: For all refinements $\widehat\TT\in\T$ of a triangulation $\TT\in\T$, for all subsets $\mathcal{S}\subseteq\TT\cap\widehat\TT$ of non-refined element domains, and for all  $V\in\XX(\TT)$, $\widehat V\in\XX(\widehat\TT)$, it holds that
\begin{align*}
\Big|\Big(\sum_{T\in\mathcal{S}}\eta_T(\widehat\TT;\widehat V)^2\Big)^{1/2}-\Big(\sum_{T\in\mathcal{S}}\eta_T(\TT;V)^2\Big)^{1/2}\Big|\leq \c{stable}\,\dist[\widehat\TT]{\widehat V}{V}.
\end{align*}
\item \label{A:reduction}\textbf{Reduction property on refined element domains}: Any refinement $\widehat\TT\in\T$ of a triangulation $\TT\in\T$ satisfies
\begin{align*}
\sum_{T\in\widehat\TT\setminus\TT}\eta_T(\widehat\TT;U(\widehat\TT))^2\leq \q{reduction} \sum_{T\in\TT\setminus\widehat\TT}\eta_T(\TT;U(\TT))^2 + \c{reduction}\dist[\widehat\TT]{U(\widehat\TT)}{U(\TT)}^2.
\end{align*}
\item\label{A:qosum}\textbf{General quasi-orthogonality}: 
%For all $\epsqo>0$, there exists $\c{qosum}(\epsqo)>0$ such that the output of Algorithm~\ref{algorithm} satisfies, for all $\ell,N\in\N_0$ with $N\ge\ell$, that
There exist constants
\begin{align*}
 0\leq \epsqo < \eps_{\rm qo}^\star(\theta)&:=\sup_{\delta>0}\frac{1-(1+\delta)(1-(1-\q{reduction})\theta)}{\c{reliable}^{2}(\c{reduction}+(1+\delta^{-1})\c{stable}^2)}
\end{align*}
and \revision{$\c{qosum}(\epsqo)\geq 1$} such that the output of Algorithm~\ref{algorithm} satisfies, for all $\ell,N\in\N_0$ with $N\ge\ell$, that
\begin{align*}
\sum_{ k=\ell}^N \big(\dist[\TT_{k+1}]{U(\TT_{k+1})}{U(\TT_k)}^2 -\epsqo \dist[\TT_k]{u}{U(\TT_k)}^2\big) \leq \c{qosum}(\epsqo)\eta(\TT_\ell;U(\TT_\ell))^2.
\end{align*}
\item\label{A:dlr}\textbf{Discrete reliability}:
For all refinements $\widehat\TT\in\T$ of a triangulation $\TT\in\T$,
there exists a subset $\RR(\TT,\widehat\TT)\subseteq\TT$ with $\TT\backslash\widehat\TT \subseteq\RR(\TT,\widehat\TT)$ and
$|\RR(\TT,\widehat\TT)|\le\c{refined}|\TT\backslash\widehat\TT|$ such that
\begin{align*}
 \dist[\widehat\TT]{U(\widehat\TT)}{U(\TT)}^2
 \leq \c{dlr}^2\sum_{T\in\RR(\TT,\widehat\TT)}\eta_T(\TT;U(\TT))^2.
\end{align*}
\end{enumerate}
%
%\begin{remark}
%We stress that the axioms~\eqref{A:stable}--\eqref{A:reliable}, in some cases even the axioms~\eqref{A:stable}, \eqref{A:reduction}, and \eqref{A:reliable}, suffice to show (R-linear) convergence of the adaptive scheme, cf.\ Theorem~\ref{thm:main}~(i) resp.\ Corollary~\ref{cor:convergence}, whereas the additional axioms~\eqref{A:dlr}--\eqref{A:efficient} are needed for quasi-optimality.
%\end{remark}

%\begin{remark}
%In contrast to the literature, the main results of this paper do not rely on any general C\'ea-type best-approximation estimate 
%\begin{align}\label{eq:cegageneral}
% \dist[\TT]{u}{U(\TT)} 
% \leq g\big(\min_{V\in\XX(\TT)}\dist[\TT]{u}{V}\big)
%\end{align}
%for some $\TT$-independent function $g:\R_{\ge0}\to\R_{\ge0}$ which is continuous at $g(0)=0$.
%\end{remark}

\begin{remark}\label{rem:qosum1}
Proposition~\ref{prop:Rconv} and Proposition~\ref{prop:qosumiscool} below show that general quasi-orthogonality~\eqref{A:qosum} together with~\eqref{A:stable}--\eqref{A:reduction} and reliability~\eqref{A:reliable} implies~\eqref{A:qosum} even with $\epsqo=0$ and $0<\c{qosum}(0)<\infty$.
\end{remark}

\begin{remark}
In all examples of Section~\ref{section:examples1}--\ref{section:examples2} and Section~\ref{section:examples3}--\ref{section:nonlinear}, the axiom~\eqref{A:qosum} is proved for any $\epsqo>0$ instead of one single $0<\epsqo<\eps_{\rm qo}^\star(\theta)$ because the value of $\eps_{\rm qo}^\star(\theta)$ is involved.
Simple calculus allows to determine the maximum in~\eqref{A:qosum} as
\begin{align*}
\eps_{\rm qo}^\star(\theta)&=\Big(1-\frac{(1-(1-\q{reduction})\theta)\c{reduction}+D}{\c{reduction}+\c{stable}^2}\Big)\frac{D- (1-(1\q{reduction})\theta)\c{stable}^2}{\c{reliable}^2D(\c{reduction}+\c{stable}^2)}\\
&\geq 
\frac{\theta^2(1-\q{reduction})^2\c{stable}^2}{2\c{reliable}^2(\c{reduction}+\c{stable}^2)^2}>0.
\end{align*}
where $D:=\sqrt{1-(1-\q{reduction})\theta}\sqrt{\c{reduction}\c{stable}(1-\q{reduction})\theta+\c{stable}^2}>0$.
While Theorem~\ref{thm:main}~(i) holds for any choice $0<\theta\leq 1$, the optimality result of Theorem~\ref{thm:main}~(ii) is further restricted by $\theta<\theta_\star:=(1+\c{stable}^{2}\c{reliable}^2)^{-1}$.
%Hence
%\begin{align}\label{eq:epsqo2}
% \eps_{\rm qo}^\star(\theta_\star)=\sup_{\delta>0}\frac{1-(1+\delta)(1-(1-\q{reduction})(1+\c{stable}^2\c{reliable}^2)^{-1}}{\c{reliable}^{2}%(\c{reduction}+(1+\delta^{-1})\c{stable}^2)}
%\end{align}
%implies the optimality result of Theorem~\ref{thm:main}~(ii).
\end{remark}

The following sections are dedicated to the relations between the different axioms 
and the corresponding implications. Figure~\ref{fig:map} outlines the 
convergence and quasi-optimality proof in Section~\ref{section:optimality} and visualizes how the different axioms interact.

%%%%%%%%%%%%%%%%%%%%%%%%%%%%%%%%%%%%%%%%%%%%%%%%%%%%%%%%%%%%%%%%%%%%%%%%%%%%%%%%%%%
%\subsection{Remarks}
%%%%%%%%%%%%%%%%%%%%%%%%%%%%%%%%%%%%%%%%%%%%%%%%%%%%%%%%%%%%%%%%%%%%%%%%%%%%%%%%%%%
%This section provides
\begin{figure}[h]
%%%%%%%%%%%%%%%%%%%%%%%%%%%%%%%%%%%%%%%%%%%%%%%%%%%%%%%%%%%%%%%%%%%%%%%%%%%%%%%%%%%%%%%%%%%%%%%%%%%%%%
\newcommand\psBox[4][white]{\rput(#2){\rnode[rc]{#4}{%
  \psframebox[fillcolor=#1]{\tiny{\makebox{\tabular{c}#3\endtabular}}}}}}%
\newcommand\psbox[4][white]{\rput(#2){\rnode{#4}{%
  \psframebox[fillcolor=#1]{\tiny{\makebox{\tabular{c}#3\endtabular}}}}}}%
\newcommand\psBoxT[4][white]{\rput(#2){\rnode[lc]{#4}{%
  \psframebox[fillcolor=#1]{\tiny{\makebox{\tabular{c}#3\endtabular}}}}}}%
%%%%%%%%%%%%%%%%%%%%%%%%%%%%%%%%%%%%%%%%%%%%%%%%%%%%%%%%%%%%%%%%%%%%%%%%%%%%%%%%%%%%%%%%%%%%%%%%%%%%%%
\psset{framearc=0.2,shadow=true,xunit=1mm,yunit=1mm,fillstyle=solid,shadowcolor=black!55}%
\begin{pspicture}(-8,0)(100,90)%
\psBox[blue!30]{10,80}{Reduction~\eqref{A:reduction}}{E1}%
\psBox[blue!30]{10,60}{Stability~\eqref{A:stable}}{E2}%
\psbox[green!30]{137,60}{Reliability~\eqref{A:reliable}\\(Lemma~\ref{lemma:dlr2rel})}{E3}%
\psBox[blue!30]{10,35}{Discrete reliability~\eqref{A:dlr}}{E4}%
\psbox[blue!30]{137,80}{Discrete reliability~\eqref{A:dlr}}{E4prime}%
\psBoxT[blue!30]{137,45}{Quasi-orthogonality~\eqref{A:qosum}}{E5}%
\psBoxT[green!30]{137,30}{Closure~\eqref{refinement:closure}}{E6}%
\psBoxT[green!30]{137,20}{Overlay~\eqref{refinement:overlay}}{E7}%
\psBox[green!30]{10,7}{Efficiency~\eqref{A:efficient}}{E8}%
\psbox[gray!30]{45,75}{Estimator reduction\\(Lemma~\ref{lem:estconv})}{EstRed}%
\psbox[gray!30]{90,45}{$R$-linear convergence\\of $\eta(\TT_\ell;U(\TT_\ell))$ (Proposition~\ref{prop:Rconv})}{Rconv}%
\psbox[gray!30]{95,75}{Convergence of $\eta(\TT_\ell;U(\TT_\ell))$}{EstConv}%
\psbox[red!70]{65,25}{Optimal Convergence\\of $\eta(\TT_\ell;U(\TT_\ell))$ (Proposition~\ref{prop:optimality})}{EstOpt}%
\psbox[gray!30]{95,63}{Convergence of $U(\TT_\ell)$}{Conv}%
\psbox[gray!30]{65,7}{Optimal Convergence\\of $U(\TT_\ell)$ (Proposition~\ref{prop:charAprox})}{Opt}%
\psbox[gray!30]{50,45}{Optimality of\\D\"orfler marking\\(Proposition~\ref{prop:doerfler})}{DoerflerOpt}%
\psset{nodesep=3pt,arrows=->,shortput=nab,shadow=false}%
\ncline{E1}{EstRed}%
\ncline{E2}{EstRed}%
\ncline{E3}{Conv}%
\ncline{E4}{DoerflerOpt}%
\ncline{E5}{Rconv}%
\ncline{E6}{EstOpt}%
\ncline{E7}{EstOpt}%
\ncline{E8}{Opt}%
\ncline{E2}{DoerflerOpt}%
\ncline{E3}{Rconv}%
\psset{nodesep=3pt,arrows=c->,shortput=nab,shadow=false,linewidth=0.1,arrowinset=0,arrowlength=0.8}%
\ncline{E4prime}{E3}%
\ncline{DoerflerOpt}{EstOpt}%
\ncline{Rconv}{EstOpt}%
\ncline{EstOpt}{Opt}%
\ncline{EstRed}{EstConv}%
\ncline{EstRed}{Rconv}%
\ncline{EstConv}{Conv}%
\end{pspicture}
\caption{Map of the quasi-optimality proof. \revision{The arrows mark the dependencies of the arguments.}}
\label{fig:map}
\end{figure}

%%%%%%%%%%%%%%%%%%%%%%%%%%%%
\subsection{Historic remarks}
%%%%%%%%%%%%%%%%%%%%%%%%%%%%
This work provides some unifying framework on the theory of adaptive 
algorithms and the related convergence and quasi-optimality analysis. 
Some historic remarks are in order  on the development of the arguments over the years.
In one way or another, the axioms arose in various works throughout the literature.
% In doing 
%so, we will see that, even though they naturally arose from previous works, 
%their development took quite some time.

{\Large\gray$\bullet$} {\bf Reliability~\eqref{A:reliable}.}\quad
%We start with the reliability estimates of type~\eqref{A:reliable}. 
Reliability basically states that the unknown error tends to zero if the
computable and hence known error bound is driven to zero by smart
adaptive algorithms. Since the invention of adaptive FEM in the 1970s, 
the question of reliability was thus a pressing matter and first results for 
FEM date back to the early works of 
\textsc{Babuska \& Rheinboldt}~\cite{br79} in 1D and 
\textsc{Babuska \& Miller}~\cite{bm87} in 2D. Therein, the error is
estimated by means of the residual. In the context of BEM, reliable 
residual-based error estimators date back to the works of 
\textsc{Carstensen \& Stephan}~\cite{cs96,cs95,cc97}. 
Since the actual adaptive algorithm only knows the estimator, reliability 
estimates have been a crucial ingredient for convergence proofs of 
adaptive schemes of any kind.

{\Large\gray$\bullet$} {\bf Efficiency~\eqref{A:efficient}.}\quad
Compared to reliability~\eqref{A:reliable}, 
efficiency~\eqref{A:efficient} provides the converse estimate and states
that the error is not overestimated by the estimator, up to some 
oscillation
terms $\eff{\cdot}{U(\cdot)}$ determined from the given data. An error estimator which 
satisfies both, reliability and efficiency, is mathematically guaranteed to 
asymptotically behave like the error, i.e.\ it decays with the same %algebraic 
rate as the actual computational error.  Consequently, efficiency is a 
desirable property as soon as it comes to convergence rates. 
%But even apart 
%from that, an efficient estimator gives much more insight into the actual 
%convergence behaviour than just a reliable one. 
For FEM with residual error estimators, efficiency has first 
been proved by~\textsc{Verf\"urth}~\cite{v94}. He used appropriate inverse
estimates and localization by means of bubble functions.
In the frame of BEM, however, 
efficiency~\eqref{A:efficient} of the residual error estimators is widely 
open and only known for particular problems~\cite{affkp,cc96}, although 
observed empirically, see also Section~\ref{section:ex:symm}.

{\Large\gray$\bullet$} {\bf Discrete local efficiency and first 
convergence analysis of~\cite{d1996,mns}.}\quad
Reliability~\eqref{A:reliable} and efficiency~\eqref{A:efficient} 
are nowadays standard topics in textbooks on a~posteriori FEM error estimation~\cite{ao00,v96}, in contrast to the convergence of
adaptive algorithms. \textsc{Babuska \& Vogelius}~\cite{bv}
already observed for conforming discretizations, that the sequence
of discrete approximations $U(\TT_\ell)$ always converges.
The work of \textsc{D\"orfler}~\cite{d1996} introduced the marking 
strategy~\eqref{eq:doerfler} for the Poisson model problem
\begin{align}\label{history:poisson}
 -\Delta u = f \text{ in }\Omega
 \quad\text{and}\quad
 u = 0\text{ on }\Gamma=\partial\Omega
\end{align}
and conforming \revision{first-order} FEM \revision{to show} convergence up to any 
\revision{given} tolerance.
\textsc{Morin, Nochetto \& Siebert}~\cite{mns} refined this and the arguments of~\textsc{Verf\"urth}~\cite{v94} 
and~\textsc{D\"orfler}~\cite{d1996} and proved the discrete variant
\begin{align*}
C_{\rm eff}^{-2} \eta(\TT_\ell;U(\TT))^2\leq \norm{\nabla(U(\TT_{\ell+1})-U(\TT_\ell))}{L^2(\Omega)}^2 +  \revision{\eff[\TT_\ell]{\TT_\ell}{U(\TT_\ell)}^2 }
\end{align*}
of the efficiency~\eqref{A:efficient}.
See also~\cite{fop} for the explicit statement and proof. 
The proof relies on discrete bubble functions and thus required an 
\emph{interior node property}
of the local mesh-refinement, which is ensured by newest vertex bisection and five bisections for each refined element.
With this \cite{mns} proved \emph{error reduction} up to data oscillation terms in the sense of
\begin{align}\label{history:conv:mns}
 \norm{\nabla(u-U(\TT_{\ell+1}))}{L^2(\Omega)}^2
 \le \kappa\,\norm{\nabla(u-U(\TT_{\ell}))}{L^2(\Omega)}^2
 + C\, \eff{\TT_\ell}{U(\TT_\ell)}
\end{align}
with some $\ell$-independent constants $0<\kappa<1$ and $C>0$. This and additional enrichment of the 
marked elements $\MM_\ell\subseteq\TT_\ell$ to ensure $ \eff{\TT_\ell}{U(\TT_\ell)}\to0$ 
as $\ell\to\infty$ leads to convergence 
\begin{align}
 \norm{\nabla(u-U(\TT_{\ell}))}{L^2(\Omega)}
 \xrightarrow{\ell\to\infty}0.
\end{align}

{\Large\gray$\bullet$} {\bf Quasi-orthogonality~\eqref{A:qosum}.}\quad
The approach of~\cite{mns} has been generalized to non-symmetric operators in~\cite{mn2005}, to nonconforming
and mixed methods in~\cite{ch06a,ch06b},
as well as to the nonlinear obstacle problem in~\textsc{Braess, Carstensen
\& Hoppe}~\cite{bch07,bch09}.
One additional difficulty is the lack of the Galerkin orthogonality
which is circumvented 
with the quasi-orthogonality axiom~\eqref{A:qosum} in Section~\ref{section:quasiorth} below. 
Stronger variants of quasi-orthogonalities have  been used in~\cite{ch06a,ch06b,mn2005} and imply~\eqref{A:qosum} in 
Section~\ref{section:quasiorth} below.
In its current form, however, the general quasi-orthogonality~\eqref{A:qosum} 
goes back to~\cite{nonsymm} of~\textsc{Feischl, F\"uhrer \& 
Praetorius}  for nonsymmetric operators without 
artificial assumptions on the initial mesh as in~\cite{cn,mn2005}. Proposition~\ref{prop:Rconv} below shows that the present form~\eqref{A:qosum} of the quasi-orthogonality cannot be weakened if one aims to follow the analysis of~\cite{ckns,stevenson07} to prove quasi-optimal convergence rates.

{\Large\gray$\bullet$} {\bf Optimal convergence rates and discrete reliability~\eqref{A:dlr}.}\quad
The work of~\textsc{Binev, Dahmen \& DeVore}~\cite{bdd} was the first 
one to prove
algebraic convergence rates for adaptive FEM of the Poisson model 
problem~\eqref{history:poisson} and lowest-order FEM. They extended the adaptive
algorithm of~\cite{mns} by additional coarsening steps to avoid
over-refinement. \textsc{Stevenson}~\cite{stevenson07} removed
this artificial coarsening step and
introduced the axiom~\eqref{A:dlr} on discrete reliability. He implicitly introduced the concept of \emph{separate D\"orfler marking}: 
If the data oscillations $ \eff{\TT_\ell}{U(\TT_\ell)}$ are small compared to the error 
estimator $\eta(\TT_\ell;U(\TT_\ell))$, he used the common D\"orfler 
marking~\eqref{eq:doerfler} to single out the elements for refinement.
Otherwise, he suggested the D\"orfler marking~\eqref{eq:doerfler} for the local contributions 
$\eff[T]{\TT_\ell}{U(\TT_\ell)}$ of the data oscillation terms. The core proof 
of~\cite{stevenson07} then uses
the observation from~\cite{mn2005} that the so-called \emph{total error} is
contracted in each step of the adaptive loop in the sense of
\begin{align}\label{history:totalerror}
 \Delta_{\ell+1} &\le \kappa\,\Delta_\ell\quad\text{for}\quad
 \Delta_\ell := \norm{\nabla(u-U(\TT_\ell))}{L^2(\Omega)}^2
 + \gamma\,\eff{\TT_\ell}{U(\TT_\ell)}^2
\end{align}
with some $\ell$-independent constants $0<\kappa<1$ and $\gamma>0$. 

Moreover, the analysis of~\cite{stevenson07} \revision{shows that the}
D\"orfler marking~\eqref{eq:doerfler} is not only sufficient to guarantee
contraction~\eqref{history:totalerror}, but somehow even necessary, see Section~\ref{section:doerfler} for the refined analysis which avoids the use of efficiency~\eqref{A:efficient}.

{\Large\gray$\bullet$} {\bf Stability~\eqref{A:stable} and reduction~\eqref{A:reduction}.} \quad The AFEM analysis of~\cite{stevenson07} was simplified
by \textsc{Cascon, Kreuzer, Nochetto \& Siebert}~\cite{ckns} with the introduction of the \emph{estimator reduction} in the sense of 
\begin{align}\label{history:reduction}
\eta(\TT_{\ell+1};U(\TT_{\ell+1}))^2
\le \kappa\, \eta(\TT_\ell;U(\TT_\ell))^2
+ C\,\norm{\nabla(U(\TT_{\ell+1})-U(\TT_\ell))}{L^2(\Omega)}^2
\end{align}
with constants $0<\kappa<1$ and $C>0$. This is an immediate consequence of 
stability~\eqref{A:stable} and reduction~\eqref{A:reduction} in
Section~\ref{section:reduction} below and also ensures contraction
of the so-called quasi-error 
\begin{align}\label{history:quasierror}
 \Delta_{\ell+1} &\le \kappa \, \Delta_\ell\quad\text{for}\quad
 \Delta_\ell := \norm{\nabla(u-U(\TT_\ell))}{L^2(\Omega)}^2
 + \gamma\,\eta(\TT_\ell;U(\TT_\ell))^2
\end{align}
with some $\ell$-independent constants $0<\kappa<1$ and $\gamma>0$. The analysis of~\cite{ckns} removed the \emph{discrete local lower bound} from the set of necessary
axioms (and hence the \emph{interior node property}~\cite{mns}). Implicitly, the 
axioms~\eqref{A:stable}--\eqref{A:reduction} are part of the proof
of~\eqref{history:reduction} in~\cite{ckns}. While~\eqref{A:stable} essentially follows from the 
triangle inequality and appropriate inverse estimates in practice, the 
reduction~\eqref{A:reduction} builds on the observation that the element 
sizes of the sons of a refined element \revision{uniformly decreases.} 
For instance, bisection-based mesh-refinements yield $|T'|\le|T|/2$,
if $T'\in\TT_{\ell+1}\backslash\TT_\ell$ is a son of  
$T\in\TT_\ell\backslash\TT_{\ell+1}$.

{\Large\gray$\bullet$} {\bf Extensions of the analysis of~\cite{ckns}.}\quad
The
work~\cite{ks} considers lowest-order AFEM for the Poisson 
problem~\eqref{history:poisson} for error estimators which are locally 
equivalent to the residual error estimator. The works~\cite{cn,nonsymm} analyze
optimality of AFEM for linear, but non-symmetric elliptic operators. While~\cite{cn} required that the corresponding bilinear form induces a norm, 
%i.e.\ the nonsymmetric part is antisymmetric, 
such an assumption is dropped in~\cite{nonsymm}, so that the latter work concluded the AFEM analysis for linear second-order elliptic PDEs.
Convergence with quasi-optimal rates for adaptive boundary element methods
has independently been proved in~\cite{fkmp,gantumur}. The main additional 
difficulty was the development of appropriate \emph{local} inverse estimates
for the \emph{nonlocal} operators involved. The BEM analysis, however, still 
hinges on symmetric and elliptic integral operators and excludes boundary
integral formulations of mixed boundary value problems as well as the 
FEM-BEM coupling. AFEM with nonconforming and mixed FEMs is considered for various problems in~\cite{rabus10,CR2012,cpr13,cr11,bms09,mzs10}. AFEM with non-homogeneous Dirichlet and mixed Dirichlet-Neumann
boundary conditions are analyzed in~\cite{dirichlet2d} for 2D and
in~\cite{dirichlet3d} for 3D. The latter work adapts the separate 
D\"orfler marking from~\cite{stevenson07} to decide whether the refinement
relies on the error estimator for the discretization error or the approximation
error of the given continuous Dirichlet data, see Section~\ref{section:boundary}. The results of those
works are reproduced and partially even improved in the frame of the abstract
axioms~\eqref{A:stable}--\eqref{A:dlr} of this paper. Finally, the proofs 
of~\cite{dirichlet3d,nonsymm} simplified the core analysis 
of~\cite{stevenson07,ckns} in the sense that the optimality analysis avoids
the use of the \emph{total error} and solely works with the error estimator.

%%%%%%%%%%%%%%%%%%%%%%%%%%%%%%%%%%%%%%%%%%%%%%%%%%%%%%%%%%%%%%%%%%%%%%%%%%%%%%%%%%
\subsection{Discrete reliability implies reliability}
%%%%%%%%%%%%%%%%%%%%%%%%%%%%%%%%%%%%%%%%%%%%%%%%%%%%%%%%%%%%%%%%%%%%%%%%%%%%%%%%%%
The compatibility condition~\eqref{eq:unifconv} and the discrete reliability~\eqref{A:dlr} imply reliability.
\begin{lemma}\label{lemma:dlr2rel}
Discrete reliability implies reliability in the sense that any triangulation $\TT\in\T$ satisfies
\begin{align} \label{A:reliable}
\dist[\TT]{u}{U(\TT)} \leq \c{reliable} \eta(\TT;U(\TT)).
\end{align}
%Moreover, the following convergence condition implies~\eqref{eq:unifconv}: There exists $u\in\XX$ such that for any $\eps>0$, there exists a triangulation $\TT_\eps\in\T$ such that
%\begin{align}\label{eq:errestconv}
%\dist[\TT_\eps]{u}{U(\TT_\eps)} + \eta(\TT_\eps;U(\TT_\eps))\leq \eps.
%\end{align}
%On the other hand, stability~\eqref{A:stable}, reduction~\eqref{A:reduction}, general quasi-orthogonality~\eqref{A:qosum}, and reliability~\eqref{A:reliable}
%imply both, {\rm (C1)} and {\rm (C2)}.
 \end{lemma}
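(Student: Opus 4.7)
The plan is to combine discrete reliability~\eqref{A:dlr} with the further approximation property~\eqref{eq:unifconv}, the compatibility condition on $\dist[\cdot]{\cdot}{\cdot}$, and the quasi-triangle inequality, in order to pass from a discrete-to-discrete estimate to a continuous-to-discrete one.

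Fix a triangulation $\TT\in\T$ and a tolerance $\eps>0$. By~\eqref{eq:unifconv} applied to $\TT$, there exists a refinement $\widehat\TT\in\T$ of $\TT$ such that $\dist[\widehat\TT]{u}{U(\widehat\TT)}\le\eps$. Since $\RR(\TT,\widehat\TT)\subseteq\TT$, the discrete reliability~\eqref{A:dlr} gives
\begin{align*}
\dist[\widehat\TT]{U(\widehat\TT)}{U(\TT)}
\le \c{dlr}\,\Big(\sum_{T\in\RR(\TT,\widehat\TT)}\eta_T(\TT;U(\TT))^2\Big)^{1/2}
\le \c{dlr}\,\eta(\TT;U(\TT)).
\end{align*}

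Next, I combine the two estimates. The compatibility condition on the error measure ensures that $\dist[\widehat\TT]{u}{U(\TT)}=\dist[\TT]{u}{U(\TT)}$, since $u\in\XX$ and $U(\TT)\in\XX(\TT)$. Inserting $U(\widehat\TT)\in\XX(\widehat\TT)$ as an intermediate and invoking the quasi-triangle inequality on $\XX\cup\XX(\widehat\TT)$, one obtains
\begin{align*}
\c{triangle}^{-1}\,\dist[\TT]{u}{U(\TT)}
= \c{triangle}^{-1}\,\dist[\widehat\TT]{u}{U(\TT)}
\le \dist[\widehat\TT]{u}{U(\widehat\TT)} + \dist[\widehat\TT]{U(\widehat\TT)}{U(\TT)}
\le \eps + \c{dlr}\,\eta(\TT;U(\TT)).
\end{align*}

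Since the right-hand side of $\dist[\TT]{u}{U(\TT)}\le \c{triangle}(\eps+\c{dlr}\,\eta(\TT;U(\TT)))$ is independent of the chosen refinement except through $\eps$, I then send $\eps\to 0$, which yields the reliability estimate~\eqref{A:reliable} with constant $\c{reliable}=\c{triangle}\,\c{dlr}$. There is no real obstacle in this argument; the only thing to watch out for is to apply the quasi-triangle inequality and the discrete reliability on $\widehat\TT$ (where all three objects $u$, $U(\widehat\TT)$, $U(\TT)$ simultaneously make sense in $\dist[\widehat\TT]{\cdot}{\cdot}$), and then transfer back to $\TT$ via the compatibility condition.
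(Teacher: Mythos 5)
Your proof is correct and follows essentially the same route as the paper's: use the further approximation property to produce a refinement $\widehat\TT$ with $\dist[\widehat\TT]{u}{U(\widehat\TT)}\le\eps$, apply discrete reliability to bound $\dist[\widehat\TT]{U(\widehat\TT)}{U(\TT)}$, combine via the compatibility condition and quasi-triangle inequality, and let $\eps\to 0$ to obtain $\c{reliable}=\c{triangle}\c{dlr}$. Nothing to add.
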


 \begin{proof}
Given any $\eps>0$, the choice of $\widehat\TT$ in~\eqref{eq:unifconv} and the discrete reliability~\eqref{A:dlr} together with $\TT\setminus\widehat\TT\subseteq\RR(\TT,\widehat\TT)$ show 
\begin{align*}
\c{triangle}^{-1}\,\dist[\TT]{u}{U(\TT)} 
&=\c{triangle}^{-1}\,\dist[\widehat\TT]{u}{U(\TT)}
\\&\le \dist[\widehat\TT]{u}{U(\widehat\TT)}+\dist[\widehat\TT]{U(\widehat\TT)}{U(\TT)}\\
&\leq  \eps +\c{dlr}\Big(\sum_{T\in\RR(\TT,\widehat\TT)}\eta_T(\TT;U(\TT))^2\Big)^{1/2}\\
&\leq \eps +\c{dlr}\eta(\TT;U(\TT)).
\end{align*}
The arbitrariness of $\eps>0$ in the above estimate proves reliability of $\eta(\TT;U(\TT))$ with $\c{reliable}=\c{triangle}\c{dlr}$.
% 
% (C2) Let $\TT\in\T$ and $\eps>0$. Discrete reliability~\eqref{A:dlr} together with the choice of $\TT_\eps$ in~\eqref{eq:errestconv} imply for the overlay $\widehat\TT:=\TT\oplus\TT_\eps$ that
% \begin{align*}
% \dist[\widehat\TT]{u}{U(\widehat\TT)}
% &\lesssim\dist[\widehat\TT]{u}{U(\TT_\eps)}
% +\dist[\widehat\TT]{U(\TT_\eps)}{U(\widehat\TT)}\\
% &\le\eps +\c{dlr}\Big( \sum_{T\in\RR(\TT_\eps,\widehat\TT)}\eta_T(\TT_\eps;U(\TT_\eps))^2\Big)^{1/2}\\
% &\leq \eps+\c{dlr}\eta(\TT_\eps;U(\TT_\eps))\lesssim \eps.
% \end{align*}
% Hence, the overlay $\widehat\TT$ satisfies the assumptions of~(C1). This shows (C2) $\implies$ (C1) and therefore reliability of of $\eta(\TT;U(\TT))$ with $\c{reliable}=\c{triangle}\c{dlr}$.
% 
% To see that~\eqref{A:stable}--\eqref{A:reliable} imply {\rm (i)} and {\rm (ii)}, we refer to Theorem~\ref{thm:main}~(i), below. The convergence~\eqref{eq:thm:main:conv2} provides the sequence $\TT_\ell$, $\ell\in\N$ which satisfies
% \begin{align*}
% \dist[\TT_\ell]{u}{U(\TT_\ell)} + \eta(\TT_\ell;U(\TT_\ell))\to 0 \quad\text{as }\ell\to\infty.
% \end{align*}
% Thus,~(ii) is satisfied for the limit $u\in\XX$ which satisfies the axioms general quasi-orthogonality~\eqref{A:qosum} and reliability~\eqref{A:reliable}.
% Since (ii) implies (i), this concludes the proof.
\end{proof}

%%%%%%%%%%%%%%%%%%%%%%%%%%%%
\subsection{Quasi-monotonicity of the error estimator}
%%%%%%%%%%%%%%%%%%%%%%%%%%%%
The first two lemmas show that the error estimator is quasi-monotone for many applications in the sense that there exists a constant $\setc{mon}>0$ such that all refinements 
$\widehat \TT\in\T$ of $\TT\in\T$ satisfy
%\renewcommand{\theequation}{\rm B3}%
%\renewcommand{\theenumi}{{\rm B5}}%
%\newcounter{subterm}%
%\begin{enumerate}
%\item \label{eq:mon} 
%\hspace*{22ex}
\begin{align}\label{eq:mon}%
\eta(\widehat\TT;U(\widehat\TT))\leq\c{mon}\eta(\TT;U(\TT)).
\end{align}
%\end{enumerate}
{\renewcommand{\theequation}{\arabic{equation}}%
\numberwithin{equation}{section}%
Although reduction~\eqref{A:reduction} is assumed in the following, the assumption 
$\q{reduction}<1$ in~\eqref{A:reduction} is not needed in 
Lemma~\ref{lem:mon1} and Lemma~\ref{lem:mon2}.

\begin{lemma}\label{lem:mon1}
Stability~\eqref{A:stable}, reduction~\eqref{A:reduction}, and discrete reliability~\eqref{A:dlr} imply quasi-monotonicity~\eqref{eq:mon} of the estimator.
\end{lemma}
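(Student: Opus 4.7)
The plan is to split the squared estimator on $\widehat\TT$ according to whether the element lies in $\TT\cap\widehat\TT$ (non-refined) or in $\widehat\TT\setminus\TT$ (newly created by refinement), then control each piece by $\eta(\TT;U(\TT))^2$ plus a term of the form $\dist[\widehat\TT]{U(\widehat\TT)}{U(\TT)}^2$, and finally absorb the distance term via discrete reliability~\eqref{A:dlr}.

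More concretely, I would first apply stability~\eqref{A:stable} on $\mathcal{S} := \TT\cap\widehat\TT$ combined with an elementary Young inequality $(a+b)^2 \le (1+\delta)a^2 + (1+\delta^{-1})b^2$ (for some fixed $\delta>0$, e.g.\ $\delta=1$) to obtain
\begin{align*}
\sum_{T\in\TT\cap\widehat\TT}\eta_T(\widehat\TT;U(\widehat\TT))^2
\le (1+\delta)\sum_{T\in\TT\cap\widehat\TT}\eta_T(\TT;U(\TT))^2
+ (1+\delta^{-1})\c{stable}^2\,\dist[\widehat\TT]{U(\widehat\TT)}{U(\TT)}^2.
\end{align*}
For the refined part, reduction~\eqref{A:reduction} gives directly
\begin{align*}
\sum_{T\in\widehat\TT\setminus\TT}\eta_T(\widehat\TT;U(\widehat\TT))^2
\le \q{reduction}\sum_{T\in\TT\setminus\widehat\TT}\eta_T(\TT;U(\TT))^2
+ \c{reduction}\,\dist[\widehat\TT]{U(\widehat\TT)}{U(\TT)}^2.
\end{align*}

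Adding the two displays and using $\q{reduction}\le 1\le 1+\delta$ so that the two $\eta(\TT;U(\TT))^2$ contributions over $\TT\cap\widehat\TT$ and $\TT\setminus\widehat\TT$ recombine into $(1+\delta)\,\eta(\TT;U(\TT))^2$, I arrive at
\begin{align*}
\eta(\widehat\TT;U(\widehat\TT))^2
\le (1+\delta)\,\eta(\TT;U(\TT))^2
+ \big((1+\delta^{-1})\c{stable}^2 + \c{reduction}\big)\,\dist[\widehat\TT]{U(\widehat\TT)}{U(\TT)}^2.
\end{align*}

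Finally, discrete reliability~\eqref{A:dlr} bounds $\dist[\widehat\TT]{U(\widehat\TT)}{U(\TT)}^2$ by $\c{dlr}^2\sum_{T\in\RR(\TT,\widehat\TT)}\eta_T(\TT;U(\TT))^2 \le \c{dlr}^2\,\eta(\TT;U(\TT))^2$. Combining these estimates proves~\eqref{eq:mon} with an explicit constant $\c{mon}^2 = (1+\delta) + \big((1+\delta^{-1})\c{stable}^2 + \c{reduction}\big)\c{dlr}^2$. I do not anticipate any real obstacle here; the only point requiring a little care is the bookkeeping that ensures both the non-refined and the refined element contributions on the right-hand side can be collected into one clean multiple of $\eta(\TT;U(\TT))^2$, which is exactly what makes the uniform bound $\q{reduction}\le 1$ convenient (though the assumption $\q{reduction}<1$ from~\eqref{A:reduction} is not needed for this argument, as already noted before the lemma).
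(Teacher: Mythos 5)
Your proof is correct and follows essentially the same route as the paper: split $\eta(\widehat\TT;U(\widehat\TT))^2$ over $\TT\cap\widehat\TT$ and $\widehat\TT\setminus\TT$, control the first part by stability~\eqref{A:stable} with Young's inequality and the second by reduction~\eqref{A:reduction}, then absorb the distance term via discrete reliability~\eqref{A:dlr}. One small remark: you assert at the end that $\q{reduction}<1$ is not needed, yet your recombination step explicitly uses $\q{reduction}\le 1\le 1+\delta$; the paper avoids this by replacing $1+\delta$ with $\max\{1+\delta,\q{reduction}\}$ in the combined coefficient (it takes $\delta=1$ and writes $\max\{2,\q{reduction}\}$), which is what actually makes the argument independent of that assumption.
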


\begin{proof}
The stability~\eqref{A:stable} and the reduction estimate~\eqref{A:reduction} imply
\begin{align*}
 \eta(\widehat\TT;U(\widehat\TT))^2&\leq \q{reduction}\sum_{T\in\TT\setminus\widehat\TT} \eta_T(\TT;U(\TT))^2+2\sum_{T\in\TT\cap\widehat\TT} \eta_T(\TT;U(\TT))^2\\
&\qquad + (2\c{stable}^2 + \c{reduction})\dist[\widehat\TT]{U(\widehat\TT)}{U(\TT)}^2 =:\text{RHS}.
\end{align*}
The discrete reliability~\eqref{A:dlr}, leads to
\begin{align*}
\text{RHS} &\le\max\{2, \q{reduction}\}\eta(\TT;U(\TT))^2 +(2\c{stable}^2+\c{reduction})\c{dlr}^2\sum_{T\in\RR(\TT,\widehat\TT)}\eta_T(\TT;U(\TT))^2\\
&\leq \big(\max\{2, \q{reduction}\}+(2\c{stable}^2+\c{reduction})\c{dlr}^2\big)\eta(\TT;U(\TT))^2.
\end{align*}
This is~\eqref{eq:mon} with $\c{mon}:=\big(\max\{2, \q{reduction}\}+(2\c{stable}^2+\c{reduction})\c{dlr}^2\big)^{1/2}$.
\end{proof}

A C\'ea-type best approximation~\eqref{eq:cea} and reliability~\eqref{A:reliable} imply
monotonicity~\eqref{eq:mon}.

\begin{lemma}\label{lem:mon2}
Suppose stability~\eqref{A:stable}, reduction~\eqref{A:reduction}, and reliability~\eqref{A:reliable} and let $\setc{cea}>0$ be a constant such that
\begin{align}\label{eq:cea}
 \dist[\widehat\TT]{u}{U(\widehat\TT)}
 \leq \c{cea}\min_{V\in\XX(\widehat\TT)}\dist[\widehat\TT]{u}{V}
\end{align}
holds for any refinement $\widehat\TT$ of $\TT\in\T$. Suppose that the ansatz spaces $\XX(\TT)\subseteq \XX(\widehat\TT)$ are nested.  Then, the error estimator is quasi-monotone~\eqref{eq:mon}.
\end{lemma}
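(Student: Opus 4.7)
The strategy is the same splitting used in Lemma~\ref{lem:mon1}: decompose the estimator on $\widehat\TT$ into contributions from non-refined elements $\TT\cap\widehat\TT$ (controlled via stability~\eqref{A:stable}) and refined elements $\widehat\TT\setminus\TT$ (controlled via reduction~\eqref{A:reduction}). The essential new ingredient is that discrete reliability~\eqref{A:dlr} is no longer available to bound $\dist[\widehat\TT]{U(\widehat\TT)}{U(\TT)}$ directly by $\eta(\TT;U(\TT))$. Instead, I will route this bound through the continuous limit $u$ using the C\'ea estimate~\eqref{eq:cea}, the compatibility of $\dist[\cdot]{\cdot}{\cdot}$, and reliability~\eqref{A:reliable}.

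\textbf{Step 1: Estimator splitting.} By the triangle-type argument already carried out in Lemma~\ref{lem:mon1} (which uses only~\eqref{A:stable}, a Young-type inequality, and~\eqref{A:reduction}, and does not require $\q{reduction}<1$), we obtain
\begin{align*}
\eta(\widehat\TT;U(\widehat\TT))^2
\le \max\{2,\q{reduction}\}\,\eta(\TT;U(\TT))^2
 + \big(2\c{stable}^2+\c{reduction}\big)\,\dist[\widehat\TT]{U(\widehat\TT)}{U(\TT)}^2.
\end{align*}

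\textbf{Step 2: Bounding the discrete distance.} Because $\XX(\TT)\subseteq\XX(\widehat\TT)$ by the nestedness hypothesis, the function $U(\TT)$ is admissible in the minimization on the right-hand side of~\eqref{eq:cea}. Combined with the compatibility condition $\dist[\widehat\TT]{u}{U(\TT)}=\dist[\TT]{u}{U(\TT)}$, this yields
\begin{align*}
\dist[\widehat\TT]{u}{U(\widehat\TT)}
 \le \c{cea}\,\dist[\widehat\TT]{u}{U(\TT)}
 = \c{cea}\,\dist[\TT]{u}{U(\TT)}.
\end{align*}
Applying the quasi-triangle inequality and quasi-symmetry of $\dist[\widehat\TT]{\cdot}{\cdot}$, I then estimate
\begin{align*}
\dist[\widehat\TT]{U(\widehat\TT)}{U(\TT)}
&\le \c{triangle}\big(\dist[\widehat\TT]{U(\widehat\TT)}{u}+\dist[\widehat\TT]{u}{U(\TT)}\big)\\
&\le \c{triangle}\big(\c{triangle}\,\dist[\widehat\TT]{u}{U(\widehat\TT)}+\dist[\widehat\TT]{u}{U(\TT)}\big)
 \le \c{triangle}(\c{triangle}\c{cea}+1)\,\dist[\TT]{u}{U(\TT)}.
\end{align*}
Finally, reliability~\eqref{A:reliable} yields $\dist[\TT]{u}{U(\TT)}\le\c{reliable}\,\eta(\TT;U(\TT))$, so that
\begin{align*}
\dist[\widehat\TT]{U(\widehat\TT)}{U(\TT)}
 \le \c{triangle}(\c{triangle}\c{cea}+1)\,\c{reliable}\,\eta(\TT;U(\TT)).
\end{align*}

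\textbf{Step 3: Conclusion.} Plugging the bound from Step~2 into the inequality of Step~1 produces
\begin{align*}
\eta(\widehat\TT;U(\widehat\TT))^2
\le \Big(\max\{2,\q{reduction}\}+\big(2\c{stable}^2+\c{reduction}\big)\c{triangle}^2(\c{triangle}\c{cea}+1)^2\c{reliable}^2\Big)\,\eta(\TT;U(\TT))^2,
\end{align*}
which is precisely~\eqref{eq:mon} with the constant $\c{mon}$ equal to the square root of the bracketed expression.

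\textbf{Main obstacle.} The only non-routine point is replacing the direct estimate of $\dist[\widehat\TT]{U(\widehat\TT)}{U(\TT)}$ by a detour through $u$. This detour is the reason one needs both the nestedness $\XX(\TT)\subseteq\XX(\widehat\TT)$ (so that $U(\TT)$ is a valid competitor in the C\'ea estimate on $\widehat\TT$) and the compatibility of the quasi-metrics (so that $\dist[\widehat\TT]{u}{U(\TT)}$ coincides with the coarse-level quantity controlled by reliability). Once these two hypotheses are used, the remaining algebra is standard.
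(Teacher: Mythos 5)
Your proposal is correct and takes essentially the same route as the paper: split the estimator as in Lemma~\ref{lem:mon1}, bound $\dist[\widehat\TT]{U(\widehat\TT)}{U(\TT)}$ via the detour through $u$ using the quasi-triangle inequality, quasi-symmetry, the C\'ea estimate~\eqref{eq:cea} with $U(\TT)\in\XX(\widehat\TT)$ as competitor (this is where nestedness enters), the compatibility $\dist[\widehat\TT]{u}{U(\TT)}=\dist[\TT]{u}{U(\TT)}$, and finally reliability~\eqref{A:reliable}. The only cosmetic difference is that you estimate $\dist[\widehat\TT]{U(\widehat\TT)}{U(\TT)}$ before squaring, whereas the paper squares first and applies $(a+b)^2\le 2(a^2+b^2)$; the resulting constants differ by unimportant numerical factors.
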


\begin{proof}
 As in the proof of Lemma~\ref{lem:mon1}, it follows
\begin{align*}
 \eta(\widehat\TT;U(\TT))^2 \leq \max\{2, \q{reduction}\}\eta(\TT;U(\TT))^2 +(2\c{stable}^2 + \c{reduction})\dist[\widehat\TT]{U(\widehat\TT)}{U(\TT)}^2.
\end{align*}
Recall $\dist[\widehat\TT]{u}{U(\TT)}=\dist[\TT]{u}{U(\TT)}$ and set $\widetilde C:=(2\c{stable}^2 + \c{reduction})\c{triangle}^2$.
Reliability~\eqref{A:reliable} and the quasi-triangle inequality yield
\begin{align*}
  \eta(\widehat\TT;U(\widehat\TT))^2 &\leq \max\{2, \q{reduction}\}\eta(\TT;U(\TT))^2 +\widetilde C\big(2\c{triangle}^2\dist[\widehat\TT]{u}{U(\widehat\TT)}^2+2\dist[\TT]{u}{U(\TT)}^2\big)\\
&\leq \max\{2, \q{reduction}\}\eta(\TT;U(\TT))^2 +\widetilde C\big(2\c{triangle}^2\c{cea}^2\dist[\TT]{u}{U(\TT)}^2+2\dist[\TT]{u}{U(\TT)}^2\big)\\
&\leq\big(\max\{2, \q{reduction}\} + 2\widetilde C(1+\c{triangle}^2\c{cea}^2)\c{reliable}^2\big)\eta(\TT;U(\TT))^2.
\end{align*}
This is~\eqref{eq:mon} with $\c{mon}:=\big(\max\{2, \q{reduction}\} + 2\widetilde C(1+\c{triangle}^2\c{cea}^2)\c{reliable}^2\big)^{1/2}.$
% = \big(2 + 2(2\c{stable}^2 + \c{reduction})\c{triangle}^2(1+\c{cea}^2)\c{reliable}^2\big)^{1/2}$.
\end{proof}

%%%%%%%%%%%%%%%%%%%%%%%%%%%%%%%%%%%%%%%%%%%%%%%%%%%%%%%%%%%%%%%%%%%%%%%%%%%%%%%%%%%%%%%%%%%%%%%%%%%%%%%%%
\subsection{Quasi-orthogonality implies general quasi-orthogonality}\label{section:quasiorth}
%%%%%%%%%%%%%%%%%%%%%%%%%%%%%%%%%%%%%%%%%%%%%%%%%%%%%%%%%%%%%%%%%%%%%%%%%%%%%%%%%%%%%%%%%%%%%%%%%%%%%%%%%
The general quasi-orthogonality axiom~\eqref{A:qosum} generalizes the {\renewcommand{\theenumi}{\rm B3}\refstepcounter{enumi}\label{A:qo}}%
quasi-orthogonality~\eqref{A:qo}~:=~\eqref{A:qoa}\&\eqref{A:qob}.
\begin{enumerate}
\renewcommand{\theenumi}{{\rm B3}\alph{enumi}}
\renewcommand{\labelenumi}{({\rm B3}\alph{enumi})}
\item \label{A:qoa}There exists a function $\mu:\,\T \to \R$ such that for all $\eps>0$ there exists some constant $\setc{qo}(\eps)>0$ such that for all refinements $\widehat\TT\in\T$ of $\TT\in\T$ it holds
\begin{align*}
\dist[\widehat\TT]{U(\widehat\TT)}{U(\TT)}^2 
&\leq (1+\eps)\dist[\TT]{u}{U(\TT)}^2 - (1-\eps)\dist[\widehat\TT]{u}{U(\widehat\TT)}^2 \\
&\quad\quad+\c{qo}(\eps)\big(\mu(\TT)^2-\mu(\widehat\TT)^2\big).
\end{align*}
\item \label{A:qob}The function $\mu(\cdot)$ from~\eqref{A:qoa}
% satisfies $\mu_0^2\geq \mu_1^2 \geq \mbox{$\mu_2^2 \geq \ldots$}$ and
is dominated by $\eta(\cdot;U(\cdot))$ in the sense that\definec{muell}
\begin{align*}
 \sup_{\TT\in\T} \frac{\mu(\TT)^2}{\eta(\TT;U(\TT))^2} =: \c{muell}<\infty.
\end{align*}
\end{enumerate}

\begin{lemma}\label{lem:qo}
Reliability~\eqref{A:reliable} and quasi-orthogonality~\eqref{A:qo} imply the general quasi-orthogonality~\eqref{A:qosum} for all $\eps=\epsqo/2>0$.
\end{lemma}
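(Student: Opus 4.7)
The plan is to apply the quasi-orthogonality axiom~\eqref{A:qoa} to each consecutive pair $(\TT_k,\TT_{k+1})$ with the parameter $\eps:=\epsqo/2$, then sum over $k=\ell,\ldots,N$ and exploit two telescoping effects: one on the error terms $\dist[\TT_k]{u}{U(\TT_k)}^2$ and one on the auxiliary quantities $\mu(\TT_k)^2$.

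Concretely, I would first write down~\eqref{A:qoa} for $\TT=\TT_k$ and $\widehat\TT=\TT_{k+1}$, namely
\begin{align*}
\dist[\TT_{k+1}]{U(\TT_{k+1})}{U(\TT_k)}^2 &\leq (1+\eps)\dist[\TT_k]{u}{U(\TT_k)}^2 - (1-\eps)\dist[\TT_{k+1}]{u}{U(\TT_{k+1})}^2\\
&\quad + \c{qo}(\eps)\bigl(\mu(\TT_k)^2-\mu(\TT_{k+1})^2\bigr).
\end{align*}
Subtracting $\epsqo\,\dist[\TT_k]{u}{U(\TT_k)}^2$ from both sides and inserting $\eps=\epsqo/2$ gives matching coefficients $1-\epsqo/2$ on both error contributions, so that summation from $k=\ell$ to $k=N$ collapses the error terms into the telescoped boundary values $(1-\epsqo/2)\bigl(\dist[\TT_\ell]{u}{U(\TT_\ell)}^2 - \dist[\TT_{N+1}]{u}{U(\TT_{N+1})}^2\bigr)$, while the $\mu$-contribution telescopes to $\c{qo}(\epsqo/2)\bigl(\mu(\TT_\ell)^2-\mu(\TT_{N+1})^2\bigr)$.

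Dropping the nonnegative terms $(1-\epsqo/2)\dist[\TT_{N+1}]{u}{U(\TT_{N+1})}^2$ and $\c{qo}(\epsqo/2)\mu(\TT_{N+1})^2$ (which is legitimate whenever $\epsqo\leq 2$, a condition forced by the admissible range $\epsqo<\eps_{\rm qo}^\star(\theta)$), I am left with an upper bound of the form $(1-\epsqo/2)\dist[\TT_\ell]{u}{U(\TT_\ell)}^2+\c{qo}(\epsqo/2)\mu(\TT_\ell)^2$. I then apply reliability~\eqref{A:reliable} to bound $\dist[\TT_\ell]{u}{U(\TT_\ell)}^2\leq \c{reliable}^2\eta(\TT_\ell;U(\TT_\ell))^2$ and the domination property~\eqref{A:qob} to obtain $\mu(\TT_\ell)^2\leq \c{muell}\,\eta(\TT_\ell;U(\TT_\ell))^2$, yielding~\eqref{A:qosum} with the explicit constant $\c{qosum}(\epsqo):=(1-\epsqo/2)\c{reliable}^2+\c{qo}(\epsqo/2)\c{muell}$.

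There is no real obstacle beyond the bookkeeping: the only point requiring care is the nonnegativity of the coefficient $1-\epsqo/2$ needed to justify discarding the endpoint error term, which is automatic in the parameter regime for which the theorem is formulated. The essence of the argument is simply that~\eqref{A:qoa} already has the structure of a differential inequality whose summation is telescopic, and reliability together with~\eqref{A:qob} then converts the single remaining right-hand side into $\eta(\TT_\ell;U(\TT_\ell))^2$.
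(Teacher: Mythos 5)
Your proposal is correct and follows essentially the same route as the paper: choose $\eps=\epsqo/2$ in~\eqref{A:qoa} so that, after subtracting $\epsqo\,\dist[\TT_k]{u}{U(\TT_k)}^2$, both endpoint coefficients become $1-\epsqo/2$, telescope the sum, drop the nonnegative tail terms, and finish with reliability~\eqref{A:reliable} and domination~\eqref{A:qob}. Your added remark on nonnegativity of $1-\epsqo/2$ and the explicit constant $\c{qosum}(\epsqo)=(1-\epsqo/2)\c{reliable}^2+\c{qo}(\epsqo/2)\c{muell}$ are both consistent with the paper's proof, which merely abbreviates the last step with $\lesssim$.
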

\begin{proof}
Quasi-orthogonality~\eqref{A:qo} with $\eps=\epsqo/2$ and reliability~\eqref{A:reliable} show for any $N\in\N$ that
\begin{align*}
\sum_{ k=\ell}^N &\big(\dist[\TT_{k+1}]{U(\TT_{k+1})}{U(\TT_k)}^2 
-\epsqo \dist[\TT_k]{u}{U(\TT_k)}^2\big)\\
&\leq \sum_{k=\ell}^N\Big((1-\epsqo/2)\big(\dist[\TT_k]{u}{U(\TT_k)}^2
-\dist[\TT_{k+1}]{u}{U(\TT_{k+1})}^2 \big) + \c{qo}(\eps)\big(\mu(\TT_k)^2-\mu(\TT_{k+1})^2\big)\Big)\\
&\leq(1-\epsqo/2)\dist[\TT_\ell]{u}{U(\TT_\ell)}^2 
+\c{qo}(\epsqo/2)\mu(\TT_\ell)^2\lesssim \eta(\TT_\ell;U(\TT_\ell))^2.
\end{align*}
This follows from the telescoping series and~\eqref{A:qob}.  This concludes the proof of~\eqref{A:qosum}.
\end{proof}

\begin{remark}
In contrast to the common quasi-ortho\-go\-nality~\eqref{A:qo},
the general quasi-orthogonality~\eqref{A:qosum} holds for equivalent norms although with different $\epsqo$.
Therefore, general quasi-ortho\-gonality appears solely as an assumption
on the approximation property of the sequence $(U(\TT_\ell))_{\ell\in\N_0}$.
\end{remark}

%%%%%%%%%%%%%%%%%%%%%%%%%
\subsection{Conforming methods for elliptic problems}%
\label{section:conforming}%
%%%%%%%%%%%%%%%%%%%%%%%%%
This short section studies the particular case of conforming methods, which allows some interesting simplifications. Let $b(\cdot,\cdot)$ be a continuous and elliptic bilinear
form on the real Hilbert space $\XX$ with dual $\XX^*$. Given any $f\in\XX^*$, the Lax-Milgram lemma guarantees the existence and uniqueness of
the solution $u\in\XX$ to
\begin{align}
 b(u,v) = f(v)\quad\text{for all }v\in\XX.
\end{align}
Suppose $\XX(\TT)\subseteq\XX(\widehat\TT)\subseteq\XX$ for all triangulations $\TT\in\T$ 
and all refinements $\widehat\TT\in\T$ of $\TT$ and suppose that 
$\norm\cdot\XX$ 
is the Hilbert space norm on $\XX$ with $\dist[\TT]vw = \dist{v}{w} 
=\norm{v-w}\XX$ for all $\TT\in\T$ and $v,w\in\XX$.
Model problems follow in Section~\ref{section:examples1} and Section~\ref{section:examples2} below.

For \revision{any} closed subspace $\XX_\infty$ of $\XX$, the Lax-Milgram lemma \revision{implies the unique existence of a} solution $U_\infty\in\XX_\infty$  to
\begin{align}
 \bform{U_\infty}{V_\infty} = f(V_\infty)
 \quad\text{for all }U_\infty\in\XX_\infty
\end{align}
which satisfies the C\'ea lemma~\eqref{eq:cea}. 
In particular, this applies to the discrete spaces $\XX(\TT)$, so that the 
discrete Galerkin solutions $U(\TT)\in\XX(\TT)$ are unique and satisfy monotonicity of the error (defined in~\eqref{eq:errorquasimon} below)
\begin{align*}
 \norm{u-U(\widehat\TT)}{\XX}\leq \c{cea}\norm{u-U(\TT)}{\XX}\quad\text{for all } \TT\in\T\text{ and all refinements }\widehat\TT\in\T.
\end{align*}

{\Large\gray$\bullet$}
It has already been observed in the seminal work~\cite{bv} that in this conforming setting with nested spaces, there holds a~priori convergence
\begin{align}\label{eq:aconv}
 \lim_{\ell\to\infty}\norm{U_\infty-U(\TT_\ell)}{\XX} = 0
\end{align}
towards a certain (unknown) limit $U_\infty\in\XX$. 
Therefore
stability~\eqref{A:stable} and reduction~\eqref{A:reduction} combined with reliability~\eqref{A:reliable}
already imply convergence in Section~\ref{section:reduction}.

% % {\Large\gray$\bullet$}
% % Suppose (C1). Compare Remark~\ref{remark:uniformrefinement} for the convergence~\eqref{eq:uniformrefinement} of uniform refinement which implies (C1). 
% % %Otherwise, the adaptive algorithm could not lead to convergence at all. 
% % Then, Lemma~\ref{lemma:dlr2rel} states that discrete reliability~\eqref{A:dlr}
% % implies reliability~\eqref{A:reliable}.

{\Large\gray$\bullet$}
Suppose that $b(\cdot,\cdot)$ is a scalar product on $\XX$ with induced norm
$\norm\cdot\XX$. Then, the Galerkin orthogonality
\begin{align}\label{eq:galerkin_orthogonality}
 \bform{u-U(\TT_{\ell+1})}{V} = 0 
 \quad\text{for all }V\in\XX(\TT_{\ell+1})
\end{align}
implies the Pythagoras theorem
\begin{align}\label{dp:pythagoras} 
 \norm{u-U(\TT_{\ell+1})}\XX^2 = \norm{u-U(\TT_{\ell})}\XX^2
  - \norm{U(\TT_{\ell+1})-U(\TT_{\ell})}\XX^2.
\end{align}
In particular, the quasi-orthogonality~\eqref{A:qo} is satisfied with $\epsqo=0=\c{qo}$ and $\mu(\cdot)=0$,
and Lemma~\ref{lem:qo} implies  the general quasi-orthogonality~\eqref{A:qosum}. In this frame, it thus only remains to verify~\eqref{A:stable}, \eqref{A:reduction}, and \eqref{A:dlr}.

%\begin{consequence}
%If $\bform\cdot\cdot$ is symmetric and $\norm\cdot\XX=\bform\cdot\cdot^{1/2}$, the general quasi-orthogonality \eqref{A:qosum}
%is satisfied. In this frame, it thus only remains to verify~\eqref{A:stable}, \eqref{A:reduction}, \eqref{A:dlr}, and \eqref{A:efficient}.\qed
%\end{consequence}

\begin{remark}
The a~priori convergence~\eqref{eq:aconv} of conforming methods holds in a wider frame of (not necessarily linear) Petrov-Galerkin schemes as exploited in~\cite{msv,siebert,estconv,afp,page,pp} 
to prove convergence of adaptive FEM and BEM, and the adaptive FEM-BEM coupling.
\end{remark}

%%%%%%%%%%%%%%%%%%%%%%%%%%%%%%%%%%%%%%%%%%%%%%%%%%%%%%%%%%%%%%%%%%%%%%%%%%%%%%%%%%%%%%%%%%%%%%%%%%%%%%%%%%%%%%%%%%%%%%%
%%%%%%%%%%%%%%%%%%%%%%%%%%%%%%%%%%%%%%%%%%%%%%%%%%%%%%%%%%%%%%%%%%%%%%%%%%%%%%%%%%%%%%%%%%%%%%%%%%%%%%%%%%%%%%%%%%%%%%%
%%%%%%%%%%%%%%%%%%%%%%%%%%%%%%%%%%%%%%%%%%%%%%%%%%%%%%%%%%%%%%%%%%%%%%%%%%%%%%%%%%%%%%%%%%%%%%%%%%%%%%%%%%%%%%%%%%%%%%%
%%%%%%%%%%%%%%%%%%%%%%%%%%%%%%%%%%%%%%%%%%%%%%%%%%%%%%%%%%%%%%%%%%%%%%%%%%%%%%%%%%%%%%%%%%%%%%%%%%%%%%%%%%%%%%%%%%%%%%%
\section{Optimal Convergence Of The Adaptive Algorithm}\label{section:optimality}
\noindent%
The best possible algebraic convergence rate $0<s<\infty$  obtained by any local mesh refinement is characterized in terms of
\begin{align}\label{def:natapproxclass}
\norm{(u,U(\cdot))}{\A_s}:= \sup_{N\in\N_0}\min_{\TT\in\T(N)}(N+1)^s\dist[\TT]{u}{U(\TT)}<\infty.
\end{align}
The statement $\norm{(u,U(\cdot))}{\A_s}<\infty$ means $\dist[\TT]{u}{U(\TT)}=\OO(N^{-s})$ for the optimal triangulations $\TT\in\T(N)$, independently of the error estimator. 
Since the adaptive algorithm is steered by the error estimator $\eta(\cdot)$, it appears natural to consider the best algebraic convergence rate $\OO(N^{-s})$ in terms of $\eta(\cdot)$, characterized by
\begin{align}\label{def:approxclass}
\norm{(\eta(\cdot),U(\cdot))}{\B_s}:= \sup_{N\in\N_0}\min_{\TT\in\T(N)}((N+1)^s\eta(\TT;U(\TT))<\infty.
\end{align}
This implies the convergence rate $\eta(\TT;U(\TT)) = \OO(N^{-s})$ for the optimal triangulations $\TT\in\T(N)$.

The relation of $\norm{\cdot}{\A_s}$ and $\norm{\cdot}{\B_s}$ and the nonlinear approximation classes in~\cite{stevenson07,ckns,ks,cn} will be discussed in Section~\ref{section:approxclass} below.
%%%%%%%%%%%%%%%%%%%%%%%%%%%%%%%
\subsection{Optimal convergence rates for the error estimator}
%%%%%%%%%%%%%%%%%%%%%%%%%%%%%%%
%Recall the sequence $(\TT_\ell)_{\ell\in\N}$ which is generated by Algorithm~\ref{algorithm}.
The main results of this work state convergence and optimality of the adaptive algorithm in the sense that
the error estimator converges with optimal convergence rate. This is a generalization of existing results as discussed in Section~\ref{section:approxclass}. Moreover, if the error estimator $\eta(\cdot)$ satisfies an efficiency estimate, also optimal convergence of the error \revision{will be} guaranteed \revision{by} Theorem~\ref{thm:mainerr}.
On the other hand, Theorem~\ref{thm:main} and Theorem~\ref{thm:mainerr} \revision{show} that the adaptive algorithm characterizes the approximability of the limit $u\in\XX$ in terms of the error and the error estimator.
%Additionally, the two main results state that the adaptive algorithm characterizes the largest possible $s>0$ for which $\norm{(u,U(\cdot))}{\A_s}<\infty$ resp. $\norm{(\eta(\cdot),U(\cdot))}{\B_s}<\infty$ holds.

\begin{theorem}\label{thm:main}
Suppose stability~\eqref{A:stable}, reduction~\eqref{A:reduction}, and general quasi-orthogonality~\eqref{A:qosum}. Then, Algorithm~\ref{algorithm} guarantees  {\rm (i)}--{\rm (ii)}.
\begin{itemize}
\item[(i)] 
Discrete reliability~\eqref{A:dlr} resp. reliability~\eqref{A:reliable} imply for all $0<\theta\leq 1$ the $R$-linear convergence of the estimator in the sense that there exists $0<\setr{conv}<1$ and $\setc{conv}>0$ such that
\begin{align}\label{eq:thm:main:conv1}
\eta(\TT_{\ell+j};U(\TT_{\ell+j}))^2\leq \c{conv}\r{conv}^j\,\eta(\TT_\ell;U(\TT_\ell))^2\quad\text{for all }j,\ell\in\N_0.
\end{align}
In particular,
\begin{align}\label{eq:thm:main:conv2}
\c{reliable}^{-1} \dist[\TT_\ell]{u}{U(\TT_\ell)}
 \le \eta(\TT_\ell; U(\TT_\ell))
 \leq \c{conv}^{1/2}\r{conv}^{\ell/2}\,\eta(\TT_0;U(\TT_0))\quad\text{for all }\ell\in\N_0.
\end{align}
\item[(ii)] Discrete reliability~\eqref{A:dlr} and $0<\theta< \theta_\star:=(1+\c{stable}^2\c{dlr}^2)^{-1}$ imply quasi-optimal convergence of the estimator
in the sense of
\begin{align}\label{eq:thm:main:opt1}
c_{\rm opt}\norm{(\eta(\cdot),U(\cdot))}{\B_s} \leq\sup_{\ell\in\N_0}\frac{\eta(\TT_\ell;U(\TT_\ell))}{(|\TT_\ell|-|\TT_0|+1)^{-s}}\leq\c{optimal}\norm{(\eta(\cdot),U(\cdot))}{\B_s}
\end{align}
for all $s>0$.
\end{itemize}
%The constants $\c{conv} = \c{rel}\c{Rconv}^{1/2}$ and $\r{conv} = \r{Rconv}^{1/2}$
%depend on the constants in Proposition~\ref{prop:Rconv} below.
The constants $\c{conv}, \r{conv} >0$ depend only on $\c{stable}, \q{reduction}, \c{reduction}, \c{qosum}(\epsqo)>0$ as well as on $\theta$.
Furthermore, the constant $\setc{optimal}>0$ depends only on %$\c{invsum},\c{lemopthelp2}>0$, and 
$C_{\rm min}, \c{refined},\c{mesh},\c{stable},\c{dlr}$, $\c{reduction}, \c{qosum}(\epsqo), \q{reduction}>0$ as well as on~$\theta$ and $s$, while $c_{\rm opt}>0$ depends only on $C_{\rm son}$. 
\end{theorem}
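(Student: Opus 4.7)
The plan is to follow the road map sketched in Figure~\ref{fig:map}, adapting the Stevenson/CKNS strategy to the axiomatic setting. The first step is to combine~\eqref{A:stable} and~\eqref{A:reduction} with the Dörfler criterion~\eqref{eq:doerfler} to obtain an \emph{estimator reduction}
\begin{equation*}
 \eta(\TT_{\ell+1};U(\TT_{\ell+1}))^2 \le q\,\eta(\TT_\ell;U(\TT_\ell))^2 + C\,\dist[\TT_{\ell+1}]{U(\TT_{\ell+1})}{U(\TT_\ell)}^2
\end{equation*}
with some $0<q<1$ depending on $\theta$, $\q{reduction}$ and $\c{stable}$. I would split the sum over $\TT_{\ell+1}$ into $\TT_\ell\cap\TT_{\ell+1}$, on which~\eqref{A:stable} combined with Young's inequality applies, and $\TT_{\ell+1}\setminus\TT_\ell$, on which~\eqref{A:reduction} applies. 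The residual sum over $\TT_\ell\setminus\TT_{\ell+1}$ is then absorbed via the Dörfler criterion and $\MM_\ell\subseteq\TT_\ell\setminus\TT_{\ell+1}$, producing a genuinely contractive factor $q=q(\theta,\q{reduction})<1$ once $\delta$ in Young's inequality is chosen small.

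For part~(i), I would telescope this estimator reduction against~\eqref{A:qosum}. Summing the reduction from $k=\ell$ to $k=\ell+N$ yields a lower bound on the sum $\sum_k\dist[\TT_{k+1}]{U(\TT_{k+1})}{U(\TT_k)}^2$, which~\eqref{A:qosum} bounds from above by $\c{qosum}(\epsqo)\eta(\TT_\ell;U(\TT_\ell))^2+\epsqo\sum_k\dist[\TT_k]{u}{U(\TT_k)}^2$; reliability~\eqref{A:reliable} (available directly or via Lemma~\ref{lemma:dlr2rel}) rebounds the latter by $\epsqo\c{reliable}^2\sum_k\eta(\TT_k;U(\TT_k))^2$. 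The explicit threshold $\eps_{\rm qo}^\star(\theta)$ is exactly the range in which this absorption still leaves a strictly positive net contraction factor. Summability of $\eta(\TT_\ell;U(\TT_\ell))^2$ combined with the quasi-monotonicity inherent in the estimator reduction then upgrades to $R$-linear convergence~\eqref{eq:thm:main:conv1}, and~\eqref{eq:thm:main:conv2} is immediate from reliability.

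The upper bound in part~(ii) rests on the \emph{optimality of Dörfler marking}: any refinement $\widehat\TT\in\T$ of $\TT_\ell$ with $\eta(\widehat\TT;U(\widehat\TT))^2\le\kappa\,\eta(\TT_\ell;U(\TT_\ell))^2$, for some $\kappa=\kappa(\theta)$ dictated by the gap $\theta_\star-\theta>0$, forces $\RR(\TT_\ell,\widehat\TT)$ to satisfy the Dörfler criterion for $\TT_\ell$. The proof applies~\eqref{A:stable} to compare $\eta$ on $\TT_\ell\cap\widehat\TT$ and~\eqref{A:dlr} to dominate $\dist[\widehat\TT]{U(\widehat\TT)}{U(\TT_\ell)}$ by the estimator restricted to $\RR(\TT_\ell,\widehat\TT)$; the assumption $\theta(1+\c{stable}^2\c{dlr}^2)<1$ is precisely what closes the loop. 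Given this, for each $\ell$ I would select an optimal $\TT_{\rm opt}\in\T(N)$ with $N\asymp(\norm{(\eta(\cdot),U(\cdot))}{\B_s}/\eta(\TT_\ell;U(\TT_\ell)))^{1/s}$ realising $\eta(\TT_{\rm opt};U(\TT_{\rm opt}))\le\kappa^{1/2}\eta(\TT_\ell;U(\TT_\ell))$, set $\widehat\TT:=\TT_\ell\oplus\TT_{\rm opt}$, and rely on Lemma~\ref{lem:mon1} (quasi-monotonicity) and~\eqref{refinement:overlay} to keep $\eta(\widehat\TT;U(\widehat\TT))$ small while $|\widehat\TT|-|\TT_\ell|\lesssim N$. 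The key lemma plus near-minimality of $|\MM_\ell|$ from Remark~\ref{rem:adaptivealgorithm} give $|\MM_\ell|\lesssim\eta(\TT_\ell;U(\TT_\ell))^{-1/s}\norm{(\eta(\cdot),U(\cdot))}{\B_s}^{1/s}$; summing via the closure estimate~\eqref{refinement:closure} and collapsing the right-hand side through the geometric series granted by~(i) yields the upper bound in~\eqref{eq:thm:main:opt1}. The matching lower bound is combinatorial, taking $\TT=\TT_\ell\in\T(|\TT_\ell|-|\TT_0|)$ in the definition of $\norm{\cdot}{\B_s}$ and using~\eqref{refinement:sons2} to absorb the shift by $+1$.

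The main obstacle is the optimality-of-Dörfler-marking argument: tracking how $\theta<\theta_\star$ propagates through~\eqref{A:stable} and~\eqref{A:dlr} into an explicit $\kappa(\theta)$ that reconstructs the marking criterion on $\RR(\TT_\ell,\widehat\TT)$. A secondary subtlety is the passage from plain summability to $R$-linear convergence in~(i) without the Hilbert-space identity~\eqref{dpr:a3}, where the slack $\eps_{\rm qo}^\star(\theta)-\epsqo>0$ plays an indispensable role.
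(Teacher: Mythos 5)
Your proposal is correct and follows essentially the same route as the paper's own proof: (A1)+(A2)+D\"orfler give the estimator reduction (Lemma~\ref{lem:estconv}), which telescoped against~\eqref{A:qosum} and reliability yields uniform summability and hence $R$-linear convergence via Lemma~\ref{lem:Rconv} and Proposition~\ref{prop:Rconv}; and the upper bound in (ii) comes from the optimality of D\"orfler marking (Proposition~\ref{prop:doerfler}), the overlay construction (Lemma~\ref{lem:optimality}), the closure estimate~\eqref{refinement:closure}, and the inverse summability~\eqref{eq:endest} granted by part~(i), with the combinatorial lower bound from~\eqref{refinement:sons2} exactly as in Proposition~\ref{prop:optimality}.
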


\begin{remark}
Unlike prior work~\cite{ckns,cn,ks,stevenson07}, the upper bound $\theta_\star$ of the range of marking parameters $0<\theta<\theta_\star$ does \emph{not} depend on the efficiency constant
$\c{efficient}$  which is formally introduced in the following Section~\ref{section:approxclass}.
\end{remark}

\begin{remark}
The upper bound in~\eqref{eq:thm:main:opt1} states that given that
$\norm{(\eta(\cdot),U(\cdot))}{\B_s}<\infty$, the estimator sequence $\eta(\TT_\ell;U(\TT_\ell))$ of Algorithm~\ref{algorithm} will decay with order $s$, i.e., if a
decay with order $s$ is possible if the optimal meshes are chosen, this
decay will in fact be realized by the adaptive algorithm.
The lower bound in~\eqref{eq:thm:main:opt1} states that the asymptotic 
convergence rate of the estimator sequence, in fact, characterizes to which
approximation class $\B_s$ the problem and its discretization belong.
\end{remark}

%%%%%%%%%%%%%%%%%%%%%%%%%%%%%%%%%%%%%%%%%%%%%%%%%%%%%%%%%%%%%%%%%%%%%%%%%%%
\subsection{Optimal convergence rates for the error}\label{section:approxclass}
%%%%%%%%%%%%%%%%%%%%%%%%%%%%%%%%%%%%%%%%%%%%%%%%%%%%%%%%%%%%%%%%%%%%%%%%%%%
The following proposition relates the definition of optimality in~\eqref{def:natapproxclass} and~\eqref{def:approxclass} with the nonlinear approximation classes in~\cite{stevenson07,ckns,ks,cn}. To that end, efficiency comes into play: There exists $\c{efficient}>0$ such that
for all $\TT\in\T$, there exists a mapping $\eff{\TT}{\cdot}:\,\XX(\TT)\to[0,\infty]$ such that
any triangulation $\TT\in\T$ satisfies
%{\renewcommand{\theequation}{{\textbf{\arabic{section}.\arabic{equation}}}}
\begin{align}\label{A:efficient}
  \c{efficient}^{-2}\,\eta(\TT;U(\TT))^2 \le \dist[\TT]{u}{U(\TT)}^2
  + \eff{\TT}{U(\TT)}^2,
\end{align}}
In particular, this implies that the data oscillations do \emph{not} have to be treated 
explicitly in the analysis.
The quality of the oscillation term $\osc(\cdot)$ is measured with
\begin{align}\label{eq:oscapprox}
\norm{\osc(\cdot)}{\O_s}:= \sup_{N\in\N_0}\min_{\TT \in \T(N)}(N+1)^{s}\eff{\TT}{U(\TT)}<\infty.
\end{align}

The following theorem shows, that the result of Theorem~\ref{thm:main} is a true generalization of the existing results in literature since 
the best possible rate for the error, measured in $\norm{\cdot}{\B_s}$, is equivalent to the best possible rate for the total error from e.g.~\cite{stevenson07,ckns,bdd}.
\begin{theorem}
The C\'ea lemma~\eqref{eq:cea} implies
\begin{align}\label{eq:approxclassbest}
 \c{cea}^{-1}\norm{(u,U(\cdot))}{\A_s}\leq \sup_{N\in\N_0}\min_{\TT\in\T(N)}\min_{V\in\XX(\TT)} (N+1)^s\dist[\TT]{u}{V}\leq \norm{(u,U(\cdot))}{\A_s}
\end{align}
for all $s>0$.
Additionally, suppose efficiency~\eqref{A:efficient} and the existence of $C_\osc>0$ such that all $\TT\in\T$ satisfy
\begin{align}
 C_\osc^{-1}\eff{\TT}{V}&\leq \eff{\TT}{W}+\dist[\TT]{V}{W}\quad\text{for all }V,W\in\XX(\TT),\label{eq:oscstab}\\
 \eff{\TT}{U(\TT)}&\leq C_\osc\eta(\TT;U(\TT)). \label{eq:oscdom}
\end{align}
Then,
\begin{align}\label{eq:approxclassckns}
\begin{split}
C_{\rm apx}^{-1}\norm{(u,U(\cdot))}{\B_s}&\leq
 \sup_{N\in\N_0}\min_{\TT\in\T(N)} \min_{V\in\XX(\TT)}(N+1)^s\big(\dist[\TT]{u}{V} + \eff{\TT}{V}\big)\\
 &\leq (\c{reliable}+C_\osc)\norm{(u,U(\cdot)}{\B_s}
 \end{split}
\end{align}
holds for all $s>0$.
The constant $C_{\rm apx}>0$ depends only on $\c{cea},\c{efficient}, C_\osc, \c{triangle}$.
\end{theorem}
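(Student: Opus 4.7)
The plan is to decouple the two equivalences~\eqref{eq:approxclassbest} and~\eqref{eq:approxclassckns} and treat each by a direct, term-by-term comparison of the minimum over $V\in\XX(\TT)$ with the discrete Galerkin value $V=U(\TT)$.

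For~\eqref{eq:approxclassbest}, I would just observe that since $U(\TT)\in\XX(\TT)$ is a feasible competitor, $\min_{V\in\XX(\TT)}\dist[\TT]{u}{V}\le \dist[\TT]{u}{U(\TT)}$, which yields the right-hand inequality after applying $(N+1)^s$, $\min_{\TT\in\T(N)}$, and $\sup_{N\in\N_0}$. The C\'ea lemma~\eqref{eq:cea} gives the reverse pointwise bound $\dist[\TT]{u}{U(\TT)}\le\c{cea}\min_{V\in\XX(\TT)}\dist[\TT]{u}{V}$, and the same sup/min monotonicity yields the left-hand inequality of~\eqref{eq:approxclassbest}. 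No serious issue arises here.

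For the right-hand estimate in~\eqref{eq:approxclassckns}, I again plan to plug in the specific competitor $V=U(\TT)$ on the left: reliability~\eqref{A:reliable} (which holds by hypothesis or, via Lemma~\ref{lemma:dlr2rel}, follows from~\eqref{A:dlr}) provides $\dist[\TT]{u}{U(\TT)}\le \c{reliable}\,\eta(\TT;U(\TT))$, and the oscillation domination~\eqref{eq:oscdom} provides $\eff{\TT}{U(\TT)}\le C_\osc\,\eta(\TT;U(\TT))$. Adding these and passing to the sup-min yields $\sup_N\min_\TT(N+1)^s\min_V\bigl(\dist[\TT]{u}{V}+\eff{\TT}{V}\bigr)\le(\c{reliable}+C_\osc)\norm{(\eta(\cdot),U(\cdot))}{\B_s}$.

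The nontrivial direction is the lower bound in~\eqref{eq:approxclassckns}: starting from efficiency~\eqref{A:efficient}, namely $\eta(\TT;U(\TT))\le \c{efficient}\bigl(\dist[\TT]{u}{U(\TT)}+\eff{\TT}{U(\TT)}\bigr)$, I need to replace the discrete Galerkin competitor $U(\TT)$ by an arbitrary $V\in\XX(\TT)$. For the error term I would apply C\'ea~\eqref{eq:cea} to get $\dist[\TT]{u}{U(\TT)}\le\c{cea}\,\dist[\TT]{u}{V}$. For the oscillation term I would invoke~\eqref{eq:oscstab} to obtain $\eff{\TT}{U(\TT)}\le C_\osc\bigl(\eff{\TT}{V}+\dist[\TT]{U(\TT)}{V}\bigr)$ and then control $\dist[\TT]{U(\TT)}{V}$ by the quasi-symmetry/quasi-triangle properties of $\dist[\TT]{\cdot}{\cdot}$ together with C\'ea: $\dist[\TT]{U(\TT)}{V}\le\c{triangle}\bigl(\c{triangle}\,\dist[\TT]{u}{U(\TT)}+\dist[\TT]{u}{V}\bigr)\le(\c{triangle}^2\c{cea}+\c{triangle})\,\dist[\TT]{u}{V}$. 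Assembling these estimates produces a pointwise bound $\eta(\TT;U(\TT))\le C_{\rm apx}\bigl(\dist[\TT]{u}{V}+\eff{\TT}{V}\bigr)$ with $C_{\rm apx}$ depending only on $\c{cea},\c{efficient},C_\osc,\c{triangle}$. Taking the minimum over $V$, then over $\TT\in\T(N)$, and the supremum over $N$ yields $\norm{(\eta(\cdot),U(\cdot))}{\B_s}\le C_{\rm apx}\cdot\text{(middle quantity)}$, which is the remaining inequality. The only place where one has to be careful is this last chain, since all four hypotheses (efficiency, C\'ea, oscillation quasi-triangle, and the metric axioms of $\dist[\TT]{\cdot}{\cdot}$) must be used in concert; once they are assembled, the estimate is routine.
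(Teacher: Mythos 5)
Your proof is correct and follows essentially the same route as the paper's: prove the pointwise equivalence $\inf_{V\in\XX(\TT)}\bigl(\dist[\TT]{u}{V}+\eff{\TT}{V}\bigr)\simeq\eta(\TT;U(\TT))$ via efficiency, C\'ea, \eqref{eq:oscstab}, \eqref{eq:oscdom}, and reliability, and then pass to $\sup_N\min_\TT$. Your constant $\c{triangle}^2\c{cea}+\c{triangle}$ for $\dist[\TT]{U(\TT)}{V}$ is in fact more careful than the paper's displayed $\c{triangle}(\c{cea}+1)$, which drops the quasi-symmetry factor; this does not affect the theorem's conclusion.
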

\begin{proof}
The C\'ea lemma~\eqref{eq:cea} and hence
\begin{align*}
\c{cea}^{-1}\dist[\TT]{u}{U(\TT)} \leq \min_{V\in\XX(\TT)} \dist[\TT]{u}{V}\leq \dist[\TT]{u}{U(\TT)}\quad\text{for all }\TT\in\T
\end{align*}
imply the equivalence~\eqref{eq:approxclassbest}.
The characterization~\eqref{eq:approxclassckns} follows from the equivalence
\begin{align}\label{eq:blabla}
 \inf_{V\in\XX(\TT)}
 \big(\dist[\TT]{u}{V} + \eff{\TT}{V}\big)\simeq \eta(\TT;U(\TT))\quad\text{for all }\TT\in\T.
\end{align}
To prove~\eqref{eq:blabla}, the efficiency~\eqref{A:efficient} as well as the C\'ea lemma~\eqref{eq:cea} and~\eqref{eq:oscstab} lead to
\begin{align*}
\c{efficient}^{-1} \eta(\TT;U(\TT))&\leq \dist[\TT]{u}{U(\TT)} + \eff{\TT}{U(\TT)}\\
 &\leq \c{cea}\dist[\TT]{u}{V} + C_\osc\eff{\TT}{V}+C_\osc\dist[\TT]{U(\TT)}{V}\\
 &\leq (\c{cea}+ C_\osc\c{triangle}(\c{cea}+1))\dist[\TT]{u}{V} + C_\osc\eff{\TT}{V}
\end{align*}
for all $V\in\XX(\TT)$. The converse direction follows with reliability~\eqref{A:reliable} and~\eqref{eq:oscdom} via
\begin{align*}
 \inf_{V\in\XX(\TT)}
 \big(\dist[\TT]{u}{V} + \eff{\TT}{V}\big)\leq (\c{reliable}+C_\osc)\eta(\TT;U(\TT)). 
\end{align*}
This concludes the proof of~\eqref{eq:blabla} and of the proposition.
\end{proof}

Under certain assumptions on the oscillations $\osc(\cdot)$, the best possible rate for the estimator is characterized by the best possible
rate for the error. The following theorem shows that the adaptive algorithm reduces the error with the optimal rate and therefore at least as good as any
other algorithm which uses the same mesh-refinement.

\begin{theorem}\label{thm:mainerr}
Suppose \eqref{A:stable}--\eqref{A:dlr} as well as efficiency~\eqref{A:efficient} and quasi-monotonicity of oscillations and error in the sense that there exists a constant $C_{\rm emon}>0$ such that any $\TT\in\T$ and its refinements $\widehat\TT\in\T$ satisfy
\begin{align}\label{eq:errorquasimon}
\dist[\widehat\TT]{u}{U(\widehat\TT)}\leq C_{\rm emon}\dist[\TT]{u}{U(\TT)}\quad\text{and}\quad \eff{\widehat\TT}{U(\widehat\TT)}\leq C_{\rm emon}\eff{\TT}{U(\TT)}.
\end{align}%
Then, $ 0<\theta<\theta_\star:=(1+\c{stable}^2\c{dlr}^2)^{-1}$ implies quasi-optimal convergence of the error
\begin{align}\label{eq:thm:mainerr}
\begin{split}
 c_{\rm opt}\c{reliable}^{-1}\c{efficient}^{-1}\norm{(u,U(\cdot))}{\A_s}&\leq\sup_{\ell\in\N_0}\frac{\dist[\TT_\ell]{u}{U(\TT_\ell)}}{(|\TT_\ell| - |\TT_0|+1)^{-s}}+\norm{\osc(\cdot)}{\O_s}\\
 &\leq (\c{optimal}\c{reliable}C_{\rm apx}+1)(\norm{(u,U(\cdot))}{\A_s}+ \norm{\osc(\cdot)}{\O_s})
 \end{split}
\end{align}
for all $s>0$.
The constants $c_{\rm opt},\setc{optimal}>0$ are defined in Theorem~\ref{thm:main}~(ii), whereas the constant $C_{\rm apx}>0$ is defined in the
following Proposition~\ref{prop:charAprox}.
%depends only on %$\c{invsum},\c{lemopthelp2}>0$, and 
%$C_{\rm min}, \c{refined},\c{mesh},\c{stable},\c{dlr}$, $\c{reduction}, \c{qosum}(\epsqo)>0$ as well as on~$\theta$ and $s$. 
\end{theorem}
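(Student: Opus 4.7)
The plan is to prove the two inequalities in~\eqref{eq:thm:mainerr} separately by routing through the estimator approximation class $\norm{(\eta(\cdot),U(\cdot))}{\B_s}$, combining Theorem~\ref{thm:main}~(ii) with reliability~\eqref{A:reliable}, efficiency~\eqref{A:efficient}, the characterization~\eqref{eq:approxclassckns}, and the quasi-monotonicity~\eqref{eq:errorquasimon}. For the upper bound I would first use reliability to pass from the error to the estimator along the adaptive sequence and then invoke the upper half of Theorem~\ref{thm:main}~(ii), obtaining $\sup_\ell(|\TT_\ell|-|\TT_0|+1)^s\dist[\TT_\ell]{u}{U(\TT_\ell)}\le\c{reliable}\c{optimal}\norm{(\eta(\cdot),U(\cdot))}{\B_s}$. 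The characterization~\eqref{eq:approxclassckns} then dominates $\norm{(\eta(\cdot),U(\cdot))}{\B_s}$ by $C_{\rm apx}\sup_N\min_{\TT\in\T(N)}\min_{V\in\XX(\TT)}(N+1)^s\bigl(\dist[\TT]{u}{V}+\eff{\TT}{V}\bigr)$; choosing the admissible candidate $V=U(\TT)$ reduces the inner minimum to $\dist[\TT]{u}{U(\TT)}+\eff{\TT}{U(\TT)}$. To bound the resulting quantity by $\norm{(u,U(\cdot))}{\A_s}+\norm{\osc(\cdot)}{\O_s}$ I would use an overlay argument: given $N$, pick near-minimizers $\TT_d^\star,\TT_o^\star\in\T(\lfloor N/2\rfloor)$ for $\norm{(u,U(\cdot))}{\A_s}$ and $\norm{\osc(\cdot)}{\O_s}$, form $\widehat\TT:=\TT_d^\star\oplus\TT_o^\star\in\T(N)$ via the overlay property~\eqref{refinement:overlay}, and use quasi-monotonicity~\eqref{eq:errorquasimon} to transfer both bounds to the common refinement up to the factor $C_{\rm emon}$. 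Adding $\norm{\osc(\cdot)}{\O_s}$ to both sides concludes the upper bound with constant $\c{optimal}\c{reliable}C_{\rm apx}+1$.

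For the lower bound, reliability combined with the lower half of Theorem~\ref{thm:main}~(ii) gives $c_{\rm opt}\c{reliable}^{-1}\norm{(u,U(\cdot))}{\A_s}\le\sup_\ell(|\TT_\ell|-|\TT_0|+1)^s\eta(\TT_\ell;U(\TT_\ell))$, and efficiency~\eqref{A:efficient} in the form $\eta(\TT;U(\TT))\le\c{efficient}\bigl(\dist[\TT]{u}{U(\TT)}+\eff{\TT}{U(\TT)}\bigr)$ converts the right-hand side into $\c{efficient}\sup_\ell(|\TT_\ell|-|\TT_0|+1)^s\bigl(\dist[\TT_\ell]{u}{U(\TT_\ell)}+\eff{\TT_\ell}{U(\TT_\ell)}\bigr)$. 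To finish, it remains to split this supremum into the algorithm's error contribution $\sup_\ell(|\TT_\ell|-|\TT_0|+1)^s\dist[\TT_\ell]{u}{U(\TT_\ell)}$ and an additive $\norm{\osc(\cdot)}{\O_s}$ term.

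The main obstacle is precisely this absorption $\sup_\ell(|\TT_\ell|-|\TT_0|+1)^s\eff{\TT_\ell}{U(\TT_\ell)}\lesssim\norm{\osc(\cdot)}{\O_s}$: the adaptive algorithm is steered by the estimator rather than by the oscillations, so the adaptive meshes $\TT_\ell$ need not realise the minimum in the definition of $\norm{\osc(\cdot)}{\O_s}$ at their own scale $N_\ell=|\TT_\ell|-|\TT_0|$. The intended remedy is, for each $\ell$, to overlay $\TT_\ell$ with a near-minimizer $\TT_o^\star\in\T(N_\ell)$ for $\norm{\osc(\cdot)}{\O_s}$; quasi-monotonicity~\eqref{eq:errorquasimon} on the common refinement $\TT_\ell\oplus\TT_o^\star\in\T(2N_\ell)$, together with the overlay estimate~\eqref{refinement:overlay} and the $R$-linear convergence of the estimator from Theorem~\ref{thm:main}~(i), will propagate $\norm{\osc(\cdot)}{\O_s}$-control to the oscillations on the adaptive mesh itself. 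This propagation step is where the bulk of the technical work lies; the remaining manipulations are essentially routine applications of the axioms and of the already-established estimator optimality.
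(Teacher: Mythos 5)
Your route coincides with the paper's: pass to $\norm{(\eta(\cdot),U(\cdot))}{\B_s}$ via Theorem~\ref{thm:main}~(ii), relate it to $\norm{(u,U(\cdot))}{\A_s}+\norm{\osc(\cdot)}{\O_s}$ via Proposition~\ref{prop:charAprox}, and convert between estimator and error along the adaptive sequence using reliability and efficiency. Your upper bound essentially re-derives Proposition~\ref{prop:charAprox} through~\eqref{eq:approxclassckns} and the overlay~\eqref{refinement:overlay}, which is a harmless detour where the paper simply cites the proposition. You are right to single out the absorption $\sup_\ell(|\TT_\ell|-|\TT_0|+1)^s\eff{\TT_\ell}{U(\TT_\ell)}\lesssim\norm{\osc(\cdot)}{\O_s}$ as the delicate point in the lower bound. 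Applying efficiency term by term only yields $\c{efficient}^{-1}\sup_\ell(|\TT_\ell|-|\TT_0|+1)^s\eta(\TT_\ell;U(\TT_\ell))\le\sup_\ell(|\TT_\ell|-|\TT_0|+1)^s\dist[\TT_\ell]{u}{U(\TT_\ell)}+\sup_\ell(|\TT_\ell|-|\TT_0|+1)^s\eff{\TT_\ell}{U(\TT_\ell)}$, whereas $\norm{\osc(\cdot)}{\O_s}$ is built from a \emph{minimum} over $\T(N)$ that the adaptive mesh $\TT_\ell\in\T(|\TT_\ell|-|\TT_0|)$ need not attain; the paper's proof simply asserts the resulting estimate. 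The adaptive algorithm controls $\eff{\TT_\ell}{U(\TT_\ell)}$ only indirectly through the estimator, whose optimal rate involves $\norm{(u,U(\cdot))}{\A_s}+\norm{\osc(\cdot)}{\O_s}$ and not $\norm{\osc(\cdot)}{\O_s}$ alone, so the desired absorption is not immediate.

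Your proposed overlay remedy does not close this gap: quasi-monotonicity~\eqref{eq:errorquasimon} bounds the oscillation on a \emph{refinement} by that on a coarser mesh, so it gives $\eff{\TT_\ell\oplus\TT_o^\star}{U(\TT_\ell\oplus\TT_o^\star)}\le C_{\rm emon}\eff{\TT_o^\star}{U(\TT_o^\star)}$, which controls the overlay but not $\TT_\ell$ itself; you need precisely the opposite direction, and neither~\eqref{eq:errorquasimon} nor the $R$-linear convergence of the estimator supplies it. The clean route to the lower half of~\eqref{eq:thm:mainerr} avoids efficiency altogether. Given $N\in\N_0$, choose the largest $\ell$ with $|\TT_\ell|-|\TT_0|\le N$; then~\eqref{refinement:sons2} gives $N+1\lesssim|\TT_\ell|-|\TT_0|+1$, and since $\TT_\ell\in\T(N)$,
\begin{align*}
(N+1)^s\min_{\TT\in\T(N)}\dist[\TT]{u}{U(\TT)}\le (N+1)^s\dist[\TT_\ell]{u}{U(\TT_\ell)}\lesssim\sup_{\ell\in\N_0}\frac{\dist[\TT_\ell]{u}{U(\TT_\ell)}}{(|\TT_\ell|-|\TT_0|+1)^{-s}}.
\end{align*}
Taking the supremum over $N$ gives $\norm{(u,U(\cdot))}{\A_s}\lesssim\sup_\ell(|\TT_\ell|-|\TT_0|+1)^s\dist[\TT_\ell]{u}{U(\TT_\ell)}$, which is exactly the counting argument the paper uses for the lower half of~\eqref{eq:optimality} in Proposition~\ref{prop:optimality} and immediately yields the claimed lower bound (with a constant that depends only on $C_{\rm son}$ and the initial mesh), without routing through $\eta$ at all.
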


The proof \revision{of Theorem~\ref{thm:mainerr}} needs a relation of $\norm{\cdot}{\A_s}$ and $\norm{\cdot}{\B_s}$, which is given in the following proposition.

\begin{proposition}\label{prop:charAprox}
Suppose reliability~\eqref{A:reliable}, efficiency~\eqref{A:efficient}, quasi-monotonicity of the estimator~\eqref{eq:mon}, and 
quasi-monotonicity of error and oscillations~\eqref{eq:errorquasimon}.
Then,
\begin{align*}
\c{reliable}^{-1}\norm{(u,U(\cdot))}{\A_s}\leq \norm{(\eta(\cdot),U(\cdot))}{\B_s}\leq C_{\rm apx}(\norm{(u,U(\cdot))}{\A_s} + \norm{\osc(\cdot)}{\O_s}).
\end{align*}
holds for all $s>0$ with a constant $C_{\rm apx}>0$ which depends only on $C_{emon}$, $\c{efficient}$, and the validity of the overlay estimate~\eqref{refinement:overlay}.
\end{proposition}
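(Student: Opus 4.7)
The plan is to prove the two inequalities separately, both by direct manipulation of the definitions~\eqref{def:natapproxclass}, \eqref{def:approxclass}, and~\eqref{eq:oscapprox}.

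\emph{Lower bound.} Reliability~\eqref{A:reliable} gives the pointwise estimate $\dist[\TT]{u}{U(\TT)}\le \c{reliable}\,\eta(\TT;U(\TT))$ on every $\TT\in\T$. Multiplying by $(N+1)^s$, taking the minimum over $\TT\in\T(N)$ on both sides, and then the supremum over $N\in\N_0$ immediately yields $\norm{(u,U(\cdot))}{\A_s}\le \c{reliable}\,\norm{(\eta(\cdot),U(\cdot))}{\B_s}$.

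\emph{Upper bound.} The idea is to meet any given element budget $N$ by splitting it into a budget for the best-approximation error and one for the oscillations, and to glue the two resulting optimal meshes via the overlay estimate~\eqref{refinement:overlay}. Concretely, I set $N_1:=\lfloor N/2\rfloor$ and $N_2:=N-N_1$, so that $N_i+1\ge(N+1)/2$ for $i=1,2$. Let $\TT_1\in\T(N_1)$ attain the minimum in the definition of $\norm{(u,U(\cdot))}{\A_s}$ at level $N_1$ and let $\TT_2\in\T(N_2)$ attain the corresponding minimum in the definition of $\norm{\osc(\cdot)}{\O_s}$ at level $N_2$. Setting $\TT:=\TT_1\oplus\TT_2$, the overlay estimate yields $|\TT|-|\TT_0|\le (|\TT_1|-|\TT_0|)+(|\TT_2|-|\TT_0|)\le N$, so $\TT\in\T(N)$. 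Since $\TT$ is a refinement of both $\TT_1$ and $\TT_2$, the quasi-monotonicity~\eqref{eq:errorquasimon} implies
$$\dist[\TT]{u}{U(\TT)}\le C_{\rm emon}\,\dist[\TT_1]{u}{U(\TT_1)}\quad\text{and}\quad\eff{\TT}{U(\TT)}\le C_{\rm emon}\,\eff{\TT_2}{U(\TT_2)}.$$
Applying efficiency~\eqref{A:efficient} on $\TT$ and using $(N_i+1)^{-s}\le 2^s(N+1)^{-s}$, one obtains
$$(N+1)^s\eta(\TT;U(\TT))\le 2^s\,\c{efficient}\,C_{\rm emon}\,\big(\norm{(u,U(\cdot))}{\A_s}+\norm{\osc(\cdot)}{\O_s}\big).$$
Taking the minimum over $\T(N)$ on the left and then the supremum over $N\in\N_0$ closes the argument with a constant of order $C_{\rm apx}\simeq 2^s\c{efficient}C_{\rm emon}$.

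\emph{Where the difficulty lies.} None of the steps above is genuinely difficult. The one point that warrants care is the verification that $\TT_1\oplus\TT_2$ really is a common refinement of \emph{both} $\TT_1$ and $\TT_2$, so that the two inequalities of~\eqref{eq:errorquasimon} can be invoked simultaneously on the same overlay mesh; this is precisely the coarsest-common-refinement content built into~\eqref{refinement:overlay}. A minor technical matter is the boundary case $N=0$, where $N_1=N_2=0$ forces $\TT_1=\TT_2=\TT_0$ and the estimate is trivial. The quasi-monotonicity of the estimator~\eqref{eq:mon} listed in the hypotheses does not appear to be strictly needed by the above sketch and is presumably inherited from the surrounding setting of Theorem~\ref{thm:mainerr}.
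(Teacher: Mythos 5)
Your proof is correct and follows essentially the same route as the paper: the lower bound is a direct consequence of reliability, and the upper bound combines the two near-optimal meshes (one for the error, one for the oscillation) via the overlay estimate, then invokes quasi-monotonicity~\eqref{eq:errorquasimon} and efficiency~\eqref{A:efficient}. The only cosmetic differences are that the paper restricts to even $N$ and works with squared quantities, whereas you handle all $N$ via $\lfloor N/2\rfloor$ and work with the unsquared estimates; both are equivalent. Your observation that quasi-monotonicity of the estimator~\eqref{eq:mon} is not invoked in the argument is also accurate — the paper's own proof does not use it either, and it is carried in the hypotheses because it is automatically available in the setting where Theorem~\ref{thm:mainerr} is applied.
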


\begin{proof}
 The reliability~\eqref{A:reliable} \revision{guarantees}
 \begin{align*}
 \norm{(u,U(\cdot))}{\A_s} \leq  \c{reliable} \norm{(\eta(\cdot),U(\cdot))}{\B_s}.
\end{align*}
Suppose $\norm{(u,U(\cdot))}{\A_s}+\norm{\osc(\cdot)}{\O_s}<\infty$ for some $s>0$. For any even $N\in\N_0$, this guarantees the existence of a triangulation $\TT_{N/2}\in\T(N/2)$ with
\begin{align*}
\dist[\TT_{N/2}]{u}{U(\TT_{N/2})}(N/2+1)^s\leq \norm{(u,U(\cdot))}{\A_s}
\end{align*}
and also the existence of a triangulation $\TT_{\rm osc}\in\T(N/2)$ with
\begin{align}\label{eq:efficienthot}
(N/2+1)^s\eff{\TT_\osc}{U(\TT_\osc)}\leq \norm{\osc(\cdot)}{\O_s}.
\end{align}
With monotonicity~\eqref{eq:errorquasimon}, the overlay $\TT_+:=\TT_{N/2}\oplus \TT_{\rm osc} \revision{\in \T(N)}$ satisfies 
\begin{align*}
\dist[\TT_+]{u}{U(\TT_+)}&\lesssim\dist[\TT_{N/2}]{u}{U(\TT_{N/2})}\lesssim (N/2+1)^{-s}\norm{(u,U(\cdot))}{\A_s},\\
\eff{\TT_+}{U(\TT_+)}&\lesssim \eff{\TT_\osc}{U(\TT_\osc)}\lesssim (N/2+1)^{-s}\norm{\osc(\cdot)}{\O_s}.
\end{align*}
This yields \revision{(with $2^{2s}\lesssim 1$) that}
\begin{align*}
(N+1)^{2s}\big(\dist[\TT_+]{u}{U(\TT_+)}^2+\eff{\TT_+}{U(\TT_+)}^2\big)
\lesssim \norm{(u,U(\cdot))}{\A_s}^2+\norm{\osc(\cdot)}{\O_s}^2.
 \end{align*}
 \revision{The efficiency~\eqref{A:efficient}} leads to
 \begin{align}
 \eta(\TT_+;U(\TT_+))^2\lesssim \dist[\TT_+]{u}{U(\TT_+)}^2+\eff{\TT_+}{U(\TT_+)}^2.
 \end{align}
Together with the previous estimate, this proves
 \begin{align}
 (N+1)^{2s} \eta(\TT_+;U(\TT_+))^2\lesssim  \norm{(u,U(\cdot))}{\A_s}^2+\norm{\osc(\cdot)}{\O_s}^2.
 \end{align}
 The overlay estimate~\eqref{refinement:overlay} finally yields $|\TT_+| -|\TT_0|\leq |\TT_{N/2}| + |\TT_{\rm osc}| -2|\TT_0| \leq N$. This proves $\norm{(\eta(\cdot),U(\cdot)}{\B_s}\lesssim \norm{(u,U(\cdot))}{\A_s}\revision{+\norm{\osc(\cdot)}{\O_s}}$.
\end{proof}

\begin{proof}[Proof of Theorem~\ref{thm:mainerr}]
According to Lemma~\ref{lem:mon1}, $\eta(\cdot)$ is quasi-monotone~\eqref{eq:mon}. Therefore all the claims of Proposition~\ref{prop:charAprox} are satisfied. Together with Theorem~\ref{thm:main}~(ii) \revision{(which will be proven at the very end of this section independently of this),} this shows
\begin{align*}
c_{\rm opt}\c{reliable}^{-1}\norm{(u,U(\cdot))}{\A_s} \leq\sup_{\ell\in\N_0}\frac{\eta(\TT_\ell;U(\TT_\ell))}{(|\TT_\ell|-|\TT_0|+1)^{-s}}\leq\c{optimal}C_{\rm apx}(\norm{(u,U(\cdot))}{\A_s}+\norm{\osc(\cdot)}{\O_s})
\end{align*}
Reliability~\eqref{A:reliable} implies
\begin{align*}
\sup_{\ell\in\N_0}\frac{\dist[\TT_\ell]{u}{U(\TT_\ell)}}{(|\TT_\ell|-|\TT_0|+1)^{-s}}\leq \c{reliable}\sup_{\ell\in\N_0}\frac{\eta(\TT_\ell;U(\TT_\ell))}{(|\TT_\ell|-|\TT_0|+1)^{-s}},
\end{align*}
whereas efficiency~\eqref{A:efficient} leads to
\begin{align*}
 \c{efficient}^{-1}\sup_{\ell\in\N_0}\frac{\eta(\TT_\ell;U(\TT_\ell))}{(|\TT_\ell|-|\TT_0|+1)^{-s}}\leq\sup_{\ell\in\N_0}\frac{\dist[\TT_\ell]{u}{U(\TT_\ell)}}{(|\TT_\ell|-|\TT_0|+1)^{-s}}
+\norm{\osc(\cdot)}{\O_s}.
\end{align*}
The combination of the last three estimates proves the assertion.
\end{proof}

%%%%%%%%%%%%%%%%%%%%%%%%%%%%%%%%%%%%%%%%%%%%%%%%%%%%%%%%%%%%%%%%%%%%%%%%%%%%%%%%%%%
\subsection{Estimator reduction and convergence of $\eta(\TT_\ell;U(\TT_\ell))$}
\label{section:reduction}
%%%%%%%%%%%%%%%%%%%%%%%%%%%%%%%%%%%%%%%%%%%%%%%%%%%%%%%%%%%%%%%%%%%%%%%%%%%%%%%%%%%
We start with the observation that stability~\eqref{A:stable} and reduction~\eqref{A:reduction}
lead to a perturbed contraction of the error estimator in each step of the adaptive loop.

\begin{lemma}\label{lem:estconv}
The stability~\eqref{A:stable} and reduction~\eqref{A:reduction} imply \revision{the estimator reduction}
\begin{align}\label{eq:estconv}
 \eta(\TT_{\ell+1};U(\TT_{\ell+1}))^2 \le \q{estconv}\,\eta(\TT_\ell;U(\TT_\ell))^2 + \c{estconv}\,\dist[\TT_{\ell+1}]{U(\TT_{\ell+1})}{U(\TT_\ell)}^2
\end{align}
for all $\ell\in\N_0$ with the constants $0<\setq{estconv}<1$ and $\setc{estconv}>0$ which relate via
\begin{align}\label{eq:estconvconst}
 \q{estconv}=(1+\delta)(1-(1-\q{reduction})\theta)\quad\text{and}\quad\c{estconv}=\c{reduction}+(1+\delta^{-1})\c{stable}^2
\end{align}
for all \revision{sufficiently small} $\delta>0$ such that $\q{estconv}<1$.
\end{lemma}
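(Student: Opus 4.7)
The plan is to split the estimator sum $\eta(\TT_{\ell+1};U(\TT_{\ell+1}))^2 = \sum_{T\in\TT_{\ell+1}} \eta_T(\TT_{\ell+1};U(\TT_{\ell+1}))^2$ into a contribution from the non-refined elements $T\in\TT_\ell\cap\TT_{\ell+1}$ and a contribution from the newly created elements $T\in\TT_{\ell+1}\setminus\TT_\ell$, and then treat each piece by exactly one of the two axioms.

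First I would apply stability~\eqref{A:stable} to the non-refined piece with $\mathcal{S}=\TT_\ell\cap\TT_{\ell+1}$. This yields a triangle-type inequality between the two square roots, and squaring together with the Young inequality $(a+b)^2\le(1+\delta)a^2+(1+\delta^{-1})b^2$ for an auxiliary parameter $\delta>0$ gives
\begin{align*}
\sum_{T\in\TT_\ell\cap\TT_{\ell+1}}\eta_T(\TT_{\ell+1};U(\TT_{\ell+1}))^2
\le (1+\delta)\sum_{T\in\TT_\ell\cap\TT_{\ell+1}}\eta_T(\TT_\ell;U(\TT_\ell))^2
+ (1+\delta^{-1})\c{stable}^2\,\dist[\TT_{\ell+1}]{U(\TT_{\ell+1})}{U(\TT_\ell)}^2.
\end{align*}
For the refined piece, the reduction axiom~\eqref{A:reduction} applied to the pair $(\TT_\ell,\TT_{\ell+1})$ produces directly the bound
\begin{align*}
\sum_{T\in\TT_{\ell+1}\setminus\TT_\ell}\eta_T(\TT_{\ell+1};U(\TT_{\ell+1}))^2
\le \q{reduction}\sum_{T\in\TT_\ell\setminus\TT_{\ell+1}}\eta_T(\TT_\ell;U(\TT_\ell))^2
+ \c{reduction}\,\dist[\TT_{\ell+1}]{U(\TT_{\ell+1})}{U(\TT_\ell)}^2.
\end{align*}

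Adding these two estimates collects the distance term with factor $\c{reduction}+(1+\delta^{-1})\c{stable}^2=\c{estconv}$, as required. It remains to control the remaining mesh-sum. Writing the non-refined part as $\eta(\TT_\ell;U(\TT_\ell))^2 - \sum_{T\in\TT_\ell\setminus\TT_{\ell+1}}\eta_T(\TT_\ell;U(\TT_\ell))^2$ and using $(1+\delta)-\q{reduction}\ge(1+\delta)(1-\q{reduction})$ together with the inclusion $\MM_\ell\subseteq\TT_\ell\setminus\TT_{\ell+1}$ (every marked element is refined) and the D\"orfler criterion~\eqref{eq:doerfler} $\theta\,\eta(\TT_\ell;U(\TT_\ell))^2\le\sum_{T\in\MM_\ell}\eta_T(\TT_\ell;U(\TT_\ell))^2$ gives
\begin{align*}
(1+\delta)\sum_{T\in\TT_\ell\cap\TT_{\ell+1}}\eta_T(\TT_\ell;U(\TT_\ell))^2
+\q{reduction}\sum_{T\in\TT_\ell\setminus\TT_{\ell+1}}\eta_T(\TT_\ell;U(\TT_\ell))^2
\le (1+\delta)\bigl(1-(1-\q{reduction})\theta\bigr)\eta(\TT_\ell;U(\TT_\ell))^2,
\end{align*}
which is the desired factor $\q{estconv}$. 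Finally, since $(1-\q{reduction})\theta>0$, choosing $\delta>0$ sufficiently small ensures $\q{estconv}<1$.

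The calculation is essentially algebraic; the only mildly delicate point is the interplay between the constants $\delta$, $\q{reduction}$ and $\theta$ in the factor controlling $\eta(\TT_\ell;U(\TT_\ell))^2$. The key observation is the inequality $(1+\delta)-\q{reduction}\ge(1+\delta)(1-\q{reduction})$ (equivalent to $\delta\q{reduction}\ge0$), which is what lets the gain from the D\"orfler marking be absorbed cleanly into the prefactor $(1+\delta)(1-(1-\q{reduction})\theta)$ stated in~\eqref{eq:estconvconst}.
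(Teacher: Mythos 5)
Your proof is correct and follows essentially the same route as the paper: the same split of $\eta(\TT_{\ell+1};U(\TT_{\ell+1}))^2$ into refined and non-refined parts, treated respectively with reduction~\eqref{A:reduction} and stability~\eqref{A:stable} plus Young's inequality, followed by the key algebraic observation $(1+\delta)-\q{reduction}\ge(1+\delta)(1-\q{reduction})$ and the D\"orfler marking~\eqref{eq:doerfler} via $\MM_\ell\subseteq\TT_\ell\setminus\TT_{\ell+1}$. You have simply made the intermediate algebra slightly more explicit than the paper does.
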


\begin{proof}
%Recall the Young inequality $(a+b)^2 \le (1+\delta)a^2 + (1+\delta^{-1})b^2$ for all $a,b\in\R$ and $\delta>0$. 
The Young inequality in combination with stability~\eqref{A:stable} and reduction~\eqref{A:reduction} 
%together with continuity~\eqref{eq:Bcont} 
shows for any $\delta >0$ and $\c{estconv} = \c{reduction} + (1+\delta^{-1})\c{stable}^2$ that
\begin{align*}
 \eta(\TT_{\ell+1};U(\TT_{\ell+1}))^2
 &= \sum_{T\in\TT_{\ell+1}\backslash\TT_\ell}\eta_T(\TT_{\ell+1};U(\TT_{\ell+1}))^2
 + \sum_{T\in\TT_{\ell+1}\cap\TT_\ell}\eta_T(\TT_{\ell+1};U(\TT_{\ell+1}))^2\\
 &\le \q{reduction}\sum_{T\in\TT_{\ell}\backslash\TT_{\ell+1}}\eta_T(\TT_\ell;U(\TT_\ell))^2
%\\&\quad\quad
+ (1+\delta)\sum_{T\in\TT_\ell\cap\TT_{\ell+1}}\eta_T(\TT_\ell;U(\TT_\ell))^2
 \\&\qquad+ \c{estconv}\dist[\TT_{\ell+1}]{U(\TT_{\ell+1})}{U(\TT_\ell)}^2.
\end{align*}
Therefore,  the inclusion $\MM_\ell\subseteq\TT_{\ell}\backslash\TT_{\ell+1}$ and the D\"orfler marking~\eqref{eq:doerfler} lead to
\begin{align*}
 \eta(\TT_{\ell+1};U(\TT_{\ell+1}))^2&\leq(1+\delta)\Big(\eta(\TT_\ell;U(\TT_\ell))^2
 -(1-\q{reduction})\sum_{T\in\TT_{\ell}\backslash\TT_{\ell+1}}\eta_T(\TT_\ell;U(\TT_\ell))^2\Big)\\
&\qquad + \c{estconv}\dist[\TT_{\ell+1}]{U(\TT_{\ell+1})}{U(\TT_\ell)}^2\\
 &\le (1+\delta)\big(1-(1-\q{reduction})\theta\big)\eta(\TT_\ell;U(\TT_\ell))^2
 + \c{estconv}\dist[\TT_{\ell+1}]{U(\TT_{\ell+1})}{U(\TT_\ell)}^2.
\end{align*}
The choice of a  sufficiently small $\delta>0$ allows for $\q{estconv}=(1+\delta)\big(1-(1-\q{reduction})\theta\big)<1$.
\end{proof}

In particular situations (e.g.\ in Section~\ref{section:conforming}) the sequence of discrete approximations is
a~priori convergent towards some limit $U_\infty\in\XX$
\begin{align}\label{dpr:aprioriconv}
 \lim_{\ell\to\infty}\dist[\TT_\ell]{U_\infty}{U(\TT_\ell)} = 0.
\end{align}
Then, the \emph{estimator reduction}~\eqref{eq:estconv} implies
convergence of the adaptive algorithm.
This \emph{estimator reduction concept} is studied in~\cite{estconv} and applies to a general class of problems and error estimators.

\begin{corollary}\label{cor:convergence}
%Suppose that
%the discrete spaces are nested and conforming $\XX(\TT_\ell)\subseteq \XX(\TT_{\ell+1}) \subseteq \XX$, and $\dist{v}{w}:=\dist[\TT_\ell]{v}{w}=\dist[\TT_{\ell+1}]{v}{w}$ for all $\ell\geq 0$.
Suppose a priori convergence~\eqref{dpr:aprioriconv} in $\XX$. Then the estimator reduction~\eqref{eq:estconv} implies estimator convergence $\lim_{\ell\to\infty}\eta(\TT_\ell;U(\TT_\ell))=0$. Under reliability~\eqref{A:reliable}, this proves convergence of the adaptive algorithm $\lim_{\ell\to\infty}\dist[\TT_\ell]{u}{U(\TT_\ell)}=0$.
\end{corollary}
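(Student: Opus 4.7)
The plan is to recognise~\eqref{eq:estconv} as a perturbed contraction whose perturbation term tends to zero by virtue of the a~priori convergence~\eqref{dpr:aprioriconv}, and then to deduce the error convergence from reliability~\eqref{A:reliable}.

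First, I would control the perturbation term $\dist[\TT_{\ell+1}]{U(\TT_{\ell+1})}{U(\TT_\ell)}^2$ in~\eqref{eq:estconv} by inserting $U_\infty\in\XX$. The quasi-triangle inequality yields
\begin{align*}
\dist[\TT_{\ell+1}]{U(\TT_{\ell+1})}{U(\TT_\ell)}
\leq \c{triangle}\big(\dist[\TT_{\ell+1}]{U(\TT_{\ell+1})}{U_\infty}+\dist[\TT_{\ell+1}]{U_\infty}{U(\TT_\ell)}\big).
\end{align*}
Quasi-symmetry on the first summand and the compatibility condition on the second (which applies since $U_\infty\in\XX$ and $U(\TT_\ell)\in\XX(\TT_\ell)$ with $\TT_{\ell+1}$ a refinement of $\TT_\ell$) give
\begin{align*}
\dist[\TT_{\ell+1}]{U(\TT_{\ell+1})}{U(\TT_\ell)}
\leq \c{triangle}^2\dist[\TT_{\ell+1}]{U_\infty}{U(\TT_{\ell+1})}+\c{triangle}\dist[\TT_\ell]{U_\infty}{U(\TT_\ell)}\xrightarrow{\ell\to\infty}0
\end{align*}
by~\eqref{dpr:aprioriconv}. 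This bookkeeping is the only subtle point: the quasi-metric nature of $\dist\cdot\cdot$ forces the simultaneous use of compatibility (to change the subscript of $\dist[\cdot]\cdot\cdot$) and quasi-symmetry (to swap arguments), whereas a single triangle inequality would suffice in a genuine Banach space setting.

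Second, with $a_\ell:=\eta(\TT_\ell;U(\TT_\ell))^2$ and $\varepsilon_\ell:=\c{estconv}\,\dist[\TT_{\ell+1}]{U(\TT_{\ell+1})}{U(\TT_\ell)}^2$, the estimator reduction~\eqref{eq:estconv} reads $a_{\ell+1}\leq\q{estconv}\,a_\ell+\varepsilon_\ell$ with $0<\q{estconv}<1$ and $\varepsilon_\ell\to 0$ by the first step. A standard perturbation-of-contraction argument (iterate the recursion to obtain $a_{\ell+j}\leq \q{estconv}^j a_\ell+\sum_{i=0}^{j-1}\q{estconv}^{j-1-i}\varepsilon_{\ell+i}$, then pass to $\limsup$) yields
\begin{align*}
\limsup_{\ell\to\infty}a_\ell\leq (1-\q{estconv})^{-1}\limsup_{\ell\to\infty}\varepsilon_\ell=0,
\end{align*}
i.e.\ $\eta(\TT_\ell;U(\TT_\ell))\to 0$. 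Finally, reliability~\eqref{A:reliable} immediately upgrades this to $\dist[\TT_\ell]{u}{U(\TT_\ell)}\leq \c{reliable}\,\eta(\TT_\ell;U(\TT_\ell))\to 0$, which concludes the proof of the corollary.
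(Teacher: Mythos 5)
Your proof is correct and follows essentially the same strategy as the paper (and~\cite[Lemma~2.3]{estconv}, which the paper invokes): control the perturbation term $\dist[\TT_{\ell+1}]{U(\TT_{\ell+1})}{U(\TT_\ell)}$ via a~priori convergence, treat~\eqref{eq:estconv} as a perturbed contraction and iterate, then upgrade via reliability. You spell out the quasi-triangle/quasi-symmetry/compatibility bookkeeping that the paper leaves implicit, and your direct passage to $\limsup$ avoids the paper's intermediate step of first establishing $\sup_\ell\eta(\TT_\ell;U(\TT_\ell))<\infty$; these are minor organizational differences, not a different argument.
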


\begin{proof}
For a convenient reading, we recall the main arguments of~\cite[Lemma 2.3]{estconv} in the notation of this paper.
Mathematical induction on $\ell$ proves with~\eqref{eq:estconv} for all $\ell\in\N$
\begin{align}\label{eq:aprioriconvhelp}
\begin{split}
 \eta(\TT_{\ell+1};U(\TT_{\ell+1}))^2&\leq \q{estconv}^{\ell+1}\eta(\TT_0;U(\TT_0))^2 + \c{estconv}\sum_{j=0}^\ell\q{estconv}^{\ell-j}\dist[\TT_{\ell+1}]{U(\TT_{\ell+1})}{U(\TT_\ell)}^2\\
 &\lesssim \eta(\TT_0;U(\TT_0))^2+\sup_{\ell\in\N}\dist[\TT_{\ell+1}]{U(\TT_{\ell+1})}{U(\TT_\ell)}^2.
 \end{split}
\end{align}
The a~priori convergence of $U(\TT_\ell)$ implies $\dist[\TT_{\ell+1}]{U(\TT_{\ell+1})}{U(\TT_\ell)} \to 0$ and hence shows together with~\eqref{eq:aprioriconvhelp} that $\sup_{\ell\in\N}\eta(\TT_\ell;U(\TT_\ell))<\infty$. Moreover,~\eqref{eq:estconv} yields
\begin{align*}
 \lim\sup_{\ell\to\infty}\eta(\TT_{\ell+1};U(\TT_{\ell+1}))^2 &\le \lim\sup_{\ell\to\infty}\big(\q{estconv}\,\eta(\TT_\ell;U(\TT_\ell))^2 + \c{estconv}\,\dist[\TT_{\ell+1}]{U(\TT_{\ell+1})}{U(\TT_\ell)}^2\big)\\
 &=\q{estconv}\lim\sup_{\ell\to\infty}\eta(\TT_{\ell+1};U(\TT_{\ell+1}))^2.
\end{align*}
This shows $ \lim\sup_{\ell\to\infty}\eta(\TT_{\ell};U(\TT_{\ell}))^2=0$, and hence elementary calculus proves convergence $\eta(\TT_\ell;U(\TT_\ell))\to 0$. Under reliability~\eqref{A:reliable} this implies $U(\TT_\ell)\to u$ in $\XX$.
\end{proof}

%%%%%%%%%%%%%%%%%%%%%%%%%%%%%%%%%%%%%%%%%%%%%%%%%%%%%%%%%%%%%%%%%%%%%%%%%%%%%%%%%%%%%%%%%%%%%%%%%%%%%%%%%%%%%%
\subsection{{Uniform $R$-linear convergence of $\eta(\TT_\ell;U(\TT_\ell))$ on any level}}
 %%%%%%%%%%%%%%%%%%%%%%%%%%%%%%%%%%%%%%%%%%%%%%%%%%%%%%%%%%%%%%%%%%%%%%%%%%%%%%%%%%%%%%%%%%%%%%%%%%%%%%%%%%%%%%
The quasi-orthogonality~\eqref{A:qosum} allows to improve~\eqref{eq:estconv} to $R$-linear convergence on any level. 
The following lemma is independent of the mesh-refinement in the sense that the critical properties~\eqref{refinement:closure}--\eqref{refinement:overlay} are not used throughout the proof. It thus remains valid e.g.\ for red-green-blue refinement.
\begin{lemma}\label{lem:Rconv}
The statements~{\rm (i)}--{\rm (iii)} are pairwise equivalent.
\renewcommand{\theenumi}{{\rm\roman{enumi}}}%
\renewcommand{\labelenumi}{({\rm\roman{enumi}})}%
\begin{enumerate}
 \item\label{contraction:estsum} Uniform summability: There exists a constant $\setc{estsum}>0$ such that
\begin{align}\label{eq:estsum}
 \sum_{k=\ell+1}^\infty\eta(\TT_k;U(\TT_k))^2\leq \c{estsum} \eta(\TT_\ell;U(\TT_\ell))^2 \quad \text{for all } \ell\in\N.
\end{align}
\item\label{contraction:invsum} Inverse  summability: \revision{For all $s>0$,} there exists a constant $\setc{invsum}>0$ such that
\begin{align}\label{eq:endest}
\sum_{k=0}^{\ell-1}\eta(\TT_k;U(\TT_k))^{-1/s} \leq \c{invsum} \eta(\TT_\ell;U(\TT_\ell))^{-1/s}\quad \text{for all } \ell\in\N.
\end{align}
\item\label{contraction:Rconv} Uniform $R$-linear convergence on any level: There exist constants $0<\setr{Rconv}<1$ and $\setc{Rconv}>0$ such that
\begin{align}\label{eq:propRconv}
 \eta(\TT_{\ell+k};U(\TT_{\ell+k}))^2\leq \c{Rconv}\r{Rconv}^k\,\eta(\TT_\ell;U(\TT_\ell))^2\quad\text{for all }k,\ell\in\revision{\N_0.}
\end{align}
\end{enumerate}
\end{lemma}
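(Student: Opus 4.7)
The plan is to prove the four implications (iii) $\Rightarrow$ (i), (iii) $\Rightarrow$ (ii), (i) $\Rightarrow$ (iii), and (ii) $\Rightarrow$ (iii), which together establish pairwise equivalence. Since the claim is a purely analytic statement about the positive scalar sequence $(\eta(\TT_\ell;U(\TT_\ell)))_{\ell\in\N_0}$, none of the axioms~\eqref{A:stable}--\eqref{A:dlr} or the mesh-refinement assumptions~\eqref{refinement:closure}--\eqref{refinement:overlay} enter the argument; this is consistent with the independence from mesh-refinement emphasised in the lemma's preamble.

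Both implications starting from (iii) are direct and reduce to summing a geometric series. For (iii) $\Rightarrow$ (i), fix $\ell$ and sum~\eqref{eq:propRconv} over $k\geq 1$ to bound the tail by $\c{Rconv}\r{Rconv}/(1-\r{Rconv})\,\eta(\TT_\ell;U(\TT_\ell))^2$. For (iii) $\Rightarrow$ (ii), rewrite~\eqref{eq:propRconv} as an upper bound of $\eta(\TT_\ell;U(\TT_\ell))$ in terms of $\eta(\TT_k;U(\TT_k))$ for $k\leq \ell$, raise to the power $-1/s$, and sum the resulting geometric series over $k=0,\ldots,\ell-1$; this yields~\eqref{eq:endest} with a constant depending on $s$.

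The implication (i) $\Rightarrow$ (iii) uses a tail-sum argument. Setting $S_\ell := \sum_{k\geq \ell} \eta(\TT_k;U(\TT_k))^2$, the telescoping identity $S_\ell - S_{\ell+1} = \eta(\TT_\ell;U(\TT_\ell))^2$ combined with~\eqref{eq:estsum} gives $S_{\ell+1} \leq \c{estsum}(S_\ell - S_{\ell+1})$, which rearranges to the geometric contraction $S_{\ell+1} \leq \c{estsum}/(1+\c{estsum})\, S_\ell$. Iterating this and using $\eta(\TT_{\ell+k};U(\TT_{\ell+k}))^2 \leq S_{\ell+k}$ together with $S_\ell \leq (1+\c{estsum})\,\eta(\TT_\ell;U(\TT_\ell))^2$ yields~(iii).

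The main obstacle is the dual implication (ii) $\Rightarrow$ (iii), which mirrors the previous argument for the reciprocal sequence and requires one extra application of (ii). Introduce $T_\ell := \sum_{k=0}^{\ell-1} \eta(\TT_k;U(\TT_k))^{-1/s}$; then~\eqref{eq:endest} reads $T_\ell \leq \c{invsum}(T_{\ell+1} - T_\ell)$, which rearranges to geometric growth $T_{\ell+1} \geq \tilde q\, T_\ell$ with $\tilde q := (1+\c{invsum})/\c{invsum} > 1$, whence $T_{\ell+k+1} \geq \tilde q^k T_{\ell+1}$. The trivial bound $T_{\ell+1} \geq \eta(\TT_\ell;U(\TT_\ell))^{-1/s}$ and a second application of~\eqref{eq:endest} in the form $\eta(\TT_m;U(\TT_m))^{-1/s} \geq T_m/\c{invsum}$ then yield $\eta(\TT_{\ell+k+1};U(\TT_{\ell+k+1}))^{-1/s} \geq (\tilde q^k/\c{invsum})\,\eta(\TT_\ell;U(\TT_\ell))^{-1/s}$. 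Raising to the $-s$th power and squaring produces~\eqref{eq:propRconv} with $\r{Rconv} := \tilde q^{-2s} < 1$, after absorbing the shift by one into the constant $\c{Rconv}$.
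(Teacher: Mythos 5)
Your proof is correct and follows essentially the same route as the paper: both directions from (iii) use geometric series, (i)~$\Rightarrow$~(iii) hinges on the recursion $S_{\ell+1}\le\c{estsum}/(1+\c{estsum})\,S_\ell$ for the tail sums, and (ii)~$\Rightarrow$~(iii) hinges on the dual recursion $T_{\ell+1}\ge(1+\c{invsum}^{-1})\,T_\ell$ for the partial sums of $\eta^{-1/s}$. The only cosmetic difference is that in (ii)~$\Rightarrow$~(iii) the paper bounds $T_{\ell+k+1}\le(1+\c{invsum})\,\eta(\TT_{\ell+k};U(\TT_{\ell+k}))^{-1/s}$ directly (avoiding an index shift), while you bound $\eta(\TT_{\ell+k+1};U(\TT_{\ell+k+1}))^{-1/s}\ge T_{\ell+k+1}/\c{invsum}$ and then absorb the shift into $\c{Rconv}$; both yield the same constants.
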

\begin{proof}
For sake of simplicity, we show the equivalence of~\eqref{contraction:estsum}--\eqref{contraction:Rconv} by proving the equivalences
$\eqref{contraction:Rconv}\Longleftrightarrow \eqref{contraction:estsum}$ 
and $\eqref{contraction:Rconv}\Longleftrightarrow \eqref{contraction:invsum}$.
%$\eqref{contraction:Rconv}\Rightarrow \eqref{contraction:estsum}, \eqref{contraction:Rconv}\Rightarrow \eqref{contraction:invsum}, \eqref{contraction:estsum}\Rightarrow \eqref{contraction:Rconv}$, and $\eqref{contraction:invsum}\Rightarrow \eqref{contraction:Rconv}$,

For the proof of the implication $\eqref{contraction:Rconv} \Rightarrow \eqref{contraction:estsum}$, suppose~\eqref{contraction:Rconv} and use the convergence of the geometric series to see
\begin{align*}
  \sum_{k= \ell+1}^\infty\eta(\TT_k;U(\TT_k))^2\leq\c{Rconv}\eta(\TT_\ell;U(\TT_\ell))^2\sum_{k=\ell+1}^\infty\r{Rconv}^{k-\ell}= \c{Rconv}\r{Rconv}(1-\r{Rconv})^{-1} \eta(\TT_\ell;U(\TT_\ell))^2.
\end{align*}
This proves~\eqref{contraction:estsum} with $\c{estsum}=\c{Rconv}\r{Rconv}(1-\r{Rconv})^{-1}$.

Similarly, the implication $\eqref{contraction:Rconv} \Rightarrow \eqref{contraction:invsum}$ follows via
\begin{align*}
  \sum_{k=0}^{\ell-1}\eta(\TT_k;U(\TT_k))^{-1/s}&\leq\c{Rconv}^{1/(2s)}\eta(\TT_\ell;U(\TT_\ell))^{-1/s}\sum_{k=0}^{\ell-1}\r{Rconv}^{(\ell-k)/(2s)}\\
  &\leq \c{Rconv}^{1/(2s)}(1-\r{Rconv}^{1/(2s)})^{-1}\eta(\TT_\ell;U(\TT_\ell))^{-1/s}.
\end{align*}
This shows~\eqref{contraction:invsum} with $\c{invsum}=\c{Rconv}^{1/(2s)}(1-\r{Rconv}^{1/(2s)})^{-1}$.

For the proof of the implication $\eqref{contraction:estsum} \Rightarrow \eqref{contraction:Rconv}$, suppose~\eqref{contraction:estsum} and conclude
\begin{align*}
 (1+\c{estsum}^{-1})\sum_{j= \ell+1}^\infty\eta(\TT_j;U(\TT_j))^2 \leq \sum_{j= \ell+1}^\infty\eta(\TT_j;U(\TT_j))^2+ \eta(\TT_\ell;U(\TT_\ell))^2= \sum_{j= \ell}^\infty\eta(\TT_j;U(\TT_j))^2.
\end{align*}
By mathematical induction, this implies
\begin{align*}
 \eta(\TT_{\ell+k};U(\TT_{\ell+k}))^2 &\leq \sum_{j=\ell+k}^\infty\eta(\TT_j;U(\TT_j))^2 \leq (1+\c{estsum}^{-1})^{-k}\sum_{j=\ell}^\infty\eta(\TT_j;U(\TT_j))^2\\
 & \leq (1+\c{estsum})(1+\c{estsum}^{-1})^{-k}\eta(\TT_\ell;U(\TT_\ell))^2.
\end{align*}
This proves~\eqref{contraction:Rconv} with $\r{Rconv}=(1+\c{estsum}^{-1})^{-1}$ and $\c{Rconv}=(1+\c{estsum})$.

The implication  $\eqref{contraction:invsum} \Rightarrow \eqref{contraction:Rconv}$ follows analogously,
\begin{align*}
 (1+\c{invsum}^{-1})\sum_{j=0}^{\ell-1}\eta(\TT_j;U(\TT_j))^{-1/s} \leq  \sum_{j=0}^\ell\eta(\TT_j;U(\TT_j))^{-1/s}.
\end{align*}
This implies
\begin{align*}
 \eta(\TT_\ell;U(\TT_\ell))^{-1/s} &\leq \sum_{j=0}^\ell\eta(\TT_j;U(\TT_j))^{-1/s} \leq (1+\c{invsum}^{-1})^{-k}\sum_{j=0}^{\ell+k}\eta(\TT_{j};U(\TT_{j}))^{-1/s}\\
 &\leq
(1+\c{invsum})(1+\c{invsum}^{-1})^{-k}\eta(\TT_{\ell+k};U(\TT_{\ell+k}))^{-1/s}.
\end{align*}
This proves $\eta(\TT_{\ell+k};U(\TT_{\ell+k}))^2\leq (1+\c{invsum})^{2s}(1+\c{invsum}^{-1})^{-2sk}\eta(\TT_\ell;U(\TT_\ell))^2$. This is~\eqref{contraction:Rconv} with $\r{Rconv}=(1+\c{invsum}^{-1})^{-2s}$ and $\c{Rconv}=(1+\c{invsum})^{2s}$.
\end{proof}

\begin{proposition}\label{prop:Rconv}
Suppose estimator reduction~\eqref{eq:estconv} and reliability~\eqref{A:reliable}. Then, general quasi-ortho\-go\-nality~\eqref{A:qosum} implies~\eqref{eq:estsum}--\eqref{eq:propRconv}.
The constants $\c{estsum},\c{invsum},\c{Rconv}>0$ and $0<\r{Rconv}<1$ depend only on $\q{estconv},\c{estconv},\c{qosum}(\epsqo),s>0$.
\end{proposition}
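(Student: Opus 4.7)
By Lemma~\ref{lem:Rconv}, the three statements~\eqref{eq:estsum}--\eqref{eq:propRconv} are equivalent, so it suffices to establish uniform summability~\eqref{eq:estsum}. The plan is to combine the estimator reduction~\eqref{eq:estconv}, the general quasi-orthogonality~\eqref{A:qosum}, and reliability~\eqref{A:reliable} into a single recursive bound on the partial sums
\begin{align*}
S_N := \sum_{k=\ell}^N \eta(\TT_k;U(\TT_k))^2,\quad N\ge\ell,
\end{align*}
and then to absorb the sum on the left-hand side using that the relevant prefactor is strictly less than one.

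First I would sum the estimator reduction~\eqref{eq:estconv} over $k=\ell,\dots,N$, which telescopes on the left and yields
\begin{align*}
S_{N+1}-\eta(\TT_\ell;U(\TT_\ell))^2 \le \q{estconv}\,S_N + \c{estconv}\sum_{k=\ell}^N \dist[\TT_{k+1}]{U(\TT_{k+1})}{U(\TT_k)}^2.
\end{align*}
The distance sum is exactly what general quasi-orthogonality~\eqref{A:qosum} controls, producing $\c{qosum}(\epsqo)\,\eta(\TT_\ell;U(\TT_\ell))^2 + \epsqo\sum_{k=\ell}^N\dist[\TT_k]{u}{U(\TT_k)}^2$. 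Reliability~\eqref{A:reliable} then converts the error terms into $\c{reliable}^2\,S_N$, and collecting everything gives
\begin{align*}
S_{N+1}\le \big(\q{estconv}+\c{estconv}\,\epsqo\,\c{reliable}^2\big)\,S_N + \big(1+\c{estconv}\,\c{qosum}(\epsqo)\big)\,\eta(\TT_\ell;U(\TT_\ell))^2.
\end{align*}

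The crucial point, and the only subtle step, is that the admissibility constraint on $\epsqo$ from~\eqref{A:qosum} has been designed for exactly this purpose: choosing in Lemma~\ref{lem:estconv} the Young parameter $\delta>0$ that realizes (up to $\eps$) the supremum defining $\eps_{\rm qo}^\star(\theta)$, the resulting constants $\q{estconv}=(1+\delta)(1-(1-\q{reduction})\theta)$ and $\c{estconv}=\c{reduction}+(1+\delta^{-1})\c{stable}^2$ satisfy
\begin{align*}
C_1:=\q{estconv}+\c{estconv}\,\epsqo\,\c{reliable}^2<1.
\end{align*}
Since $S_{N+1}\ge S_N$, the recursion above absorbs on the left:
\begin{align*}
(1-C_1)\,S_N \le \big(1+\c{estconv}\,\c{qosum}(\epsqo)\big)\,\eta(\TT_\ell;U(\TT_\ell))^2
\end{align*}
uniformly in $N\ge\ell$. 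Passing to the limit $N\to\infty$ and subtracting the $k=\ell$ summand yields~\eqref{eq:estsum} with $\c{estsum}=(1-C_1)^{-1}(1+\c{estconv}\,\c{qosum}(\epsqo))$, which depends only on $\q{estconv},\c{estconv},\c{qosum}(\epsqo)$. Finally, Lemma~\ref{lem:Rconv} transfers this bound into inverse summability~\eqref{eq:endest} and $R$-linear convergence~\eqref{eq:propRconv}, with the $s$-dependence entering only through the implication $\eqref{contraction:estsum}\Rightarrow\eqref{contraction:invsum}$.

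The main obstacle is entirely bookkeeping rather than conceptual: synchronizing the Young parameter $\delta>0$ used in the estimator reduction with the one defining $\eps_{\rm qo}^\star(\theta)$, and verifying strict inequality $C_1<1$. Once that is in place, the summation argument together with the monotonicity $S_N\le S_{N+1}$ closes the recursion immediately.
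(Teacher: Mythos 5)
Your proof is correct and takes essentially the same route as the paper's: both sum the estimator reduction, invoke the general quasi-orthogonality~\eqref{A:qosum} together with reliability~\eqref{A:reliable} to control the distance sum, absorb using the fact that $\epsqo<\eps_{\rm qo}^\star(\theta)$ makes the prefactor strictly less than one, and then invoke Lemma~\ref{lem:Rconv} for the equivalences. The only difference is bookkeeping: the paper introduces an auxiliary parameter $\nu$ with $\epsqo\c{estconv}\c{reliable}^2\le\nu<1-\q{estconv}$ and reorganizes the terms before applying~\eqref{A:qosum}, whereas you apply~\eqref{A:qosum} first, convert $\dist[\TT_k]{u}{U(\TT_k)}$ to $\eta(\TT_k;U(\TT_k))$ via reliability, and close a recursion for $S_N$ using $S_N\le S_{N+1}$; choosing $\nu=\epsqo\c{estconv}\c{reliable}^2$ in the paper's argument reproduces your constant $C_1$, so the two derivations coincide up to rearrangement.
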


\begin{proof}
In the following, the general quasi-orthogonality~\eqref{A:qosum} implies~\eqref{eq:estsum}--\eqref{eq:propRconv} since~\eqref{A:qosum} implies~\eqref{eq:estsum}. To that end, the estimator reduction~\eqref{eq:estconv} from Lemma~\ref{lem:estconv} yields for any $\nu>0$ that
\begin{align*}
  \sum_{k= \ell+1}^N\eta(\TT_k;U(\TT_k))^2&\leq \sum_{k=\ell+1}^N\big(\q{estconv}\eta(\TT_{k-1};U(\TT_{k-1}))^2+ \c{estconv}\dist[\TT_k]{U(\TT_k)}{U(\TT_{k-1})}^2\big)\\
&=\sum_{k=\ell+1}^N\Big((\q{estconv}+\nu)\eta(\TT_{k-1};U(\TT_{k-1}))^2\\
&\quad\quad+ \c{estconv}\big(\dist[\TT_k]{U(\TT_k)}{U(\TT_{k-1})}^2
-\nu\c{estconv}^{-1}\eta(\TT_{k-1};U(\TT_{k-1}))^2\big)\Big)\\
&=:{\rm RHS}.
\end{align*}
The use of reliability~\eqref{A:reliable} then shows 
\begin{align*}
{\rm RHS}\leq&\sum_{k=\ell+1}^N\Big((\q{estconv}+\nu)\eta(\TT_{k-1};U(\TT_{k-1}))^2\\
&\quad\quad+ \c{estconv}\big(\dist[\TT_k]{U(\TT_k)}{U(\TT_{k-1})}^2-\nu\c{estconv}^{-1}\c{reliable}^{-2}\dist[\TT_{k-1}]{u}{U(\TT_{k-1})}^2\big)\Big).
\end{align*}
With the constants $\q{estconv}$ and $\c{estconv}$ from~\eqref{eq:estconvconst}, the constraint on $\epsqo$ in~\eqref{A:qosum} reads
\begin{align*}
 0\leq \epsqo< \frac{1-\q{estconv}}{\c{reliable}^{2}\c{estconv}}=\frac{1-(1+\delta)(1-(1-\q{reduction})\theta}{\c{reliable}^{2}(\c{reduction}+(1+\delta^{-1})\c{stable}^2)}\leq \eps_{\rm qo}^\star
\end{align*}
for some choice of $\delta>0$. Note that this choice is valid since $\q{estconv}<1$. In particular, it exists $\nu<1-\q{estconv}$ such that $\epsqo\leq\nu\c{estconv}^{-1}\c{reliable}^{-2}$. This allows to apply general quasi-orthogonality~\eqref{A:qosum} to the last term before \revision{the limit} $N\to\infty$ \revision{proves} that
\begin{align*}
  \sum_{k=\ell+1}^\infty\eta(\TT_k;U(\TT_k))^2
\leq\sum_{k=\ell+1}^\infty(\q{estconv}+\nu)\eta(\TT_{k-1};U(\TT_{k-1}))^2+ \c{estconv}\c{qosum}(\epsqo)\eta(\TT_\ell;U(\TT_\ell))^2.
\end{align*}
Some rearrangement leads to
\begin{align*}
 (1-(\q{estconv}+\nu)) \sum_{k=\ell+1}^\infty\eta(\TT_k;U(\TT_k))^2\leq (\q{estconv}+\nu+\c{estconv}\c{qosum}(\epsqo))\eta(\TT_\ell;U(\TT_\ell))^2.
\end{align*}
This is~\eqref{eq:estsum} with $\c{estsum}=(\q{estconv}+\nu+\c{estconv}\c{qosum}(\epsqo))/(1-(\q{estconv}+\nu))$ and concludes the proof of \eqref{A:qosum} $\Rightarrow$ \eqref{eq:estsum}. Lemma~\ref{lem:Rconv} yields the equivalence~\eqref{eq:estsum}--\eqref{eq:propRconv}.
\end{proof}

Assume that~\eqref{A:stable}--\eqref{A:reduction} and reliability~\eqref{A:reliable} hold. The last proposition then proves that the quasi-orthogonality~\eqref{A:qosum} yields linear convergence~\eqref{eq:propRconv}. The following proposition shows that under the same assumptions, linear convergence~\eqref{eq:propRconv} implies the general quasi-orthogonality~\eqref{A:qosum}. 
\revision{This} means that linear convergence~\eqref{eq:propRconv} is equivalent to general quasi-orthogonality~\eqref{A:qosum}.

\begin{proposition}\label{prop:qosumiscool}
Reliability~\eqref{A:reliable} and each of the statements~\eqref{eq:estsum}--\eqref{eq:propRconv} imply general quasi-ortho\-go\-nality~\eqref{A:qosum} with $\epsqo=0$ and $\c{qosum}(0)>0$. 
%if at least one of the assumptions (i)--(ii) holds:
%\begin{enumerate}
%\renewcommand{\theenumi}{{\rm\roman{enumi}}}%
%\item $\eta(\cdot)$ satisfies quasi-monotonicity~\eqref{eq:mon}.
%\item There holds the C\'ea lemma~\eqref{eq:cea} and nestedness $\XX(\TT_\ell)\subseteq\XX(\TT_{\ell+1})$ for all $\ell\in\N_0$.
%\end{enumerate}}
\end{proposition}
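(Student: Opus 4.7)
The goal is to produce a finite constant $\c{qosum}(0)$ such that, for the sequence $(\TT_\ell)$ generated by Algorithm~\ref{algorithm},
\begin{align*}
\sum_{k=\ell}^N \dist[\TT_{k+1}]{U(\TT_{k+1})}{U(\TT_k)}^2 \;\le\; \c{qosum}(0)\,\eta(\TT_\ell;U(\TT_\ell))^2
\end{align*}
for all $N\ge\ell\ge 0$. The route is the natural one: bound each Galerkin-type increment by the two surrounding errors via the (quasi-)triangle inequality, convert errors to estimators by reliability~\eqref{A:reliable}, and then sum using uniform summability~\eqref{eq:estsum}, which is equivalent to $R$-linear convergence~\eqref{eq:propRconv} by Lemma~\ref{lem:Rconv}.

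First, fix $k\ge\ell$. Since $u\in\XX$ and $U(\TT_k)\in\XX(\TT_k)\subseteq\XX\cup\XX(\TT_k)\cup\XX(\TT_{k+1})$, the compatibility condition from Section~\ref{section:setting:continuous} gives $\dist[\TT_{k+1}]{u}{U(\TT_k)} = \dist[\TT_k]{u}{U(\TT_k)}$. The quasi-triangle inequality together with quasi-symmetry therefore yields
\begin{align*}
\c{triangle}^{-1}\dist[\TT_{k+1}]{U(\TT_{k+1})}{U(\TT_k)}
&\le \dist[\TT_{k+1}]{U(\TT_{k+1})}{u}+\dist[\TT_{k+1}]{u}{U(\TT_k)}\\
&\le \c{triangle}\dist[\TT_{k+1}]{u}{U(\TT_{k+1})}+\dist[\TT_k]{u}{U(\TT_k)}.
\end{align*}
Squaring (using $(a+b)^2\le 2(a^2+b^2)$) and then applying reliability~\eqref{A:reliable} to each term converts the right-hand side into a bound in terms of $\eta(\TT_{k+1};U(\TT_{k+1}))^2+\eta(\TT_k;U(\TT_k))^2$, with a constant depending only on $\c{triangle}$ and $\c{reliable}$.

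Summing this pointwise inequality over $k=\ell,\ldots,N$ produces, up to an absolute factor, the sum $\sum_{k=\ell}^{N+1}\eta(\TT_k;U(\TT_k))^2$. By hypothesis one of~\eqref{eq:estsum}--\eqref{eq:propRconv} holds, and Lemma~\ref{lem:Rconv} upgrades this to uniform summability~\eqref{eq:estsum}. Adding the single term $\eta(\TT_\ell;U(\TT_\ell))^2$ to~\eqref{eq:estsum} yields $\sum_{k=\ell}^{\infty}\eta(\TT_k;U(\TT_k))^2\le(1+\c{estsum})\,\eta(\TT_\ell;U(\TT_\ell))^2$, which completes the estimate with an explicit constant
\begin{align*}
\c{qosum}(0) \;=\; 4\,\c{triangle}^2(\c{triangle}^2+1)\,\c{reliable}^2\,(1+\c{estsum}).
\end{align*}

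The proof is essentially routine once the right ingredients are identified; the one place requiring care is the handling of the quasi-metric. Because $\dist[\TT_k]\cdot\cdot$ and $\dist[\TT_{k+1}]\cdot\cdot$ are a priori different functions, one must invoke the compatibility condition to identify $\dist[\TT_{k+1}]{u}{U(\TT_k)}$ with $\dist[\TT_k]{u}{U(\TT_k)}$ before reliability~\eqref{A:reliable} (which is stated on level $k$) can be applied. Everything else is a triangle-inequality and geometric-series bookkeeping.
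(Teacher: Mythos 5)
Your proof is correct and follows essentially the same route as the paper: bound each increment $\dist[\TT_{k+1}]{U(\TT_{k+1})}{U(\TT_k)}^2$ by the two surrounding errors via the quasi-triangle inequality, convert to estimators via reliability~\eqref{A:reliable}, and sum using uniform summability~\eqref{eq:estsum}. The paper's proof is terser (absorbing the quasi-symmetry and compatibility steps into a single $\lesssim$), but the underlying argument is identical; your extra care with the quasi-metric and the explicit constant are legitimate elaborations, not a different method.
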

\begin{proof}
% Reliability~\eqref{A:reliable} and quasi-monotonicity~\eqref{eq:mon} show
%\begin{align*}
% \dist[\TT_{\ell+1}]{U(\TT_{\ell+1})}{U(\TT_\ell)}
% &\lesssim \dist[\TT_\ell]{u}{U(\TT_\ell)}
% + \dist[\TT_{\ell+1}]{u}{U(\TT_{\ell+1})}\\
% &\lesssim \eta(\TT_\ell;U(\TT_\ell))+\eta(\TT_{\ell+1};U(\TT_{\ell+1}))\lesssim \eta(\TT_\ell;U(\TT_\ell)).
%\end{align*}
With reliability~\eqref{A:reliable} and~\eqref{eq:estsum}, it holds 
\begin{align*}
\sum_{k=\ell}^N \dist[\TT_{k+1}]{U(\TT_{k+1})}{U(\TT_k)}^2&\lesssim\sum_{k=\ell}^N \dist[\TT_{k+1}]{u}{U(\TT_{k+1})}^2
+\dist[\TT_{k}]{u}{U(\TT_{k})}^2\\
&\leq 2 \sum_{k=\ell}^{N+1}\dist[\TT_{k}]{u}{U(\TT_{k})}^2\\
& \lesssim\sum_{k=\ell}^{N+1}\eta(\TT_k;U(\TT_k))^2\lesssim \eta(\TT_\ell;U(\TT_\ell))^2
\end{align*}
for all $\ell,N\in\N_0$. Let $N\to\infty$ to conclude~\eqref{A:qosum} with $\epsqo=0$ and $\c{qosum}(0)\simeq 1$.
\end{proof}

%\begin{remark}
%Proposition~\ref{prop:qosumiscool} shows that $R$-linear convergence of $\eta(\TT_\ell;U(\TT_\ell))$ is even necessary if one wants to use the inverse %summability~\eqref{eq:endest} to obtain~\eqref{eq:markedest} below in the quasi-optimality proof of Proposition~\ref{prop:optimality}.
%\end{remark}

\begin{proof}[Proof of Theorem~\ref{thm:main},~(i)]
Reliability~\eqref{A:reliable} is implied by discrete reliability~\eqref{A:dlr} according to Lemma~\ref{lemma:dlr2rel}.
 Stability~\eqref{A:stable} and reduction~\eqref{A:reduction} \revision{guarantee} estimator reduction~\eqref{eq:estconv} for $\eta(\TT_\ell;U(\TT_\ell))$. Together with quasi-orthogonality~\eqref{A:qosum} and reliability~\eqref{A:reliable}, this allows to apply~\eqref{eq:propRconv}. In combination with reliability~\eqref{A:reliable}, \revision{this} proves Theorem~\ref{thm:main}~(i) with $\c{conv} = \c{Rconv}$ and $\r{conv} = \r{Rconv}$.
\end{proof}

%%%%%%%%%%%%%%%%%%%%%%%%%%%%%%%%%%%%%%%%%%%%%%%%%%%%%%%%%%%%%%%%%%%%%%%%%%%%%%%
\subsection{Optimality of D\"orfler marking}
%%%%%%%%%%%%%%%%%%%%%%%%%%%%%%%%%%%%%%%%%%%%%%%%%%%%%%%%%%%%%%%%%%%%%%%%%%%%%%%
\label{section:doerfler}
Lemma~\ref{lem:estconv} and Proposition~\ref{prop:Rconv} prove that D\"orfler marking~\eqref{eq:doerfler} essentially guarantees the (perturbed) contraction properties~\eqref{eq:estconv} and~\eqref{eq:estsum}--\eqref{eq:propRconv} and hence $\lim_{\ell\to\infty}\eta(\TT_\ell;U(\TT_\ell)) = 0$. The next statement asserts the converse.

\begin{proposition}\label{prop:doerfler}
Stability~\eqref{A:stable} and discrete reliability~\eqref{A:dlr} imply {\rm (i)--(ii)}.
\begin{itemize}
\item[{\rm(i)}] For all $0<\kappa_0<1$, there exists a constant $0<\theta_0<1$ such that all 
$0<\theta\le\theta_0$ and all refinements $\widehat\TT\in\T$ of $\TT\in\T$ satisfy
\begin{align}\label{eq:doerfler:opt}
 \eta(\widehat\TT;U(\widehat\TT))^2 \le \kappa_0\eta(\TT;U(\TT))^2
 \quad\Longrightarrow\quad
 \theta\,\eta(\TT;U(\TT))^2
 \le \sum_{T\in\RR(\TT,\widehat\TT)}\eta_T(\TT;U(\TT))^2
\end{align}
with $\TT\setminus\widehat\TT \subseteq\RR(\TT,\widehat\TT)\subseteq\TT$ from~\eqref{A:dlr}.
The constant $\theta_0$ depends only on $\c{stable}$, $\c{dlr}$ and $\kappa_0$.
\item[{\rm(ii)}] For all $0<\theta_0<\theta_\star:=(1+\c{stable}^2\c{dlr}^2)^{-1}$, there 
exists some $0<\kappa_0<1$ such that~\eqref{eq:doerfler:opt} holds for all $0<\theta\le\theta_0$ and all refinements $\widehat\TT$ of $\TT\in\T$.
The constant $\kappa_0$ depends only on $\c{stable}$, $\c{dlr}$, and $\theta_0$.
\end{itemize}
\end{proposition}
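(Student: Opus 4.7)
The plan is to derive both implications from a single master inequality
\begin{align*}
\bigl(1-(1+\delta)\kappa_0\bigr)\,\eta(\TT;U(\TT))^2
\le \bigl(1+(1+\delta^{-1})\c{stable}^2\c{dlr}^2\bigr)\sum_{T\in\RR(\TT,\widehat\TT)}\eta_T(\TT;U(\TT))^2,
\end{align*}
valid for every refinement $\widehat\TT$ of $\TT$ satisfying $\eta(\widehat\TT;U(\widehat\TT))^2\le\kappa_0\eta(\TT;U(\TT))^2$ and every free parameter $\delta>0$. All the work goes into establishing this single inequality; then (i) and (ii) are extracted by picking $\delta$ and $\kappa_0$ appropriately.

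To set it up, I split $\eta(\TT;U(\TT))^2$ and $\eta(\widehat\TT;U(\widehat\TT))^2$ along the non-refined elements $\mathcal{S}:=\TT\cap\widehat\TT$ and denote $a:=(\sum_{T\in\mathcal{S}}\eta_T(\TT;U(\TT))^2)^{1/2}$, $\widehat a:=(\sum_{T\in\mathcal{S}}\eta_T(\widehat\TT;U(\widehat\TT))^2)^{1/2}$, as well as $\mu:=\dist[\widehat\TT]{U(\widehat\TT)}{U(\TT)}$. Stability~\eqref{A:stable} yields $|a-\widehat a|\le\c{stable}\,\mu$, so splitting $a=(a-\widehat a)+\widehat a$ and applying Young's inequality with parameter $\delta>0$ gives $a^2\le(1+\delta)\widehat a^2+(1+\delta^{-1})\c{stable}^2\mu^2$. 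Dropping the non-negative contributions from $\widehat\TT\setminus\TT$ yields $\widehat a^2\le\eta(\widehat\TT;U(\widehat\TT))^2\le\kappa_0\,\eta(\TT;U(\TT))^2$, hence
\begin{align*}
\sum_{T\in\TT\cap\widehat\TT}\eta_T(\TT;U(\TT))^2\le(1+\delta)\kappa_0\,\eta(\TT;U(\TT))^2+(1+\delta^{-1})\c{stable}^2\mu^2.
\end{align*}
Adding $\sum_{T\in\TT\setminus\widehat\TT}\eta_T(\TT;U(\TT))^2$ on both sides reconstitutes $\eta(\TT;U(\TT))^2$ on the left, while discrete reliability~\eqref{A:dlr} bounds $\mu^2$ by $\c{dlr}^2\sum_{T\in\RR(\TT,\widehat\TT)}\eta_T(\TT;U(\TT))^2$; since $\TT\setminus\widehat\TT\subseteq\RR(\TT,\widehat\TT)$, the remaining sum is also absorbed into a sum over $\RR(\TT,\widehat\TT)$, and the master inequality drops out.

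With the master inequality in hand, define $\theta_0(\kappa_0,\delta):=\bigl(1-(1+\delta)\kappa_0\bigr)/\bigl(1+(1+\delta^{-1})\c{stable}^2\c{dlr}^2\bigr)$. For (i), given $0<\kappa_0<1$, any $\delta\in(0,\kappa_0^{-1}-1)$ makes $\theta_0(\kappa_0,\delta)>0$, and the master inequality says that whenever $\theta\le\theta_0(\kappa_0,\delta)$ the desired D\"orfler-type estimate on $\RR(\TT,\widehat\TT)$ holds. For (ii), observe that
\begin{align*}
\lim_{\delta\to\infty}\lim_{\kappa_0\to 0}\theta_0(\kappa_0,\delta)=\frac{1}{1+\c{stable}^2\c{dlr}^2}=\theta_\star,
\end{align*}
so for any prescribed $\theta_0<\theta_\star$ one can select $\delta$ large and $\kappa_0$ small enough that $\theta_0\le\theta_0(\kappa_0,\delta)$, which proves the implication for that $\kappa_0$.

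The calculations are routine once stability and discrete reliability are combined correctly; the only subtle point is the choice of the Young splitting, since the naive bound $a^2-\widehat a^2\le 2a|a-\widehat a|+(a-\widehat a)^2$ yields a constant worse than $\theta_\star$. The splitting $a=(a-\widehat a)+\widehat a$ with weight $(1+\delta)$ on $\widehat a^2$ is precisely what lets $\delta\to\infty$ recover the sharp threshold $(1+\c{stable}^2\c{dlr}^2)^{-1}$ claimed in (ii); this is the main place where care is needed.
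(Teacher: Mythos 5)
Your proof is correct and follows essentially the same route as the paper: split $\eta(\TT;U(\TT))^2$ over $\TT\cap\widehat\TT$ and $\TT\setminus\widehat\TT$, apply stability~\eqref{A:stable} with Young's inequality (free parameter $\delta$) on the non-refined part, bound $\sum_{T\in\TT\cap\widehat\TT}\eta_T(\widehat\TT;U(\widehat\TT))^2$ by $\kappa_0\eta(\TT;U(\TT))^2$, absorb $\mu^2$ via~\eqref{A:dlr}, and rearrange to reach the same master inequality~\eqref{eq:thetastar}. Your extraction of (i) via small $\delta$ and (ii) via $\delta\to\infty$, $\kappa_0\to 0$ approaching $\theta_\star$ matches the paper's argument, and your closing remark about the Young splitting being the right choice is exactly the point the paper exploits.
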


\begin{proof}
(i):
The Young inequality and stability~\eqref{A:stable} 
%together with continuity~\eqref{eq:Bcont}
 show, for any $\delta>0$, that
\begin{align*}
 \eta(\TT;U(\TT))^2
 &= \sum_{T\in\TT\backslash\widehat\TT}\eta_T(\TT;U(\TT))^2
 + \sum_{T\in\TT\cap\widehat\TT}\eta_T(\TT;U(\TT))^2\\
 & \le \sum_{T\in\TT\backslash\widehat\TT}\eta_T(\TT;U(\TT))^2
 + (1+\delta)\sum_{T\in\TT\cap\widehat\TT}\eta_T(\widehat\TT;U(\widehat\TT))^2\\
 &\quad\quad
 + (1+\delta^{-1})\c{stable}^2\dist[\widehat\TT]{U(\widehat\TT)}{U(\TT)}^2 =: \text{RHS}.
 \end{align*}
 Recall $\TT\backslash\widehat\TT\subseteq\RR(\TT,\widehat\TT)$.
The application of the discrete reliability~\eqref{A:dlr} and the assumption $\eta(\widehat\TT;U(\widehat\TT))^2 \le \kappa_0\eta(\TT;U(\TT))^2$ yield
 \begin{align*}
\text{RHS} &\le (1+\delta)\kappa_0\eta(\TT;U(\TT))^2\\
 &\quad\quad + \big(1+(1+\delta^{-1})\c{stable}^2\c{dlr}^2\big)\,\sum_{T\in\RR(\TT,\widehat\TT)}\eta_T(\TT;U(\TT))^2.
\end{align*}
Some rearrangement of those terms reads
\begin{align*}
 \frac{1-(1+\delta)\kappa_0}{1+(1+\delta^{-1})\c{stable}^2\c{dlr}^2}\,\eta(\TT;U(\TT))^2
 \le \sum_{T\in\RR(\TT,\widehat\TT)}\eta_T(\TT;U(\TT))^2.
\end{align*}
For arbitrary $0<\kappa_0<1$ and sufficiently small $\delta>0$, this is~\eqref{eq:doerfler:opt} with
\begin{align}\label{eq:thetastar}
 \theta_0:= \frac{1-(1+\delta)\kappa_0}{1+(1+\delta^{-1})\c{stable}^2\c{dlr}^2}>0.
\end{align}
To see~(ii), choose $\delta>0$ sufficiently large and then determine $0<\kappa_0<1$ such that
\begin{align*}
 \theta_0 
 = \frac{1-(1+\delta)\kappa_0}{1+(1+\delta^{-1})\c{stable}^2\c{dlr}^2}
 < \frac{1}{1+(1+\delta^{-1})\c{stable}^2\c{dlr}^2}
 < \frac{1}{1+\c{stable}^2\c{dlr}^2} = \theta_\star.
\end{align*}
The arguments from~(i) conclude the proof.
\end{proof}

\begin{remark}
 Note that Proposition~\ref{prop:doerfler} states~\eqref{eq:doerfler:opt} for all $0<\kappa_0<1$. However, the subsequent quasi-optimality analysis relies, in principle, only on the fact that~\eqref{eq:doerfler:opt} holds for one particular $0<\kappa_0<1$. In this sense, the discrete reliability is sufficient to prove quasi-optimal convergence rates, but \revision{it} might not be necessary.

On the other hand, assume that the error estimator $\eta(\TT;U(\TT))$ is reliable~\eqref{A:reliable} and quasi-monotone \eqref{eq:mon}.
Then, the D\"orfler marking yields
\begin{align*}
\frac12\,\c{triangle}^{-2}\,
\dist[\widehat\TT]{U(\widehat\TT)}{U(\TT)}^2
 &\le \c{triangle}^2\dist[\widehat\TT]{u}{U(\widehat\TT)}^2+ \dist[\TT]{u}{U(\TT)}^2\\
 &\le\c{reliable}^2\,\big( \c{triangle}^2\eta(\widehat\TT;U(\widehat\TT))^2+\eta(\TT;U(\TT))^2\big)\\
 &\le \c{reliable}^2(1+ \c{triangle}^2\c{mon}^2)\,\eta(\TT;U(\TT))^2\\
&\leq\c{reliable}^2(1+ \c{triangle}^2\c{mon}^2)\theta^{-1}\sum_{T\in\MM}\eta_T(\TT;U(\TT))^2.
\end{align*}
The inclusion $\MM \subseteq \TT\backslash\widehat\TT =: \RR(\TT,\widehat\TT)$ thus shows that
discrete reliability~\eqref{A:dlr} holds with the constant $\c{dlr}^2 = 2\c{triangle}^2\theta^{-1}\c{reliable}^2(1+ \c{triangle}^2\c{mon}^2)$.
\end{remark}

%Therefore, the discrete reliability~\eqref{estimator:dlr} is not only sufficient but in case of $\eta_\star \leq \kappa_0 \eta(\TT_\ell;U(\TT_\ell))$ even necessary to prove %optimality of the D\"orfler marking.
%--------------------------------------------------------------------
\subsection{Quasi-optimality of adaptive algorithm}
%--------------------------------------------------------------------
This section provides quasi-optimal convergence rates for the estimator and \revision{thereby} the theoretical heart of the proof of Theorem~\ref{thm:main}~(ii). The first lemma states the existence of a quasi-optimal refinement $\widehat \TT$ of $\TT_\ell$ under certain assumptions guaranteed by Lemma~\ref{lem:mon1} in case that the estimator satisfies the axioms stability~\eqref{A:stable}, reduction~\eqref{A:reduction}, and discrete reliability~\eqref{A:dlr}. For sake of generality, however, the next statement is given independently of this context.
This step exploits the overlay 
estimate~\eqref{refinement:overlay}
for the mesh-refinement.
%
%\begin{remark}
% According to Lemma~\ref{lem:mon1}, the assumptions of Lemma~\ref{lem:optimality} on the error estimator are satisfied provided that  stability~\eqref{A:stable}, reduction~\eqref{A:reduction}, and discrete reliability~\eqref{A:dlr} hold.
%\end{remark}
\begin{lemma}\label{lem:optimality}
Assume that the estimator is quasi-monotone~\eqref{eq:mon} and that the implication~\eqref{eq:doerfler:opt} is valid for one 
particular choice of $0<\kappa_0,\theta_0<1$. Then,
for $\norm{(\eta(\cdot),U(\cdot))}{\B_s}<\infty$ and $\TT_\ell \in \T$, there is a certain refinement $\widehat\TT\in\T$ of $\TT_\ell$ with
\begin{subequations}\label{eq:lemoptresult1}
\begin{align}
\eta(\widehat\TT;U(\widehat\TT))^2 &\leq  \kappa_0 \eta(\TT_\ell;U(\TT_\ell))^2,
\\
|\widehat\TT| - |\TT_\ell| &\leq \c{lemopthelp2} \norm{(\eta(\cdot),U(\cdot))}{\B_s}^{1/s}\,\eta(\TT_\ell;U(\TT_\ell))^{-1/s},
\end{align}
\end{subequations}%
where the set $\RR(\TT_\ell,\widehat\TT)\supseteq \TT_\ell\setminus\widehat\TT$ from Proposition~\ref{prop:doerfler} satisfies
\begin{align}\label{eq:lemoptresult2}
|\RR(\TT_\ell,\widehat\TT)|\leq \c{lemopthelp2} \norm{(\eta(\cdot),U(\cdot))}{\B_s}^{1/s}\,\eta(\TT_\ell;U(\TT_\ell))^{-1/s}
\end{align}
as well as the D\"orfler marking~\eqref{eq:doerfler} for all $0<\theta\leq \theta_0$. The constant $\setc{lemopthelp2}>0$ is independent of $\ell$ and depends only on
the constant $\c{mon}>0$ of quasi-monotonicity~\eqref{eq:mon} as well as on $\kappa_0$, $\c{refined}$, and $s>0$.
\end{lemma}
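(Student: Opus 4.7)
The plan is to construct $\widehat\TT$ as the overlay of $\TT_\ell$ with a near-optimal triangulation $\TT_\eps\in\T$ and then apply Proposition~\ref{prop:doerfler} to deduce the D\"orfler property on $\RR(\TT_\ell,\widehat\TT)$.

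More precisely, I would proceed as follows. First, since $\norm{(\eta(\cdot),U(\cdot))}{\B_s}<\infty$, set
\begin{align*}
N := \Big\lceil \kappa_0^{-1/(2s)}\,\c{mon}^{1/s}\,\norm{(\eta(\cdot),U(\cdot))}{\B_s}^{1/s}\,\eta(\TT_\ell;U(\TT_\ell))^{-1/s}\Big\rceil,
\end{align*}
and pick $\TT_\eps\in\T(N)$ realising the minimum (up to a harmless constant) in the definition of $\norm{\cdot}{\B_s}$, so that $(N+1)^s\,\eta(\TT_\eps;U(\TT_\eps))\leq \norm{(\eta(\cdot),U(\cdot))}{\B_s}$. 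By the choice of $N$, this yields
\begin{align*}
\c{mon}^2\,\eta(\TT_\eps;U(\TT_\eps))^2 \leq \kappa_0\,\eta(\TT_\ell;U(\TT_\ell))^2.
\end{align*}

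Second, define $\widehat\TT := \TT_\ell \oplus \TT_\eps$, which is a common refinement of $\TT_\ell$ and $\TT_\eps$. Quasi-monotonicity~\eqref{eq:mon} of the estimator, applied with the refinement $\widehat\TT$ of $\TT_\eps$, then gives
\begin{align*}
\eta(\widehat\TT;U(\widehat\TT))^2 \leq \c{mon}^2\,\eta(\TT_\eps;U(\TT_\eps))^2 \leq \kappa_0\,\eta(\TT_\ell;U(\TT_\ell))^2,
\end{align*}
which is the first assertion of~\eqref{eq:lemoptresult1}. The overlay estimate~\eqref{refinement:overlay} together with the fact that $N+1\lesssim N$ yields
\begin{align*}
|\widehat\TT|-|\TT_\ell|\leq |\TT_\eps|-|\TT_0|\leq N \lesssim \norm{(\eta(\cdot),U(\cdot))}{\B_s}^{1/s}\,\eta(\TT_\ell;U(\TT_\ell))^{-1/s},
\end{align*}
which is the second assertion of~\eqref{eq:lemoptresult1} for a suitable constant $\c{lemopthelp2}$ depending on $\c{mon},\kappa_0,s$.

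Third, since $\eta(\widehat\TT;U(\widehat\TT))^2 \leq \kappa_0\,\eta(\TT_\ell;U(\TT_\ell))^2$ is exactly the hypothesis of the implication~\eqref{eq:doerfler:opt}, Proposition~\ref{prop:doerfler} delivers a set $\RR(\TT_\ell,\widehat\TT)\supseteq\TT_\ell\setminus\widehat\TT$ satisfying the D\"orfler criterion
\begin{align*}
\theta\,\eta(\TT_\ell;U(\TT_\ell))^2\leq \sum_{T\in\RR(\TT_\ell,\widehat\TT)}\eta_T(\TT_\ell;U(\TT_\ell))^2
\end{align*}
for all $0<\theta\leq\theta_0$, together with the cardinality control $|\RR(\TT_\ell,\widehat\TT)|\leq \c{refined}\,|\TT_\ell\setminus\widehat\TT|$. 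Finally, the refinement property~\eqref{refinement:sons} gives $|\TT_\ell\setminus\widehat\TT|\leq |\widehat\TT|-|\TT_\ell|$, which combined with the previous cardinality bound yields~\eqref{eq:lemoptresult2}, after adjusting $\c{lemopthelp2}$ to also absorb the factor $\c{refined}$.

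The only non-routine step is to choose $N$ so that the three requirements --- that $\eta(\TT_\eps;U(\TT_\eps))$ is small enough to trigger the implication~\eqref{eq:doerfler:opt} after overlay, that $|\widehat\TT|-|\TT_\ell|$ is at most $N$, and that $N$ itself is bounded by $\norm{(\eta,U)}{\B_s}^{1/s}\eta(\TT_\ell;U(\TT_\ell))^{-1/s}$ up to a constant --- can all be satisfied simultaneously; this is exactly the calibration done above, and the rest of the proof consists of combining the overlay estimate, quasi-monotonicity, Proposition~\ref{prop:doerfler}, and~\eqref{refinement:sons} in the order just described.
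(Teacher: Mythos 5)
Your proof is correct and follows the same strategy as the paper's: choose a near-optimal mesh $\TT_\eps$ with a controlled number of elements, form the overlay $\widehat\TT=\TT_\ell\oplus\TT_\eps$, invoke quasi-monotonicity~\eqref{eq:mon} to transfer the smallness of $\eta(\TT_\eps;U(\TT_\eps))$ to $\eta(\widehat\TT;U(\widehat\TT))$, and then apply Proposition~\ref{prop:doerfler} together with the overlay estimate~\eqref{refinement:overlay} and the son estimate~\eqref{refinement:sons}. The only cosmetic difference is in how $N$ is calibrated: you fix $N$ explicitly via a ceiling, whereas the paper selects the minimal $N$ with $(N+1)^{-s}\norm{(\eta(\cdot),U(\cdot))}{\B_s}\le\eps$ and deduces the bound on $N+1$ afterwards; both routes give the same constant up to a factor of $2$, and your ceiling is indeed $\gtrsim 1$ since quasi-monotonicity forces $\eta(\TT_\ell;U(\TT_\ell))\le\c{mon}\,\eta(\TT_0;U(\TT_0))\le\c{mon}\norm{(\eta(\cdot),U(\cdot))}{\B_s}$. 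The one thing you should state explicitly is the degenerate case $\eta(\TT_\ell;U(\TT_\ell))=0$ (where your choice of $N$ is ill-defined): as in the paper, the claim then holds trivially with $\widehat\TT=\TT_\ell$.
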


\begin{proof}
Without loss of generality, we may assume $\eta(\TT_0; U(\TT_0))>0$ since monotonicity~\eqref{eq:mon} predicts $\eta(\TT_\ell;U(\TT_\ell))\leq \c{mon}\eta(\TT_0;U(\TT_0))$ and the claim~\eqref{eq:lemoptresult1}--\eqref{eq:lemoptresult2} is trivially satisfied for $\eta(\TT_\ell;U(\TT_\ell))=0$ and $\widehat\TT=\TT_\ell$.
Define $\lambda:=\c{mon}^{-2}\kappa_0$ and due to quasi-mo\-notonicity~\eqref{eq:mon} also $0<\eps^2 := \lambda\eta(\TT_\ell;U(\TT_\ell))^2\leq \eta(\TT_0;U(\TT_0))^2\leq \norm{(\eta(\cdot),U(\cdot))}{\B_s}^2$.  The fact $\norm{(\eta(\cdot),U(\cdot))}{\B_s}<\infty$ implies the existence of some $N\in\N$ and $\TT_\eps\in\T(N)$ with
\begin{align}\label{eq:lemopthelp}
\eta(\TT_\eps;U(\TT_\eps))=\min_{\TT\in\T(N)}\eta(\TT;U(\TT)) \le (N+1)^{-s}\norm{(\eta(\cdot),U(\cdot))}{\B_s}\leq\eps.
\end{align}
Let $N\in\N_0$ be the smallest number such that the last estimate in~\eqref{eq:lemopthelp} holds. First, assume $N>0$. Then, it holds
\begin{align*}
\norm{(\eta(\cdot),U(\cdot))}{\B_s} >N^s\eps
\end{align*}
and therefore $N+1\leq 2N\leq 2 \norm{(\eta(\cdot),U(\cdot))}{\B_s}^{1/s}\eps^{-1/s}$. For $N=0$, it always holds $ 1\leq \norm{(\eta(\cdot),U(\cdot))}{\B_s}\eps^{-1/s}$
because of $\norm{(\eta(\cdot),U(\cdot))}{\B_s}\geq\eps$.
Altogether, $\TT_\eps$ fulfils
\begin{align}\label{eq1:lemopt}
 |\TT_\eps| - |\TT_0| \leq N\leq2\norm{(\eta(\cdot),U(\cdot))}{\B_s}^{1/s}\eps^{-1/s}
 \quad\text{and}\quad\eta(\TT_\eps;U(\TT_\eps)) \le \eps.
\end{align}
According to~\eqref{refinement:overlay}, the coarsest common refinement $\widehat\TT:=\TT_\eps\oplus\TT_\ell$ satisfies
\begin{align}\label{eq2:lemopt}
 |\widehat\TT| - |\TT_\ell| \le |\TT_\eps| + |\TT_\ell| - |\TT_0| -|\TT_\ell|=|\TT_\eps| - |\TT_0|.
\end{align}
The fact that $\widehat\TT$ is a refinement of $\TT_\eps$ allows for quasi-monotonicity~\eqref{eq:mon} and
\begin{align}\label{eq:quasimonoton}
 \eta(\widehat\TT;U(\widehat\TT))^2 \leq\c{mon}^2 \eta(\TT_\eps;U(\TT_\eps))^2 \le \c{mon}^2\lambda\eta(\TT_\ell;U(\TT_\ell))^2=\kappa_0 \eta(\TT_\ell;U(\TT_\ell))^2.
\end{align}
Proposition~\ref{prop:doerfler} thus \revision{guarantees} that the set $\RR(\TT_\ell,\widehat\TT)\subseteq\TT_\ell$ with $|\RR(\TT_\ell,\widehat\TT)|\simeq |(\TT_\ell\setminus\widehat\TT)|$  satisfies the D\"orfler marking. There holds $\eps\simeq \eta(\TT_\ell;U(\TT_\ell))$ and together with~\eqref{eq1:lemopt} and~\eqref{eq2:lemopt}, this proves~\eqref{eq:lemoptresult1}. Estimate~\eqref{eq:lemoptresult2} follows from~\eqref{refinement:sons} and~\eqref{eq2:lemopt}, i.e.
\begin{align*}
 |\RR(\TT_\ell,\widehat\TT)|
& \lesssim |(\TT_\ell\backslash\widehat\TT)|
 \le |\widehat\TT| - |\TT_\ell|
 \le |\TT_\eps| - |\TT_0|\\
 &\lesssim \norm{(\eta(\cdot),U(\cdot))}{\B_s}^{1/s}\eps^{-1/s}
 \simeq \norm{(\eta(\cdot),U(\cdot))}{\B_s}^{1/s}\eta(\TT_\ell;U(\TT_\ell))^{-1/s}.
\end{align*}
This concludes the proof.
\end{proof}

The subsequent proposition states the quasi-optimality for a general adaptive algorithm which fits in the framework of this section under the axioms stability~\eqref{A:stable}, reduction~\eqref{A:reduction}, and discrete reliability~\eqref{A:dlr}. For the upper bound in~\eqref{eq:optimality}, the mesh 
refinement strategy has to fulfil the mesh closure estimate~\eqref{refinement:closure}, while the lower bound hinges only on~\eqref{refinement:sons2}.

\begin{proposition}\label{prop:optimality}
Suppose that~\eqref{eq:lemoptresult1}--\eqref{eq:lemoptresult2} of Lemma~\ref{lem:optimality} are valid
for one particular choice of $0<\kappa_0,\theta_0<1$, and assume that the estimator $\eta(\TT_\ell;U(\TT_\ell))$ satisfies the equivalent estimates~\eqref{eq:estsum}--\eqref{eq:propRconv} from Lemma~\ref{lem:Rconv}.
For $0<\theta\leq \theta_0$, then the equivalence
\begin{align}\label{eq:optimality}
c_{\rm opt}\norm{(\eta(\cdot),U(\cdot))}{\B_s} \leq\sup_{\ell\in\N_0}\frac{\eta(\TT_\ell;U(\TT_\ell))}{(|\TT_\ell|-|\TT_0|+1)^{-s}}\leq\c{optimal}\norm{(\eta(\cdot),U(\cdot))}{\B_s}
\end{align}
holds for all $s>0$. % and some constant $\setc{optimal}>0$. 
The constant $\c{optimal}>0$ depends only on  $C_{\rm min},\c{mesh},\c{invsum},\c{lemopthelp2},C_{\rm son}>0$
and $s>0$, while $c_{\rm opt}>0$ depends only on $C_{\rm son}$.
\end{proposition}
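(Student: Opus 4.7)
My plan follows the classical two-sided argument, splitting the proof into an upper bound (the substantive direction) and a lower bound (essentially a counting exercise).

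For the upper bound $\sup_\ell \eta(\TT_\ell;U(\TT_\ell))(|\TT_\ell|-|\TT_0|+1)^s\leq \c{optimal}\norm{(\eta(\cdot),U(\cdot))}{\B_s}$, I would first reduce to the case $\norm{(\eta(\cdot),U(\cdot))}{\B_s}<\infty$ (otherwise there is nothing to prove) and handle $\ell=0$ separately: taking $N=0$ in the definition of $\B_s$ yields $\eta(\TT_0;U(\TT_0))\leq\norm{(\eta(\cdot),U(\cdot))}{\B_s}$. Then, for each fixed $\ell\geq 1$, I apply Lemma~\ref{lem:optimality} at level $\ell$ to obtain a refinement $\widehat\TT_\ell$ of $\TT_\ell$ whose associated set $\RR(\TT_\ell,\widehat\TT_\ell)\subseteq\TT_\ell$ satisfies the D\"orfler criterion~\eqref{eq:doerfler} for the chosen $0<\theta\leq\theta_0$ and obeys $|\RR(\TT_\ell,\widehat\TT_\ell)|\leq\c{lemopthelp2}\norm{(\eta(\cdot),U(\cdot))}{\B_s}^{1/s}\eta(\TT_\ell;U(\TT_\ell))^{-1/s}$. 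Almost-minimality of $\MM_\ell$ in Algorithm~\ref{algorithm} (Remark~\ref{rem:adaptivealgorithm}) then yields $|\MM_\ell|\leq C_{\rm min}|\RR(\TT_\ell,\widehat\TT_\ell)|$.

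Next, I would invoke the closure estimate~\eqref{refinement:closure} to collect the marked sets across levels and combine with the inverse summability~\eqref{eq:endest} from Lemma~\ref{lem:Rconv} (available by hypothesis) to telescope the resulting sum into a single term:
\begin{align*}
|\TT_\ell|-|\TT_0|\leq \c{mesh}\sum_{k=0}^{\ell-1}|\MM_k|\lesssim\norm{(\eta(\cdot),U(\cdot))}{\B_s}^{1/s}\sum_{k=0}^{\ell-1}\eta(\TT_k;U(\TT_k))^{-1/s}\lesssim \norm{(\eta(\cdot),U(\cdot))}{\B_s}^{1/s}\eta(\TT_\ell;U(\TT_\ell))^{-1/s}.
\end{align*}
Rearranging yields $\eta(\TT_\ell;U(\TT_\ell))(|\TT_\ell|-|\TT_0|+1)^s\lesssim\norm{(\eta(\cdot),U(\cdot))}{\B_s}$ with a multiplicative constant depending only on the data listed in the proposition.

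For the lower bound, I would argue by direct counting. Given $N\in\N_0$, pick $\ell=\ell(N)\in\N_0$ maximal such that $|\TT_\ell|-|\TT_0|\leq N$ (if no such maximum exists, all $\TT_\ell\in\T(N)$ and the bound follows from $\eta(\TT_\ell;U(\TT_\ell))\to 0$ provided by Lemma~\ref{lem:Rconv}). Then $\TT_\ell\in\T(N)$ gives $\min_{\TT\in\T(N)}\eta(\TT;U(\TT))\leq\eta(\TT_\ell;U(\TT_\ell))$. Maximality of $\ell$ yields $N+1\leq |\TT_{\ell+1}|-|\TT_0|+1$, while the son-bound~\eqref{refinement:sons2} implies $|\TT_{\ell+1}|\leq C_{\rm son}|\TT_\ell|$ and hence $N+1\leq C_{\rm son}(|\TT_\ell|-|\TT_0|+1)$ up to an absorption. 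Multiplying and taking the supremum over $N$ on the left delivers $c_{\rm opt}\norm{(\eta(\cdot),U(\cdot))}{\B_s}\leq\sup_\ell\eta(\TT_\ell;U(\TT_\ell))(|\TT_\ell|-|\TT_0|+1)^s$ with $c_{\rm opt}$ depending only on $C_{\rm son}$.

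The main obstacle lies in the chaining step of the upper bound: one must verify that the set $\RR(\TT_\ell,\widehat\TT_\ell)$ supplied by Lemma~\ref{lem:optimality} indeed triggers the D\"orfler marking in Algorithm~\ref{algorithm}, so that almost-minimality transfers the cardinality bound from $\RR(\TT_\ell,\widehat\TT_\ell)$ onto $\MM_\ell$. This is precisely where the overlay estimate~\eqref{refinement:overlay} and discrete reliability~\eqref{A:dlr} pay off through the combination of Proposition~\ref{prop:doerfler} and Lemma~\ref{lem:optimality}; once this link is recognised, the rest is essentially bookkeeping of constants.
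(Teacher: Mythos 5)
Your proposal is correct and follows the same line of argument as the paper's own proof: upper bound by combining Lemma~\ref{lem:optimality} with almost-minimality of $\MM_\ell$, the closure estimate~\eqref{refinement:closure}, and the inverse summability~\eqref{eq:endest}; lower bound by the maximal-$\ell$ counting argument using~\eqref{refinement:sons2}. The small refinements you add (treating $\ell=0$ explicitly via $N=0$ in the definition of $\norm{\cdot}{\B_s}$, and the degenerate case when no maximal $\ell$ exists) are sound and do not change the essence of the argument.
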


\begin{proof}
For the proof of the upper bound in~\eqref{eq:optimality}, suppose that the right-hand side of~\eqref{eq:optimality} is finite. Otherwise, the upper bound holds trivially.
Step~(iii) of Algorithm~\ref{algorithm} selects some set
$\MM_\ell$ with \emph{(almost) minimal} cardinality which satisfies the D\"orfler marking~\eqref{eq:doerfler}. Since the set $\RR(\TT_\ell;\widehat\TT)$ also satisfies the D\"orfler marking,~\eqref{eq:lemoptresult2} implies
\begin{align}\label{eq:missinglink}
|\MM_\ell|\lesssim |\RR(\TT_\ell;\widehat\TT)|\lesssim\norm{(\eta(\cdot),U(\cdot))}{\B_s}^{1/s}\, \eta(\TT_\ell;U(\TT_\ell))^{-1/s}.
\end{align}
The equivalence of the estimates~\eqref{eq:estsum}--\eqref{eq:propRconv} in Lemma~\ref{lem:Rconv} together with Proposition~\ref{prop:Rconv} allows to employ~\eqref{eq:endest} as well as~\eqref{eq:missinglink} and the optimality of the mesh closure~\eqref{refinement:closure}. For all $\ell\in\N$, this implies
\begin{align}\label{eq:markedest}
\begin{split}
 |\TT_\ell|-|\TT_0|+1\lesssim |\TT_\ell| - |\TT_0|
 &\lesssim \sum_{j=0}^{\ell-1}|\MM_j|
 \lesssim \norm{(\eta(\cdot),U(\cdot))}{\B_s}^{1/s}\,\sum_{j=0}^{\ell-1}\eta(\TT_j;U(\TT_j))^{-1/s}\\
&\lesssim \norm{(\eta(\cdot),U(\cdot))}{\B_s}^{1/s}\,\eta(\TT_\ell;U(\TT_\ell))^{-1/s}.
\end{split}
\end{align}
Consequently,
\begin{align*}
 \eta(\TT_\ell;U(\TT_\ell))(|\TT_\ell| - |\TT_0|+1)^{s}\lesssim \norm{(\eta(\cdot),U(\cdot))}{\B_s} \quad \text{for all } \ell\in\N.
\end{align*}
Since $\eta(\TT_0;U(\TT_0))\leq \norm{(\eta(\cdot),U(\cdot))}{\B_s}$ , this leads to the upper bound in~\eqref{eq:optimality}.

For the proof of the lower bound in~\eqref{eq:optimality}, suppose the middle supremum is finite. Otherwise the lower bound holds trivially. Choose $N\in\N_0$ and the largest possible $\ell\in\N_0$ with $|\TT_\ell|-|\TT_0|\leq N$.
Due to maximality of $\ell$, $N+1<|\TT_{\ell+1}|-|\TT_0|+1\leq C_{\rm son}|\TT_\ell| -|\TT_0|+1\lesssim |\TT_\ell|-|\TT_0|+1$.
This leads to
\begin{align*}
\inf_{\TT\in\T(N)}(N+1)^{s}\eta(\TT;U(\TT)) \lesssim (|\TT_\ell|-|\TT_0|+1)^{s}\eta(\TT_\ell;U(\TT_\ell))
\end{align*}
and concludes the proof.
\end{proof}

\begin{remark}\label{remark:doerfler}
As mentioned above, the axioms stability~\eqref{A:stable}, reduction~\eqref{A:reduction}, and discrete reliability~\eqref{A:dlr} allow for an application of Proposition~\ref{prop:optimality}.
In this case, Lemma~\ref{lem:optimality} implies
the quasi-optimality~\eqref{eq:optimality} of Proposition~\ref{prop:optimality} for all 
$0<\theta<\theta_\star:=(1+\c{stable}^2\c{dlr}^2)^{-1}$, see Proposition~\ref{prop:doerfler}~(ii). 
Moreover, $\c{optimal}>0$ then depends only on $C_{\rm min}$, $\c{mesh}$, $\c{stable},\c{reduction},\c{dlr},\c{qosum}(\epsqo)>0 $ as well as on $\theta$ and $s>0$.
%To see this, we choose $\kappa_0$ sufficiently small, such that we may choose $\delta>0$ in~\eqref{eq:thetastar} large enough to ensure $\theta\leq\theta_0$.
\end{remark}

\begin{proof}[Proof of Theorem~\ref{thm:main}~(ii)]
Lemma~\ref{lemma:dlr2rel} proves that discrete reliability~\eqref{A:dlr} implies reliability~\eqref{A:reliable}.
Stability~\eqref{A:stable} and discrete reliability~\eqref{A:dlr} guarantee that~\eqref{eq:doerfler:opt} holds for all $\kappa_0\in(0,1)$. Together with quasi-monotonicity~\eqref{eq:mon} from Lemma~\ref{lem:mon1}, this implies that~\eqref{eq:lemoptresult1}--\eqref{eq:lemoptresult2} of Lemma~\ref{lem:optimality} are valid.  Moreover,~\eqref{A:stable} and~\eqref{A:reduction} prove estimator reduction~\eqref{eq:estconv} from Lemma~\ref{lem:estconv}. This and quasi-orthogonality~\eqref{A:qosum} together with reliability~\eqref{A:reliable} allow to employ Proposition~\ref{prop:Rconv} which ensures that~\eqref{eq:estsum}--\eqref{eq:propRconv} hold. Finally, 
choose $0<\theta\le\theta_0<\theta_\star$ in 
Proposition~\ref{prop:doerfler}~(ii) with corresponding $0<\kappa_0<1$.
Then, the application of Proposition~\ref{prop:optimality} concludes
the proof.
\end{proof}

%So far, we thus have proved claim (i)--(ii) of Theorem~\ref{thm:main}. 

% !TEX root = axioms.tex

%======================================================================================
%======================================================================================
\section{Laplace Problem with Residual Error Estimator}
\label{section:examples1}
%======================================================================================
%======================================================================================

\noindent
This section applies the abstract analysis of the preceding sections to different discretizations of the Laplace problem.
The examples are taken from conforming, nonconforming, and mixed finite element methods (FEM) as well as the boundary element methods (BEM) for weakly-singular and hyper-singular integral equations.

%%%%%%%%%%%%%%%%%%%%%%%%%%%%%%%%%%%%%%%%%%%%%%%%%%%%%%%%%%%%%%%%%%%%%%%%%%%%%%%%%%%%%%%%%%%%%%%%%%%%%%%%%%%%%%%%%%%%%%%
%%%%%%%%%%%%%%%%%%%%%%%%%%%%%%%%%%%%%%%%%%%%%%%%%%%%%%%%%%%%%%%%%%%%%%%%%%%%%%%%%%%%%%%%%%%%%%%%%%%%%%%%%%%%%%%%%%%%%%%
%%%%%%%%%%%%%%%%%%%%%%%%%%%%%%%%%%%%%%%%%%%%%%%%%%%%%%%%%%%%%%%%%%%%%%%%%%%%%%%%%%%%%%%%%%%%%%%%%%%%%%%%%%%%%%%%%%%%%%%
%%%%%%%%%%%%%%%%%%%%%%%%%%%%%%%%%%%%%%%%%%%%%%%%%%%%%%%%%%%%%%%%%%%%%%%%%%%%%%%%%%%%%%%%%%%%%%%%%%%%%%%%%%%%%%%%%%%%%%%

%======================================================================================
\subsection{Conforming FEM}
%======================================================================================
\label{section:ex:poisson}%
In the context of conforming FEM for symmetric operators, the convergence and quasi-optimality of the adaptive
algorithm has finally been analyzed in the seminal works~\cite{ckns,stevenson07}.
In this section, we show that their results can be reproduced in the abstract
framework developed. Moreover, our approach adapts the idea 
of~\cite{dirichlet3d}, and efficiency~\eqref{A:efficient} is only used to
characterize the approximation class. This provides a qualitative improvement
over~\cite{ckns,stevenson07} in the sense that the upper bound $\theta_\star$
for optimal adaptivity parameters $0<\theta<\theta_\star$ does not depend on
the efficiency constant $\c{efficient}$.

Let $\Omega\subset\R^d$, $d\ge2$, be a bounded Lipschitz domain with polyhedral boundary 
$\Gamma:=\partial\Omega$. With given volume force $f\in L^2(\Omega)$,
we consider the Poisson model problem
%\begin{subequations}
\begin{align}\label{ex:poisson}
-\Delta u &= f\quad\text{in }\Omega
\quad\text{and}\quad
 u=0\quad\text{on }\Gamma.
\end{align}
%\end{subequations}
For the weak formulation, let 
$\XX := H^1_0(\Omega) $ denote the usual Sobolev space, with the equivalent $H^1$-norm 
$\norm{v}{H^1_0(\Omega)}:=\norm{\nabla v}{L^2(\Omega)}$ associated with the scalar product 
\begin{align}\label{eq:poisson:bilinear}
 b(u,v) := \int_\Omega\nabla u\cdot\nabla v\,dx = \int_\Omega fv\,dx
 \quad\text{for all } v\in H^1_0(\Omega).
\end{align}
Then, the weak form of~\eqref{ex:poisson} admits a unique solution $u\in H^1_0(\Omega)$.
Based on a regular triangulation $\TT$ of $\Omega$ into simplices, the conforming finite 
element spaces $\XX(\TT) := \SS^p_0(\TT) := \PP^p(\TT)\cap H^1_0(\Omega)$ of fixed 
polynomial order $p\ge1$ read
\begin{align}\label{eq:polynomials}
 \PP^p(\TT):=\set{v\in L^2(\Omega)}{\forall T\in\TT\quad v|_T \text{ is a polynomial of degree}\,\leq p}.
\end{align}
The discrete formulation
\begin{align}\label{eq:poisson:discrete}
 b(U(\TT),V) = \int_\Omega fV\,dx
 \quad\text{for all }V\in\SS^p_0(\TT)
\end{align}
also admits a unique FE solution $U(\TT)\in\SS^p_0(\TT)$. In particular, all assumptions
of Section~\ref{sec:setting} are satisfied with 
$\dist[\TT]{v}{w} = \norm{v-w}{H^1_0(\Omega)}$ and $\c{triangle}=1$.
The standard residual error estimator consists of the local contributions
\begin{align}\label{ex:poisson:estimator}
 \eta_T(\TT;V)^2 := h_T^2\,\norm{f+\Delta V}{L^2(T)}^2
 + h_T\,\norm{[\partial_nV]}{L^2(\partial T\cap\Omega)}^2\quad \text{for all }T\in\TT,
\end{align}
see e.g.~\cite{ao00,v96} as well as e.g.~\cite{ckns,stevenson07}.

Here, $[\partial_n V]$ denotes the jump of the normal derivative over interior
facets of $\TT$. Following~\cite{ckns}, we use the local mesh-width function
\begin{align}\label{ex:laplace:meshsize}
 h(\TT)\in\PP^0(\TT)
 \quad\text{with}\quad
 h(\TT)|_T := h_T = |T|^{1/d}, 
\end{align}
where $|T|$ denotes the volume 
of an element $T\in\TT$. We employ newest vertex bisection for mesh-refinement and 
stress that the sons $T'$ of a refined element $T$ satisfy $h_{T'}\le 2^{-1/d}h_T$.
Since the admissible meshes $\TT\in\T$ are uniformly shape regular, we note that 
$h_T \simeq \diam(T)$ with the Euclidean diameter $\diam(T)$.
In particular, $\eta(\cdot)$ coincides,
up to a multiplicative constant, with the usual definition found in textbooks,
cf.\ e.g.~\cite{ao00,v96}. We refer to Section~\ref{section:examples3:meshsize} for the proof that the choice of the mesh-size function does not affect convergence and quasi-optimality of the adaptive algorithm.

Recall that the \revision{problem under consideration} involves some symmetric and elliptic \revision{bilinear form} $b(\cdot,\cdot)$.
According to the abstract analysis in Section~\ref{section:conforming}, \revision{it remains} to verify that the residual error
estimator $\eta(\TT;V)$ satisfies stability~\eqref{A:stable}, 
reduction~\eqref{A:reduction}, and discrete reliability~\eqref{A:dlr}.

\begin{proposition}\label{prop:ex:poisson:assumptions}
The conforming discretization of the Poisson problem~\eqref{ex:poisson} with
residual error estimator~\eqref{ex:poisson:estimator} satisfies
stability~\eqref{A:stable}, reduction~\eqref{A:reduction}
with $\q{reduction}=2^{-1/d}$, discrete reliability~\eqref{A:dlr} with 
$\RR(\TT,\widehat\TT) = \TT\backslash\widehat\TT$ and efficiency~\eqref{A:efficient} 
with 
\begin{align}\label{ex:poisson:osc}
\eff{\TT}{U(\TT)}
 := \osc(\TT) 
 := \min_{F\in\PP^{p-1}(\TT)}\norm{h(\TT)\,(f-F)}{L^2(\Omega)},
\end{align}
where $\norm{\osc(\cdot)}{\O_{1/d}}<\infty$ is guaranteed.
The constants $\c{stable},\c{reduction},\c{dlr},\c{efficient}>0$
depend only on the polynomial degree $p\in\N$ and 
on $\T$.
\end{proposition}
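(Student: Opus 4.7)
The verification of Proposition~\ref{prop:ex:poisson:assumptions} proceeds axiom by axiom using standard tools from the conforming AFEM literature, tailored to fit the abstract framework.

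For stability~\eqref{A:stable}: Apply the reverse Minkowski inequality on each fixed $T\in\mathcal{S}\subseteq\TT\cap\widehat\TT$ to reduce the claim to controlling $h_T\,\norm{\Delta(\widehat V-V)}{L^2(T)}$ and $h_T^{1/2}\,\norm{[\partial_n(\widehat V-V)]}{L^2(\partial T\cap\Omega)}$ separately. Since $T$ is not refined, $W:=\widehat V-V\in\SS^p_0(\widehat\TT)$ restricts to a single polynomial on $T$, so the polynomial inverse estimate $h_T\,\norm{\Delta W}{L^2(T)}\lesssim\norm{\nabla W}{L^2(T)}$ and the trace inverse estimate $h_T^{1/2}\,\norm{[\partial_n W]}{L^2(\partial T\cap\Omega)}\lesssim\norm{\nabla W}{L^2(\omega_T(\widehat\TT))}$ apply. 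Summation over $\mathcal{S}$ and finite overlap of patches (from shape regularity~\eqref{refinement:shaperegular}) then yield~\eqref{A:stable}. For reduction~\eqref{A:reduction}: Every $T'\in\widehat\TT\setminus\TT$ is a proper descendant of some $T\in\TT\setminus\widehat\TT$, and newest-vertex bisection enforces $h_{T'}\le 2^{-1/d}h_T$. Because $U(\TT)$ is polynomial on $T$, the jumps of $\partial_n U(\TT)$ across facets of $\widehat\TT$ interior to $T$ vanish, so only jumps along (sub-)facets of $\partial T$ contribute; collecting the factors $h_{T'}^2\le 2^{-2/d}h_T^2$ and $h_{T'}\le 2^{-1/d}h_T$ after applying Young's inequality to separate the perturbation $U(\widehat\TT)-U(\TT)$ (whose coefficient is absorbed into $\c{reduction}$ via the same inverse estimates used above) gives $\q{reduction}=2^{-1/d}$.

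For discrete reliability~\eqref{A:dlr}: Set $W:=U(\widehat\TT)-U(\TT)\in\SS^p_0(\widehat\TT)$ and exploit Galerkin orthogonality $b(W,V)=0$ for all $V\in\SS^p_0(\TT)$. Choose a Scott--Zhang-type quasi-interpolation $J_\TT:\SS^p_0(\widehat\TT)\to\SS^p_0(\TT)$ whose vertex functionals are located on elements of $\widehat\TT$, so that $(W-J_\TT W)|_T=0$ on every $T\in\TT\cap\widehat\TT$ whose vertices are untouched by refinement. Then
\begin{align*}
\norm{\nabla W}{L^2(\Omega)}^2
&=b(W,W-J_\TT W)
=\int_\Omega f(W-J_\TT W)\,dx-\int_\Omega\nabla U(\TT)\cdot\nabla(W-J_\TT W)\,dx,
\end{align*}
and elementwise integration by parts produces precisely the volume residuals $f+\Delta U(\TT)$ and facet jumps $[\partial_n U(\TT)]$ that compose $\eta_T(\TT;U(\TT))$, with the sum restricted to those $T$ where $W-J_\TT W\not\equiv 0$. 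Local first-order approximation properties $\norm{h_T^{-1}(W-J_\TT W)}{L^2(T)}+h_T^{-1/2}\,\norm{W-J_\TT W}{L^2(\partial T)}\lesssim\norm{\nabla W}{L^2(\omega_T)}$ together with Cauchy--Schwarz absorb one power of $\norm{\nabla W}{L^2(\Omega)}$, and the choice of $J_\TT$ confines the contributing indicators to $\RR(\TT,\widehat\TT)=\TT\setminus\widehat\TT$. This is the most delicate step: achieving $\RR=\TT\setminus\widehat\TT$ (rather than a one-ring neighborhood) requires the interpolation to reproduce $V\in\SS^p_0(\TT)$ exactly via functionals supported inside non-refined elements, which is the core innovation of~\cite{ckns} over~\cite{stevenson07}.

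For efficiency~\eqref{A:efficient}: The classical Verfürth bubble-function argument yields $h_T\,\norm{f+\Delta U(\TT)}{L^2(T)}\lesssim\norm{\nabla(u-U(\TT))}{L^2(T)}+h_T\,\norm{f-\Pi^{p-1}f}{L^2(T)}$ via interior bubbles, and $h_T^{1/2}\,\norm{[\partial_n U(\TT)]}{L^2(\partial T\cap\Omega)}\lesssim\norm{\nabla(u-U(\TT))}{L^2(\omega_T)}+\norm{h(\TT)(f-\Pi^{p-1}f)}{L^2(\omega_T)}$ via facet bubbles; summation and finite overlap deliver~\eqref{A:efficient} with $\eff{\TT}{U(\TT)}=\osc(\TT)$. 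Finally, $\norm{\osc(\cdot)}{\O_{1/d}}<\infty$ follows by uniform refinement: the $\ell$-fold uniform bisection of $\TT_0$ yields a mesh $\TT_\ell\in\T(N)$ with $N\simeq 2^{\ell d}|\TT_0|$ and $\norm{h(\TT_\ell)}{L^\infty(\Omega)}\lesssim N^{-1/d}$, hence $\osc(\TT_\ell)\le\norm{h(\TT_\ell)f}{L^2(\Omega)}\lesssim N^{-1/d}\norm{f}{L^2(\Omega)}$.
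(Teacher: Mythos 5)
Your proposal is essentially correct and reconstructs precisely the arguments that the paper disposes of by citation: the paper's own proof consists of pointing to \cite[Corollary~3.4]{ckns} for~\eqref{A:stable}--\eqref{A:reduction}, to \cite[Lemma~3.6]{ckns} for~\eqref{A:dlr}, and to textbooks for~\eqref{A:efficient}, followed by the uniform-refinement remark for $\norm{\osc(\cdot)}{\O_{1/d}}<\infty$. Two small imprecisions in your write-up of discrete reliability are worth flagging, although they do not invalidate the argument. First, the qualifier ``on every $T\in\TT\cap\widehat\TT$ whose vertices are untouched by refinement'' is both unnecessary and, read literally, insufficient to yield $\RR(\TT,\widehat\TT)=\TT\setminus\widehat\TT$; the correct (and achievable) statement is that $(W-J_\TT W)|_T=0$ for \emph{all} $T\in\TT\cap\widehat\TT$. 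This follows once the nodal functionals of $J_\TT$ are chosen to average over elements of $\widehat\TT$ (equivalently, once the operator interpolates nodal values of the continuous piecewise polynomial $W$): at every Lagrange node $z$ of a non-refined $T$ the functional then reproduces $W(z)$ exactly, and since both $W|_T$ and $J_\TT W|_T$ lie in $\PP^p(T)$ and agree at all nodes of $T$, they coincide on $T$. With this the support of $W-J_\TT W$ is confined to $\bigcup(\TT\setminus\widehat\TT)$ and integration by parts produces only indicators over $\TT\setminus\widehat\TT$, as required. Second, in the oscillation bound the element count after $\ell$ uniform bisection steps is $N\simeq 2^{\ell}|\TT_0|$ (or $2^{\ell d}|\TT_0|$ if each step is a full red refinement halving the diameter); either convention gives $\norm{h(\TT_\ell)}{L^\infty(\Omega)}\lesssim N^{-1/d}$ and hence $\osc(\TT_\ell)\lesssim N^{-1/d}\norm{f}{L^2(\Omega)}$, so the conclusion stands, but as written the exponent is inconsistent with ``bisection.'' One should also note that the definition of $\norm{\osc(\cdot)}{\O_{s}}$ involves a supremum over \emph{all} $N\in\N_0$, so for $N$ between the dyadic values one simply picks the largest uniform refinement with at most $N$ extra elements; this gap is routine but should be acknowledged.
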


\begin{proof}
Stability on non-refined elements~\eqref{A:stable} as well as reduction on 
refined elements~\eqref{A:reduction} are part of the proof 
of~\cite[Corollary~3.4]{ckns}. The discrete reliability~\eqref{A:dlr} is 
found in~\cite[Lemma~3.6]{ckns}. Efficiency~\eqref{A:efficient} is well exposed in text books on a~posteriori
error estimation, see e.g.~\cite{ao00,v96}, and $\norm{\osc(\cdot)}{\O_{1/d}}<\infty$ follows by definition~\eqref{eq:oscapprox} for a sequence of uniform meshes.
\end{proof}

\begin{consequence}\label{con:conforming}
The adaptive algorithm leads to convergence with quasi-optimal rate
for the estimator $\eta(\cdot)$ in the sense of Theorem~\ref{thm:main}. 
Theorem~\ref{thm:mainerr} proves that for 
lowest-order elements $p=1$, even optimal rates for the discretization error
are achieved, while for higher-order elements $p\geq 2$ additional regularity of
$f$ has to be imposed, e.g., $f\in H^1(\Omega)$ for $p=2$. \qed
\end{consequence}

Numerical examples for the 2D Laplacian with mixed Dirichlet-Neumann boundary conditions are found in~\cite{p1afem} together with a detailed discussion of the implementation. Examples for 3D are found in~\cite{ckns}.
%%%%%%%%%%%%%%%%%%%%%%%%%%%%%%%%%%%%%%%%%%%%%%%%%%%%%%%%%%%%%%%%%%%%%%%%%%%%%%%%%%%%%%%%%%%%%%%%%%%%%%%%%%%%%%%%%%%%%%%
%%%%%%%%%%%%%%%%%%%%%%%%%%%%%%%%%%%%%%%%%%%%%%%%%%%%%%%%%%%%%%%%%%%%%%%%%%%%%%%%%%%%%%%%%%%%%%%%%%%%%%%%%%%%%%%%%%%%%%%
%%%%%%%%%%%%%%%%%%%%%%%%%%%%%%%%%%%%%%%%%%%%%%%%%%%%%%%%%%%%%%%%%%%%%%%%%%%%%%%%%%%%%%%%%%%%%%%%%%%%%%%%%%%%%%%%%%%%%%%
%%%%%%%%%%%%%%%%%%%%%%%%%%%%%%%%%%%%%%%%%%%%%%%%%%%%%%%%%%%%%%%%%%%%%%%%%%%%%%%%%%%%%%%%%%%%%%%%%%%%%%%%%%%%%%%%%%%%%%%

%======================================================================
% requires definitions
%\DeclareMathOperator{\mmid}{mid}================
\subsection{Lowest-order nonconforming FEM}
%======================================================================================
\label{section:ex:nonconforming}
The convergence analysis of adaptive nonconforming finite element techniques is much younger than that of conforming ones. The lack of the Galerkin orthogonality led to the
invention of the quasi-orthogonality in \cite{ch06b} and  thereafter in \cite{rabus10,bms09,mzs10}. 

Consider the Poisson model problem \eqref{ex:poisson} of Subection~\ref{section:ex:poisson} with the
weak formulation \eqref{eq:poisson:bilinear} in the Hilbert space
$\XX := H^1_0(\Omega)$. Let $\TT\in\T$ be a regular triangulation, and let $\EE(\TT)$ denote the set of element facets. The discrete problem is based on the piecewise gradients for piecewise linear polynomials
\begin{align}\label{ex:nonconfPMP:CR}
\begin{split}
\XX(\TT):=CR^1_0(\TT):=\set{V\in \PP^1(\TT)}{&V\text{ is continuous at }
\mmid(\EE(\TT))\cap \Omega\\
&\text{ and }  V=0 \text{ at }    \mmid(\EE(\TT)) \cap \Gamma }
\end{split}
\end{align}
where $\mmid(\EE(\TT))$ denotes the set of barycenters of all facets of $\TT$. Given $U,V\in  CR^1_0(\TT)$ in the nonconforming $P_1$-FEM also sometimes named after Crouzeix and Raviart \cite{CR73}, the piecewise version of the bilinear form,
\begin{equation}
b(U,V) := \sum_{T\in\TT} \int_T   \nabla U\cdot \nabla V\, dx,
\end{equation}
where the weak gradient $\nabla(\cdot)$ is replaced by the $\TT$-piecewise gradient $\nabla_\TT(\cdot)$,
defines a scalar product on $CR^1_0(\TT)$. The induced norm 
\[
\norm{\cdot}{\XX(\TT)} = \left(  \sum_{T\in\TT} \norm{\nabla_\TT(\cdot)}{L^2(T)}^2 \right)^{1/2}
\]
equals the piecewise $H^1$-seminorm and controls the $L^2$-norm in the sense of a discrete Friedrichs inequality~\cite{SR}, and all assumptions of Section~\ref{sec:setting} hold with 
$\dist[\TT]{v}{w} = \norm{v-w}{\XX(\TT)}$.
Hence,  the discrete formulation
\begin{align}\label{eq:poisson:CRdiscrete}
 b(U(\TT),V) = \int_\Omega fV\,dx
 \quad\text{for all }V\in CR^1_0(\TT)
\end{align}
admits a unique FE solution $U(\TT)\in CR^1_0(\TT)$. We adopt the mesh-size function $h_T=h(\TT)|_T$ from~\eqref{ex:laplace:meshsize}. Note that analogously to Section~\ref{section:ex:poisson}, we use newest vertex bisection and obtain $h_{\widehat T}\leq 2^{-1/d}h_T$ for all $T\in\TT$ and its successors $\widehat T\in\widehat\TT\setminus\TT$ with $\widehat T \subset T$.
The explicit residual-based error estimator consists of the local contributions
\begin{align}\label{ex:poisson:CRestimator}
 \eta_T(\TT;V)^2 := h_T^2\,\norm{f}{L^2(T)}^2
 + h_T\,\norm{[\partial_t V]}{L^2(\partial T)}^2\quad \text{for all }T\in\TT.
\end{align}
Here $[\partial_t V]$ denotes the jump of the $(d-1)$-dimensional tangential derivatives across  interior
facets of $\TT$ and (for the homogeneous Dirichlet boundary conditions at hand)
$[\partial_t V]:=\partial_t V$ along boundary facets $E\in\EE(\TT)$ with $E\subset \partial\Omega$.

\begin{proposition}\label{prop:ex:poisson:CRassumptions}
The nonconforming discretization of the Poisson problem~\eqref{ex:poisson} with
residual error estimator~\eqref{ex:poisson:CRestimator} satisfies
stability~\eqref{A:stable}, reduction~\eqref{A:reduction}
with $\q{reduction}=2^{-1/d}$, general quasi-orthogonality~\eqref{A:qosum}, discrete reliability~\eqref{A:dlr} with 
$\RR(\TT,\widehat\TT) = \TT\backslash\widehat\TT$ and efficiency~\eqref{A:efficient} 
with $ \eff{\TT}{U(\TT)}:= \osc(\TT)$ from~\eqref{ex:poisson:osc}
and hence $\norm{\osc(\cdot)}{\O_{1/d}}<\infty$. 
The constants $\c{stable},\c{reduction},\c{dlr},\c{efficient}>0$
depend only on $\T$.%uniform boundedness of the shape regularity constant 
%in $\TT$.
\end{proposition}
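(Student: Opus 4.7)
The plan is to establish each of the axioms by combining standard residual-estimator arguments with tools tailored to the nonconforming CR setting. Since every $V\in CR^1_0(\TT)$ is elementwise affine, the elementwise Laplacian vanishes and the volume residual simplifies to $h_T\norm{f}{L^2(T)}$; this \emph{helps} for (A1), (A2), and efficiency, so the genuine obstacles are the loss of Galerkin orthogonality in (A3) and the non-inclusion $\XX(\TT)\not\subset\XX(\widehat\TT)$ in (A4). Stability (A1) would follow by applying the reverse triangle inequality in $\ell^2$: the volume contributions $h_T\norm{f}{L^2(T)}$ are independent of the discrete argument and cancel on every $T\in\mathcal{S}\subseteq\TT\cap\widehat\TT$, while for the tangential-jump contributions the difference $W:=\widehat V-V$ is piecewise affine on $\widehat\TT$, so an inverse trace estimate $h_T^{1/2}\norm{\partial_t W}{L^2(\partial T)}\lesssim\norm{\nabla_{\widehat\TT}W}{L^2(\omega_T)}$ summed over the uniformly shape-regular patches yields the bound. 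For the reduction (A2), every child $\widehat T\subsetneq T$ under newest vertex bisection satisfies $h_{\widehat T}\le 2^{-1/d}h_T$; a Young-type split of $[\partial_t U(\widehat\TT)]$ into contributions from $[\partial_t U(\TT)]$ and $[\partial_t(U(\widehat\TT)-U(\TT))]$, combined with the same trace inverse estimate on the latter and the mesh-size decrease on the former, gives (A2) with $\q{reduction}=2^{-1/d}$.

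The discrete reliability (A4) is the main technical obstacle and I would follow the strategy of Carstensen--Hoppe~\cite{ch06a} and Rabus~\cite{rabus10}. Set $W:=U(\widehat\TT)-U(\TT)\in CR^1_0(\widehat\TT)$ (viewing $U(\TT)$ on the fine mesh) and decompose $W=W_c+W_b$, where $W_c\in\SS^1_0(\widehat\TT)$ is a conforming companion obtained by averaging the facet-midpoint values of $W$ at interior Lagrange nodes of $\widehat\TT$, and $W_b:=W-W_c$. The remainder $W_b$ vanishes at the midpoints of every facet shared by $\TT$ and $\widehat\TT$ and is therefore supported in a neighbourhood of $\TT\setminus\widehat\TT$, with the local bound $\norm{\nabla_{\widehat\TT}W_b}{L^2(T)}^2\lesssim h_T\norm{[\partial_t U(\TT)]}{L^2(\partial T)}^2$. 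The conforming piece $W_c\in H^1_0(\Omega)$ is admissible in both CR Galerkin identities, and Scott--Zhang interpolation of $W_c$ into $\SS^1_0(\TT)$ reduces $\bform{W}{W_c}$ to an integral of $f$ against a residual localised on $\TT\setminus\widehat\TT$. Combining both contributions via $\norm{\nabla_{\widehat\TT}W}{L^2(\Omega)}^2=\bform{W}{W_c}+\bform{W}{W_b}$ produces (A4) with $\RR(\TT,\widehat\TT)=\TT\setminus\widehat\TT$; reliability with $\RR=\TT\setminus\widehat\TT$ then follows from Lemma~\ref{lemma:dlr2rel}.

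For the general quasi-orthogonality (A3), I would exploit the well-known nonconforming perturbation of the Pythagoras identity,
\begin{align*}
\norm{\nabla_{\widehat\TT}(u-U(\widehat\TT))}{L^2(\Omega)}^2
=\norm{\nabla_{\TT}(u-U(\TT))}{L^2(\Omega)}^2-\norm{\nabla_{\widehat\TT}(U(\widehat\TT)-U(\TT))}{L^2(\Omega)}^2+2\,\Phi(\TT,\widehat\TT),
\end{align*}
where the consistency term $\Phi(\TT,\widehat\TT)$ collects jumps of $\nabla_{\widehat\TT}(U(\widehat\TT)-U(\TT))$ tested against $u-U(\TT)$ on facets of $\widehat\TT$ and, following~\cite{ch06a}, satisfies $|\Phi(\TT,\widehat\TT)|\le C\,\eta(\TT;U(\TT))\,\norm{\nabla_{\widehat\TT}(U(\widehat\TT)-U(\TT))}{L^2(\Omega)}$. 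A Young inequality absorbs this term and yields (B3a) with arbitrary $\eps>0$ and $\mu(\TT)=\eta(\TT;U(\TT))$, for which (B3b) is trivial by construction; Lemma~\ref{lem:qo} then converts this to (A3) for every admissible $\epsqo$. Finally, the efficiency assertion with oscillations $\osc(\TT)$ as in~\eqref{ex:poisson:osc} is the classical Verf\"urth argument using element and facet bubble functions, adapted to the nonconforming case as in~\cite{ch06a,rabus10}, and $\norm{\osc(\cdot)}{\O_{1/d}}<\infty$ is immediate by evaluating $\osc(\cdot)$ on a sequence of uniform refinements of $\TT_0$.
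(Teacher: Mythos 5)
Your treatment of stability, reduction, and efficiency matches the standard arguments the paper cites, and is fine in outline. There are, however, two points where the proposal diverges from a correct proof.

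The critical gap is in the quasi-orthogonality step. Axiom~\eqref{A:qoa} requires the perturbation to appear as a telescoping difference $\c{qo}(\eps)\bigl(\mu(\TT)^2-\mu(\widehat\TT)^2\bigr)$; this structure is essential for Lemma~\ref{lem:qo}, since the sum over levels collapses only because of telescoping. You claim that after Young's inequality the consistency term is absorbed and \eqref{A:qoa} holds ``with $\mu(\TT)=\eta(\TT;U(\TT))$,'' but your bound $|\Phi|\le C\,\eta(\TT;U(\TT))\,\norm{\nabla_{\widehat\TT}(U(\widehat\TT)-U(\TT))}{L^2(\Omega)}$ produces, after Young, only a $+\,C'\eta(\TT;U(\TT))^2$ term, not the difference $\eta(\TT;U(\TT))^2-\eta(\widehat\TT;U(\widehat\TT))^2$. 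There is no guarantee the estimator decreases from one mesh to its refinement, so your choice of $\mu$ does not yield \eqref{A:qoa}, and summing in Lemma~\ref{lem:qo} would require precisely the summability of $\eta(\TT_k;U(\TT_k))^2$ that one is trying to establish --- the argument is circular. The paper's proof avoids this by using the sharper, \emph{localized} quasi-orthogonality of~\cite{bms09,ch06b,rabus10}, in which the consistency term is controlled by $\|h(\TT)f\|_{\TT\setminus\widehat\TT}$ only on refined elements; choosing $\mu(\TT)=\|h(\TT)f\|_{L^2(\Omega)}$ then gives a genuine telescoping difference, because the mesh-size contracts by $2^{-1/d}$ on refined elements, so $\|h(\TT)f\|_{\TT\setminus\widehat\TT}^2\le(\mu(\TT)^2-\mu(\widehat\TT)^2)/(1-2^{-2/d})$. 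The localization of the consistency term to $\TT\setminus\widehat\TT$ is the whole point and is missing from your argument.

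On discrete reliability, you propose a conforming-companion decomposition $W=W_c+W_b$ with $W_b$ ``supported in a neighbourhood of $\TT\setminus\widehat\TT$.'' The paper instead proves~\eqref{A:dlr} via a (continuous or discrete) Helmholtz decomposition following~\cite{rabus10}, which is why the result is stated for simply connected $\Omega$. Your approach is different in method and could in principle be carried out, but the key localization claim for $W_b$ is not justified: $W=U(\widehat\TT)-U(\TT)$ differs from zero globally, and a conforming average $W_c\in\SS^1_0(\widehat\TT)$ built from nodal values will not reproduce the facet-midpoint values of $W$ on unrefined facets, so $W_b=W-W_c$ has no a~priori reason to vanish away from refined elements. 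As written, this step needs a genuinely different construction (e.g.\ matching $U(\TT)$ exactly on $\TT\cap\widehat\TT$) before the local bound $\norm{\nabla_{\widehat\TT}W_b}{L^2(T)}^2\lesssim h_T\norm{[\partial_t U(\TT)]}{L^2(\partial T)}^2$ could be justified.
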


\begin{proof}
Stability~\eqref{A:stable} and reduction~\eqref{A:reduction} follow as for the conforming case by reduction of the mesh-size function and standard inverse inequalities.
Efficiency~\eqref{A:efficient} is established in \cite{DDPV,CBJ,CCJH}, while the discrete reliability~\eqref{A:dlr} is shown in \cite[Sect.~4]{rabus10} for $d=2$, but the proof essentially 
applies to any dimension. The aforementioned contributions utilize a continuous or discrete Helmholtz decomposition and are therefore restricted to simply connected domains. The general case is exposed in \cite{CGS}. Notice the abbreviation for the $L^2$-norm on the refined domain $\bigcup( \TT\setminus\widehat\TT )$
\[
\|  \cdot \|_{\TT\setminus\widehat\TT}:=\Big(   \sum_{T\in \TT\setminus\widehat\TT}
\|  \cdot \|_{L^2(T)}^2\Big)^{1/2}.
\]
The quasi-orthogonality of \cite{bms09,ch06b} 
guarantees, e.g., in the form of \cite[Lemma~2.2]{rabus10}, that
\begin{align*}
\enorm{U(\widehat\TT)-U(\TT)}_{\XX(\widehat\TT)}^2 &\leq 
\enorm{u-U(\TT)}_{\XX(\TT)}^2 - \enorm{u-U(\widehat\TT)}_{\XX(\widehat\TT)}^2 \\
&\quad\quad+ C\enorm{u-U(\widehat\TT)}_{\XX(\widehat\TT)}
\|  h(\TT)\, f\|_{\TT\setminus\widehat\TT}
\end{align*}
for some generic constant $C\simeq 1$ which depends on $\T$ . For any
$0<\epsqo<1$, the Young inequality yields
\[
C\enorm{u-U(\widehat\TT)}_{\XX(\widehat\TT)}
\|  h(\TT)\, f\|_{\TT\setminus\widehat\TT} \le 
\epsqo\enorm{u-U(\widehat\TT)}_{\XX(\widehat\TT)}^2
+ C^2\|  h(\TT)\, f\|_{\TT\setminus\widehat\TT}^2/(4\epsqo).
\]
The analysis of the last term starts with the observation that 
\[
\mu(\TT) :=  \|  h(\TT) f  \|_{L^2(\Omega)}
\]
defines a function $\mu:\T\to\R$ with 
\[
\|  h(\TT)\, f\|_{\TT\setminus\widehat\TT}^2\le  \left(  
\mu(\TT)^2-\mu(\widehat\TT)^2\right)/(1-2^{2/d}).
\]
In fact, any contribution for $T\in \TT\cap\widehat\TT$ vanishes on both sides while
for any 
$ \widehat{T}\in\widehat\TT$ and $T\in\TT \setminus\widehat\TT$ 
with  $ \widehat{T}\subset T$, the local mesh-size satisfies  $h_{\widehat{T}}\le 2^{-1/d} h_T$. 
The combination of the aforementioned estimates result in \eqref{A:qoa}
with $\c{qo}(\epsqo):= C^2/\left(\epsqo  (1-2^{2/d}) 4 \right)$.
Since the term $\mu(\TT)$ is part of the estimator $\eta(\TT,V)$, it follows
$C_2=1$ in \eqref{A:qob}.
This and Lemma~\ref{lem:qo} imply the general quasi-orthogonality~\eqref{A:qosum}.
\end{proof}

\begin{consequence}
The adaptive algorithm leads to convergence with quasi-optimal rate
in the sense of Theorem~\ref{thm:main} and Theorem~\ref{thm:mainerr}. \qed
\end{consequence}
Numerical examples in 2D that underline the above result can be found in~\cite{ch06a}.

%%%%%%%%%%%%%%%%%%%%%%%%%%%%%%%%%%%%%%%%%%%%%%%%%%%%%%%%%%%%%%%%%%%%%%%%%%%%%%%%%%%%%%%%%%%%%%%%%%%%%%%%%%%%%%%%%%%%%%%%%%%%%%%%%%%%%%%%%%
% requires definitions
%\DeclareMathOperator{\mmid}{mid}
%\DeclareMathOperator{\ddiv}{div}
%\DeclareMathOperator{\curl}{curl}
%
%======================================================================================
\subsection{Mixed FEM}
%======================================================================================
\label{section:ex:mixed}
The mixed formulation of the Poisson model problem~\eqref{ex:poisson} involves
the product Hilbert space
$\XX:=H(\ddiv,\Omega)\times L^2(\Omega) $ with 
\[
H(\ddiv,\Omega):=\{q\in L^2(\Omega;\R^n):\ddiv q\in L^2(\Omega)\}
\]
equipped with the \revision{corresponding} norms i.e. 
\begin{align*}
\norm{(q,v)}{\XX}^2:= \normLtwo{q}{\Omega}^2+ \normLtwo{\ddiv q}{\Omega}^2 + \normLtwo{v}{\Omega}^2 
\end{align*}
and $\dist[\TT]{(p,u)}{(q,v)} = \norm{(p-q,u-v)}{\XX}$.
The weak formulation~\eqref{eq:poisson:bilinear} now involves the bilinear form $b:\XX\times\XX\to\R$ and 
right-hand side $F\in \XX^*$ defined
for any $(p,u),(q,v)\in H(\ddiv,\Omega)\times L^2(\Omega) $ by
\begin{align*}
b((p,u),(q,v))&:=\int_\Omega\left(  p\cdot q+ u\ddiv q+v\ddiv p\right)dx, \\
F(q,v)&=\int_\Omega fv\, dx ,
\end{align*}
where $f\in L^2(\Omega)$ is the right-hand side in the Poisson problem. Let \revision{$\TT\in\T$} be a regular triangulation, and let $\EE(\TT)$ denote the set of element facets.
The conforming mixed finite element function spaces read
\[
\XX(\TT):= \{ Q\in H(\ddiv,\Omega): \forall T\in\TT,\,
Q|_T\in M_k(T)\}\times \PP^k(\TT)\subset \XX,
\]
with the Raviart-Thomas (RT) mixed finite element space
\begin{align*}
M_k(T):=\set{ Q\in \PP^{k+1}(T;\R^d) }{ 
& \exists a_1,\dots,a_d,b\in \PP^k(T)\,\forall x\in T,\\
& Q(x)=\big(a_1(x)+b(x)\,x_1,\dots,a_d(x)+ b(x)\, x_d\big)   }
\end{align*}
or the Brezzi-Douglas-Marini (BDM) mixed finite element 
space $M_k(T):=\PP^{k+1}(T;\R^d) $ amongst many other examples for $k\in\N_0$.
The discrete formulation 
\begin{align*}
b\big((P(\TT),U(\TT)),(Q,V)\big)=F(Q,V)\quad\text{for all } (Q,V)\in \XX(\TT),
\end{align*}
admits a unique solution $(P(\TT),U(\TT))\in\XX(\TT)$ cf.\ e.g.~\cite{brezzi-fortin}.
With the local mesh-size function $h_T:=h(\TT)|_T$ from~\eqref{ex:laplace:meshsize},
the explicit residual-based a~posteriori error estimator for $d=2,3$ 
consists of the local contributions, for all $T\in\TT$,
\begin{align}\label{ex:poisson:MFEMestimator}
 \eta_T(\TT;Q)^2 := h_T^2\,\norm{\curl Q}{L^2(T)}^2
 + h_T^2\,\norm{f-\Pi_k f}{L^2(T)}^2
 + h_T\,\norm{[Q\times \nu]}{L^2(\partial T)}^2.
\end{align}
Here,  $\curl $ denotes the rotation operator ($=\partial\cdot/\partial x_2-\partial\cdot/\partial x_1$ in 2D) and  $[Q\times \nu]$ denotes the
jump of the $(d-1)$-dimensional tangential derivatives across  interior
facets $E\in\EE(\TT)$ with $E\subseteq\Omega$ with unit normal $\nu$ \revision{along $\partial T$.} For the homogeneous Dirichlet 
boundary conditions at hand, we define
$[Q\times \nu]:=Q\times \nu$ along boundary facets $E\in\EE(\TT)$ with $E\subseteq \partial\Omega$. Finally,
$\Pi_k:\,L^2(\Omega)\to \PP^k(\TT)$ is the $L^2$-orthogonal projection onto $P^k(\TT)$.

\revision{The} newest-vertex bisection for mesh refinement \revision{allows} the following result.
\begin{proposition}\label{prop:ex:poisson:MFEMassumptions}
The mixed formulation of the Poisson problem~\eqref{ex:poisson} on a simply connected 
Litschitz domain $\Omega$ in $d=2,3$ dimensions with
residual error estimator~\eqref{ex:poisson:MFEMestimator} satisfies
stability~\eqref{A:stable}, reduction~\eqref{A:reduction}
with $\q{reduction}=2^{-1/d}$, general quasi-orthogonality~\eqref{A:qosum}, discrete reliability~\eqref{A:dlr} with 
$\RR(\TT,\widehat\TT) = \TT\backslash\widehat\TT$ and efficiency~\eqref{A:efficient}  with $ \eff{\TT}{U(\TT)}:= \osc(\TT)$ from~\eqref{ex:poisson:osc}
and hence $\norm{\osc(\cdot)}{\O_{1/d}}<\infty$. 
As above, the constants $\c{stable}$, $\c{reduction}$, $\c{dlr}$, $\c{efficient}>0$
depend only on the polynomial degree $k$ and on $\T$.%uniform boundedness of the shape regularity constant 
%in $\TT$.
\end{proposition}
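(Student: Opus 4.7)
The plan is to follow the template of Proposition~\ref{prop:ex:poisson:CRassumptions} (the nonconforming case), since mixed methods share with Crouzeix-Raviart the absence of a pointwise Galerkin orthogonality in the energy norm and rely on a Helmholtz decomposition for the reliability argument. First I would treat the local axioms~\eqref{A:stable} and~\eqref{A:reduction}, which are insensitive to the underlying problem and depend only on the form of $\eta_T(\TT;Q)$ in~\eqref{ex:poisson:MFEMestimator} and on the contraction $h_{\widehat T}\le 2^{-1/d}h_T$ provided by newest-vertex bisection.

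For~\eqref{A:stable} on $\mathcal{S}\subseteq\TT\cap\widehat\TT$, the oscillation contribution $h_T\|f-\Pi_k f\|_{L^2(T)}$ is mesh-independent (since $\Pi_k$ acts element-wise and $T$ is undivided) and cancels in the reverse triangle inequality. The remaining curl- and tangential-jump contributions are bounded via the inverse estimates $\|\curl V\|_{L^2(T)}\lesssim h_T^{-1}\|V\|_{L^2(T)}$ and $\|V\times\nu\|_{L^2(\partial T)}\lesssim h_T^{-1/2}\|V\|_{L^2(T)}$ for piecewise polynomials, producing a bound by $\|\widehat Q-Q\|_{L^2(\Omega)}\le\dist[\widehat\TT]{(\widehat Q,\widehat U)}{(Q,U)}$. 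For~\eqref{A:reduction}, each son $\widehat T$ of $T\in\TT\setminus\widehat\TT$ satisfies $h_{\widehat T}\le 2^{-1/d}h_T$, so the volume terms gain a factor $2^{-2/d}$ and the facet term a factor $2^{-1/d}$; a Young-inequality step combined with the same inverse estimates then yields the stated $\q{reduction}=2^{-1/d}$ together with the perturbation $\c{reduction}\dist[\widehat\TT]{U(\widehat\TT)}{U(\TT)}^2$.

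The genuinely non-routine step is discrete reliability~\eqref{A:dlr}. The Galerkin relation $b\big((P(\widehat\TT)-P(\TT),U(\widehat\TT)-U(\TT)),(Q,V)\big)=0$ for all $(Q,V)\in\XX(\TT)$ together with the exact identity $\ddiv(P(\widehat\TT)-P(\TT))=\Pi_k^{\widehat\TT}f-\Pi_k^{\TT}f$ (which vanishes on $\TT\cap\widehat\TT$) reduces the bound to controlling the solenoidal part of $P(\widehat\TT)-P(\TT)$. Simple-connectedness of $\Omega$ and a Helmholtz decomposition represent this solenoidal part as $\curl\beta$ with $\beta$ in an $H^1$-space; testing with $\beta-\beta_\TT$ for a Scott-Zhang-type quasi-interpolant $\beta_\TT$ onto the continuous space on $\TT$ localises the estimate to an element-patch around $\TT\setminus\widehat\TT$, and an element-wise integration by parts converts the resulting bound into exactly $\sum_{T\in\TT\setminus\widehat\TT}\eta_T(\TT;U(\TT))^2$. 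This variant of the argument from~\cite{ch06a,LCMHJX,HuangXu} is what permits the sharp choice $\RR(\TT,\widehat\TT)=\TT\setminus\widehat\TT$, without an additional layer of neighbours. I expect this to be the main obstacle, because it needs both the topological hypothesis on $\Omega$ and the commuting-diagram property of the RT/BDM interpolant.

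Finally, general quasi-orthogonality~\eqref{A:qosum} will be deduced from Lemma~\ref{lem:qo} with $\mu(\TT):=\|h(\TT)(f-\Pi_k f)\|_{L^2(\Omega)}$: property~\eqref{A:qob} is immediate from the definition of $\eta$, while~\eqref{A:qoa} follows from the mixed-FEM quasi-orthogonality available in~\cite{ch06a,HuangXu} together with a Young inequality absorbing the cross term of type $\|p-P(\widehat\TT)\|_{L^2(\Omega)}\|h(\TT)(f-\Pi_k f)\|_{L^2(\bigcup(\TT\setminus\widehat\TT))}$; the analysis parallels the proof of~\eqref{A:qoa} in Proposition~\ref{prop:ex:poisson:CRassumptions}. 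Efficiency~\eqref{A:efficient} and the bound $\|\osc(\cdot)\|_{\O_{1/d}}<\infty$ are classical: \Verfuerth-type bubble-function arguments applied separately to the $\curl$ and to the tangential jumps yield the former (cf.~\cite{ao00,v96,LCMHJX}), and the latter follows by evaluating~\eqref{eq:oscapprox} on a sequence of uniformly refined meshes, using $h(\TT)\to 0$ uniformly and $f\in L^2(\Omega)$.
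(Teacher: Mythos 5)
Your proposal follows essentially the same path as the paper's proof: stability and reduction are verified element-wise exactly as in the conforming case; efficiency, the quasi-orthogonality form, and discrete reliability are imported from the mixed-FEM literature~\cite{ch06a,LCMHJX,HuangXu}; and the general quasi-orthogonality~\eqref{A:qosum} is obtained from the $L^2$-Pythagoras-type inequality of~\cite{HuangXu} via the same Young-inequality-plus-telescoping argument with $\mu(\TT):=\osc(\TT;f)$ that the paper applies in Section~\ref{section:ex:nonconforming}. The paper merely cites \cite{HuangXu} for~\eqref{A:dlr}, whereas you sketch a Helmholtz-decomposition argument for it; this is the right idea, but one subtlety in your sketch deserves care. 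You say that testing against $\beta-\beta_\TT$ for a Scott--Zhang-type interpolant ``localises the estimate to an element-patch around $\TT\setminus\widehat\TT$'', yet at the same time you claim the result is $\sum_{T\in\TT\setminus\widehat\TT}\eta_T(\TT;U(\TT))^2$ without an extra layer, i.e.\ $\RR(\TT,\widehat\TT)=\TT\setminus\widehat\TT$. Those two statements are not automatically consistent: a Scott--Zhang interpolant onto $\TT$ produces $\beta-\beta_\TT\neq 0$ on the one-layer patch $\omega(\TT;\TT\setminus\widehat\TT)$, so naively one would land on a patch-based set $\RR$. The reason \cite{HuangXu} (and the paper) can still take $\RR(\TT,\widehat\TT)=\TT\setminus\widehat\TT$ is that the analogue of the discrete Helmholtz decomposition (or a commuting Fortin-type interpolant exploiting the $H(\ddiv)$-conformity) is performed on the \emph{refined} side and exploits that the divergence difference $\ddiv(P(\widehat\TT)-P(\TT))=\Pi_k^{\widehat\TT}f-\Pi_k^{\TT}f$ vanishes identically on $\TT\cap\widehat\TT$; the surviving terms then live only on $\TT\setminus\widehat\TT$, and the element-wise integration-by-parts does not generate extra patch contributions. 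If you prefer the Scott--Zhang route, you must argue explicitly why the extra patch layer drops out; otherwise you recover discrete reliability only with $\RR(\TT,\widehat\TT)=\omega(\TT;\TT\setminus\widehat\TT)$, which is still sufficient for~\eqref{A:dlr} (with a different $\c{refined}$) but is not the sharp statement in the proposition. Everything else in your proposal matches the paper's argument and is correct.
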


\begin{proof}
Stability~\eqref{A:stable} and reduction~\eqref{A:reduction} follow as for the conforming case.
Efficiency~\eqref{A:efficient} dates back to the independent work \cite{Alonso1996,Carstensen1997};
the first version and the notion of quasi-orthogonality~\eqref{A:qosum} has been introduced in
\cite{ch06a} and refined in \cite{LCMHJX}. For the two mentioned versions RT-MFEM  and 
BDM-MFEM, the work \cite{HuangXu} presents discrete reliability~\eqref{A:dlr} and 
the quasi-orthogonality \eqref{A:qo} in the form
\begin{align*}
\norm{P(\widehat\TT)-P(\TT)}{L^2(\Omega)}^2 &\leq 
\norm{p-P(\TT)}{L^2(\Omega)}^2 - \norm{p-P(\widehat\TT)}{L^2(\Omega)}^2 \\
&\quad\quad+ C\norm{p-P(\widehat\TT)}{L^2(\Omega)}
\osc(\TT\setminus\widehat\TT,f)
\end{align*}
for some generic constant $C\simeq 1$.  The rearrangements  of the previous subsection 
with $\mu(\TT):= \osc(\TT;f)$ result in~\eqref{A:qoa}
for {\em any } $0<\epsqo<1$ and $\c{qo}(\epsqo):= C^2/\left(\epsqo  (1-2^{2/d}) 4 \right)$ and $C_2=1$ in
\eqref{A:qob}. 
\end{proof}

\begin{consequence}
The adaptive algorithm leads to convergence with quasi-optimal rate
for the estimator $\eta(\cdot)$ in the sense of Theorem~\ref{thm:main} and Theorem~\ref{thm:mainerr}.
 \qed
\end{consequence}
Numerical examples that underline the above result can be found in~\cite{hw97,cb02}.

%%%%%%%%%%%%%%%%%%%%%%%%%%%%%%%%%%%%%%%%%%%%%%%%%%%%%%%%%%%%%%%%%%%%%%%%%%%%%%%%%%%%%%%%%%%%%%%%%%%%%%%%%%%%%%%%%%%%%%%
%%%%%%%%%%%%%%%%%%%%%%%%%%%%%%%%%%%%%%%%%%%%%%%%%%%%%%%%%%%%%%%%%%%%%%%%%%%%%%%%%%%%%%%%%%%%%%%%%%%%%%%%%%%%%%%%%%%%%%%
%%%%%%%%%%%%%%%%%%%%%%%%%%%%%%%%%%%%%%%%%%%%%%%%%%%%%%%%%%%%%%%%%%%%%%%%%%%%%%%%%%%%%%%%%%%%%%%%%%%%%%%%%%%%%%%%%%%%%%%
%%%%%%%%%%%%%%%%%%%%%%%%%%%%%%%%%%%%%%%%%%%%%%%%%%%%%%%%%%%%%%%%%%%%%%%%%%%%%%%%%%%%%%%%%%%%%%%%%%%%%%%%%%%%%%%%%%%%%%%

\def\slp{\mathfrak V}
\def\dlp{\mathfrak K}
%======================================================================================
\subsection{Conforming BEM for weakly-singular integral equation}
%======================================================================================
\label{section:ex:symm}
In this section, we consider adaptive mesh-refinement for the weighted-residual error estimator
in the context of BEM for integral operators of order $-1$. Unlike FEM, the efficiency of this error estimator is \revision{still an open question} in general and mathematically guaranteed only for particular situations~\cite{affkp} \revision{while} typically observed throughout, see e.g.~\cite{cc97,cms,cs95,cs96}. Nevertheless, the abstract framework of
Section~\ref{section:optimality} provides the means to analyze
convergence and quasi-optimality of the adaptive algorithm. 

In a \revision{specific} setting, optimal convergence of adaptive mesh-refinement has independently first been proved by~\cite{fkmp,gantumur} for lowest-order BEM. While the analysis of~\cite{gantumur} covers general operators, but is restricted to smooth boundaries $\Gamma$, the analysis of~\cite{fkmp} focusses on the Laplace equation only, but allows for polyhedral boundaries. In~\cite{ffkmp:part1}, these results are generalized to BEM with ansatz functions of arbitrary, but fixed polynomial order.

Let $\Omega\subset\R^d$ be a bounded Lipschitz domain with polyhedral boundary
$\partial\Omega$ and $d=2,3$. Let $\Gamma\subseteq\partial\Omega$ be a relatively
open subset. For given $f\in H^{1/2}(\Gamma) := \set{\phi|_\Gamma}{\phi\in H^1(\Omega)}$, 
we consider the weakly-singular first-kind integral equation
\begin{align}\label{ex:symm}
 \slp u(x) = f(x)
 \quad\text{for  }x \in \Gamma.
\end{align}
The sought solution satisfies $u\in \H^{-1/2}(\Gamma)$. The
negative-order Sobolev space $\H^{-1/2}(\Gamma)$ is
the dual space of $H^{1/2}(\Gamma)$ with respect to the extended $L^2(\Gamma)$-scalar
product $\dual{\cdot}{\cdot}_{L^2(\Gamma)}$. We refer to the 
monographs~\cite{hw,mclean,ss}
for details and proofs of this as well as of the following facts on the functional analytic 
setting: With the fundamental solution of the Laplacian
\begin{align}\label{ex:symm:G}
 G(z) := \begin{cases}
 -\frac{1}{2\pi}\log|z|\quad&\text{for }d=2,\\
 +\frac{1}{4\pi}\frac{1}{|z|}\quad&\text{for }d=3,
 \end{cases}
\end{align}
the {\em simple-layer potential} reads
\begin{align}\label{eq:symm:V}
 \slp u(x) &:= \int_\Gamma G(x-y)u(y)\,d\Gamma(y)
 \quad\text{for }x\in\Gamma.
% \dlp g (x) &:= \int_\Gamma \partial_{n(y)}G(x-y)\,g(y)\,d\Gamma(y).
\end{align}
We note that $\slp\in L(H^{-1/2+s}(\Gamma);H^{1/2+s}(\Gamma))$ is a linear, continuous, and symmetric 
operator for all $-1/2\le s\le 1/2$. For 2D, we assume $\diam(\Omega)<1$ which can always 
be achieved by scaling. Then, $\slp$ is also elliptic, i.e.\
\begin{align}
 b(u,v) := \dual{\slp u}{v}_{L^2(\Gamma)}
\end{align}
defines an equivalent scalar product on $\XX:=\H^{-1/2}(\Gamma)$. We equip $\H^{-1/2}(\Gamma)$ 
with the induced Hilbert space norm $\norm{v}{\H^{-1/2}(\Gamma)}^2 := \dual{\slp v}{v}_{L^2(\Gamma)}$.
According to the Hahn-Banach theorem,~\eqref{ex:symm} is equivalent to the variational formulation
\begin{align}\label{ex:symm:weakform}
 b(u,v) = \dual{f}{v}_{L^2(\Gamma)}
 \quad\text{for all }v\in \H^{-1/2}(\Gamma).
\end{align}
It relies on the the scalar product $b(\cdot,\cdot)$ and hence admits a unique solution $u\in \H^{-1/2}(\Gamma)$ of~\eqref{ex:symm:weakform}. 

Let $\TT$ be a regular triangulation of $\Gamma$. For each element $T\in\TT$, let
$\gamma_T:T_{\rm ref}\to T$ denote an affine bijection from the reference element 
$T_{\rm ref} = [0,1]$ for $d=2$ resp.\ $T_{\rm ref} = {\rm conv}\{(0,0),(0,1),(1,0)\}$
for $d=3$ onto $T$. We employ conforming boundary elements 
$\XX(\TT) := \PP^p(\TT) \subset H^{-1/2}(\Gamma)$
of order $p\ge0$, where
\begin{align*}
 \PP^p(\TT) := \set{V\in L^2(\Gamma)}{V\circ\gamma_T\text{ is a polynomial of degree }\le p
 \text{ on }T_{\rm ref},
 \text{ for all }T\in\TT}.
\end{align*}
The discrete formulation
\begin{align*}
 b(U(\TT),V) = \dual{f}{V}_{L^2(\Gamma)}
 \quad\text{for all }V\in\PP^p(\TT)
\end{align*}
admits a unique BE solution $U(\TT)\in\PP^p(\TT)$. In particular, all assumptions
of Section~\ref{sec:setting} are satisfied with 
$\dist[\TT]{v}{w} = \norm{v-w}{\H^{-1/2}(\Gamma)}$ and $\c{triangle}=1$.

Under additional regularity of the data $f\in H^1(\Gamma)$, we consider the
weighted-residual error estimator of~\cite{cc97,cms,cs95,cs96}
with local contributions
\begin{align}\label{ex:symm:eta}
 \eta_T(\TT;V)^2 := h_T\,\norm{\nabla_\Gamma(f-\slp V)}{L^2(T)}^2\quad\text{for all }T\in\TT.
\end{align}
Here, $\nabla_\Gamma(\cdot)$ denotes the surface gradient and
$h(\TT)\in\PP^0(\TT)$ denotes the local-mesh width~\eqref{ex:laplace:meshsize} which now reads $h(\TT)|_T=|T|^{1/(d-1)}$ for all $T\in\TT$ as $\Gamma$ is a $(d-1)$-dimensional manifold.
We note that the
analysis of~\cite{cc97,cms,cs95,cs96} \revision{relies} on a Poincar\'e-type estimate
$\norm{R(\TT)}{H^{1/2}(\Gamma)}\lesssim\norm{h(\TT)^{1/2}\nabla_\Gamma R(\TT)}{L^2(\Gamma)}$ for
the Galerkin residual $R(\TT)=f-\slp U(\TT)$ and \revision{requires} shape-regularity of the triangulation 
$\TT$ for \revision{$d=3$, in particular, the fact} that $h_T\simeq \diam(T)$. We employ newest vertex bisection for $d=3$ \revision{and} the bisection
algorithm of~\cite{affkp} for $d=2$.

As in Section~\ref{section:ex:poisson}, the \revision{problem under consideration} involves a symmetric and
elliptic \revision{bilinear form} $b(\cdot,\cdot)$ and conforming discretizations. Therefore, it only remains to 
discuss stability~\eqref{A:stable}, 
reduction~\eqref{A:reduction}, and discrete reliability~\eqref{A:dlr}, see Section~\ref{section:conforming}.

\begin{proposition}\label{prop:example1symm}
The conforming BEM of the weakly-singular integral equations~\eqref{ex:symm} with
weighted-residual error estimator~\eqref{ex:symm:eta} satisfies
stability~\eqref{A:stable}, reduction~\eqref{A:reduction}
with $\q{reduction}=2^{-1/(d-1)}$, and discrete reliability~\eqref{A:dlr} with 
\begin{align}\label{dp:patch}
 \RR(\TT,\widehat\TT) 
 = \omega(\TT;\TT\backslash\widehat\TT)
 := \set{T\in\TT}{\exists T'\in\TT\backslash\widehat\TT\quad T\cap T'\neq\emptyset},
\end{align}
i.e.\ $\RR(\TT,\widehat\TT)$ contains all refined elements plus one additional layer of elements.
The constants $\c{stable}$, $\c{reduction}$, $\c{dlr}>0$ depend only on 
the polynomial degree $p\in\N_0$ and
%uniform boundedness 
%of the shape regularity constant for $d=3$ resp.\ of the local mesh-ratio for $d=2$ 
%and hence only 
on $\T$.
\end{proposition}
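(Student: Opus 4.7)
The plan is to verify the three axioms~\eqref{A:stable}, \eqref{A:reduction}, and~\eqref{A:dlr} essentially in the spirit of~\cite{fkmp,ffkmp:part1}. The analytic engine behind all three assertions is a novel local inverse-type estimate for the simple-layer potential of the form
\begin{align*}
 \|h(\TT)^{1/2}\nabla_\Gamma \slp\psi\|_{L^2(\Gamma)}
 \le C_{\rm inv}\,\|\psi\|_{\H^{-1/2}(\Gamma)}
 \quad\text{for all }\psi\in\PP^p(\TT),
\end{align*}
which is the main technical contribution of~\cite{affkp,fkmp,ffkmp:part1} and holds uniformly in $\TT\in\T$. I will invoke this estimate as a black box.

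For stability~\eqref{A:stable}, since $\SS\subseteq\TT\cap\widehat\TT$, the mesh-size $h_T$ coincides under $\TT$ and $\widehat\TT$. The $\ell^2$-triangle inequality reduces the claim to bounding $\bigl(\sum_{T\in\SS}h_T\|\nabla_\Gamma\slp(\widehat V-V)\|_{L^2(T)}^2\bigr)^{1/2}$. Since $\PP^p(\TT),\PP^p(\widehat\TT)\subseteq\PP^p(\widehat\TT)$, the inverse estimate applied to $\psi=\widehat V-V\in\PP^p(\widehat\TT)$ (using that $h(\widehat\TT)\le h(\TT)$ pointwise) yields~\eqref{A:stable} with $\c{stable}\simeq C_{\rm inv}$.

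For reduction~\eqref{A:reduction}, each son $T'\in\widehat\TT\setminus\TT$ of a parent $T\in\TT\setminus\widehat\TT$ satisfies $|T'|\le|T|/2$ and hence $h_{T'}\le 2^{-1/(d-1)}h_T$, since $\Gamma$ is $(d-1)$-dimensional. Splitting $\nabla_\Gamma(f-\slp U(\widehat\TT))=\nabla_\Gamma(f-\slp U(\TT))-\nabla_\Gamma\slp(U(\widehat\TT)-U(\TT))$, applying Young's inequality, and absorbing the second term via the inverse estimate (applied to the discrete function $U(\widehat\TT)-U(\TT)\in\PP^p(\widehat\TT)$) yields the contraction with $\q{reduction}=2^{-1/(d-1)}$ and $\c{reduction}\simeq C_{\rm inv}^2$.

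The main obstacle is discrete reliability~\eqref{A:dlr}. The plan is to combine ellipticity and Galerkin orthogonality:
\begin{align*}
 \|U(\widehat\TT)-U(\TT)\|_{\H^{-1/2}(\Gamma)}^2
 = b\bigl(U(\widehat\TT)-U(\TT),\,(1-J)(U(\widehat\TT)-U(\TT))\bigr)
\end{align*}
where $J:\PP^p(\widehat\TT)\to\PP^p(\TT)$ is a Scott–Zhang-type quasi-interpolation that is local, $\H^{-1/2}$-stable, and preserves discrete functions on $\TT\cap\widehat\TT$, so that the support of $(1-J)\psi$ is contained in $\omega(\TT;\TT\setminus\widehat\TT)$. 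Using that $b(u-U(\TT),V)=0$ for $V\in\PP^p(\TT)$ rewrites the right-hand side as a duality pairing $\langle f-\slp U(\TT),(1-J)\psi\rangle_{L^2(\Gamma)}$; since $(1-J)\psi$ has vanishing integral mean on each patch-element (a property that $J$ is designed to enforce), a localized Poincaré-type estimate bounds the pairing by $\bigl(\sum_{T\in\omega(\TT;\TT\setminus\widehat\TT)}h_T\|\nabla_\Gamma(f-\slp U(\TT))\|_{L^2(T)}^2\bigr)^{1/2}\|\psi\|_{\H^{-1/2}(\Gamma)}$. Dividing by $\|\psi\|_{\H^{-1/2}(\Gamma)}=\|U(\widehat\TT)-U(\TT)\|_{\H^{-1/2}(\Gamma)}$ yields~\eqref{A:dlr} with $\RR(\TT,\widehat\TT)=\omega(\TT;\TT\setminus\widehat\TT)$. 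The cardinality bound $|\omega(\TT;\TT\setminus\widehat\TT)|\le\c{refined}|\TT\setminus\widehat\TT|$ follows from the $\gamma$-shape regularity, which limits the number of neighbors of each refined element. Constructing (or locating in~\cite{fkmp,ffkmp:part1}) the operator $J$ with the correct mapping properties in the fractional-order dual space $\H^{-1/2}(\Gamma)$ is the technically most demanding step.
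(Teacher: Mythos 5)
Your proposal reconstructs the arguments of the cited references; the paper's own proof of this proposition is essentially a pointer to~\cite[Propositions~4.2 and~5.3]{fkmp} for stability, reduction and discrete reliability (for $p=0$), together with~\cite[Corollary~2]{afembem} for the local inverse-type estimate
\[
 \|h(\TT)^{1/2}\nabla_\Gamma\slp V\|_{L^2(\Gamma)}\lesssim\|V\|_{\H^{-1/2}(\Gamma)}
 \quad\text{for all }V\in\PP^p(\TT),
\]
which upgrades everything to arbitrary $p\ge0$. Your treatment of~\eqref{A:stable} and~\eqref{A:reduction} via the $\ell^2$-triangle inequality, the son--parent reduction $h_{T'}\le 2^{-1/(d-1)}h_T$ on the $(d-1)$-manifold $\Gamma$, and absorption through the inverse estimate is exactly the strategy of~\cite{fkmp,ffkmp:part1}.

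One substantive remark about discrete reliability. Your outline (Galerkin orthogonality, $b$-ellipticity, duality with the residual, fractional Poincar\'e, cardinality bound via shape regularity) is correct, but the operator you invoke is not the one used in the literature. Since $\PP^p(\TT)\subset\H^{-1/2}(\Gamma)$ consists of \emph{discontinuous} piecewise polynomials, a ``Scott--Zhang-type'' quasi-interpolant (designed for $H^1$-conforming spaces) is not the natural choice, and its $\H^{-1/2}$-stability is not an off-the-shelf fact. The reference~\cite{fkmp} instead uses the elementwise $L^2$-orthogonal projection $\Pi_p(\TT):\,L^2(\Gamma)\to\PP^p(\TT)$. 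With this choice, $(1-\Pi_p)\psi$ vanishes on every $T\in\TT\cap\widehat\TT$ (so its support is exactly $\bigcup(\TT\setminus\widehat\TT)$, not the full patch), it has vanishing elementwise moments, and the required $H^{1/2}(\Gamma)\to H^{1/2}(\Gamma)$ boundedness of $\Pi_p$ --- needed because the adjoint $(1-\Pi_p)^*=(1-\Pi_p)$ acts on the $H^{1/2}$ residual $f-\slp U(\TT)$ --- follows by interpolation from its $L^2$- and $H^1$-stability on shape-regular meshes. The additional layer $\omega(\TT;\TT\setminus\widehat\TT)\supsetneq\TT\setminus\widehat\TT$ in $\RR(\TT,\widehat\TT)$ then enters not through the support of $(1-J)\psi$, but through the localization of the non-local $H^{1/2}$-norm of $(1-\Pi_p)(f-\slp U(\TT))$: the cut at the interface between refined and non-refined elements contributes to the fractional seminorm and forces one layer of neighbours into the estimate. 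Your version gets to the same set $\RR$ by a different bookkeeping, but relies on an operator whose key stability property you would still have to construct; with $\Pi_p(\TT)$ that step is already in the literature and self-adjointness makes the duality argument clean.

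Since you explicitly flag the construction of $J$ as the technically demanding step, this is a conscious gap rather than an oversight; replacing $J$ by $\Pi_p(\TT)$ and shifting the patch enlargement to the $H^{1/2}$ side closes it and aligns the argument with~\cite[Proposition~5.3]{fkmp}.
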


\begin{proof}
Stability on non-refined elements~\eqref{A:stable} as well as reduction on 
refined elements~\eqref{A:reduction} are part of the proof 
of~\cite[Proposition~4.2]{fkmp}. The proof essentially follows~\cite{ckns}, but additionally
relies on the novel inverse-type estimate
\begin{align*}
 \norm{h(\TT)^{1/2}\nabla_\Gamma \slp V}{L^2(\Gamma)} 
 \lesssim \norm{V}{\H^{-1/2}(\Gamma)}
 \quad\text{for all }V\in\PP^p(\TT).
\end{align*}
While the work~\cite{fkmp} is concerned with the lowest-order case $p=0$ only, we 
refer to~\cite[Corollary~2]{afembem} for general $p\ge0$ so 
that~\cite[Proposition~4.2]{fkmp} transfers to $p\ge0$. Discrete reliability
is proved in~\cite[Proposition~5.3]{fkmp} for $p=0$, but the proof holds 
accordingly for arbitrary $p\ge0$.
\end{proof}

\begin{consequence}
The adaptive algorithm leads to convergence with quasi-optimal rate
for the estimator $\eta(\cdot)$ in the sense of Theorem~\ref{thm:main}.\qed
\end{consequence}

Numerical examples that underline the above result can be found in~\cite{cs95}.

Efficiency~\eqref{A:efficient} of the weighted-residual error 
estimator~\eqref{ex:symm:eta} \revision{remains an open question.} The only result available~\cite{affkp} is
for $d=2$, and it exploits the equivalence of~\eqref{ex:symm} to some 
Dirichlet-Laplace problem: Assume $\Gamma = \partial\Omega$ and let 
\begin{align}\label{def:doublelayer}
 \dlp g (x) &:= \int_\Gamma \partial_{n(y)}G(x-y)\,g(y)\,d\Gamma(y)
\end{align}
denote the double-layer potential $\dlp\in L(H^{1/2+s}(\Gamma);H^{1/2+s}(\Gamma))$,
for all $-1/2\le s\le 1/2$. Then, the weakly-singular integral equation~\eqref{ex:symm:laplace}
for given Dirichlet data $g\in H^{1/2}(\Gamma)$ and $f:=(\dlp+1/2)g$ is 
an equivalent formulation of the Dirichlet-Laplace problem
\begin{align}\label{ex:symm:laplace}
-\Delta \phi &= 0\quad\text{in }\Omega
\quad\text{and}\quad
 \phi=g\quad\text{on }\Gamma = \partial\Omega.
\end{align}
The density $u\in\H^{-1/2}(\Gamma)$, which is sought in~\eqref{ex:symm}, is the normal 
derivative $u=\partial_n \phi$ to the potential $\phi\in H^1(\Omega)$ of~\eqref{ex:symm:laplace}.

For this special situation and lowest-order elements $p=0$, efficiency~\eqref{A:efficient} 
of $\eta(\TT)$ is proved in~\cite[Theorem~4]{affkp}.

\begin{proposition}\label{dp:symm:proposition}
We consider lowest-order BEM $p=0$ for $d=2$ and $\Gamma=\partial\Omega$.
Let $\sigma>2$ and $g\in H^\sigma(\partial\Omega):=\set{\phi|_{\partial\Omega}}{\phi\in H^{\sigma+1/2}(\Omega)}$.
For $f:=(\dlp+1/2)g$, the weighted-residual error estimator~\eqref{ex:symm:eta} satisfies~\eqref{A:efficient}
with $\norm{\osc(\cdot)}{\O_{3/2}}<\infty$.
\end{proposition}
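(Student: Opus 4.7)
The plan has two independent parts, corresponding to the two assertions of Proposition~\ref{dp:symm:proposition}.

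\textbf{Efficiency~\eqref{A:efficient}.} For the efficiency inequality itself, the plan is to invoke directly~\cite[Theorem~4]{affkp}, which in the present setting of lowest-order BEM with $p=0$ on a closed polygonal boundary in $\R^2$ establishes
\begin{align*}
\c{efficient}^{-1}\,\eta(\TT;U(\TT))
\le \norm{u-U(\TT)}{\H^{-1/2}(\Gamma)} + \eff{\TT}{U(\TT)}
\end{align*}
with the element-wise data oscillation
\begin{align*}
\eff{\TT}{U(\TT)}^2 := \sum_{T\in\TT} h_T\,\min_{c\in\R}\norm{\nabla_\Gamma f - c}{L^2(T)}^2.
\end{align*}
No new ingredient is required for this step, so the task reduces to verifying the approximation class membership of the oscillation.

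\textbf{Oscillation rate $\norm{\osc(\cdot)}{\O_{3/2}}<\infty$.} The plan is to evaluate $\osc(\cdot)$ along the sequence $\widetilde\TT_k\in\T$ of uniform bisections of $\TT_0$. Since $\Gamma$ is a closed $1$-manifold, uniform bisection doubles the cardinality and halves the local mesh-width, so $|\widetilde\TT_k|\simeq 2^k$ and $h_T\simeq 2^{-k}$ uniformly. Assuming (without loss of generality) that the corners of $\partial\Omega$ are nodes of $\TT_0$, every element $T\in\widetilde\TT_k$ lies in the interior of one smooth edge $E\subset\partial\Omega$. The assumption $g\in H^\sigma(\partial\Omega)$ with $\sigma>2$ combined with the smoothing behaviour of $\dlp$ on the interior of each straight edge yields $f|_E\in H^2(E)$ uniformly in $E$. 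A one-dimensional Poincar\'e inequality then delivers $\min_{c\in\R}\norm{\nabla_\Gamma f-c}{L^2(T)}\lesssim h_T\,\norm{\nabla_\Gamma^2 f}{L^2(T)}$, so that
\begin{align*}
\eff{\widetilde\TT_k}{U(\widetilde\TT_k)}^2
\lesssim \sum_{T\in\widetilde\TT_k} h_T^3\,\norm{\nabla_\Gamma^2 f}{L^2(T)}^2
\lesssim 2^{-3k}\,\sum_{E\subset\partial\Omega}\norm{f}{H^2(E)}^2
\simeq |\widetilde\TT_k|^{-3}.
\end{align*}
Definition~\eqref{eq:oscapprox} then gives $\norm{\osc(\cdot)}{\O_{3/2}}<\infty$.

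\textbf{Main obstacle.} The principal difficulty is that $f=(\dlp+1/2)g$ is not globally smooth on the closed polygon $\Gamma$: corner singularities of $\dlp g$ typically destroy $f\in H^2(\Gamma)$ even for analytic data $g$, which is why the rate $\O_{3/2}$ ---matching the quasi-optimal rate achievable by lowest-order ansatz functions under uniform refinement in one space dimension--- is the best one may hope for here. The plan to address this is to respect the polygonal geometry already in the initial mesh $\TT_0$ by forcing every corner of $\Gamma$ into the node set; then every element produced by uniform bisection lies in the interior of a smooth edge and the piecewise Bramble--Hilbert estimate is blind to the corner singularities. The strict inequality $\sigma>2$ provides the required margin over the critical Sobolev exponent to absorb any logarithmic corner contribution.
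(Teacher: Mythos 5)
Your plan diverges from the paper's proof in two places, and the second one contains a genuine error.

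First, the form of the oscillation term. You take for granted that~\cite[Theorem~4]{affkp} gives efficiency up to the data oscillation
$\eff{\TT}{U(\TT)}^2 = \sum_{T\in\TT} h_T\,\min_{c\in\R}\norm{\nabla_\Gamma f - c}{L^2(T)}^2$.
This is not what the cited result provides, and it is not what the paper uses: the paper states explicitly that the higher-order term in~\cite{affkp} is built from the \emph{regular part of the exact solution} $u$, not from a pointwise oscillation of the data $f$. This distinction is not cosmetic. The weighted-residual estimator $\eta_T = h_T^{1/2}\norm{\nabla_\Gamma(f-\slp V)}{L^2(T)}$ cannot be bounded by $\norm{u-V}{\H^{-1/2}(\Gamma)}$ plus a data oscillation of $f$ alone, precisely because of the corner singularities that you identify yourself; the decomposition of $u$ into regular and singular parts is what makes the efficiency argument in~\cite{affkp} go through.

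Second, and more seriously, the argument you give for the rate is false. You claim that because the corners of $\partial\Omega$ are mesh nodes, every $T\in\widetilde\TT_k$ lies in the interior of a smooth edge, and that $g\in H^\sigma$ with $\sigma>2$ together with the smoothing of $\dlp$ on the interior of each edge yields $f|_E\in H^2(E)$ \emph{uniformly} up to the endpoints of $E$. This is exactly where the corner singularities bite. The double-layer operator $\dlp$ is pseudodifferential of order $-1$ only away from corners; near a corner with angle $\omega$ it is a Mellin operator of order $0$ whose symbol has poles, so $\dlp g$ carries algebraic corner singularities of type $r^\alpha$ even for $g\in C^\infty(\Gamma)$ — not merely logarithmic perturbations that $\sigma>2$ could ``absorb''. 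Placing corners at mesh nodes does not remove these singularities from the element adjacent to the corner: if $\nabla_\Gamma f \sim r^{\alpha-1}$ near a corner with $\alpha<3/2$, the single element $T$ touching the corner at level $k$ contributes $h_T\min_c\norm{\nabla_\Gamma f - c}{L^2(T)}^2 \simeq h_T^{2\alpha}$, which dominates the global sum and destroys the claimed $2^{-3k}$ bound. Your one-dimensional Poincar\'e step requires $f\in H^2(T)$ on those elements, which fails. The paper sidesteps all of this by citing~\cite{affkp} both for the efficiency estimate and for the rate $\eff{\TT}{U(\TT)}=\OO(h^{3/2+\eps})$ on uniform meshes (with $\eps>0$ coming from the Sobolev regularity $\sigma>2$ of the data and the regular-part decomposition of $u$); the corner analysis is performed there and is not re-derived here.
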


\begin{proof}
The statement on efficiency of $\eta(\TT)$ is found in~\cite[Theorem~4]{affkp}, 
where $\eff{\TT}{U(\TT)}$ is based on the regular part of the exact solution $u$.
It holds $\eff{\TT}{U(\TT)}=\OO(h^{3/2+\eps})$ for uniform meshes
with mesh-size $h$ and some $\sigma$-dependent $\eps>0$, see~\cite{affkp}.
\end{proof}

For some smooth exact solution $u$, the generically optimal order of convergence is $\OO(h^{3/2})$ for lowest-order elements $p=0$,
where $h$ denotes the maximal mesh-width. 
For quasi-uniform meshes with $N$ elements and 2D BEM, this corresponds to
$\OO(N^{-3/2})$ and hence $s=3/2$.
With the foregoing proposition and according to Theorem~\ref{thm:mainerr}, the adaptive algorithm attains \revision{any} possible convergence order $0<s \le 3/2$ and the generically quasi-optimal rate is thus achieved.

\begin{consequence}\label{dp:symm:consequence}
Under the assumptions of Proposition~\ref{dp:symm:proposition},
the adaptive algorithm leads to the generically optimal rate 
for the discretization error in the sense of Theorem~\ref{thm:mainerr}.\qed
\end{consequence}

Numerical examples that underline the above result can be found in~\cite{affkp,cc97,cms,cs95,cs96,fkmp}.

%%%%%%%%%%%%%%%%%%%%%%%%%%%%%%%%%%%%%%%%%%%%%%%%%%%%%%%%%%%%%%%%%%%%%%%%%%%%%%%%%%%%%%%%%%%%%%%%%%%%%%%%%%%%%%%%%%%%%%%
%%%%%%%%%%%%%%%%%%%%%%%%%%%%%%%%%%%%%%%%%%%%%%%%%%%%%%%%%%%%%%%%%%%%%%%%%%%%%%%%%%%%%%%%%%%%%%%%%%%%%%%%%%%%%%%%%%%%%%%
%%%%%%%%%%%%%%%%%%%%%%%%%%%%%%%%%%%%%%%%%%%%%%%%%%%%%%%%%%%%%%%%%%%%%%%%%%%%%%%%%%%%%%%%%%%%%%%%%%%%%%%%%%%%%%%%%%%%%%%
%%%%%%%%%%%%%%%%%%%%%%%%%%%%%%%%%%%%%%%%%%%%%%%%%%%%%%%%%%%%%%%%%%%%%%%%%%%%%%%%%%%%%%%%%%%%%%%%%%%%%%%%%%%%%%%%%%%%%%%
\def\hyp{\mathfrak W}
\def\stab{\mathfrak S}
%======================================================================================
\subsection{Conforming BEM for hyper-singular integral equation}
%======================================================================================
\label{section:ex:hypsing}
In this section, we consider adaptive BEM for hyper-singular integral 
equations, where the \revision{hyper-singular} operator is of order $+1$.
In this frame, convergence and quasi-optimality of the adaptive algorithm 
has first been proved in~\cite{gantumur}, while the necessary technical 
tools have independently been developed in~\cite{afembem}. While the analysis
of~\cite{gantumur} only covers the lowest-order case $p=1$ and smooth 
boundaries, the recent work~\cite{ffkmp:part2} generalizes this to BEM
with ansatz functions of arbitrary, but fixed polynomial order $p\ge1$
and polyhedral boundaries.

Throughout, we use the notation from Section~\ref{section:ex:symm}. 
Additionally, we assume that $\Gamma\subseteq \partial\Omega$ is  
connected. We consider the hyper-singular integral
equation
\begin{align}\label{eq:def:hypsing:screen}
 \hyp u(x) = f(x)\quad\text{for }x\in\Gamma,
\end{align}
where the hyper-singular integral operator formally reads
\begin{align}\label{def:hyper-singular}
\hyp v(x) := \partial_{n(x)}\int_\Gamma \partial_{n(y)}G(x-y)v(y)\, d\Gamma(y).
\end{align}
By definition, there holds $\hyp g(x)=\partial_n \dlp g(x)$ if the double-layer potential $\dlp g(x)$ is considered as a function on $\Omega$ by evaluating~\eqref{def:doublelayer} for $x\in\Omega$. 
Again, we refer to the monographs~\cite{hw,mclean,ss} for details and proofs
of the following facts on the functional analytic setting:
The hyper-singular integral operator $\hyp$ is symmetric as well as
positive semi-definite and has a one-dimensional kernel which consists of the constant functions, i.e.\ $\hyp 1 = 0$. To deal with this kernel and to obtain an elliptic formulation, 
we distinguish the cases $\Gamma\subsetneqq\partial\Omega$ and
$\Gamma=\partial\Omega$.

\subsubsection{Screen problem $\Gamma\subsetneqq \partial\Omega$}
\label{section:ex:hypsing:screen}
On the screen, the hyper-singular integral operator $\hyp:\,\widetilde H^{1/2+s}(\Gamma)\to H^{-1/2+s}(\Gamma)$ is a continuous mapping for all $-1/2\leq s\leq1/2$.
Here, $\widetilde H^{1/2+s}(\Gamma):=\set{v|_\Gamma}{v\in H^{1/2+s}(\partial \Omega)\text{ with }\text{supp}(v)\subseteq \overline\Gamma}$ denotes the space of functions which can be extended by zero to the entire boundary,
and $H^{-1/2+s}(\Gamma)$ denotes the dual space of $H^{1/2-s}(\Gamma)$.
For given $f\in H^{-1/2}(\Gamma)$, we seek the solution
$u\in \widetilde H^{1/2}(\Gamma)$ of~\eqref{eq:def:hypsing:screen}.

We note that $1\notin \widetilde H^{1/2}(\Gamma)$ and $\hyp: \,\widetilde H^{1/2}(\Gamma)\to H^{-1/2}(\Gamma)$ is a symmetric and elliptic operator.
In particular, 
\begin{align}
b(u,v):=\dual{\hyp u}{v}_{L^2(\Gamma)}
\end{align}
defines an equivalent scalar product on $\XX:=\widetilde H^{1/2}(\Gamma)$. We equip $\widetilde H^{1/2}(\Gamma)$ with the induced Hilbert space norm $\norm{v}{\widetilde H^{1/2}(\Gamma)}^2:=b(v,v)$. 
The hyper-singular integral equation is thus equivalently stated as\begin{align}\label{hypsing:screen:weak}
 b(u,v)=\dual{f}{v}_{L^2(\Omega)}\quad\text{for all }v\in \widetilde H^{1/2}(\Gamma)
\end{align}
and admits a unique solution.

Given a regular triangulation $\TT$ and a polynomial degree $p\ge1$, we 
employ conforming boundary elements
$
\XX(\TT):=\SS^p_0(\TT)
:=\PP^{p}(\TT)\cap \widetilde H^{1/2}(\Gamma).
$
The discrete formulation
\begin{align*}
 b(U(\TT),V) = \dual{f}{V}_{L^2(\Gamma)}
 \quad\text{for all }V\in\SS^p_0(\TT)
\end{align*}
admits a unique BE solution $U(\TT)\in\SS^p_0(\TT)$. In particular, all assumptions
of Section~\ref{sec:setting} are satisfied with 
$\dist[\TT]{v}{w} = \norm{v-w}{\widetilde H^{1/2}(\Gamma)}$ and $\c{triangle}=1$.

Under additional regularity of the data $f\in L^2(\Gamma)$, we may define the weighted-residual error estimator from \cite{cc97,cmps,cs95,cs96} with local contributions
\begin{align}\label{eq:def:hypsing:est}
 \eta_T(\TT;V)^2:=h_T\normLtwo{f-\hyp V}{T}^2\quad\text{for all }T\in\TT.
\end{align}
As in Section~\ref{section:ex:poisson}, the \revision{problem under consideration} involves symmetric and
elliptic $b(\cdot,\cdot)$ and conforming discretizations. Therefore, it only remains to 
discuss stability~\eqref{A:stable}, 
reduction~\eqref{A:reduction}, and discrete reliability~\eqref{A:dlr},  see Section~\ref{section:conforming}. As for for the weakly-singular integral
equation from Section~\ref{section:ex:symm}, efficiency~\eqref{A:efficient} 
is only observed empirically~\cite{cc97,cmps,cs95,cs96}, but a rigorous 
mathematical proof \revision{remains as an open question.}

\begin{proposition}
 The conforming BEM of the hyper-singular integral equation~\eqref{eq:def:hypsing:screen} on the screen with
weighted-residual error estimator~\eqref{eq:def:hypsing:est} satisfies
stability~\eqref{A:stable}, reduction~\eqref{A:reduction}
with $\q{reduction}=2^{-1/(d-1)}$, and discrete reliability~\eqref{A:dlr} with 
\begin{align*}
 \RR(\TT,\widehat\TT) 
 = \TT\setminus\widehat\TT.
\end{align*}
The constants $\c{stable},\c{reduction},\c{dlr}>0$ depend only on 
the polynomial degree $p\in\N$ and on
%uniform boundedness 
%of the shape regularity constant for $d=3$ resp.\ of the local mesh-ratio for %$d=2$ 
%and hence only on 
$\T$.
\end{proposition}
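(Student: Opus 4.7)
The plan is to mirror the strategy of Section~\ref{section:ex:symm} for the weakly-singular case. Since $b(\cdot,\cdot)$ is a symmetric elliptic scalar product on $\widetilde H^{1/2}(\Gamma)$ and the discretization is conforming, Section~\ref{section:conforming} reduces the task to verifying only~\eqref{A:stable}, \eqref{A:reduction}, and~\eqref{A:dlr}; the general quasi-orthogonality~\eqref{A:qosum} follows automatically from Galerkin orthogonality via the Pythagoras identity. The central analytical ingredient that replaces the inverse estimate $\|h(\TT)^{1/2}\nabla_\Gamma\slp V\|_{L^2(\Gamma)}\lesssim\|V\|_{\H^{-1/2}(\Gamma)}$ of Section~\ref{section:ex:symm} is its hyper-singular counterpart,
\begin{align*}
\normLtwo{h(\TT)^{1/2}\hyp V}{\Gamma}\lesssim \|V\|_{\widetilde H^{1/2}(\Gamma)}\quad\text{for all } V\in\SS^p_0(\TT),
\end{align*}
which is available through~\cite{afembem} and its extension in~\cite{ffkmp:part2} to arbitrary $p\ge1$.

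For~\eqref{A:stable}, I would write $\eta_T(\widehat\TT;\widehat V)-\eta_T(\TT;V)=h_T^{1/2}\big(\normLtwo{f-\hyp \widehat V}{T}-\normLtwo{f-\hyp V}{T}\big)$, apply the reverse triangle inequality pointwise on $T\in\mathcal{S}\subseteq\TT\cap\widehat\TT$ (using that $h_T$ is unchanged on non-refined elements), and sum. Embedding $V\in\SS^p_0(\TT)\subseteq\SS^p_0(\widehat\TT)$ and invoking the inverse estimate for $\widehat V-V$ on $\widehat\TT$ bounds the sum by $\|\widehat V-V\|_{\widetilde H^{1/2}(\Gamma)}=\dist[\widehat\TT]{\widehat V}{V}$. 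For~\eqref{A:reduction}, newest-vertex bisection on the $(d-1)$-dimensional surface $\Gamma$ guarantees $h_{T'}^{d-1}\le h_T^{d-1}/2$ for every son $T'\in\widehat\TT$ of a refined element $T\in\TT$, i.e.\ $h_{T'}\le 2^{-1/(d-1)}h_T$. Splitting via the Young inequality and bounding the cross term through the same inverse estimate reproduces the argument of~\cite[Proposition~4.2]{fkmp} and gives~\eqref{A:reduction} with $\q{reduction}=2^{-1/(d-1)}$.

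For~\eqref{A:dlr} with the sharp choice $\RR(\TT,\widehat\TT)=\TT\setminus\widehat\TT$, set $W:=U(\widehat\TT)-U(\TT)\in\SS^p_0(\widehat\TT)$. Galerkin orthogonality yields, for any $V\in\SS^p_0(\TT)\subseteq\SS^p_0(\widehat\TT)$,
\begin{align*}
\|W\|_{\widetilde H^{1/2}(\Gamma)}^2=b(W,W-V)=\dual{f-\hyp U(\TT)}{W-V}_{L^2(\Gamma)}.
\end{align*}
Choose $V=J_\TT W$, where $J_\TT:\SS^p_0(\widehat\TT)\to\SS^p_0(\TT)$ denotes nodal Lagrange interpolation at the Lagrange nodes of $\TT$ (extended by zero at the Dirichlet nodes on $\partial\Gamma$ to preserve the homogeneous trace condition). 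Since every $T\in\TT\cap\widehat\TT$ is unchanged and $W|_T$ is then a polynomial of degree $p$, nodal interpolation reproduces $W$ on $T$, so $\text{supp}(W-J_\TT W)\subseteq\bigcup_{T\in\TT\setminus\widehat\TT}T$. Element-wise Cauchy--Schwarz followed by a local approximation estimate $h_T^{-1}\normLtwo{W-J_\TT W}{T}^2\lesssim \|W\|_{\widetilde H^{1/2}(\omega(\TT;T))}^2$ and a Faermann-type patch localization of the Sobolev--Slobodeckij norm conclude the estimate.

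The hard part will be the last step: proving $\sum_{T\in\TT\setminus\widehat\TT} h_T^{-1}\normLtwo{W-J_\TT W}{T}^2\lesssim \|W\|_{\widetilde H^{1/2}(\Gamma)}^2$. Because $\widetilde H^{1/2}(\Gamma)$ is a non-local fractional Sobolev space, its double-integral seminorm cannot be split into patch contributions by a naive summation, and a Faermann-type patch decomposition of the Sobolev--Slobodeckij seminorm together with sharp approximation properties of $J_\TT$ in the fractional norm is required. This is precisely the step where the tight choice $\RR(\TT,\widehat\TT)=\TT\setminus\widehat\TT$, without the additional element layer appearing in the weakly-singular case, is justified, and corresponds to the discrete reliability analysis of~\cite{ffkmp:part2}.
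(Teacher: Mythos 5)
Your proposal takes essentially the same approach as the paper, which for this proposition is little more than a pointer to~\cite{ffkmp:part2} (general $p$) and~\cite{gantumur} ($p=1$): reduce to the three axioms via Section~\ref{section:conforming}, use the hyper-singular inverse estimate from~\cite[Corollary~2]{afembem} for~\eqref{A:stable} and~\eqref{A:reduction}, and for~\eqref{A:dlr} combine Galerkin orthogonality with a discrete (quasi-)interpolation that reproduces $U(\widehat\TT)-U(\TT)$ exactly on non-refined elements so that $\RR(\TT,\widehat\TT)=\TT\setminus\widehat\TT$ comes out sharp, then localize the fractional norm. One small caution: the paper's alternate proof for $p=1$ from~\cite{gantumur} uses Scott--Zhang and consequently needs the one-layer set $\omega(\TT;\TT\setminus\widehat\TT)$, so your choice of nodal Lagrange interpolation---which does reproduce $W$ on unchanged elements and thus gives the sharp $\RR=\TT\setminus\widehat\TT$---is the right one, but the local approximation estimate $h_T^{-1}\normLtwo{W-J_\TT W}{T}^2\lesssim|W|^2_{H^{1/2}(\omega(\TT;T))}$ for plain nodal interpolation in a fractional-order norm is not automatic (nodal interpolation is unbounded on $H^{1/2}$ in general) and must be argued using the specific piecewise-polynomial structure of $W$, which is precisely the content of the discrete-reliability argument in~\cite{ffkmp:part2} that you defer to.
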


\begin{proof}
The discrete reliability~\eqref{A:dlr} follows with the techniques from~\cite{ckns} which are combined with the localization techniques for the $H^{1/2}(\Gamma)$-norm from~\cite{cmps}. We refer to~\cite{ffkmp:part2} 
for details. For the lowest-order case $p=1$, an alternate proof is found 
in~\cite[Section~4]{gantumur}, where $\RR(\TT,\widehat\TT)=\omega(\TT;\TT\backslash\widehat\TT)$ are the refined elements plus one additional
layer of elements, see~\eqref{dp:patch}.
Stability~\eqref{A:stable} and reduction~\eqref{A:reduction} are proved in~\cite{ffkmp:part2} and use the inverse estimate from~\cite[Corollary~2]{afembem}.
\end{proof}

\begin{consequence}
The adaptive algorithm leads to convergence with quasi-optimal rate
for the estimator $\eta(\cdot)$ in the sense of Theorem~\ref{thm:main}.\qed
\end{consequence}

Numerical examples that underline the above result can be found in~\cite{cs95}.

\subsubsection{Laplace-Neumann problem $\Gamma=\partial\Omega$}
On the closed boundary $\Gamma=\partial\Omega$, the hyper-singular integral operator~\eqref{def:hyper-singular} is continuous for all $-1/2\leq s\leq 1/2$
\begin{align*}
 \hyp:\, H^{1/2+s}(\Gamma)\to H^{-1/2+s}(\Gamma).
\end{align*}
Due to $1\in H^{1/2}(\Gamma)$, we have to stabilize $\hyp$, e.g., with the rank-one operator $\stab v:= \dual{v}{1}_{L^2(\Omega)}\,1$. Alternatively, one could consider $\hyp$ on the factor space $H^{1/2}(\Gamma)/\R\simeq H^{1/2}_\star(\Gamma):=\set{v\in H^{1/2}(\Gamma)}{\int_\Gamma v\,ds =0}$. The (stabilized) hyper-singular integral equation reads
\begin{align}\label{eq:def:hypsing}
 (\hyp+\stab) u(x)=f(x)\quad\text{for } x\in \Gamma.
\end{align}
The sought solution satisfies $u\in\XX:= H^{1/2}(\Gamma)$.
The stabilization $\stab$ allows to define an equivalent scalar product on $H^{1/2}(\Gamma)$ by
\begin{align*}
 b(u,v):=\dual{\hyp u}{v}_{L^2(\Gamma)}+ \dual{u}{1}_{L^2(\Gamma)}\dual{v}{1}_{L^2(\Gamma)}.
\end{align*}
We equip $H^{1/2}(\Gamma)$ with the induced Hilbert space norm $\normHeh{v}{\Gamma}^2=b(v,v)$.
Then,~\eqref{eq:def:hypsing} is equivalent to
\begin{align}\label{eq:hypsing:weak}
 b(u,v)=\dual{f}{v}_{L^2(\Gamma)}\quad\text{for all }v\in H^{1/2}(\Gamma).
\end{align}
In case of $\dual{f}{1}_{L^2(\Gamma)}=0$, we see that $\dual{u}{1}_{L^2(\Gamma)}=0$ by choice of the test function $v=1$. Then, the above formulation~\eqref{eq:def:hypsing} resp.~\eqref{eq:hypsing:weak} is equivalent 
to~\eqref{eq:def:hypsing:screen}.

For given $g\in H^{-1/2}(\Gamma)$ and the special right-hand side
$f=(1/2-\dlp')g$, it holds $\langle f,1\rangle_{L^2(\Gamma)}=0$.
Moreover,~\eqref{eq:def:hypsing:screen} resp.~\eqref{eq:def:hypsing}
is an equivalent formulation of the Laplace-Neumann problem
\begin{align}
 -\Delta \phi=0\quad\text{in }\Omega\quad\text{and}\quad \partial_n\phi=g\quad\text{on }\Gamma=\partial\Omega.
\end{align}
Clearly, the solution $\phi\in H^1(\Omega)$ is only unique up to an additive 
constant. If we fix this constant by $\dual\phi1_{L^2(\Gamma)}=0$,
the density $u\in H^{1/2}(\Gamma)$ which is sought in~\eqref{eq:def:hypsing:screen}
for $f=(1/2-\dlp')g$, is the trace $u=\phi|_{\Gamma}$ of the potential $\phi$.

For fixed $p\ge1$ and a regular triangulation $\TT$ of $\Gamma$, we employ
conforming boundary elements $\XX(\TT):=\SS^p(\TT):=\PP^p(\TT)\cap H^{1/2}(\Gamma)$.
The discrete formulation
\begin{align}\label{eq:def:hypsing:discrete}
 %\Bform{U(\TT)}{V}{\TT}
 b(U(\TT), V)=\dual{f}{V}_{L^2(\Gamma)}\quad\text{for all }V\in \SS^p(\TT)
\end{align}
admits a unique solution $U(\TT)\in\SS^p(\TT)$. In particular, all assumptions 
of Section~\ref{sec:setting} are satisfied with 
$\dist[\TT]{v}{w} = \norm{v-w}{H^{1/2}(\Gamma)}$ and $\c{triangle}=1$.
In case of $\dual{f}{1}_{L^2(\Gamma)}=0$, it follows as for the continuous case that $\dual{U(\TT)}{1}_{\Gamma}=0$ and therefore $\stab U(\TT) =0$. Hence, the definition of the error estimator as well as the proof of the axioms~\eqref{A:stable},
\eqref{A:reduction}, and~\eqref{A:dlr}
is verbatim to the screen problem in Section~\ref{section:ex:hypsing:screen} and therefore omitted.

\begin{consequence}
The adaptive algorithm leads to convergence with quasi-optimal rate
for the estimator $\eta(\cdot)$ in the sense of Theorem~\ref{thm:main}.\qed
\end{consequence}

Numerical examples that underline the above result can be found in~\cite{cmps}.

Although one may expect an efficiency result~\eqref{A:efficient} similar to that from~\cite{affkp} 
for Symm's integral equation from Section~\ref{section:ex:symm}, see
Consequence~\ref{dp:symm:consequence}, the details have not been worked out 
yet. In particular, quasi-optimality of the adaptive algorithm in the sense of Theorem~\ref{thm:mainerr} \revision{remains as an open question.}

%%%%%%%%%%%%%%%%%%%%%%%%%%%%%%%%%%%%%%%%%%%%%%%%%%%%%%%%%%%%%%%%%%%%%%%%%%%%%%%%%%%%%%%%%%%%%%%%%%%%%%%%%%%%%%%%%%%%%%%
%%%%%%%%%%%%%%%%%%%%%%%%%%%%%%%%%%%%%%%%%%%%%%%%%%%%%%%%%%%%%%%%%%%%%%%%%%%%%%%%%%%%%%%%%%%%%%%%%%%%%%%%%%%%%%%%%%%%%%%
%%%%%%%%%%%%%%%%%%%%%%%%%%%%%%%%%%%%%%%%%%%%%%%%%%%%%%%%%%%%%%%%%%%%%%%%%%%%%%%%%%%%%%%%%%%%%%%%%%%%%%%%%%%%%%%%%%%%%%%
%%%%%%%%%%%%%%%%%%%%%%%%%%%%%%%%%%%%%%%%%%%%%%%%%%%%%%%%%%%%%%%%%%%%%%%%%%%%%%%%%%%%%%%%%%%%%%%%%%%%%%%%%%%%%%%%%%%%%%%
%======================================================================================
%======================================================================================
\section{General Second-Order Elliptic Equations}
\label{section:examples2}
%======================================================================================
%======================================================================================

\noindent
This section collects further fields of applications for the abstract theory 
developed in the previous Sections~\ref{sec:setting}--\ref{section:optimality}
beyond the Laplace model problem from Section~\ref{section:examples1}.
This includes general second-order linear elliptic operators as well as different
FEM discretizations of the Stokes problem and linear elasticity.

%%%%%%%%%%%%%%%%%%%%%%%%%%%%%%%%%%%%%%%%%%%%%%%%%%%%%%%%%%%%%%%%%%%%%%%%%%%%%%%%%%%%%%%%%%%%%%%%%%%%%%%%%%%%%%%%%%%%%%%
%%%%%%%%%%%%%%%%%%%%%%%%%%%%%%%%%%%%%%%%%%%%%%%%%%%%%%%%%%%%%%%%%%%%%%%%%%%%%%%%%%%%%%%%%%%%%%%%%%%%%%%%%%%%%%%%%%%%%%%
%%%%%%%%%%%%%%%%%%%%%%%%%%%%%%%%%%%%%%%%%%%%%%%%%%%%%%%%%%%%%%%%%%%%%%%%%%%%%%%%%%%%%%%%%%%%%%%%%%%%%%%%%%%%%%%%%%%%%%%
%%%%%%%%%%%%%%%%%%%%%%%%%%%%%%%%%%%%%%%%%%%%%%%%%%%%%%%%%%%%%%%%%%%%%%%%%%%%%%%%%%%%%%%%%%%%%%%%%%%%%%%%%%%%%%%%%%%%%%%
\def\operator#1{{\mathcal{#1}}}
\def\matrix#1{{\boldsymbol{#1}}}
\def\div{{\rm div\;}}
%------------------------------------------------------------------------------
\subsection{Conforming FEM for non-symmetric problems}
\label{section:ex:nonsymm}
%------------------------------------------------------------------------------
On the bounded Lipschitz domain $\Omega\subset \R^d$, we consider the following linear second-order PDE
%\begin{subequations}
\begin{align}\label{ex:nonsymm}
 \operator{L}u
 := -\text{div}\matrix{A}\nabla u + \matrix{b}\cdot\nabla u + c u 
 = f\quad\text{in }\Omega
\quad\text{and}\quad
 u=0\quad\text{on }\Gamma.
\end{align}
%with a general elliptic, but non-symmetric differential operator
%\begin{align}\label{eq:nonsymmoperator}
%\operator{L} :\,H^1_0(\Omega)\to H^{-1}(\Omega),\quad\operator{L}u.
%\end{align}
%\end{subequations}
For all $x\in\Omega$, $\matrix{A}(x)\in \R^{d\times d}$ is a symmetric matrix with $\matrix{A}\in W^{1,\infty}(\Omega; \R^{d\times d}_{\rm sym})$. Moreover, $\matrix{b}(x)\in\R^d$ is a vector with $\matrix{b}\in L^\infty(\Omega;\R^d)$ and $c(x)\in \R$ is a scalar with $c\in L^\infty(\Omega)$.
Note that $\operator{L}$ is non-symmetric as
\begin{align*}
\operator{L}\neq \operator{L}^T=-\text{div}\matrix{A}\nabla u -\matrix{b}\cdot\nabla u + (c-\text{div}\matrix{b}) u.
\end{align*}
We assume that the induced bilinear form 
\begin{align*}
 \bform{u}{v}:=\dual{\operator{L}u}{v}
 = \int_\Omega \matrix{A}\nabla u \cdot \nabla v + \matrix{b}\cdot\nabla u v + cu v\,dx
 \quad\text{for }u,v\in\XX:=H^1_0(\Omega)
\end{align*}
is continuous and \revision{$H^1_0(\Omega)$-elliptic} and denote by $\quasinorm{v}{H^1_0(\Omega)}^2:=\bform{v}{v}$ the induced \emph{quasi-norm} on $H^1_0(\Omega)$.
%To guarantee well-posedness of the corresponding variational formulation for 
%$u\in \XX:=H^1_0(\Omega)$
%\begin{align}\label{eq:nonsymmweakform}
%\dual{\operator{L}u}{v}=\int_\Omega \matrix{A}\nabla u \cdot \nabla v + \matrix{b}\cdot\nabla u v + cu v\,dx = \int_\Omega f v\,dx\quad\text{for all }v\in \YY:=H^1_0(\Omega)
%\end{align} 
%for some $f\in L^2(\Omega)$, we restrict to $\matrix{A}\in L^\infty(\Omega)$, $\matrix{b}\in L^{d/(d+2)}(\Omega)$, and $c\in L^{d/2}(\Omega)$. The presented model problem admits an elliptic and continuous bilinear form $b(u,v):=\dual{\operator{L}u}{v}$, i.e.\ for norms $\enorm{\cdot}_\XX=\enorm{\cdot}_\YY=(\int_\Omega\nabla(\cdot)^2dx)^{1/2}$
According to the Lax-Milgram lemma and for given $f\in L^2(\Omega)$, the weak formulation
\begin{align}\label{eq:nonsymmweakform}
 \bform{u}{v} = \int_\Omega fv\,dx
 \quad\text{for all }v\in H^1_0(\Omega)
\end{align}
admits a unique solution $u\in H^1_0(\Omega)$.

Historically, the convergence and quasi-optimality analysis for the
adaptive algorithm has first been developed for elliptic and symmetric
operators, 
e.g.~\cite{d1996,mns,bdd,stevenson07,ckns} to name \revision{some} milestones,
and the analysis strongly used the Pythagoras theorem for
the energy norm~\eqref{dp:pythagoras}. The work~\cite{mn2005} introduced
an appropriate quasi-orthogonality~\eqref{A:qoa} in the $H^1$-norm 
to prove linear convergence of the so-called \emph{total error} which is the 
weighted sum of error plus oscillations. Later,~\cite{cn} used this
approach to prove quasi-optimal convergence rates. However,~\cite{mn2005,cn}
are restricted to $\div\matrix{b}=0$ and sufficiently fine initial meshes 
$\TT_0$ to
prove this quasi-orthogonality. The recent work~\cite{nonsymm} removes
these artificial assumption by proving the general quasi-orthogonality~\eqref{A:qosum}
with respect to the induced energy \emph{quasi-norm} 
$\quasinorm\cdot{H^1_0(\Omega)}$.
Moreover, the latter analysis also provides a framework for convergence
and quasi-optimality if $\bform\cdot\cdot$ is not uniformly elliptic, but 
only satisfies some Garding inequality. For details, the reader is 
referred to~\cite[Section~6]{nonsymm}.

The discretization of~\eqref{eq:nonsymmweakform} is done as in 
Section~\ref{section:ex:poisson}, from where we adopt the notation:
For a given regular triangulation $\TT$
and a polynomial degree $p\ge1$, we consider 
$\XX(\TT) := \SS^p_0(\TT) := \PP^p(\TT)\cap H^1_0(\Omega)$
with $\PP^p(\TT)$ from~\eqref{eq:polynomials}. 
The discrete formulation
also fits into the frame of the Lax-Milgram lemma and
\begin{align}
 b(U(\TT),V) = \int_\Omega fV\,dx
 \quad\text{for all }V\in\SS^p_0(\TT)
\end{align}
hence admits a unique FE solution $U(\TT)\in\SS^p_0(\TT)$. In particular, all assumptions
of Section~\ref{sec:setting} are satisfied with the \emph{quasi-norm}
$\quasinorm{\cdot}{H^1_0(\Omega)}$ and
$\dist[\TT]{v}{w} = \quasinorm{v-w}{H^1_0(\Omega)}$ and some constant
$\c{triangle}\ge1$ which depends only on $\operator{L}$.

The residual error-estimator $\eta(\cdot)$ differs slightly from \revision{the} above, namely
\begin{align}\label{ex:nonsymm:estimator}
\eta_T(\TT;V)^2:=h_T^2\normLtwo{\operator{L}|_T V -f}{T}^2 
+ h_T\normLtwo{[\matrix{A}\nabla V\cdot n]}{\partial T\cap\Omega}^2
\end{align}
for all $T\in\TT$ and all $V\in\SS^p_0(\TT)$ and $\operator{L}|_T V:=-\text{div}|_T \matrix{A}(\nabla|_T V )+ \matrix{b}\cdot \nabla|_T V + c V$, see e.g.~\cite{ao00,v96}.

The \revision{problem under consideration} involves the elliptic \revision{bilinear form} $\bform\cdot\cdot$ and thus fits
into \revision{the framework of} Section~\ref{section:conforming}.

\begin{proposition}\label{prop:ex:nonsymm:assumptions}
The conforming discretization of problem~\eqref{ex:nonsymm} with
residual error estimator~\eqref{ex:nonsymm:estimator} satisfies
stability~\eqref{A:stable}, reduction~\eqref{A:reduction}
with $\q{reduction}=2^{-1/d}$, generalized 
quasi-orthogonality~\eqref{A:qosum}, discrete reliability~\eqref{A:dlr} 
with $\RR(\TT,\widehat\TT) = \TT\backslash\widehat\TT$, and efficiency~\eqref{A:efficient} 
with 
\begin{align}
\begin{split}
 \eff{\TT}{U(\TT)}^2
&:= \min_{F\in\PP^{q}(\TT)}\sum_{T\in\TT}h_T^2\norm{\operator{L}|_T U(\TT)-f-F}{L^2(T)}^2\\
&\qquad\qquad+\min_{F^\prime\in\PP^{q^\prime}(\TT)}\sum_{T\in\TT}h_T\normLtwo{[\matrix{A}\nabla U(\TT)\cdot n]-F}{\partial T\cap\Omega}^2,
\end{split}
\end{align}
where $q,q^\prime\in\N_0$ are arbitrary and $\c{efficient}$ depends on $q,q^\prime$ and on $\T$.
If the differential operator $\operator{L}$ has piecewise polynomial coefficients, 
sufficiently large $q,q^\prime\in\N_0$ even provides~\eqref{A:efficient} with
\begin{align}\label{ex:nonsymm:osc}
 \eff{\TT}{U(\TT)}
 := \osc(\TT) 
 = \min_{F\in\PP^{p-1}(\TT)}\norm{h(\TT)\,(f-F)}{L^2(\Omega)}.
\end{align}%
In particular, there holds $\norm{\osc(\cdot)}{\O_{1/d}}<\infty$ in this case.
% and efficiency~\eqref{A:efficient} 
%with 
%\begin{align}
% \hot(\TT)
% := \osc(\TT;f) 
% := \min_{F\in\PP^{p-1}(\TT)}\norm{h(\TT)\,(f-F)}{L^2(\Omega)}
%\end{align}
%and hence for some $t\ge1/d$ in~\eqref{A:efficientb}. 
The constants $\c{stable},\c{reduction},\c{dlr}>0$
depend only on 
the polynomial degrees $q,q'\in\N$ and on $\T$.
%uniform boundedness of the shape regularity constant and hence
%only on $\T$, while $\c{efficient}>0$ depends additionally on the polynomial
%degrees $q,q'\in\N_0$ chosen.
\end{proposition}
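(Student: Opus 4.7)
The plan is to verify the four axioms by essentially reducing to the symmetric conforming case of Proposition~\ref{prop:ex:poisson:assumptions}, with the lower-order terms of $\operator{L}$ treated as compact perturbations whose contributions are controlled by the equivalence $\quasinorm{\cdot}{H^1_0(\Omega)}\simeq\norm{\cdot}{H^1_0(\Omega)}$. I would start with stability~\eqref{A:stable} and reduction~\eqref{A:reduction}, where the arguments from~\cite{ckns} carry over: on a non-refined element $T\in\TT\cap\widehat\TT$ the difference
\begin{align*}
\big|\eta_T(\widehat\TT;\widehat V)-\eta_T(\TT;V)\big|
\end{align*}
is controlled using the reverse triangle inequality, whereupon the local inverse estimate yields $h_T\norm{\ddiv\matrix{A}\nabla(\widehat V-V)}{L^2(T)}\lesssim \norm{\nabla(\widehat V-V)}{L^2(T)}$, and the lower-order contributions $h_T\norm{\matrix{b}\cdot\nabla(\widehat V-V)+c(\widehat V-V)}{L^2(T)}$ are trivially bounded by the same right-hand side using $\matrix{b},c\in L^\infty$ and a Friedrichs inequality. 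The jump contribution is handled by a scaled trace inequality on the elements in $\omega(\widehat\TT;T)$. Summing, using norm equivalence, and invoking $h_{T'}\le 2^{-1/d}h_T$ for any child $T'\in\widehat\TT\setminus\TT$ of $T\in\TT\setminus\widehat\TT$ yields \eqref{A:reduction} with $\q{reduction}=2^{-1/d}$ exactly as in Proposition~\ref{prop:ex:poisson:assumptions}.

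For discrete reliability~\eqref{A:dlr} with $\RR(\TT,\widehat\TT)=\TT\setminus\widehat\TT$, I would set $W:=U(\widehat\TT)-U(\TT)\in\SS^p_0(\widehat\TT)$, use coercivity $\quasinorm{W}{H^1_0(\Omega)}^2=b(W,W)$, Galerkin orthogonality $b(u-U(\widehat\TT),W)=0$, and thus
\begin{align*}
\quasinorm{W}{H^1_0(\Omega)}^2=b(u-U(\TT),W)=b(u-U(\TT),W-J_\TT W),
\end{align*}
where $J_\TT$ is a Scott--Zhang-type quasi-interpolation onto $\SS^p_0(\TT)$ that is the identity on $\SS^p_0(\TT)\cap\SS^p_0(\widehat\TT)$ and whose supports of $(W-J_\TT W)|_T$ vanish on interiors of non-refined elements. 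Elementwise integration by parts and Cauchy--Schwarz yield
\begin{align*}
\quasinorm{W}{H^1_0(\Omega)}^2\lesssim \Big(\sum_{T\in\TT\setminus\widehat\TT}\eta_T(\TT;U(\TT))^2\Big)^{1/2}\norm{W}{H^1_0(\Omega)},
\end{align*}
where the standard local approximation estimates $\norm{W-J_\TT W}{L^2(T)}\lesssim h_T\norm{\nabla W}{L^2(\omega(T))}$ etc.\ supply the mesh-size weights. This uses that $J_\TT$ can be chosen supported only on $\TT\setminus\widehat\TT$; the subtle point is to design $J_\TT$ so that \emph{only} refined elements enter on the right-hand side, which is possible because $W$ vanishes outside the refined region for sufficiently rich $\SS^p_0(\TT)\cap\SS^p_0(\widehat\TT)$, see the construction in~\cite{stevenson07,ckns} and its refinement in~\cite{nonsymm}.

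The main obstacle is the general quasi-orthogonality~\eqref{A:qosum}, because for non-symmetric $b$ the Pythagoras identity~\eqref{dp:pythagoras} in the quasi-norm fails and is replaced by
\begin{align*}
\quasinorm{u-U(\TT_k)}{H^1_0(\Omega)}^2=\quasinorm{u-U(\TT_{k+1})}{H^1_0(\Omega)}^2+\quasinorm{U(\TT_{k+1})-U(\TT_k)}{H^1_0(\Omega)}^2+b(U(\TT_{k+1})-U(\TT_k),u-U(\TT_{k+1})),
\end{align*}
where the last term is non-zero and involves the non-symmetric perturbation $(\matrix{b}\cdot\nabla\cdot)+c\,\cdot$. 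Here I would follow the strategy of~\cite{nonsymm}: a priori convergence $U(\TT_\ell)\to u_\infty$ in $H^1_0(\Omega)$ (which holds for any nested conforming Galerkin scheme with Lax--Milgram), combined with reliability~\eqref{A:reliable} derived from~\eqref{A:dlr} via Lemma~\ref{lemma:dlr2rel}, forces $u_\infty=u$. Bounding the mixed term by $\eps\quasinorm{U(\TT_{k+1})-U(\TT_k)}{H^1_0(\Omega)}^2+C(\eps)\norm{u-U(\TT_{k+1})}{L^2(\Omega)}^2$ via Young's inequality, summing over $k=\ell,\ldots,N$, and exploiting the Aubin--Nitsche-type $L^2$-compactness $\norm{u-U(\TT_k)}{L^2(\Omega)}\to 0$ superlinearly in the $H^1$-error, one absorbs the $\eps$-term and bounds the rest by a telescoping sum $\quasinorm{u-U(\TT_\ell)}{H^1_0(\Omega)}^2\lesssim \eta(\TT_\ell;U(\TT_\ell))^2$ via reliability; the remaining $L^2$-contributions are absorbed into the $\epsqo\dist[\TT_k]{u}{U(\TT_k)}^2$ terms on the left-hand side, yielding~\eqref{A:qosum} for any $0<\epsqo<\eps_{\rm qo}^\star(\theta)$.

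Finally, efficiency~\eqref{A:efficient} together with $\norm{\osc(\cdot)}{\O_{1/d}}<\infty$ in the polynomial-coefficient case follows from the standard bubble-function technique of \Verfuerth~\cite{v94,v96}: interior and facet bubbles localize the residual and the jump, yielding $h_T\norm{\operator{L}|_T U(\TT)-f-F}{L^2(T)}+h_T^{1/2}\norm{[\matrix{A}\nabla U(\TT)\cdot n]-F'}{L^2(\partial T\cap\Omega)}\lesssim \quasinorm{u-U(\TT)}{H^1_0(\omega(T))}+\text{oscillations}$, and hence~\eqref{A:efficient} after summation. For polynomial coefficients, choosing $q,q'$ large enough allows to replace the generic oscillations by $\osc(\TT)$ from~\eqref{ex:nonsymm:osc}; convergence $\norm{\osc(\cdot)}{\O_{1/d}}<\infty$ then follows as in Proposition~\ref{prop:ex:poisson:assumptions} by testing against uniformly refined meshes.
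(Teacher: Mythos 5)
Your treatment of stability~\eqref{A:stable}, reduction~\eqref{A:reduction}, discrete reliability~\eqref{A:dlr}, and efficiency~\eqref{A:efficient} essentially matches the paper, which dispatches all four by reduction to the symmetric conforming case of Section~\ref{section:ex:poisson} and standard bubble-function arguments from~\cite{v94,v96}. The quasi-interpolation construction you sketch for~\eqref{A:dlr} is the right mechanism.

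There is, however, a genuine gap in your argument for the general quasi-orthogonality~\eqref{A:qosum}, and it is precisely the one the proof design of~\cite{nonsymm} (which the paper adopts) was built to close. After splitting off the mixed term via Young's inequality, you bound it by $C(\eps)\,\norm{u-U(\TT_{k+1})}{L^2(\Omega)}^2$ and then appeal to an ``Aubin--Nitsche-type $L^2$-compactness'' to claim this is superlinearly small compared to the $H^1$-error and hence absorbable into the $\epsqo\,\dist[\TT_k]{u}{U(\TT_k)}^2$ terms. That superlinear relation $\norm{u-U(\TT_k)}{L^2(\Omega)}= o(1)\,\norm{\nabla(u-U(\TT_k))}{L^2(\Omega)}$ is \emph{not} elementary: Aubin--Nitsche duality supplies it only under additional regularity of the dual problem (e.g.\ convexity of $\Omega$, $H^2$-regularity of the adjoint, or constraints like $\div\matrix{b}=0$ and sufficiently fine initial meshes), which is exactly the set of artificial hypotheses of the earlier works~\cite{mn2005,cn} that the proposition does \emph{not} assume. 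The paper's proof instead establishes, via a separate Hilbert-space lemma ([Lemma~6]{nonsymm}), that the \emph{normalized} sequences
\begin{align*}
e_\ell := \frac{u-U(\TT_\ell)}{\norm{\nabla(u-U(\TT_\ell))}{L^2(\Omega)}},
\qquad
E_\ell := \frac{U(\TT_{\ell+1})-U(\TT_\ell)}{\norm{\nabla(u-U(\TT_\ell))}{L^2(\Omega)}}
\end{align*}
converge weakly to zero in $H^1_0(\Omega)$, and then exploits that the non-symmetric part $\operator{K}u = \matrix{b}\cdot\nabla u$ is a \emph{compact} operator $H^1_0(\Omega)\to H^{-1}(\Omega)$, so that $\operator{K}e_\ell$ and $\operator{K}E_\ell$ converge to zero \emph{strongly} in $H^{-1}(\Omega)$. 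This yields the needed smallness of the mixed term without any dual regularity. You would need to either prove the analogue of that weak-convergence lemma or cite it explicitly; the mere Rellich compactness of $H^1_0\hookrightarrow L^2$ does not produce a rate, and ``Aubin--Nitsche'' smuggles in assumptions you are not entitled to.

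A second, smaller omission: because the resulting smallness is only asymptotic ($\ell\ge\ell_0$), the paper explicitly patches in the finitely many indices $0\le\ell<\ell_0$ by a uniform-boundedness argument (with the convention $\infty\cdot 0 = 0$ to cover the case $U(\TT_\ell)=u$). Your argument sums over all $k\ge\ell$ without distinguishing this pre-asymptotic regime, so as written the absorption constant would not be uniform in $\ell$.
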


\begin{proof}
Stability~\eqref{A:stable}, reduction~\eqref{A:reduction}, and
discrete reliability~\eqref{A:dlr} follow as for the Poisson model problem
from Section~\ref{section:ex:poisson}. Standard arguments from e.g.~\cite{ao00,v96} provide the efficiency~\eqref{A:efficient}. 
The proof of the general quasi-orthogonality~\eqref{A:qosum} follows as in~\cite{nonsymm}:
First, according to Corollary~\ref{cor:convergence}, a~priori convergence of 
$U(\TT_\ell)\to U_\infty$ implies convergence $U(\TT_\ell)\to u$ as 
$\ell\to\infty$. Without loss of generality, assume that $u\neq U_\ell$
for all $\ell\ge0$.

Second, by general Hilbert space arguments, the a~priori convergence 
implies that 
the sequences $e_\ell := (u-U_\ell)/\norm{\nabla(u-U_\ell)}{L^2(\Omega)}$
as well as $E_\ell := (U_{\ell+1}-U_\ell)/\norm{\nabla(u-U_\ell)}{L^2(\Omega)}$
tend weakly to zero, see~\cite[Lemma~6]{nonsymm}.

Third, the non-symmetric part $\operator{K}u:=\matrix{b}\cdot\nabla u$ of 
$\operator{L}$ is a 
compact perturbation. Hence, $\operator{K} e_\ell$ as well as $\operator{K} E_\ell$ converge to zero even strongly in $H^{-1}(\Omega):=H^1_0(\Omega)^*$.

Since $\operator{L}-\operator{K}$ is
symmetric, the following quasi-orthogonality is established as in~\cite[Proposition~7]{nonsymm}. For all $\eps>0$, there exists some index
$\ell_0\in\N$ such that 	
\begin{align*}%\label{eq:qopre}
 \quasinorm{U(\TT_{\ell+1})-U(\TT_\ell)}{H^1_0(\Omega)}^2
 \leq \frac{1}{1-\eps}\quasinorm{u-U(\TT_\ell)}{H^1_0(\Omega)}^2
 -\quasinorm{u-U(\TT_{\ell+1})}{H^1_0(\Omega)}^2
 \quad\text{for all }\ell\geq \ell_0.
\end{align*}
As shown in~\cite[Proof of Theorem~8]{nonsymm}, one may now choose 
$\eps>0$ sufficiently small to derive for all $N\geq \ell$
\begin{align*}
 \sum_{k= \ell}^N \big(\quasinorm{U(\TT_{k+1})-U(\TT_k)}{H^1_0(\Omega)}^2
 - \epsqo\quasinorm{u-U(\TT_k)}{H^1_0(\Omega)}^2\big) 
 \lesssim \quasinorm{u-U(\TT_\ell)}{H^1_0(\Omega)}^2
 \text{ for all }\ell\geq \ell_0. 
\end{align*}
%for all $\ell\geq \ell_0$. 
For the remaining indices $0\leq \ell< \ell_0$,  recall that $\quasinorm{u-U(\TT_\ell)}{H^1_0(\Omega)}=0$ implies 
$\quasinorm{U(\TT_{k+1})-U(\TT_k)}{H^1_0(\Omega)}=0$ for all $k\geq \ell$. 
With the convention $\infty\cdot 0 = 0$, it holds
\begin{align*}
\max_{\ell=0,\ldots,\ell_0-1} \quasinorm{u-U(\TT_\ell)}{H^1_0(\Omega)}^{-2}
\sum_{k=\ell}^{\ell_0-1} \quasinorm{U(\TT_{k+1})-U(\TT_k)}{H^1_0(\Omega)}^2<\infty.
\end{align*}
The combination of the last two estimates finally yields the
general quasi-or\-tho\-go\-na\-lity~\eqref{A:qosum} and concludes the proof.
\end{proof}

\begin{consequence}
The adaptive algorithm leads to convergence with quasi-optimal rate for
the estimator $\eta(\cdot)$ in the sense of 
Theorem~\ref{thm:main}.
For lowest-order elements $p=1$ and piecewise polynomial coefficients of $\operator{L}$, even quasi-optimal rates for the 
discretization error are achieved in the sense of Theorem~\ref{thm:mainerr}.
At the current state of research, higher-order elements $p\ge2$ formally require piecewise polynomial coefficients of $\operator{L}$ 
and additional regularity of $f$.\qed
\end{consequence}
Numerical examples for the symmetric case that underline the above result can be found in~\cite{mn2005}.
%%%%%%%%%%%%%%%%%%%%%%%%%%%%%%%%%%%%%%%%%%%%%%%%%%%%%%%%%%%%%%%%%%%%%%%%%%%%%%%%%%%%%%%%%%%%%%%%%%%%%%%%%%%%%%%%%%%%%%%
%%%%%%%%%%%%%%%%%%%%%%%%%%%%%%%%%%%%%%%%%%%%%%%%%%%%%%%%%%%%%%%%%%%%%%%%%%%%%%%%%%%%%%%%%%%%%%%%%%%%%%%%%%%%%%%%%%%%%%%
%%%%%%%%%%%%%%%%%%%%%%%%%%%%%%%%%%%%%%%%%%%%%%%%%%%%%%%%%%%%%%%%%%%%%%%%%%%%%%%%%%%%%%%%%%%%%%%%%%%%%%%%%%%%%%%%%%%%%%%
%%%%%%%%%%%%%%%%%%%%%%%%%%%%%%%%%%%%%%%%%%%%%%%%%%%%%%%%%%%%%%%%%%%%%%%%%%%%%%%%%%%%%%%%%%%%%%%%%%%%%%%%%%%%%%%%%%%%%%%
%======================================================================================
\subsection{Lowest-order nonconforming FEM for Stokes}
%======================================================================================
\label{example:stokes}
The simplest model example for computational fluid dynamics is the  stationary Stokes equations
 \begin{align} 
-\Delta u  +\nabla p= f \quad\text{and}\quad \div u= 0 \quad\text{in }\Omega
\quad\text{and}\quad u = 0\quad\text{on }\Gamma = \partial\Omega
\label{ex:stokes:strongform}
\end{align}
with Dirichlet boundary conditions for the velocity field $u\in H^1_0(\Omega;\R^d)$ along the boundary $\Gamma$ and the pressure field $p\in L^2_0(\Omega):=\set{q\in L^2(\Omega)}{ \int_\Omega q\, dx=0} $. 
 The weak fomulation involves the Hilbert space $\XX := H^1_0(\Omega;\R^d)\times L^2_0(\Omega) $ and the bilinear form
$b(\cdot,\cdot)$ and linear form $F(\cdot)$ with 
\[
b((u,p),(v,q)):=\int_\Omega (  Du:Dv- p\,\ddiv v- q\,\ddiv u)\,dx
\quad\text{and}\quad
F(v,q):=\int_\Omega f\cdot v\, dx
\]
for $(u,p),(v,q)\in\XX$ with  the Frobenius scalar product of 
matrices $A:B:=\sum_{j,k=1}^d A_{jk} B_{jk}$ and the Jacobian $D$. The weak problem 
\[
b((u,p),(v,q))=F(v,q)\quad\text{for all }(v,q)\in\XX
\]
has a unique solution $(u,p)\in\XX$. This and the conforming and nonconforming discretisation is e.g.\ included in
textbooks \cite{braess,SR,brezzi-fortin,Gi_Ra}.

The first contributions on adaptive FEMs for the Stokes Equations involved the Uzawa algorithm \cite{BMN,YS} to overcome the residuals from the 
divergence term. In contrast to this, the nonconforming scheme naturally 
satisfies the side constraint $\ddiv u =0$ piecewise \cite{DDP}
and so enables the convergence and optimality proof \cite{BeMao10,HX07,cpr13}.

Given a regular triangulation $\TT\in\T$, the nonconforming discretisation starts with the nonconforming Crouzeix-Raviart space $CR^1_0(\TT)$ from \eqref{ex:nonconfPMP:CR} and
\begin{equation}\label{ex:nonconfstokes:CR}
\XX(\TT):=CR^1_0(\TT)^d\times  (\PP^0(\TT)\cap L^2_0(\Omega)),
\end{equation}
equipped with the product norm ($\nabla_\TT(\cdot)$ denotes the piecewise gradient)
\begin{align*}
 \norm{(V,Q)}{\XX(\TT)}^2:=\norm{\nabla_\TT V}{L^2(\Omega)}^2 + \normLtwo{Q}{\Omega}^2
\end{align*}
and
$\dist[\TT]{(U,P)}{(V,Q)} = \norm{(U-V,P-Q)}{\XX(\TT)}$.
The differential operators in $b(\cdot,\cdot)$ are understood in the piecewise sense
\[
b\big(\TT;(U,P),(V,Q)\big) := \sum_{T\in\TT} \int_T    (  DU:DV- P\,\ddiv V- Q\,\ddiv U) dx
\]
for all $(U,P), (V,Q)\in\XX(\TT)$. 
The discrete problem
\begin{align}\label{eq:Stokes:CRdiscrete}
b\big(\TT; (U(\TT),P(\TT)),(V,Q)) = \int_\Omega fV\,dx \quad\text{for all }(V,Q)\in \XX(\TT)
\end{align}
admits a unique FE solution $((U(\TT),P(\TT))\in \XX(\TT)$  \cite{braess,SR,brezzi-fortin,Gi_Ra}. 
Recall the jumps of the  tangential derivatives from Section~\ref{section:ex:nonconforming}
and define  the local contributions of the explicit residual-based error estimator  \cite{DDP}
\begin{align}\label{ex:stokes:CRestimator}
 \eta_T(\TT;(V,Q))^2 := h_T^2\,\norm{f}{L^2(T)}^2
 + h_T\,\norm{[\partial_t V]}{L^2(\partial T)}^2\quad \text{for all }T\in\TT,
\end{align}
where $V$ is some part of the discrete test function $Y=(V,Q)\in\XX(\TT)$ and $h_T$ is the local mesh-size defined in~\eqref{ex:laplace:meshsize}.

\begin{proposition}\label{prop:ex:stokes:CRassumptions}
The nonconforming discretization~\eqref{eq:Stokes:CRdiscrete} of the Stokes problem~\eqref{ex:stokes:strongform} on a simply connected domain $\Omega$ with
residual error estimator~\eqref{ex:stokes:CRestimator} satisfies
stability~\eqref{A:stable}, reduction~\eqref{A:reduction}
with $\q{reduction}=2^{-1/d}$, general quasi-orthogonality~\eqref{A:qosum}, discrete reliability~\eqref{A:dlr} with 
$\RR(\TT,\widehat\TT) = \TT\backslash\widehat\TT$ and efficiency~\eqref{A:efficient} 
with $ \eff{\TT}{U(\TT)}:= \osc(\TT)$ from~\eqref{ex:poisson:osc}
and hence $\norm{\osc(\cdot)}{\O_{1/d}}<\infty$. 
The constants $\c{stable},\c{reduction},\c{dlr},\c{efficient}>0$
depend only on $\T$.
\end{proposition}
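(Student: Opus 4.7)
The plan is to mirror the strategy from the nonconforming Poisson case (Proposition~\ref{prop:ex:poisson:CRassumptions}) and adapt it to the Stokes saddle-point setting. Since the Crouzeix-Raviart discretization yields $\ddiv_\TT U(\TT) = 0$ elementwise, the pressure enters the estimator only implicitly through the velocity residual, so the structural arguments from the Poisson case carry over with minor modifications.

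First, stability~\eqref{A:stable} follows from the triangle inequality applied to the square-root of the sums, combined with the standard scaled inverse/trace estimates for piecewise polynomials, where the jump term $h_T^{1/2}\|[\partial_t(\widehat V - V)]\|_{L^2(\partial T)}$ is bounded by $\|\nabla_{\widehat\TT}(\widehat V - V)\|_{L^2(\omega(T))}$; the volume term $h_T\|f\|_{L^2(T)}$ does not depend on the discrete function. Reduction~\eqref{A:reduction} follows because on refined elements $\widehat T\subset T\in\TT\setminus\widehat\TT$ we have $h_{\widehat T}\le 2^{-1/d} h_T$, giving the factor $2^{-1/d}$ after splitting the jump contributions with Young's inequality. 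Efficiency~\eqref{A:efficient} is the classical bubble-function estimate of~\cite{DDP,v96} adapted to the CR-Stokes residual, yielding oscillations of the form~\eqref{ex:poisson:osc}, and $\norm{\osc(\cdot)}{\O_{1/d}}<\infty$ for a sequence of uniform refinements.

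For discrete reliability~\eqref{A:dlr} with $\RR(\TT,\widehat\TT)=\TT\setminus\widehat\TT$, I would invoke the discrete Helmholtz decomposition of CR functions that are elementwise divergence-free: on a simply connected domain, the velocity difference $U(\widehat\TT)-U(\TT)$ can, after subtraction of a suitable conforming interpolant, be represented via a discrete stream function, which reduces the argument to a Poisson-type discrete reliability estimate supported on $\TT\setminus\widehat\TT$. This is the approach of~\cite{cpr13} for $d=2$ and~\cite{HX07,mzs10} for higher dimensions; the simply-connectedness hypothesis enters exactly here to guarantee existence of the stream function.

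The main obstacle is the general quasi-orthogonality~\eqref{A:qosum}. I would follow the exact strategy of Proposition~\ref{prop:ex:poisson:CRassumptions}: first establish the classical quasi-orthogonality of~\cite{bms09,ch06b} in the Stokes context,
\begin{align*}
\enorm{U(\widehat\TT)-U(\TT)}_{\XX(\widehat\TT)}^2
&\le \enorm{(u,p)-(U(\TT),P(\TT))}_{\XX(\TT)}^2 - \enorm{(u,p)-(U(\widehat\TT),P(\widehat\TT))}_{\XX(\widehat\TT)}^2 \\
&\quad + C\,\enorm{(u,p)-(U(\widehat\TT),P(\widehat\TT))}_{\XX(\widehat\TT)}\,\|h(\TT)\,f\|_{L^2(\bigcup(\TT\setminus\widehat\TT))},
\end{align*}
where the cross term arises from the nonconformity and is handled via the same Helmholtz-type argument used for discrete reliability. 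Applying the Young inequality with parameter $\epsqo$ absorbs the error term, while the remaining $\|h(\TT) f\|_{\TT\setminus\widehat\TT}^2$ telescopes via the contractive majorant $\mu(\TT):=\|h(\TT)f\|_{L^2(\Omega)}$, using $h_{\widehat T}\le 2^{-1/d}h_T$ exactly as in Section~\ref{section:ex:nonconforming}. Since $\mu(\TT)\le\eta(\TT;(U(\TT),P(\TT)))$, the axiom~\eqref{A:qob} holds with $C_2=1$, and Lemma~\ref{lem:qo} concludes~\eqref{A:qosum}.
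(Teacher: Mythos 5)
Your proposal takes essentially the same approach as the paper's proof. The paper also handles stability and reduction by referring back to Proposition~\ref{prop:ex:poisson:CRassumptions}, cites~\cite{DDP} for efficiency, cites~\cite[Theorem~3.1]{cpr13} for discrete reliability via a (discrete) Helmholtz decomposition on simply connected domains (which is precisely where that hypothesis enters), and derives~\eqref{A:qosum} from the quasi-orthogonality of~\cite[Lemma~4.3]{cpr13} by the same telescoping argument with $\mu(\TT)=\|h(\TT)f\|_{L^2(\Omega)}$ (now vector-valued) and Lemma~\ref{lem:qo}, exactly as you sketch.
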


\begin{proof}
Stability~\eqref{A:stable} and reduction~\eqref{A:reduction} follow as in 
Proposition~\ref{prop:ex:poisson:CRassumptions}.
Efficiency~\eqref{A:efficient} is established in \cite{DDP}, 
while the discrete reliability~\eqref{A:dlr} is shown in \cite[Theorem~3.1]{cpr13} for $d=2$, but the proof essentially 
applies also to the case $d=3$. The aforementioned contributions utilize a continuous or discrete Helmholtz decomposition and are therefore restricted to simply connected domains. The
general case is clarified in \cite{CGS}.

The quasi-orthogonality in the version of \cite[Lemma~4.3]{cpr13} allows an analysis analogous to that of 
Proposition~\ref{prop:ex:poisson:CRassumptions} with the same $\mu(\TT)$ (applied to $f$ in $d$ components rather than one) to prove \eqref{A:qob}.
This and Lemma~\ref{lem:qo} imply the general quasi-orthogonality~\eqref{A:qosum}.
\end{proof}

\begin{consequence}
The adaptive algorithm leads to convergence with quasi-optimal rate in the sense of Theorem~\ref{thm:main} and Theorem~\ref{thm:mainerr}. \qed
\end{consequence}
Numerical examples for 2D that underline the above result can be found in~\cite{BeMao10}.% which employs an unnecessary separate D\"orfler marking.}
%%%%%%%%%%%%%%%%%%%%%%%%%%%%%%%%%%%%%%%%%%%%%%%%%%%%%%%%%%%%%%%%%%%%%%%%%%%%%%%%%%%%%%%%%%%%%%%%%%%%%%%%%%%%%%%%%%%%%%%%%%5
%======================================================================================
\subsection{Mixed FEM for Stokes}
%======================================================================================
\label{example:mixedstokes}
The pseudostress formulation for the Stokes equations \eqref{ex:stokes:strongform} starts with the stress 
$\sigma:= Du -p\, I$ for the $d\times d$ unit matrix $I$ and the velocity $u\in H^1_0(\Omega;\R^d)$ and the
pressure $p\in L^2_0(\Omega)$. Since $u$ is divergence free, the trace free part 
$ \dev  \, \sigma := \sigma- \tr(\sigma) /d\, I$ (with the trace $\tr \sigma:=\sigma_{11}+\dots+\sigma_{dd}=\sigma:I$)
equals the Jacobian matrix  $Du$. With this notation,~\eqref{ex:stokes:strongform} reads
\begin{align*}
 \dev  \, \sigma=Du\quad\text{and}\quad
 f+\ddiv\sigma=0,
 \end{align*}
 where the divergence acts row-wise.
 With the Hilbert space
\[
H:=\set{\tau\in H(\ddiv,\Omega;\R^{d\times d})}{  \int_\Omega \tr(\tau)dx = 0 }
\]
for the stresses, the mixed weak formulation reads 
\begin{equation*}
\begin{array}{rcll}
 \int_\Omega ( \sigma:\dev  \tau + u\cdot \ddiv  \tau)dx  &=& 0&\text{for all }\tau\in H,\\
 \int_\Omega v\cdot \ddiv\sigma\,dx  &=& -\int_\Omega v\cdot f\,dx &\text{for all } v\in L^2(\Omega;\R^d).
\end{array}
\end{equation*}
Given a regular triangulation $\TT\in\T$,
the discrete spaces for the Raviart-Thomas discretisation read
\[
\XX(\TT):= (M_k(\TT)^d\cap H)\times \PP^k(\TT;\R^2)\subset \XX:=H\times   L^2(\Omega;\R^d)
\]
with $M_k(\TT)$ from Section~\ref{section:ex:mixed} and equipped with the norm
$\dist[\TT]{(U,\sigma)}{(V,\tau)} = (\norm{U-V}{L^2(\Omega)}^2+\norm{\sigma-\tau}{H(\ddiv,\Omega;\R^{d\times d})}^2)^{1/2}$.
The discrete formulation
\begin{align*}
 \int_\Omega ( \Sigma(\TT):\dev  \mbox{\large$\tau$} + U(\TT)\cdot \ddiv  \mbox{\large$\tau$})dx  &= 0,\\
 \int_\Omega V\cdot \ddiv\Sigma(\TT)\,dx  &= -\int_\Omega V\cdot f\,dx 
\end{align*}
for all $Y:=(V,\mbox{\large$\tau$})\in\XX(\TT)$ admits a unique solution $X(\TT)=(U(\TT),\Sigma(\TT))\in\XX(\TT)$~\cite{CR73}.
For 
$Y(\TT)=(V,\tau)\in  \XX(\TT)$,
the a posteriori error analysis of  \cite{CKP11} leads to the local contribution 
\begin{align}\label{e:defestimator}
\begin{split}
  \eta_T(\TT;V)^2 :=  \osc^2(f,T)  
  + h_T^2\,  \norm{   \curl(\dev V)  }{L^2(T)}^2  
   + h_T  \,\norm{    [\dev( V)  \times \nu ]   }{L^2(\partial T)}^2
  \end{split}
 \end{align}
with the jumps $ [\dev( V) \times \nu]$ of the tangential  components of the deviatoric part of the stress approximation as in Section~\ref{section:ex:mixed}. 

\begin{proposition}\label{prop:ex:stokes:MFEMassumptions}
The pseudostress formulation of the Stokes equations on a simply connected 
Lipschitz domain $\Omega$ in $d=2$  with
residual error estimator~\eqref{e:defestimator} satisfies
stability~\eqref{A:stable}, reduction~\eqref{A:reduction}
with $\q{reduction}=2^{-1/d}$, general quasi-orthogonality~\eqref{A:qosum}, discrete reliability~\eqref{A:dlr} with 
$\RR(\TT,\widehat\TT) = \TT\backslash\widehat\TT$ and efficiency~\eqref{A:efficient} 
with $ \eff{\TT}{U(\TT)}:= \osc(\TT)$ from~\eqref{ex:poisson:osc}
and hence $\norm{\osc(\cdot)}{\O_{1/d}}<\infty$. 
As above,  the constants $\c{stable}$, $\c{reduction}$, $\c{dlr}$, $\c{efficient}>0$
depend only on 
the polynomial degree $k$ and on
$\T$.%uniform boundedness of the shape regularity constant 
%in $\TT$.
\end{proposition}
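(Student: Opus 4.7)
The plan is to follow the blueprint already set out for mixed FEM of the Poisson problem in Proposition~\ref{prop:ex:poisson:MFEMassumptions}, adapting each step to the pseudostress formulation. The discretisation uses conforming Raviart--Thomas elements in a subspace of $H(\ddiv,\Omega;\R^{d\times d})\times L^2(\Omega;\R^d)$, so we are again in the setting of Section~\ref{section:conforming} with nested spaces and a Galerkin-type saddle point problem. The main novelty in two space dimensions is the presence of the deviatoric operator $\dev$ and the trace constraint $\int_\Omega \tr(\tau)\,dx=0$, and these are what need to be accommodated in the Helmholtz-type decomposition entering the discrete reliability proof.

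For stability~\eqref{A:stable} and the reduction property~\eqref{A:reduction} with $\q{reduction}=2^{-1/d}=2^{-1/2}$, I would proceed exactly as in Section~\ref{section:ex:mixed}: the estimator~\eqref{e:defestimator} is a weighted sum of element residuals $\curl(\dev V)$, facet jumps $[\dev(V)\times\nu]$, and data oscillations $\osc(f,T)$, all weighted by $h_T$ or $h_T^{1/2}$. The triangle inequality together with the standard trace and inverse estimates on the reference element, combined with shape-regularity of $\T$, yields~\eqref{A:stable} with a constant depending only on $k$ and $\T$. On refined elements, the newest-vertex-bisection contraction $h_{\widehat T}\le 2^{-1/d}h_T$ \revision{combined with} the same inverse estimate gives~\eqref{A:reduction}. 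Efficiency~\eqref{A:efficient} with $\eff{\TT}{U(\TT)}:=\osc(\TT)$ is proved by bubble-function arguments on element interiors and facet patches following~\cite{CKP11}; the oscillation quasi-monotonicity $\norm{\osc(\cdot)}{\O_{1/d}}<\infty$ follows from~\eqref{eq:oscapprox} evaluated on uniform refinements.

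Discrete reliability~\eqref{A:dlr} will be the main obstacle. The idea is to write $\Sigma(\widehat\TT)-\Sigma(\TT)\in M_k(\widehat\TT)^d\cap H$ via a discrete Helmholtz/Stokes decomposition valid on a simply connected 2D domain, isolating a $\ddiv$-controlled part (bounded by oscillations of $f$ on $\TT\setminus\widehat\TT$) and a curl part (expressible as the deviatoric curl of a discrete stream function). The curl part is then tested against the difference of the two Galerkin formulations, and a Scott--Zhang-type quasi-interpolation that vanishes on $\TT\cap\widehat\TT$ localises the error contributions to $\RR(\TT,\widehat\TT):=\TT\setminus\widehat\TT$, so that only volume residuals $h_T\curl(\dev\Sigma(\TT))$ and facet jumps $h_T^{1/2}[\dev(\Sigma(\TT))\times\nu]$ on refined elements and data oscillations on those elements contribute. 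Integration by parts and the localization bound the result by $\c{dlr}^2\sum_{T\in\TT\setminus\widehat\TT}\eta_T(\TT;U(\TT))^2$. The simply-connectedness assumption on $\Omega$ enters precisely in making the discrete Helmholtz decomposition available; the general case would require the modifications of~\cite{CGS}.

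For the general quasi-orthogonality~\eqref{A:qosum}, I would first establish the classical quasi-orthogonality~\eqref{A:qo} in the form
\begin{align*}
 \norm{\Sigma(\widehat\TT)-\Sigma(\TT)}{L^2(\Omega)}^2
 \le \norm{\sigma-\Sigma(\TT)}{L^2(\Omega)}^2-\norm{\sigma-\Sigma(\widehat\TT)}{L^2(\Omega)}^2
 +C\norm{\sigma-\Sigma(\widehat\TT)}{L^2(\Omega)}\,\osc(\TT\setminus\widehat\TT;f),
\end{align*}
with $\mu(\TT):=\osc(\TT;f)$; the Young inequality and the identity $\osc(\TT\setminus\widehat\TT;f)^2\le(\mu(\TT)^2-\mu(\widehat\TT)^2)/(1-2^{-2/d})$ then yield~\eqref{A:qoa} with constant $\c{qo}(\epsqo)\simeq\epsqo^{-1}$, while $\mu(\TT)\le\eta(\TT;U(\TT))$ gives~\eqref{A:qob}. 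Finally, Lemma~\ref{lem:qo} converts this into~\eqref{A:qosum} for any $0<\epsqo<1$. The derivation of the orthogonality identity itself uses the mixed Pythagoras relation for the stress error (as in~\cite{ch06a,LCMHJX,HuangXu}) and, in the Stokes context, exploits that $\ddiv(\Sigma(\widehat\TT)-\Sigma(\TT))$ lives in $\PP^k(\widehat\TT;\R^d)$ so that the deviation from exact Galerkin orthogonality is controlled by the $L^2$-best-approximation of $f$ on the refined elements, i.e.\ by $\osc(\TT\setminus\widehat\TT;f)$.
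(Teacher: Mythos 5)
Your proposal follows the same overall blueprint as the paper's (rather terse, reference-heavy) proof: transfer the Raviart--Thomas machinery for stability, reduction and efficiency, obtain discrete reliability from a discrete Helmholtz decomposition (hence the restriction to simply connected $\Omega$), and then feed a quasi-orthogonality identity into the abstract framework via Lemma~\ref{lem:qo}. Your sketches of~\eqref{A:stable}, \eqref{A:reduction} and~\eqref{A:efficient} and your localization idea for~\eqref{A:dlr} are reasonable reconstructions of what the paper delegates to \cite{CGSpseudo,CKP11}.

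There is, however, one concrete discrepancy in the quasi-orthogonality step that you should flag. You wrote the cross-term with the \emph{stress} error,
\begin{align*}
 C\,\norm{\sigma-\Sigma(\widehat\TT)}{L^2(\Omega)}\,\osc(\TT\setminus\widehat\TT;f),
\end{align*}
by analogy with the Raviart--Thomas Poisson case of Proposition~\ref{prop:ex:poisson:MFEMassumptions}, where the cross-term involves the flux error. But the identity the paper actually quotes from \cite[Theorem~4.2]{CGSpseudo} has the \emph{velocity} error $\norm{u-U(\widehat\TT)}{L^2(\Omega)}$ in that slot instead. The reason is structural, not cosmetic: \cite{CGSpseudo} does \emph{not} derive the identity from a mixed Pythagoras relation in the stress variable as you proposed. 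It proves it via an equivalence of the pseudostress method with the nonconforming Crouzeix--Raviart FEM of Section~\ref{example:stokes}, mediated by a discrete Helmholtz decomposition — and that equivalence is precisely what produces the velocity factor and what forces the restriction to $d=2$. Your derivation route (using that $\ddiv(\Sigma(\widehat\TT)-\Sigma(\TT))\in\PP^k(\widehat\TT;\R^d)$ to control the deviation from Galerkin orthogonality by $\osc(\TT\setminus\widehat\TT;f)$) would have to be verified independently — it is not the argument in the literature and it is not obvious that the stress-error version you wrote down is actually true for the pseudostress saddle point system, since the first equation pairs $\sigma$ with $\dev\tau$ rather than $\tau$ and entangles the velocity through $u\cdot\ddiv\tau$. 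That said, the downstream application of Lemma~\ref{lem:qo} and the Young-inequality rearrangement with $\mu(\TT):=\osc(\TT;f)$ is insensitive to which component of the error appears in the cross-term, since both $\norm{\sigma-\Sigma(\widehat\TT)}{L^2(\Omega)}$ and $\norm{u-U(\widehat\TT)}{L^2(\Omega)}$ are dominated by $\dist[\widehat\TT]{u}{U(\widehat\TT)}$; so if you can secure either identity, the remainder of your argument goes through unchanged.
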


\begin{proof}
Stability~\eqref{A:stable} and reduction~\eqref{A:reduction} follow as above --- some details can be found in the proof of
\cite[Theorem 4.1]{CGSpseudo}.
Efficiency~\eqref{A:efficient} 
is contained in \cite{CKP11,CGSpseudo}.  
The recent work \cite{CGSpseudo} presents 
discrete reliability~\eqref{A:dlr}  \cite[Theorem 5.1]{CGSpseudo} and
quasi-orthogonality  in the form
\begin{align*}
\norm{\Sigma(\widehat\TT)-\Sigma(\TT)}{L^2(\Omega)}^2 &\leq 
\norm{\sigma-\Sigma(\TT)}{L^2(\Omega)}^2 - \norm{\sigma-\Sigma(\widehat\TT)}{L^2(\Omega)}^2 \\
&\quad\quad+ C\norm{u-U(\widehat\TT)}{L^2(\Omega)}
\osc(\TT\setminus\widehat\TT,f)
\end{align*}
for some generic constant $C\simeq 1$  \cite[Theorem 4.2]{CGSpseudo}. The proof is based on a discrete Helmholtz decomposition and an equivalence result of the pseudostress method with the nonconforming FEM of the previous subsection and so restricted to $d=2$.
The rearrangements as in Section~\ref{section:ex:mixed} 
with $\mu(\TT):= \osc(\TT;f)$ result in~\eqref{A:qoa}
for {\em any } $0<\epsqo<1$ \revision{with} $C_1:= C^2/\left(\epsqo  (1-2^{-1/2}) 4 \right)$ and $C_2=1$ in
\eqref{A:qob}. 
\end{proof}

\begin{consequence}
The adaptive algorithm leads to convergence with quasi-optimal rate
for the estimator $\eta(\cdot)$ in the sense of Theorem~\ref{thm:main} and Theorem~\ref{thm:mainerr}. \qed
\end{consequence}
Numerical examples that underline the above result can be found in~\cite{CKP11}.
%%%%%%%%%%%%%%%%%%%%%%%%%%%%%%%%%%%%%%%%%%%%%%%%%%%%%%%%%%%%%%%%%%%%%%%%%%%%%%%%%%%%%%%%%%%%%%%%%%%%%%%%%%%%%%%%%55
%======================================================================================
\subsection{Lowest-order nonconforming FEM for linear elasticity}
%======================================================================================
\label{example:lame}
The Navier Lam\'e equations  form the simplest model problem in solid mechanics with isotropic homogeneous positive material and the Lame parameters $\lambda$ and $\mu$.
Given a polyhedral Lipschitz domain $\Omega\subset\R^d$ and $f\in L^2(\Omega;\R^d)$, the displacement field $u\in \XX:= H^1_0(\Omega; \mathbb R^d)$ satisfies 
\begin{equation}\label{ex:lame:strongformulation}
	-\mu\Delta u - (\lambda + \mu) \nabla \ddiv u=f \text{ in } \Omega.
\end{equation}
The existence and uniqueness of weak solutions with the bilinear form $b(\cdot,\cdot)$ and the linear form $F(\cdot)$ 
and the conforming and nonconforming discretisation is included in the
textbooks~\cite{braess,SR}.
The weak form of~\eqref{ex:lame:strongformulation} reads
\[
b( u,v)   :=  
\int_\Omega \left(  \mu  D u : D v + (\lambda+\mu)  (\ddiv u) (\ddiv v) \right) dx.
\]
Given a regular triangulation $\TT\in\T$, let $\XX(\TT):=CR^1_0(\TT)^d$ denote the nonconforming Crouzeix-Raviart space from Section~\ref{section:ex:nonconforming} and let $\dist[\TT]{\cdot}{\cdot}$ be defined as in Section~\ref{section:ex:nonconforming}. There exists a unique discrete solution $U(\TT)\in \XX(\TT)$ 
such that 
\begin{align}\label{eq:lame:CRdiscrete}
b(\TT; U(\TT),V) = \int_\Omega f\cdot V\,dx \quad\text{for all }V\in \XX(\TT),
\end{align}
where
\[
b(\TT; U(\TT),V)   :=  \sum_{T\in\TT}
\int_T \left(  \mu  D U(\TT) : D V + (\lambda+\mu)  (\ddiv U(\TT)) (\ddiv V) \right) dx.
\]
The error estimator reads  \eqref{ex:poisson:CRestimator} as in Section~\ref{section:ex:nonconforming}
with the little difference that $V$ and $f$ are  no longer scalar but $d$-dimensional. 

\begin{proposition}\label{prop:ex:lame:CRassumptions}
The nonconforming discretization   \eqref{eq:lame:CRdiscrete} of the Navier-Lame equations \eqref{ex:lame:strongformulation}
on the simply connected domain $\Omega$ in 2D with
residual error estimator~\eqref{ex:poisson:CRestimator} satisfies
stability~\eqref{A:stable}, reduction~\eqref{A:reduction}
with $\q{reduction}=2^{-1/d}$, general quasi-orthogonality~\eqref{A:qosum}, discrete reliability~\eqref{A:dlr} with 
$\RR(\TT,\widehat\TT) = \TT\backslash\widehat\TT$ and efficiency~\eqref{A:efficient} 
with $ \hot(\TT):= \osc(\TT;f)$ from Section~\ref{section:ex:poisson}
and hence $\norm{\osc(\cdot)}{\O_{1/d}}<\infty$. 
The constants $\c{stable}$, $\c{reduction}$, $\c{dlr}$, $\c{efficient}>0$
depend only on $\T$ and constraints on $\mu$, but do not depend on $\lambda$.
%uniform boundedness of the shape regularity constant 
%in $\TT$.
\end{proposition}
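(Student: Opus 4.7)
The plan is to verify the four axioms in parallel with Proposition~\ref{prop:ex:poisson:CRassumptions} for scalar nonconforming Poisson and Proposition~\ref{prop:ex:stokes:CRassumptions} for nonconforming Stokes, since Navier-Lam\'e shares structural features with both; the extra care is that every constant must be independent of~$\lambda$.

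First, stability~\eqref{A:stable} and reduction~\eqref{A:reduction} follow from the triangle inequality combined with standard inverse estimates applied componentwise to the vector-valued $V$ and $f$. Because the estimator~\eqref{ex:poisson:CRestimator} has exactly the scalar-Laplace form (tangential jumps of $\nabla V$ and $L^2$ residual of $f$) and the error measure inherited from Section~\ref{section:ex:nonconforming} is the piecewise $H^1$-seminorm, none of the constants sees $\lambda$. The reduction factor $\q{reduction}=2^{-1/d}$ arises from $h_{\widehat T}\le 2^{-1/d} h_T$ under newest vertex bisection, exactly as in Proposition~\ref{prop:ex:poisson:CRassumptions}. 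Efficiency~\eqref{A:efficient} is obtained by the bubble-function technique of~\cite{v94} applied componentwise, and the oscillation bound $\norm{\osc(\cdot)}{\O_{1/d}}<\infty$ follows on uniform meshes as in Section~\ref{section:ex:poisson}. Since the estimator only involves $f$ and tangential jumps, this remains $\lambda$-robust.

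Third and most delicate, the discrete reliability~\eqref{A:dlr} relies on a discrete Helmholtz decomposition of the piecewise gradient $\nabla_\TT\big(U(\widehat\TT)-U(\TT)\big)$ on the simply connected 2D domain, in the spirit of~\cite[Sect.~4]{rabus10} and~\cite[Theorem~3.1]{cpr13}. The conforming component is controlled by a Scott-Zhang-type quasi-interpolation (as in the analysis of Proposition~\ref{prop:ex:poisson:assumptions}) applied to each velocity component, whereas the divergence-free discrete curl is controlled by the tangential-jump contribution of the estimator. The choice $\RR(\TT,\widehat\TT)=\TT\setminus\widehat\TT$ is then inherited from the locality of the quasi-interpolation on newest vertex bisection meshes. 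Finally, general quasi-orthogonality~\eqref{A:qosum} will be derived from a refinement-step bound in the spirit of~\cite[Lemma~2.2]{rabus10},
\begin{align*}
\enorm{U(\widehat\TT)-U(\TT)}^2_{\XX(\widehat\TT)}
&\leq \enorm{u-U(\TT)}^2_{\XX(\TT)} - \enorm{u-U(\widehat\TT)}^2_{\XX(\widehat\TT)}\\
&\qquad + C\,\enorm{u-U(\widehat\TT)}_{\XX(\widehat\TT)}\,\| h(\TT)\,f \|_{\TT\setminus\widehat\TT},
\end{align*}
followed by a Young inequality with parameter $\epsqo>0$, the identification $\mu(\TT):=\| h(\TT)\,f\|_{L^2(\Omega)} \lesssim \eta(\TT;U(\TT))$ and Lemma~\ref{lem:qo}.

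The main obstacle is the $\lambda$-robustness of the Helmholtz decomposition in step three. Conforming low-order elements suffer from volume locking as $\lambda\to\infty$, whereas Crouzeix-Raviart is well-known to be locking-free; the decomposition must be set up so that the term $(\lambda+\mu)\,\ddiv_\TT U(\TT)$ only enters through data quantities while the piecewise $H^1$-seminorm of the discrete difference is bounded directly by $\eta(\TT;U(\TT))$. A secondary restriction is that both the Helmholtz-based discrete reliability and the above refinement-step bound exploit simple connectedness in 2D; the analogue in 3D or on multiply connected domains requires the extension techniques of~\cite{CGS} and is outside the present scope.
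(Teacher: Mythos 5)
Your overall architecture mirrors the paper's proof: stability and reduction follow componentwise as in the scalar nonconforming Poisson case of Proposition~\ref{prop:ex:poisson:CRassumptions}, efficiency is obtained by localization, discrete reliability rests on a discrete Helmholtz decomposition on simply connected 2D domains, and general quasi-orthogonality is derived from a \cite[Lemma~2.2]{rabus10}-style refinement-step bound combined with Young's inequality, the identification $\mu(\TT):=\|h(\TT)f\|_{L^2(\Omega)}$, and Lemma~\ref{lem:qo}. You also correctly flag the 2D/simple-connectedness restriction.

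However, at the point you yourself call ``the main obstacle'' there is a genuine gap. You write that the decomposition ``must be set up so that'' the $(\lambda+\mu)\ddiv_\TT$-term only enters through data quantities, but you never produce the mechanism that delivers this. The paper's proof names it explicitly: the $\tr$-$\dev$-$\ddiv$ lemma of \cite{Carstensen:2005:Unifying,CR2012}, which for a matrix field with vanishing mean trace bounds $\|\tr\tau\|_{L^2}$ by $\|\dev\tau\|_{L^2}$ plus a norm of $\ddiv\tau$, \emph{uniformly in $\lambda$}. This is precisely what lets the $(\lambda+\mu)$-weighted divergence contribution in the Navier--Lam\'e bilinear form be absorbed without the constants $\c{dlr}$ and $\c{efficient}$ degenerating as $\lambda\to\infty$. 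Asserting that ``Crouzeix--Raviart is well-known to be locking-free'' is a motivation, not a proof; without invoking the $\tr$-$\dev$-$\ddiv$ lemma (or an equivalent algebraic identity) the $\lambda$-robustness claimed in the proposition remains unsubstantiated. A secondary inaccuracy: the relevant discrete Helmholtz decomposition for elasticity is developed in \cite{CR2012} itself, which is what the paper cites for discrete reliability; the Poisson \cite{rabus10} and Stokes \cite{cpr13} constructions are analogues in spirit but do not directly yield the $\lambda$-independent constants needed here.
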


\begin{proof}
Stability~\eqref{A:stable} and reduction~\eqref{A:reduction},
Efficiency~\eqref{A:efficient} plus 
the discrete reliability~\eqref{A:dlr} and  the quasi-orthogonality  follow as in 
Section~\ref{section:ex:nonconforming}. The novel aspect is that 
all the generic constants are independent of $\lambda$ which follows with an application of the  $\tr$-$\dev$-$\ddiv$ lemma  
\cite{Carstensen:2005:Unifying,CR2012}. A discrete Helmholtz decomposition in  \cite{CR2012}
leads to discrete reliability and so  restricts the assertion  to simply connected domains $\Omega$ for $d=2$. 
\end{proof}

\begin{consequence}
The adaptive algorithm leads to robust convergence with quasi-optimal rate in the sense of Theorem~\ref{thm:main} and Theorem~\ref{thm:mainerr}. All constants are independent of $\lambda$. \qed
\end{consequence}

Numerical examples that underline the above result and provide a comparison to conforming finite element simulations can be found in~\cite{CR2012}.

%========================================================================================================
\section{Incorporation of an Inexact Solver Algorithm}
%========================================================================================================
\label{section:inexact}
\noindent
Any evaluation of the solver $U(\cdot)$ depends on the solution of some linear or nonlinear system of equations and may be \revision{polluted} by computational errors.
This contradicts the verification of the axioms~\eqref{A:stable}--\eqref{A:dlr} in Section~\ref{section:examples1}--\ref{section:examples2} for the exact evaluation of the solver $U(\cdot)$.
This section is devoted to the incorporation of this additional error into the optimality analysis.

%--------------------------------------------------------------------------------------------------------
\subsection{Discrete problem}
%--------------------------------------------------------------------------------------------------------
In contrast to the previous sections, we do not assume that the discrete approximation $U(\TT)$ is computed exactly by the numerical solver. 
Instead, given some $0<\vartheta<\infty$, we assume that we can compute another discrete approximation
$\widetilde{U}(\TT) \in \XX(\TT)$ such that
\begin{align}\label{eq:iediscrete}
\begin{split}
\dist[\TT]{U(\TT)}{\widetilde{U}(\TT)}
\leq \vartheta \,\eta(\TT;\widetilde{U}(\TT)).
\end{split}
\end{align}
Here, we assume that the error introduced by the \revision{inexact solve} is controlled by
the corresponding error estimator. A similar criterion is found in~\cite[Section~2]{BeMao09}.
Since $\vartheta=0$ in~\eqref{eq:iediscrete} implies $U(\TT)=\widetilde U(\TT)$, the results of this section generalize those of Section~\ref{sec:setting}--\ref{section:optimality}.

%--------------------------------------------------------------------------------------------------------
\subsection{Residual control of approximation error}
%--------------------------------------------------------------------------------------------------------
This section illustrates the condition~\eqref{eq:iediscrete} in the context of an iterative solver.
Suppose $\dist[\TT]{v}{w} 
= \quasinorm{v-w}{\XX(\TT)}$ stems from a quasi-norm on $\XX+\XX(\TT)$ and let $\YY(\TT)$ be a suitable 
normed test space. Suppose that $\Bform{\cdot}{\cdot}{\TT}:\XX(\TT)\times\YY(\TT)\to\R$
is linear in \revision{the second component in} $\YY(\TT)$. Given any linear function $F(\TT;\cdot)$, suppose that $U(\TT)$ solves the variational equality
\begin{align}
 \Bform{U(\TT)}{V}{\TT}=F(\TT;V)\quad\text{for all }V\in\YY(\TT).
\end{align}
An iterative solver \revision{terminates} after a finite computation and so specifies an inexact solver
\begin{align*}
\widetilde U(\cdot): \T \to \XX(\cdot).
\end{align*}
Given an accuracy $\eps>0$, common iterative solvers allow to monitor
the discrete residual
\begin{align}\label{eq:ieresest}
 \norm{F(\TT;\cdot)-\Bform{\widetilde U(\TT)}{\cdot}{\TT}}{\YY(\TT)^*}\leq \eps
\end{align}
in terms of the dual norm $\norm{\cdot}{\YY(\TT)^*}:=\sup_{V\in\YY(\TT)\setminus\{0\}}\dual{\cdot}{V}/\norm{V}{\YY(\TT)}$. 
Suppose that $\Bform\cdot\cdot\TT$ satisfies a uniform LBB condition in the sense that
\begin{align}\label{eq:inexact:lbb}
\quasinorm{V}{\XX(\TT)}
 \le \c{lbb}\,\norm{\Bform{V}{\cdot}\TT}{\YY(\TT)^*}
 \quad\text{for all }V\in\XX(\TT)
\end{align}
with some  universal constant $\setc{lbb}>0$. Then, the estimate~\eqref{eq:ieresest}
guarantees
\begin{align*}
 \dist[\TT]{U(\TT)}{\widetilde U(\TT)} \leq \c{lbb}\eps.
\end{align*}
Altogether, the termination with $\eps:=\c{lbb}^{-1}\vartheta\,\eta(\TT;\widetilde U(\TT))$ guarantees~\eqref{eq:iediscrete}.

In particular, the above assumptions are met for the uniformly elliptic 
problems of Section~\ref{section:conforming}
as well as for \revision{the} strongly monotone
operators of Section~\ref{section:nonlinear}.

%--------------------------------------------------------------------------------------------------------
\subsection{Adaptive algorithm for an inexact solver}
%--------------------------------------------------------------------------------------------------------
The only difference between the following adaptive algorithm and Algorithm~\ref{algorithm} of Section~\ref{sec:setting} is that the \revision{inexact solve} computes the discrete approximations in Step~(i).
\begin{algorithm}\label{iealgorithm}
\textsc{Input:} Initial triangulation $\TT_0$, parameters $0<\theta,\vartheta<1$.\\
\textbf{Loop: }For $\ell=0,1,2,\ldots$ do ${\rm (i)}-{\rm(iii)}$
\begin{itemize}
\item[\rm(i)] Compute \emph{approximate} discrete approximation $\widetilde{U}(\TT_\ell)\in\XX(\TT)$ as well as the corresponding error estimator $\eta(\TT_\ell;\widetilde{U}(\TT_\ell))$ which satisfy~\eqref{eq:iediscrete}.
\item[\rm(ii)] Determine set $\MM_\ell\subseteq\TT_\ell$ of (almost) minimal cardinality such that
\begin{align}\label{eq:iedoerfler}
 \theta\,\eta(\TT_\ell;\widetilde{U}(\TT_\ell))^2 \le \sum_{T\in\MM_\ell}\eta_T(\TT_\ell;\widetilde{U}(\TT_\ell))^2.
\end{align}
\item[\rm(iii)] Refine (at least) the marked elements $T\in\MM_\ell$ to design new triangulation $\TT_{\ell+1}$.
\end{itemize}
\textsc{Output:} Approximate solutions $\widetilde{U}(\TT_\ell)$ and error estimators
$\eta(\TT_\ell;\widetilde{U}(\TT_\ell))$ for all $\ell\in\N$.
\end{algorithm}

%--------------------------------------------------------------------------------------------------------
\subsection{Optimal convergence rates}
%--------------------------------------------------------------------------------------------------------
%The proofs in this  section rely on a certain relation between $\theta$ and $\vartheta$: To simplify notation,
%we call the pair of parameters $(\vartheta,\theta_2)$ \emph{sufficiently small with respect to $\theta_1$}, if there% holds
%\begin{align}\label{eq:thetasuffsmall}
%\begin{split}
%0&< \theta_2 \leq \sup_{\eps>0}\frac{(1-\c{stable}\vartheta)^2\theta_1-(1+\delta^{-1})\c{stable}^2\vartheta^2%}{(1+\delta)}\\
%0&<\vartheta \leq \c{stable}^{-2}\theta_2.
%\end{split}
%\end{align}
%In particular, the choice $\theta_2\in (0,\theta_1)$ is valid if $\vartheta$ is chosen sufficiently small. This also% implies that for all $\theta_1\in (0,1)$ there exist sufficiently small $\vartheta,\theta_2$ with respect to $\thet%a_1$.
The following is the main result of this section.

\begin{theorem}\label{thm:main2}
Suppose stability~\eqref{A:stable}, reduction~\eqref{A:reduction}, and general quasi-orthogonality~\eqref{A:qosum}. Then, Algorithm~\ref{iealgorithm} guarantees  {\rm (i)}--{\rm (ii)}.
\begin{itemize}
\item[(i)] 
Discrete reliability~\eqref{A:dlr} \revision{or} reliability~\eqref{A:reliable} and $0\leq \vartheta^2\c{stable}^{2}< \theta$ imply $R$-linear convergence of the estimator in the sense that there exists $0<\setr{conv}<1$ and $\setc{conv}>0$ such that
\begin{align}\label{eq:thm:main2:conv}
\eta(\TT_{\ell+j};\widetilde U(\TT_{\ell+j}))^2\leq \c{conv}\r{conv}^j\,\eta(\TT_\ell;\widetilde U(\TT_\ell))^2\quad\text{for all }j,\ell\in\N_0.
\end{align}
In particular,
\begin{align}\label{eq:thm:main2:conv2}
 \widetilde C_{\rm rel}^{-1}\,\dist[\TT_\ell]{u}{\widetilde U(\TT_\ell)}
 \leq \eta(\TT_\ell;\widetilde U(\TT_\ell))
 \leq \c{conv}^{1/2}\r{conv}^{\ell/2}\,\eta(\TT_0;\widetilde U(\TT_0))\quad\text{for all }\ell\in\N_0.
\end{align}
\item[(ii)] Discrete reliability~\eqref{A:dlr} together with $0<\theta< \theta_\star:=(1+\c{stable}^2\c{dlr}^2)^{-1}$ and
\begin{align}\label{eq:inexact:theta}
 0 < \theta < \sup_{\delta>0}\frac{(1-\vartheta\c{stable})^2\theta_\star-(1+\delta^{-1})\vartheta^2\c{stable}^2}{(1+\delta)}
\end{align}
imply quasi-optimal convergence of the estimator in the sense of
\begin{align}\label{eq:thm:main2:opt1}
c_{\rm opt}\norm{(\eta(\cdot),U(\cdot))}{\B_s} \leq\sup_{\ell\in\N_0}\frac{\eta(\TT_\ell;\widetilde U(\TT_\ell))}{(|\TT_\ell|-|\TT_0|+1)^{-s}}\leq\c{optimal}\norm{(\eta(\cdot),U(\cdot))}{\B_s}
\end{align}
for all $s>0$.
\end{itemize}
The constants $\c{conv}, \r{conv} >0$ depend only on $\c{stable}, \q{reduction}, \c{reduction}, \c{qosum}(\epsqo)>0$ as well as on $\theta$ and $\vartheta$.
Furthermore, the constant $\setc{optimal}>0$ depends only on %$\c{invsum},\c{lemopthelp2}>0$, and 
$C_{\rm min}, \c{refined},\c{mesh},\c{stable},\c{dlr}$, $\c{reduction}, C_{\rm son}, \c{qosum}(\epsqo), \q{reduction}>0$ as well as on~$\theta,\vartheta$ and $s$, while $c_{\rm opt}>0$ depends only on $\vartheta, \c{stable}$, and $C_{\rm son}$.
\end{theorem}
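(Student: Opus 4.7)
The plan is to mirror the proof of Theorem~\ref{thm:main} while carefully tracking the pollution from $\widetilde U \neq U$ controlled by~\eqref{eq:iediscrete}. First, I would establish a perturbed estimator reduction of the form
\[
\eta(\TT_{\ell+1};\widetilde U(\TT_{\ell+1}))^2 \leq \widetilde q \,\eta(\TT_\ell;\widetilde U(\TT_\ell))^2 + \widetilde C\,\dist[\TT_{\ell+1}]{\widetilde U(\TT_{\ell+1})}{\widetilde U(\TT_\ell)}^2.
\]
The derivation proceeds as in Lemma~\ref{lem:estconv}: split the sum into refined and non-refined element domains, apply stability~\eqref{A:stable} and reduction~\eqref{A:reduction} (whose proofs rely only on inverse estimates and triangle inequalities and hence apply to arbitrary discrete functions, not only to the exact discrete solutions), then invoke the D\"orfler marking criterion~\eqref{eq:iedoerfler} for the inexact estimator. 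The extra terms coming from the inexact solve contribute a summand of the shape $\vartheta^2\c{stable}^2\,\eta(\TT_\ell;\widetilde U(\TT_\ell))^2$ to the right-hand side; the assumption $\vartheta^2\c{stable}^2 < \theta$ is precisely what is needed so that after absorption the final contraction constant $\widetilde q$ remains strictly less than one.

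Second, I would establish reliability for the inexact approximation. Using the quasi-triangle inequality, reliability~\eqref{A:reliable} (which follows from~\eqref{A:dlr} by Lemma~\ref{lemma:dlr2rel}), the perturbation bound~\eqref{eq:iediscrete}, and stability~\eqref{A:stable} to compare $\eta(\TT;U(\TT))$ with $\eta(\TT;\widetilde U(\TT))$, one obtains the estimate $\dist[\TT_\ell]{u}{\widetilde U(\TT_\ell)} \leq \widetilde C_{\rm rel}\,\eta(\TT_\ell;\widetilde U(\TT_\ell))$ needed for~\eqref{eq:thm:main2:conv2}. The general quasi-orthogonality~\eqref{A:qosum}, which is formulated for the exact solutions $U(\TT_\ell)$, transfers to $\widetilde U(\TT_\ell)$ through triangle inequalities combined with~\eqref{eq:iediscrete}, yielding an analogous telescoping bound in terms of $\eta(\TT_\ell;\widetilde U(\TT_\ell))$. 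The machinery of Lemma~\ref{lem:Rconv} and Proposition~\ref{prop:Rconv} then applies essentially verbatim and produces the $R$-linear convergence~\eqref{eq:thm:main2:conv}.

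For part~(ii), the heart of the argument is an inexact counterpart of Proposition~\ref{prop:doerfler}: for suitable $0<\kappa_0<1$, the implication that $\eta(\widehat\TT;\widetilde U(\widehat\TT))^2 \leq \kappa_0\,\eta(\TT;\widetilde U(\TT))^2$ forces the D\"orfler criterion $\theta\,\eta(\TT;\widetilde U(\TT))^2 \leq \sum_{T\in\RR(\TT,\widehat\TT)}\eta_T(\TT;\widetilde U(\TT))^2$ to hold on the refinement set $\RR(\TT,\widehat\TT)$ from~\eqref{A:dlr}. The proof expands $\eta(\TT;\widetilde U(\TT))^2$ into its refined and non-refined parts, applies Young's inequality together with stability~\eqref{A:stable}, and then splits the emerging quantity $\dist[\widehat\TT]{\widetilde U(\widehat\TT)}{\widetilde U(\TT)}$ into the exact difference $\dist[\widehat\TT]{U(\widehat\TT)}{U(\TT)}$, bounded via discrete reliability~\eqref{A:dlr}, plus two pollution terms, each bounded by $\vartheta$ times the corresponding $\widetilde U$-estimator via~\eqref{eq:iediscrete}. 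The main obstacle is that both the discrete-reliability contribution and the $\vartheta$-pollution must be simultaneously absorbed on the left-hand side after rearrangement, and the algebraic constraint capturing that absorption is exactly~\eqref{eq:inexact:theta}. Once this inexact D\"orfler-optimality statement is in hand, Lemma~\ref{lem:optimality} and Proposition~\ref{prop:optimality} carry over with $\widetilde U$ in place of $U$ throughout to give the upper bound in~\eqref{eq:thm:main2:opt1}; for the lower bound, stability~\eqref{A:stable} together with~\eqref{eq:iediscrete} yields equivalence of $\eta(\cdot;\widetilde U(\cdot))$ and $\eta(\cdot;U(\cdot))$ up to a factor depending on $\vartheta\c{stable}$, explaining the dependence of $c_{\rm opt}$ on these constants.
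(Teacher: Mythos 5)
Your proposal diverges from the paper's strategy, and the divergence opens a genuine gap. The paper does \emph{not} re-derive estimator reduction, quasi-orthogonality, and D\"orfler optimality for $\widetilde U$. Instead it proves Lemma~\ref{lem:doerflerequiv}, which establishes the two-sided estimator equivalence $\eta(\TT;U(\TT))\simeq\eta(\TT;\widetilde U(\TT))$ and, more importantly, that under $\vartheta^2\c{stable}^2<\theta$ the sets $\MM_\ell$ chosen by Algorithm~\ref{iealgorithm} for $\eta(\cdot;\widetilde U(\cdot))$ automatically satisfy D\"orfler marking for $\eta(\cdot;U(\cdot))$ with a modified parameter $\widetilde\theta$. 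This lets the paper run the \emph{unchanged} machinery of Lemma~\ref{lem:estconv} and Proposition~\ref{prop:Rconv} directly on $\eta(\TT_\ell;U(\TT_\ell))$, and only transfer the resulting $R$-linear convergence back to $\eta(\TT_\ell;\widetilde U(\TT_\ell))$ via the equivalence in Lemma~\ref{lem:doerflerequiv}(i). Part~(ii) likewise applies Lemma~\ref{lem:optimality} and Proposition~\ref{prop:optimality} in terms of $U$ and uses Lemma~\ref{lem:doerflerequiv}(ii) to show the set $\RR(\TT_\ell,\widehat\TT)$ also satisfies D\"orfler marking for $\widetilde U$, with assumption~\eqref{eq:inexact:theta} being exactly the condition needed for that implication.

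Your route, by contrast, tries to establish a perturbed estimator reduction directly in terms of $\eta(\cdot;\widetilde U(\cdot))$ and then to ``transfer'' the general quasi-orthogonality~\eqref{A:qosum} from $U$ to $\widetilde U$ ``through triangle inequalities.'' That transfer is where the argument breaks. Axiom~\eqref{A:qosum} controls $\sum_k \dist[\TT_{k+1}]{U(\TT_{k+1})}{U(\TT_k)}^2$. When you replace $U$ by $\widetilde U$ using the quasi-triangle inequality and~\eqref{eq:iediscrete}, the error terms $\dist[\TT_k]{U(\TT_k)}{\widetilde U(\TT_k)}\le\vartheta\,\eta(\TT_k;\widetilde U(\TT_k))$ contribute a tail $\vartheta^2\sum_{k}\eta(\TT_k;\widetilde U(\TT_k))^2$, which is precisely the quantity that the $R$-linear convergence is supposed to bound; the argument becomes circular unless you restructure the absorption bootstrap inside Proposition~\ref{prop:Rconv} to account for this extra contribution, and your sketch simply asserts the machinery applies ``essentially verbatim.'' Similarly, applying~\eqref{A:reduction} to $\widetilde U$ on refined elements requires first inserting $U$ via~\eqref{A:stable} on both triangulations, which produces pollution terms proportional to $\eta(\TT_{\ell+1};\widetilde U(\TT_{\ell+1}))^2$ (not the $\ell$-level estimator); absorbing those needs a careful small-$\delta$ argument that you have not traced through, and whose final constraint need not coincide with the condition $\vartheta^2\c{stable}^2<\theta$ you quote. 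The paper's indirection through $U$ exists precisely to sidestep these absorptions; your direct route is viable in principle but requires substantially more work than the sketch suggests, and its central step as written is not sound.
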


\revision{The proof of Theorem~\ref{thm:main2} is the overall subject of this section and found below.}
The following theorem transfers the results of Theorem~\ref{thm:mainerr} to \revision{inexact solve} $\widetilde U(\cdot)$.

\begin{theorem}\label{thm:mainerr2}
Suppose~\eqref{A:stable}--\eqref{A:dlr} as well as efficiency~\eqref{A:efficient} and quasi-monotonicity of error and oscillations~\eqref{eq:errorquasimon}.
Then,~\eqref{eq:inexact:theta} implies quasi-optimal convergence of the error
\begin{align}\label{eq:thm:mainerr2}
\begin{split}
 c_{\rm opt}C_{\rm ie}^{-1}\norm{(u,U(\cdot))}{\A_s}&\leq\sup_{\ell\in\N_0}\frac{\dist[\TT_\ell]{u}{\widetilde U(\TT_\ell)}}{(|\TT_\ell| - |\TT_0|+1)^{-s}}+\norm{\osc(\cdot)}{\O_s}\\
 &\leq \c{optimal}C_{\rm ie}(\norm{(u,U(\cdot))}{\A_s}+ \norm{\osc(\cdot)}{\O_s})
 \end{split}
\end{align}
for all $s>0$. The constants $c_{\rm opt},\setc{optimal}>0$ are defined in Theorem~\ref{thm:main2}.
The constant $C_{\rm ie}>0$ depends only on $\vartheta,\c{stable},\c{reliable},\c{efficient},C_{\rm apx}$.
\end{theorem}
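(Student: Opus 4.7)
The plan is to mirror the proof of Theorem~\ref{thm:mainerr}, transferring each estimate from the exact solver $U(\cdot)$ to the inexact solver $\widetilde U(\cdot)$ via the residual control~\eqref{eq:iediscrete}. First I will collect the already established ingredients: Lemma~\ref{lem:mon1} supplies quasi-monotonicity of $\eta(\cdot;U(\cdot))$, so together with~\eqref{eq:errorquasimon}, Proposition~\ref{prop:charAprox} gives
\begin{align*}
\c{reliable}^{-1}\norm{(u,U(\cdot))}{\A_s}\leq \norm{(\eta(\cdot),U(\cdot))}{\B_s}\leq C_{\rm apx}\big(\norm{(u,U(\cdot))}{\A_s}+\norm{\osc(\cdot)}{\O_s}\big).
\end{align*}
Inserting this into the sandwich provided by Theorem~\ref{thm:main2}~(ii) yields
\begin{align*}
\sup_{\ell\in\N_0}\frac{\eta(\TT_\ell;\widetilde U(\TT_\ell))}{(|\TT_\ell|-|\TT_0|+1)^{-s}}\simeq \norm{(u,U(\cdot))}{\A_s}+\norm{\osc(\cdot)}{\O_s}.
\end{align*}

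For the upper bound in~\eqref{eq:thm:mainerr2}, I will invoke the reliability estimate~\eqref{eq:thm:main2:conv2} for the inexact solver, namely $\dist[\TT_\ell]{u}{\widetilde U(\TT_\ell)}\leq \widetilde C_{\rm rel}\,\eta(\TT_\ell;\widetilde U(\TT_\ell))$, to dominate the discrete error by the estimator along the adaptive sequence, and then combine with the equivalence above.

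For the lower bound I will derive a perturbed efficiency estimate for $\widetilde U(\cdot)$. Stability~\eqref{A:stable} combined with~\eqref{eq:iediscrete} produces $(1-\c{stable}\vartheta)\,\eta(\TT;\widetilde U(\TT))\leq \eta(\TT;U(\TT))$, while the quasi-triangle inequality combined with~\eqref{eq:iediscrete} yields $\dist[\TT]{u}{U(\TT)}\leq \c{triangle}\,\dist[\TT]{u}{\widetilde U(\TT)}+\c{triangle}\vartheta\,\eta(\TT;\widetilde U(\TT))$. Substituting both into efficiency~\eqref{A:efficient} and rearranging gives
\begin{align*}
\big(\c{efficient}^{-1}(1-\c{stable}\vartheta)-\c{triangle}\vartheta\big)\,\eta(\TT;\widetilde U(\TT))\leq \c{triangle}\,\dist[\TT]{u}{\widetilde U(\TT)}+\eff{\TT}{U(\TT)}.
\end{align*}
Passing to the supremum along the adaptive sequence and combining with the equivalence derived in the first step (plus the lower bound in Theorem~\ref{thm:main2}~(ii) and Proposition~\ref{prop:charAprox}) closes the lower bound, with all multiplicative constants absorbed into $C_{\rm ie}$.

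The main obstacle will be to confirm that the smallness condition $\vartheta(\c{stable}+\c{efficient}\c{triangle})<1$, needed to make the left-hand coefficient in the perturbed efficiency positive, is compatible with~\eqref{eq:inexact:theta}; that constraint already forces $\vartheta\c{stable}<1$ through the freedom in $\delta>0$, and any additional smallness beyond this enters the overall constant $C_{\rm ie}$. A secondary bookkeeping task, already present in the proof of Theorem~\ref{thm:mainerr}, is the passage from the pointwise oscillation values $\eff{\TT_\ell}{U(\TT_\ell)}$ along the adaptive sequence to the approximation-class quantity $\norm{\osc(\cdot)}{\O_s}$; this is handled through the quasi-monotonicity of oscillations~\eqref{eq:errorquasimon} combined with the overlay estimate~\eqref{refinement:overlay}.
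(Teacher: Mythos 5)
Your proposal is correct and takes essentially the same route as the paper, with a slight reorganization. The paper first reuses the arguments of Theorem~\ref{thm:mainerr} to establish the sandwich~\eqref{eq:thm:mainerr2} for the \emph{exact} solution $U(\cdot)$ (quoting Proposition~\ref{prop:charAprox} and Theorem~\ref{thm:main2}~(ii) via Lemma~\ref{lem:doerflerequiv}~(i)), and then transfers it to $\widetilde U(\cdot)$ by proving the equivalence $\dist[\TT]{u}{U(\TT)}+\eff{\TT}{U(\TT)}\simeq \dist[\TT]{u}{\widetilde U(\TT)}+\eff{\TT}{U(\TT)}$ through a chain of triangle inequalities, \eqref{eq:iediscrete}, stability, and efficiency. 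You instead prove the required reliability and (perturbed) efficiency estimates directly for $\widetilde U(\cdot)$: the reliability in~\eqref{eq:thm:main2:conv2} (with $\widetilde C_{\rm rel}$) handles the upper bound, and your perturbed efficiency estimate for $\eta(\TT;\widetilde U(\TT))$ handles the lower bound. The two manipulations are the same up to bookkeeping, and either yields the stated constant dependencies.

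Two remarks. First, your explicit smallness condition $\vartheta(\c{stable}+\c{efficient}\c{triangle})<1$ (a factor $\c{triangle}$ should in fact be $\c{triangle}^2$, since both the quasi-triangle inequality and the quasi-symmetry of $\dist[\TT]\cdot\cdot$ are invoked when reorganizing~\eqref{eq:iediscrete}) is the same absorption threshold that the paper hides behind the phrase ``for some sufficiently small $\vartheta$''; you are right that this is an additional requirement not implied by~\eqref{eq:inexact:theta}, and it is not correct to say it merely ``enters the constant $C_{\rm ie}$'': if the threshold is violated, the coefficient on the left-hand side becomes nonpositive and the argument fails. Second, the claimed equivalence $\sup_\ell\eta(\TT_\ell;\widetilde U(\TT_\ell))(|\TT_\ell|-|\TT_0|+1)^s\simeq\norm{(u,U(\cdot))}{\A_s}+\norm{\osc(\cdot)}{\O_s}$ is overstated: Proposition~\ref{prop:charAprox} only sandwiches $\norm{(\eta(\cdot),U(\cdot))}{\B_s}$ one-sidedly between $\c{reliable}^{-1}\norm{(u,U(\cdot))}{\A_s}$ and $C_{\rm apx}(\norm{(u,U(\cdot))}{\A_s}+\norm{\osc(\cdot)}{\O_s})$, which is exactly what is needed, but it is not a genuine two-sided equivalence. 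Neither issue affects the validity of the overall argument, and both are inherited from the paper's own proof of Theorem~\ref{thm:mainerr}.
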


The proof of Theorem~\ref{thm:main2} first establishes that the D\"orfler marking~\eqref{eq:iedoerfler} for $\eta(\TT; \widetilde{U}(\TT))$  implies the D\"orfler marking~\eqref{eq:doerfler} for $\eta(\TT; U(\TT))$ with a different 
parameter $0<\widetilde\theta<1$ and vice versa.

\begin{lemma}\label{lem:doerflerequiv}
Suppose that $\eta(\cdot)$ satisfies stability~\eqref{A:stable}.
Then, any $\TT\in\T$ and
$0<\theta_1,\theta_2,\vartheta<1$ satisfy {\rm (i)--(iii)}.
\begin{itemize}
\item[\rm(i)] 
%For all $0<\vartheta<1$, it holds
%\begin{align}\label{eq:etaequivalence}
 $(1-\vartheta\c{stable})\eta(\TT;\widetilde U(\TT))\leq \eta(\TT;U(\TT)) \leq (1+\vartheta\c{stable})\eta(\TT;\widetilde U(\TT)).$
%\end{align}
\item[\rm(ii)] Assume that $\theta_2=\theta$ satisfies~\eqref{eq:inexact:theta} with $\theta_1=\theta_\star$.
If $\MM\subset\TT$ satisfies
\begin{align}\label{eq:stardoerfler}
\theta_1 \,\eta(\TT; U(\TT))^2 \leq \sum_{T\in\MM}\eta_T(\TT;U(\TT))^2,
\end{align}
then it follows
\begin{align}\label{eq:pertdoerfler}
\theta_2\, \eta(\TT; \widetilde U(\TT))^2 \leq \sum_{T\in\MM}\eta_T(\TT;\widetilde U(\TT))^2.
\end{align}
\item[\rm(iii)] Provided that $\vartheta^2\c{stable}^2<\theta_2$, there exists $0<\theta_0<\theta_2$
which depends only on $\theta_2$, $\vartheta$, and $\c{stable}$, such that $0<\theta_1\le\theta_0$
guarantees that~\eqref{eq:pertdoerfler} implies~\eqref{eq:stardoerfler}.
\end{itemize}
\end{lemma}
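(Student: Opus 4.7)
\textbf{Proof plan for Lemma~\ref{lem:doerflerequiv}.}

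For part~(i), the plan is to apply stability~\eqref{A:stable} in the degenerate case $\widehat\TT = \TT$ and $\mathcal{S} = \TT$, which yields
\[
\bigl|\eta(\TT;\widetilde U(\TT)) - \eta(\TT;U(\TT))\bigr| \le \c{stable}\,\dist[\TT]{\widetilde U(\TT)}{U(\TT)}.
\]
Insertion of the a posteriori bound~\eqref{eq:iediscrete} on the right-hand side gives
$|\eta(\TT;\widetilde U(\TT))-\eta(\TT;U(\TT))|\le \vartheta\c{stable}\,\eta(\TT;\widetilde U(\TT))$, and a rearrangement provides both inequalities of~(i).

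For part~(ii), the idea is to apply stability~\eqref{A:stable} with the same $\widehat\TT=\TT$ but now with the smaller set $\mathcal{S}=\MM$, so that
\[
\Big(\sum_{T\in\MM}\eta_T(\TT;\widetilde U(\TT))^2\Big)^{1/2}
\ge \Big(\sum_{T\in\MM}\eta_T(\TT;U(\TT))^2\Big)^{1/2}-\vartheta\c{stable}\,\eta(\TT;\widetilde U(\TT)).
\]
Squaring via the Young inequality $(x-y)^2\ge (1+\delta)^{-1}x^2-\delta^{-1}y^2$ (equivalently $x^2\le(1+\delta)(x-y)^2+(1+\delta^{-1})y^2$) yields, for any $\delta>0$,
\[
\sum_{T\in\MM}\eta_T(\TT;\widetilde U(\TT))^2
\ge \frac{1}{1+\delta}\sum_{T\in\MM}\eta_T(\TT;U(\TT))^2
-\frac{(1+\delta^{-1})\vartheta^2\c{stable}^2}{1+\delta}\,\eta(\TT;\widetilde U(\TT))^2.
\]
The assumption~\eqref{eq:stardoerfler} and part~(i) replace the first sum on the right-hand side by $\theta_\star(1-\vartheta\c{stable})^2\eta(\TT;\widetilde U(\TT))^2$, and assumption~\eqref{eq:inexact:theta} (choosing $\delta>0$ so that the fraction inside the supremum exceeds $\theta$) allows us to conclude~\eqref{eq:pertdoerfler} with $\theta_2=\theta$.

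For part~(iii), the roles of $U(\TT)$ and $\widetilde U(\TT)$ get interchanged: stability with $\mathcal{S}=\MM$ provides
\[
\Big(\sum_{T\in\MM}\eta_T(\TT;U(\TT))^2\Big)^{1/2}
\ge \Big(\sum_{T\in\MM}\eta_T(\TT;\widetilde U(\TT))^2\Big)^{1/2}-\vartheta\c{stable}\,\eta(\TT;\widetilde U(\TT)),
\]
and the same Young inequality leads to
\[
\sum_{T\in\MM}\eta_T(\TT;U(\TT))^2\ge \frac{\theta_2}{1+\delta}\,\eta(\TT;\widetilde U(\TT))^2-\frac{\vartheta^2\c{stable}^2}{\delta}\,\eta(\TT;\widetilde U(\TT))^2
\]
under the assumption~\eqref{eq:pertdoerfler}. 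Since $\vartheta^2\c{stable}^2<\theta_2$, the bracket $\frac{\theta_2}{1+\delta}-\frac{\vartheta^2\c{stable}^2}{\delta}$ is strictly positive for all sufficiently large $\delta>0$. Finally, part~(i) gives $\eta(\TT;\widetilde U(\TT))\ge(1+\vartheta\c{stable})^{-1}\eta(\TT;U(\TT))$, which converts the right-hand side into a multiple of $\eta(\TT;U(\TT))^2$. The admissible constant is
\[
\theta_0:=\frac{1}{(1+\vartheta\c{stable})^2}\left(\frac{\theta_2}{1+\delta}-\frac{\vartheta^2\c{stable}^2}{\delta}\right),
\]
which depends only on $\theta_2,\vartheta,\c{stable}$ and satisfies $0<\theta_0<\theta_2$. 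The main (and essentially only) technical obstacle is the careful bookkeeping of the Young parameter $\delta$ to make the condition~\eqref{eq:inexact:theta} sharp; all the heavy lifting is done by one-line applications of stability with $\mathcal{S}\in\{\TT,\MM\}$.
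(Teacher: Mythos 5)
Your proposal is correct and follows essentially the same route as the paper's proof: part~(i) by one application of stability~\eqref{A:stable} with $\widehat\TT=\TT$, $\mathcal S=\TT$ together with~\eqref{eq:iediscrete}; parts~(ii) and~(iii) by applying stability with $\mathcal S=\MM$, absorbing the residual term via the Young inequality, and then converting between $\eta(\TT;U(\TT))$ and $\eta(\TT;\widetilde U(\TT))$ using~(i). Your formula for $\theta_0$ (for a fixed admissible $\delta$) is the expression under the supremum in the paper's~\eqref{eq:k1}, so it is consistent; your Young inequality written as a lower bound on $(x-y)^2$ is algebraically identical to the paper's upper-bound form after noting $\tfrac{1+\delta^{-1}}{1+\delta}=\tfrac1\delta$.
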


\begin{proof}[Proof of (i)]
Stability~\eqref{A:stable} and the definition of $\widetilde{U}(\TT)$ in~\eqref{eq:iediscrete} show
\begin{align*}
\eta(\TT;U(\TT))&\leq \eta(\TT;\widetilde U(\TT)) + \c{stable}\dist[\TT]{\widetilde U(\TT)}{U(\TT)}\\
&\leq (1+ \vartheta\c{stable})\eta(\TT;\widetilde U(\TT)).
\end{align*}
Analogously, one derives
\begin{align*}
\eta(\TT;\widetilde U(\TT))&\leq \eta(\TT; U(\TT)) + \c{stable}\dist[\TT]{\widetilde U(\TT)}{U(\TT)}\\
&\leq \eta(\TT;U(\TT))+ \vartheta\c{stable}\eta(\TT;\widetilde U(\TT)).
\end{align*}
This implies~(i).
\end{proof}

\begin{proof}[Proof of (ii)]
Suppose that~\eqref{eq:stardoerfler} holds. With~(i), 
stability~\eqref{A:stable} as well as~\eqref{eq:iediscrete} and the Young inequality, it follows, for each $\delta>0$, that
\begin{align*}
(1-\vartheta\c{stable})^2\theta_1\eta(\TT;\widetilde U(\TT))^2&\leq  \theta_1\eta(\TT;U(\TT))^2\leq \sum_{T\in\MM} \eta_T(\TT;U(\TT))^2\\
&\leq
 (1+\delta)\sum_{T\in\MM} \eta_T(\TT;\widetilde U(\TT))^2 + (1+\delta^{-1})\vartheta^2\c{stable}^2 \eta(\TT;\widetilde U(\TT))^2.
\end{align*}
The absorption of the last term proves~\eqref{eq:pertdoerfler} for all
\begin{align}\label{eq:wtheta}
0<\theta_2\leq \sup_{\delta>0}\frac{(1-\vartheta\c{stable})^2\theta_1-(1+\delta^{-1})\vartheta^2\c{stable}^2}{(1+\delta)}.
\end{align}
Therefore, assumption~\eqref{eq:inexact:theta} with $\theta = \theta_2$ and $\theta_\star = \theta_1$
implies~\eqref{eq:pertdoerfler}.
\end{proof}

\begin{proof}[Proof of (iii)]
Suppose that~\eqref{eq:pertdoerfler} holds. The aforementioned arguments show, for each $\delta>0$, that
\begin{align*}
 \theta_2\eta(\TT;\widetilde U(\TT))^2 &\leq \sum_{T\in\MM}\eta_T(\TT;\widetilde U(\TT))^2 \\
&\leq
(1+\delta)\sum_{T\in\MM} \eta_T(\TT;U(\TT))^2 + (1+\delta^{-1})\vartheta^2\c{stable}^2\eta(\TT;\widetilde U(\TT))^2.
\end{align*}
This implies
\begin{align*}
\big( \theta_2-(1+\delta^{-1})\vartheta^2\c{stable}^2\big)\eta(\TT;\widetilde{U}(\TT))^2
&\leq
(1+\delta)\sum_{T\in\MM}\eta_T(\TT;U(\TT))^2.
\end{align*}
The combination with (i) and
\begin{align}\label{eq:k1}
0<\theta_1\leq \sup_{\delta>0}\frac{ \theta_2-(1+\delta^{-1})\vartheta^2\c{stable}^2}{(1+\delta)(1+\vartheta\c{stable})^2}=:\theta_0<\theta_2
\end{align}
establishes~\eqref{eq:stardoerfler}. This concludes the proof.
\end{proof}

\begin{proof}[Proof of Theorem~\ref{thm:mainerr2}]
Proposition~\ref{prop:charAprox},~\eqref{eq:thm:main2:opt1}, and the equivalence from Lemma~\ref{lem:doerflerequiv}~(i) lead to
\begin{align*}
(1+\vartheta\c{stable})^{-1}\c{optimal}^{-1}\c{reliable}^{-1}\norm{(u,U(\cdot))}{\A_s} &\leq\sup_{\ell\in\N_0}\frac{\eta(\TT_\ell;U(\TT_\ell))}{(|\TT_\ell|-|\TT_0|+1)^{-s}}\\
&\leq(1-\vartheta\c{stable})^{-1}\c{optimal}C_{\rm apx}(\norm{(u,U(\cdot))}{\A_s}+\norm{\osc(\cdot)}{\O_s}).
\end{align*}
The arguments of the proof of Theorem~\ref{thm:mainerr} imply
\begin{align}\label{eq:proofmainerr2}
\begin{split}
 (1+\vartheta\c{stable})^{-1}\c{optimal}^{-1}\c{reliable}^{-1}&\c{efficient}^{-1}\norm{(u,U(\cdot))}{\A_s}\leq\sup_{\ell\in\N_0}\frac{\dist[\TT_\ell]{u}{U(\TT_\ell)}}{(|\TT_\ell| - |\TT_0|+1)^{-s}}+\norm{\osc(\cdot)}{\O_s}\\
 &\leq ((1-\vartheta\c{stable})^{-1}\c{optimal}\c{reliable}C_{\rm apx}+1)(\norm{(u,U(\cdot))}{\A_s}+ \norm{\osc(\cdot)}{\O_s}).
\end{split}
 \end{align}
The arguments of~\eqref{eq:iereliable} together with~\eqref{A:stable}, efficiency~\eqref{A:efficient}, and~\eqref{eq:iediscrete} yield
\begin{align*}
\dist[\TT]{u}{U(\TT)} 
&\lesssim \dist[\TT]{u}{\widetilde U(\TT)} 
+ \dist[\TT]{U(\TT)}{\widetilde U(\TT)}\\
&\lesssim \dist[\TT]{u}{\widetilde U(\TT)}+\vartheta \, \eta(\TT;\widetilde U(\TT))\\
&\lesssim \dist[\TT]{u}{\widetilde U(\TT)}+\vartheta \, \eta(\TT; U(\TT))
 + \vartheta \, \dist[\TT]{U(\TT)}{\widetilde U(\TT)}\\
&\lesssim (1+\vartheta)  \dist[\TT]{u}{\widetilde U(\TT)} + \vartheta\, \dist[\TT]{u}{U(\TT)}
 + \vartheta\,\eff{\TT}{U(\TT)}.
\end{align*}
For some sufficiently small $\vartheta$, it follows
\begin{align*}
\dist[\TT]{u}{U(\TT)}
\lesssim \dist[\TT]{u}{\widetilde U(\TT)}+\eff{\TT}{U(\TT)}.
\end{align*}
The converse estimate follows analogously
\begin{align*}
\dist[\TT]{u}{\widetilde U(\TT)}
\lesssim \dist[\TT]{u}{U(\TT)} + \vartheta\,\eta(\TT,U(\TT))
\lesssim \dist[\TT]{u}{U(\TT)} + \vartheta \, \eff{\TT}{U(\TT)}.
\end{align*}
This leads to the equivalence
\begin{align*}
\dist[\TT]{u}{U(\TT)} + \eff{\TT}{U(\TT)}
\simeq \dist[\TT]{u}{\widetilde U(\TT)} + \eff{\TT}{U(\TT)}.
\end{align*}
\revision{The combination with~\eqref{eq:proofmainerr2} concludes the proof.}
\end{proof}%

\begin{proof}[Proof of Theorem~\ref{thm:main2}~(i)]
With $\vartheta^2\c{stable}^{2}<\theta$ and $\theta_2=\theta$, Lemma~\ref{lem:doerflerequiv}~(iii) shows that the
D\"orfler marking~\eqref{eq:doerfler} holds for some $0<\widetilde\theta<1$ in the sense of
\begin{align*}
\widetilde \theta\eta(\TT_\ell;U(\TT_\ell))^2\leq \sum_{T\in\MM_\ell}\eta_T(\TT_\ell;U(\TT_\ell))^2.
\end{align*}
Proposition~\ref{prop:Rconv} provides $R$-linear convergence~\eqref{eq:propRconv} of $\eta(\TT_\ell;U(\TT_\ell))$. 
This and Lemma~\ref{lem:doerflerequiv}~(i) imply $R$-linear convergence of $\eta(\TT_\ell;\widetilde U(\TT_\ell))$ and hence~\eqref{eq:thm:main2:conv}. The reliability~\eqref{A:reliable}, assumption~\eqref{eq:iediscrete},
and Lemma~\ref{lem:doerflerequiv}~(i) lead to
\begin{align}\label{eq:iereliable}
 \begin{split}
 \c{triangle}^{-1} \dist[\TT]{u}{\widetilde U(\TT)}
 &\leq  \dist[\TT]{u}{U(\TT)} + \dist[\TT]{U(\TT)}{\widetilde U(\TT)}\\
 &\leq \c{reliable}\eta(\TT;U(\TT)) + \vartheta\eta(\TT;\widetilde U(\TT))\\
 &\leq (\c{reliable}(1+\vartheta\c{stable}) + \vartheta)\,\eta(\TT;\widetilde U(\TT)),
 \end{split}
\end{align}
i.e.\ reliability of $\eta(\TT;\widetilde U(\TT))$ with $\widetilde C_{\rm rel}:=\c{triangle} (\c{reliable}(1+\vartheta\c{stable}) + \vartheta)$.
This, $\TT=\TT_\ell$ in the estimate above, and~\eqref{eq:thm:main2:conv} conclude the proof of~\eqref{eq:thm:main2:conv2}.
\end{proof}

The following lemma \revision{asserts,} in particular, that the approximation class $\B_s$ from~\eqref{def:approxclass} is a suitable approximation class for the inexact problem~\eqref{eq:iediscrete}.

\begin{lemma}\label{lem:approxClass}
Provided $\vartheta\c{stable}<1$ and $s>0$, it holds
\begin{align}\label{def:ieapproxclass}
(1-\vartheta\c{stable})\norm{(\eta(\cdot),\widetilde U(\cdot))}{\B_s}\leq\norm{(\eta(\cdot),U(\cdot))}{\B_s}\leq(1+\vartheta\c{stable})\norm{(\eta(\cdot),\widetilde U(\cdot))}{\B_s}.
\end{align}
\end{lemma}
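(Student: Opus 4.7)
The plan is to reduce everything to the pointwise equivalence already established in Lemma~\ref{lem:doerflerequiv}~(i), which asserts
\begin{align*}
(1-\vartheta\c{stable})\,\eta(\TT;\widetilde U(\TT))\leq \eta(\TT;U(\TT)) \leq (1+\vartheta\c{stable})\,\eta(\TT;\widetilde U(\TT))
\end{align*}
for every single triangulation $\TT\in\T$. Since this estimate holds with constants independent of $\TT$, it should transfer to the approximation class $\B_s$ by monotonicity of $\min$ and $\sup$.

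Concretely, first I would fix $s>0$ and $N\in\N_0$ and, for each $\TT\in\T(N)$, multiply the upper bound by $(N+1)^s\geq 0$. Taking the minimum over $\TT\in\T(N)$ on both sides gives
\begin{align*}
\min_{\TT\in\T(N)}(N+1)^s\eta(\TT;U(\TT))\leq (1+\vartheta\c{stable})\min_{\TT\in\T(N)}(N+1)^s\eta(\TT;\widetilde U(\TT)),
\end{align*}
because the minimizer on the right yields a valid competitor for the left-hand minimum with cost bounded by $(1+\vartheta\c{stable})$ times the optimal value. Taking the supremum over $N\in\N_0$ then yields the upper estimate in~\eqref{def:ieapproxclass}. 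The lower estimate follows by applying the same argument to the lower bound of Lemma~\ref{lem:doerflerequiv}~(i) and using $\vartheta\c{stable}<1$ so that $(1-\vartheta\c{stable})>0$ may be pulled out of the $\min$ and $\sup$.

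There is no genuine obstacle in this argument; the only point requiring a small amount of care is that the minimum and the supremum in the definition of $\norm{\cdot}{\B_s}$ are taken over the \emph{same} set $\T(N)$ for both $U(\cdot)$ and $\widetilde U(\cdot)$, so the equivalence constants are preserved without any further perturbation. Note in particular that the bound $\vartheta\c{stable}<1$ is exactly what is needed to ensure the lower constant is strictly positive and that the inequality in~\eqref{def:ieapproxclass} is nontrivial.
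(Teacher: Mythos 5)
Your proof is correct and takes exactly the approach the paper uses: the paper's proof consists of the single sentence that the statement follows immediately from Lemma~\ref{lem:doerflerequiv}~(i), and you simply spell out the routine passage of the pointwise two-sided bound through the $\min$ over $\T(N)$ and the $\sup$ over $N$.
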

\begin{proof}
 The statement follows immediately from Lemma~\ref{lem:doerflerequiv} (i).
\end{proof}

\begin{proof}[Proof of Theorem~\ref{thm:main2}~(ii)]
According to~\eqref{eq:inexact:theta}, there exists $0<\theta_0<\theta_\star$ such that
\begin{align}\label{eq2:inexact:theta}
 0 < \theta < \sup_{\delta>0}\frac{(1-\vartheta\c{stable})^2\theta_0-(1+\delta^{-1})\vartheta^2\c{stable}^2}{(1+\delta)}.
\end{align}
Given $\theta_0$, Proposition~\ref{prop:doerfler}~(ii) provides an appropriate $0<\kappa_0<1$ 
and allows for Lemma~\ref{lem:optimality}. For $ \norm{(\eta(\cdot),U(\cdot))}{\B_s}<\infty$ and $\TT_\ell \in \T$, this guarantees the
existence of a certain refinement $\widehat\TT\in\T$ of $\TT_\ell$ with
\begin{align*}
\eta(\widehat\TT;U(\widehat\TT))^2 \leq  \kappa_0 \eta(\TT_\ell,U(\TT_\ell))^2 \quad \text{and} \quad |\widehat\TT|- |\TT_\ell| \leq 2 \norm{(\eta(\cdot),U(\cdot))}{\B_s}^{1/s}\eta(\TT_\ell,U(\TT_\ell))^{-1/s}
\end{align*}
\revision{for} some set $\RR(\TT_\ell,\widehat\TT)\supseteq \TT_\ell\setminus\widehat\TT$ from 
Proposition~\ref{prop:doerfler}, which satisfies
\begin{align*}
|\RR(\TT_\ell,\widehat\TT)|\leq \c{lemopthelp2} \norm{(\eta(\cdot),U(\cdot))}{\B_s}^{1/s}\eta(\TT_\ell,U(\TT_\ell))^{-1/s},
\end{align*}
as well as the D\"orfler marking~\eqref{eq:doerfler} for $\theta_0$ and $\eta(\cdot;U(\cdot))$.
With~\eqref{eq2:inexact:theta}, Lemma~\ref{lem:doerflerequiv}~(ii) yields that $\RR(\TT_\ell,\widehat\TT)$ 
satisfies the D\"orfler marking~\eqref{eq:iedoerfler}
\begin{align*}
\theta\eta(\TT_\ell;\widetilde U(\TT_\ell))^2\leq \sum_{T\in\RR(\TT_\ell,\widehat\TT)}\eta_T(\TT_\ell;\widetilde U(\TT_\ell))^2.
\end{align*}
The (almost) minimal cardinality of $\MM_\ell$ in Algorithm~\ref{iealgorithm} results in
\begin{align*}
|\MM_\ell| \lesssim |\RR(\TT_\ell,\widehat\TT)| \lesssim\norm{(\eta(\cdot),U(\cdot))}{\B_s}^{1/s} \eta(\TT_\ell,U(\TT_\ell))^{-1/s}\quad \text{for all }\ell\in\N.
\end{align*}
According to Theorem~\ref{thm:main2}~(i), $\eta(\TT_\ell;U(\TT_\ell))$ is $R$-linear convergent. The arguments of the proof of Proposition~\ref{prop:optimality} show
\begin{align*}
 \eta(\TT_\ell;U(\TT_\ell))(|\TT_\ell|-|\TT_0|+1)^{s}\lesssim \norm{(\eta(\cdot),U(\cdot))}{\B_s}\quad\text{for all }\ell\in\N.
\end{align*}
Hence $\eta(\TT_\ell;U(\TT_\ell))$ decays with the optimal algebraic rate. 
The equivalence $\eta(\TT_\ell;U(\TT_\ell))\simeq\eta(\TT_\ell;\widetilde U(\TT_\ell))$ from
Lemma~\ref{lem:doerflerequiv}~(i) proves the upper bound in~\eqref{eq:thm:main2:opt1}.
The lower bound follows as in the proof of Theorem~\ref{thm:main}~(ii) by use of Lemma~\ref{lem:doerflerequiv}~(i).
\end{proof}

%

%%%%%%%%%%%%%%%%%%%%%%%%%%%%%%%%%%%%%%%%%%%%%%%%%%%%%%%%%%%%%%%%%%%%%%%%%%%%%%%%%%%%%%%%%%%%%%%%%%%%%%%%%%%%%%%%%%%%%%%
%%%%%%%%%%%%%%%%%%%%%%%%%%%%%%%%%%%%%%%%%%%%%%%%%%%%%%%%%%%%%%%%%%%%%%%%%%%%%%%%%%%%%%%%%%%%%%%%%%%%%%%%%%%%%%%%%%%%%%%
%%%%%%%%%%%%%%%%%%%%%%%%%%%%%%%%%%%%%%%%%%%%%%%%%%%%%%%%%%%%%%%%%%%%%%%%%%%%%%%%%%%%%%%%%%%%%%%%%%%%%%%%%%%%%%%%%%%%%%%
%%%%%%%%%%%%%%%%%%%%%%%%%%%%%%%%%%%%%%%%%%%%%%%%%%%%%%%%%%%%%%%%%%%%%%%%%%%%%%%%%%%%%%%%%%%%%%%%%%%%%%%%%%%%%%%%%%%%%%%

%%%%%%%%%%%%%%%%%%%%%%%%%%%%%%%%%%%%%%%%%%%%%%%%%%%%%%%%%%%%%%%%%%%%%%%%%%%%%%%
\section{\revision{Equivalent Error Estimators}}
\label{section:locequiv}
%%%%%%%%%%%%%%%%%%%%%%%%%%%%%%%%%%%%%%%%%%%%%%%%%%%%%%%%%%%%%%%%%%%%%%%%%%%%%%%
\noindent%
Some error estimators $\varrho(\cdot)$ do not immediately match the abstract framework of Section~\ref{section:axioms}, but are (locally) equivalent to other estimators $\eta(\cdot)$ that do.
Moreover, the local contributions of an error estimator may rather be
associated with facets and/or nodes than with elements.
This section shows quasi-optimal convergence rates for an estimator $\varrho(\cdot)$ if D\"orfler marking with $\varrho(\cdot)$ is equivalent to D\"orfler marking with some mesh-width based error estimator $\eta(\cdot)$ that satisfies the axioms of Section~\ref{section:axioms}. 
Moreover, the discrete reliability axiom~\eqref{A:dlr} is generalized to allow for strong non-linear problems like the $p$-Laplace.
This generalizes~\cite{ks,bdk}.

Affirmative examples and applications are found in Section~\ref{section:examples3}
and~Section~\ref{section:nonlinear} below.

%------------------------------------------------------------------------------
\subsection{Additional assumptions on mesh-refinement}
\label{section:kpatch}
%------------------------------------------------------------------------------
The following assumptions are 
satisfied for all mesh-refinement strategies of Section~\ref{section:refinement}.
The element domains $T\in\TT$ are
compact subsets of $\R^D$ with positive $d$-dimensional measure $|T|>0$ for a fixed $d\leq D$. 
The meshes 
$\TT\in\T$ are uniformly $\gamma$-shape regular in the sense 
of~\eqref{refinement:shapereg} and each refined element domain
$T\in\TT\backslash\widehat\TT$ is the union of its successors, i.e., 
$T = \bigcup\set{\widehat T\in\widehat\TT}{\widehat T\subset T}$. Moreover, 
two different successors 
$\widehat T,\widehat T'\in\widehat\TT$ of $T\in\TT$ are essentially disjoint in the sense that $\widehat T\cap\widehat T'$ has measure zero.  
Finally, for each $\TT\in\T$, let $h(\TT)\in \PP^0(\TT)$ denote 
the piecewise constant  mesh-size function defined by
$h(\TT)|_T = |T|^{1/d}$ as in
Section~\ref{section:examples1} and Section~\ref{section:examples2}. Suppose that there exists a contraction constant $0<\setr{refinereduction}<1$
(which depends only on $\T$), such that all successors $\widehat T\in\widehat\TT$ 
of a refined element $T\in\TT\backslash\widehat\TT$ satisfy
\begin{align}\label{refinement:sons:measure}
|\widehat T|\leq \r{refinereduction}|T|.
\end{align}
The strategies from 
Section~\ref{section:refinement} imply~\eqref{refinement:sons:measure} with $\r{refinereduction}=1/2$. 

Additional notation is required throughout this section.
The $k$-patch $\omega^k(\TT;\SS)\subseteq \TT$ of a subset $\SS\subseteq \TT\in\T$ is successively  defined by
\begin{align*}
\omega(\TT;\SS)&:=\omega^1(\TT;\SS):=\set{T\in\TT}{\text{exists }T^\prime \in\SS \text{ such that }T^\prime\cap T \neq \emptyset}\quad\text{and}\\
\omega^k(\TT;\SS)&:=\omega(\TT;\omega^{k-1}(\TT;\SS))\quad\text{for }k=2,3,\ldots
\end{align*}
To abbreviate notation, set $\omega^k(\TT;T):=\omega^k(\TT;\{T\})$.
The $\gamma$-shape regularity, implies
\begin{align}\label{eq:patchbound}
 |\omega^k(\TT;\SS)|\leq \c{patchbound}|\SS|
 \quad\text{for all }\SS\subseteq\TT\in\T
\end{align}
with some constant $\setc{patchbound}$, which depends only on $\T$ and $k\in\N$.

%------------------------------------------------------------------------------
\subsection{Assumptions on abstract index set}\label{section:index}
%------------------------------------------------------------------------------
For each mesh $\TT\in\T$, let $\II(\TT)$ denote an index set. For each index
$\tau\in\II(\TT)$, let $\TT(\tau)\subseteq\TT$ be a nonempty subset of associated elements. Recall the counting measure $|\cdot|$ for finite sets and suppose uniform boundedness
\begin{align}
 |\TT(\tau)| \le \c{dp1}
 \quad\text{for all }\tau\in\II(\TT)
\end{align}
with a universal constant $\setc{dp1}\ge1$. For each subset
$\Sigma\subseteq\II(\TT)$ of indices, abbreviate 
$\TT(\Sigma):=\bigcup_{\tau\in\Sigma}\TT(\tau)$ and, with a universal constant $\setc{dp2}\ge1$, assume that
\begin{align}\label{ass:dp1}
 \big|\set{\tau\in\II(\TT)}{\TT(\tau)\cap\SS\neq\emptyset}\big|
 \le \c{dp2}\,|\SS|
 \quad\text{for all }\SS\subseteq\TT.
\end{align}

In typical applications, the local contributions of $\varrho(\cdot)$ are 
associated with the element domains $T\in\TT$, the facets $E\in\EE(\TT)$ of $\TT$, and/or the nodes
$z\in\KK(\TT)$ of $\TT$, i.e.\ it holds $\II(\TT)\subseteq\TT\cup\EE(\TT)\cup\KK(\TT)$.
In those cases, $\TT(\tau)$ usually is either the whole corresponding patch or just
one (arbitrary) element of the patch and $\c{dp1},\c{dp2}>0$ depend only
on $\gamma$-shape regularity and hence only on $\T$.

%------------------------------------------------------------------------------
\subsection{Adaptive algorithm}
%------------------------------------------------------------------------------
For each $\TT\in\T$ and $\tau\in\II(\TT)$, let $\varrho_\tau(\TT,\cdot):\,\XX(\TT)\to[0,\infty)$  
denote a function on the discrete space $\XX(\TT)$ with the corresponding error estimator
\begin{align}
 \varrho(\TT,V)^2:=\sum_{\tau\in\II(\TT)}\varrho_\tau(\TT,V)^2
 \quad\text{for all }\TT\in\T\text{ and }V\in\XX(\TT).
\end{align}
The difference between Algorithm~\ref{algorithmrho} below and Algorithm~\ref{algorithm} of Section~\ref{sec:setting} is that  instead of $\eta(\cdot)$, $\varrho(\cdot)$ marks indices $\II(\TT_\ell)$ for refinement in Step~(iii). The refinement step~(iv) refines the element domains $\TT(\MM_\ell)$ associated with the marked indices.

\begin{algorithm}\label{algorithmrho}
\textsc{Input:} Initial triangulation $\TT_0$ and $0<\theta<1$.\\
\textbf{Loop: }for $\ell=0,1,2,\ldots$ do ${\rm (i)}-{\rm(iv)}$
\begin{itemize}
\item[\rm(i)] Compute discrete approximation $U(\TT_\ell)$.
\item[\rm(ii)] Compute refinement indicators $\varrho_\tau(\TT_\ell;U(\TT_\ell))$ for all $\tau\in\II(\TT_\ell)$.
\item[\rm(iii)] Determine set $\MM_\ell\subseteq\II(\TT_\ell)$ of (almost) minimal cardinality such that
\begin{align}\label{eq:doerflerrho}
 \theta\,\varrho(\TT_\ell;U(\TT_\ell))^2 \le \sum_{\tau\in\MM_\ell}\varrho_\tau(\TT_\ell;U(\TT_\ell))^2.
\end{align}
\item[\rm(iv)] Refine (at least) the element domains $T\in\TT(\MM_\ell)$ corresponding to marked indices, to design new triangulation $\TT_{\ell+1}$.
\end{itemize}
\textsc{Output:} Discrete approximations $U(\TT_\ell)$ and error estimators
$\varrho(\TT_\ell;U(\TT_\ell))$ for all $\ell\in\N$.
\end{algorithm}

%------------------------------------------------------------------------------
\subsection{Assumptions on equivalent mesh-width weighted error estimator}\label{section:meshwidtherrest}
%------------------------------------------------------------------------------
\label{section8:assumption}%
Let $\eta(\cdot)$ be a given error estimator of the form
\begin{align}\label{eq:hbasedest}
\eta_T(\TT;V)^2=\eta_T(\TT,\widehat h(\TT);V)^2\quad\text{for all }T\in\TT%:= (h(\TT)|_T)^r \ker_T(\TT;V),
\end{align}
with some local mesh-width function $\widehat h(\TT)\in L^\infty(\bigcup\TT)$, either $\widehat h(\TT) = h(\TT)$ or $\widehat h(\TT) = h(\TT, k)$ with  the equivalent mesh-width function $h(\TT,k)$ from Section~\ref{section:modh} below.

Suppose that 
$\varrho(\cdot)$ and $\eta(\cdot)$ are globally equivalent in the sense that, with a universal constant $\setc{dp3}>0$,
\begin{align}\label{eq:glob}
 \c{dp3}^{-1}\,\eta(\TT,h(\TT);U(\TT))^2 \le \varrho(\TT;U(\TT))^2 \le \c{dp3}\,\eta(\TT,h(\TT);U(\TT))^2
 \quad\text{for all }\TT\in\T.
\end{align}

Suppose that D\"orfler marking for $\eta(\cdot)$ and $\varrho(\cdot)$ is equivalent 
in the  sense that there exist constants $k\in\N$ and $\setc{dp4}\ge1$ such that for 
all $\TT\in\T$ the following conditions (i)--(ii) hold:
\begin{itemize}
\item[(i)] If $\MM\subseteq\II(\TT)$ and $0<\theta<1$ satisfy the D\"orfler marking criterion
\begin{subequations}\label{dp:doerfler}
\begin{align}\label{dp1:doerfler}
 \theta\,\varrho(\TT;U(\TT))^2 \le \sum_{\tau\in\MM}\varrho_\tau(\TT;U(\TT))^2,
\end{align}
then, $\widetilde\theta := \c{dp4}^{-1}\theta$ and the $k$-patch $\widetilde\MM:=\omega^k(\TT;\TT(\MM))$ satisfy
\begin{align}\label{dp2:doerfler}
 \widetilde\theta\,\eta(\TT;U(\TT))^2 \le \sum_{T\in\widetilde\MM} \eta_T(\TT;U(\TT))^2.
\end{align}
\end{subequations}
\item[(ii)] Conversely, if $\widetilde\MM\subseteq\TT$ satisfies the D\"orfler marking 
criterion~\eqref{dp2:doerfler} with $0<\widetilde\theta<1$, the set
$\MM:=\set{\tau\in\II(\TT)}{\TT(\tau)\subseteq\omega^k(\TT;\widetilde\MM)\neq\emptyset}$
satisfies~\eqref{dp1:doerfler} with $\theta := \c{dp4}^{-1}\widetilde\theta$.
\end{itemize}

In addition to the general assumptions of Section~\ref{section:setting:continuous}, suppose~\eqref{A:kernelhomo}--\eqref{A:kernelstab}.
\begin{enumerate}
\renewcommand{\theenumi}{{\rm B\arabic{enumi}}}%
\setcounter{enumi}{-1}
\item\label{A:kernelhomo} 
\textbf{Homogeneity}: There exist universal constants $0<r_+\leq r_-<\infty$ such that for
all $T\in\TT\in\T$, $V\in\XX(\TT)$, and $\alpha\in L^\infty(T;[0,1])$ it holds
\begin{align*}%\label{A:kernelhomo}
\norm{\alpha}{L^\infty(T)}^{r_-}\eta_T(\TT,\widehat h(\TT);V) \leq \eta_T(\TT,\alpha \widehat h(\TT);V)\leq \norm{\alpha}{L^\infty(T)}^{r_+}\eta_T(\TT,\widehat h(\TT);V).
\end{align*}
\item\label{A:kernelstab} 
\textbf{Stability}: There exists a constant $\setc{stab2}>0$ such that all refinements 
$\widehat\TT\in\T$ of $\TT\in\T$, all functions $\widehat V\in\XX(\widehat\TT)$ and  $V\in\XX(\TT)$,
as well as all $\widehat h(\TT)\in \PP^0(\widehat\TT)$ with $\widehat h(\TT)\leq h(\TT)$ satisfy
%Given any refinement $\widehat\TT$ of a triangulation $\TT\in\T$, the non-negative estimator function $\ker(\TT;\cdot):\,\XX(\TT)\to \mathcal{B}(\Omega)^*$ is assumed to satisfy
\begin{align*}%\label{A:kernelstab}
\begin{split}
\Big|\big(\sum_{T\in\widehat\SS}\eta_T(\widehat \TT, \widehat h(\TT);\widehat V)^2\big)^{1/2}-\big(\sum_{T\in\SS}\eta_T(\TT, \widehat h(\TT);V)^2\big)^{1/2}\Big|
\leq
 \c{stab2}\,\dist[\widehat\TT]{\widehat V}{V}
\end{split}
\end{align*}
for all subsets $\widehat\SS\subseteq\widehat\TT$, $\SS\subseteq \TT$ with $\bigcup\widehat\SS = \bigcup\SS$. 
\end{enumerate}
Note that~\eqref{A:kernelstab} is slightly stronger than~\eqref{A:stable}, since it includes 
the case $\SS\subseteq \TT\cap\widehat\TT$ and $\widehat h(\TT)=h(\TT)$ formulated in~\eqref{A:stable} 
 with $\c{stable}=\c{stab2}$.
Lemma~\ref{lemma:etaestconv1} below asserts that~\eqref{A:kernelhomo}--\eqref{A:kernelstab} imply the reduction 
\revision{axiom~\eqref{A:reduction}. Section~\ref{section:examples3}} below studies the application for the residual FEM error estimator.

Finally, the discrete reliability axiom~\eqref{A:dlr} is weakened.
\begin{enumerate}
\renewcommand{\theenumi}{{\rm B\arabic{enumi}}}%
\setcounter{enumi}{3}
\item\label{B:dlr}\textbf{Weak discrete reliability}:
For all refinements $\widehat\TT\in\T$ of a triangulation $\TT\in\T$ and all $\eps>0$,
there exists a subset $\RR(\eps;\TT,\widehat\TT)\subseteq\TT$ with 
$\TT\backslash\widehat\TT \subseteq\RR(\eps;\TT,\widehat\TT)$ and
$|\RR(\eps;\TT,\widehat\TT)|\le\c{refined}(\eps)\,|\TT\backslash\widehat\TT|$ such that
\begin{align*}
 \dist[\widehat\TT]{U(\widehat\TT)}{U(\TT)}^2
\leq\eps\,\eta(\TT;U(\TT))^2
 + \c{dlr}(\eps)^2 \sum_{T\in\RR(\eps;\TT,\widehat\TT)}\eta_T(\TT;U(\TT))^2.
\end{align*}
\end{enumerate}
The constants $\c{refined}(\eps),\c{dlr}(\eps)>0$ depend only on $\T$ and $\eps>0$.

\begin{lemma}\label{lemma:wdlr2rel}
Discrete reliability~\eqref{A:dlr} implies weak discrete reliability with $\eps=0$ and $\c{dlr}(0)=\c{dlr}$.
Weak discrete reliability~\eqref{B:dlr} implies reliability~\eqref{A:reliable} with $\c{reliable}=\inf_{\eps>0}(\eps+ \c{triangle}\c{dlr}(\eps))$.
\end{lemma}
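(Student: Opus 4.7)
The first assertion is immediate: discrete reliability~\eqref{A:dlr} is precisely the statement of~\eqref{B:dlr} with the choice $\eps = 0$, $\c{dlr}(0) = \c{dlr}$, and $\RR(0;\TT,\widehat\TT) := \RR(\TT,\widehat\TT)$, where the cardinality bound $|\RR(0;\TT,\widehat\TT)| \le \c{refined}\,|\TT\setminus\widehat\TT|$ with $\c{refined}(0) = \c{refined}$ and the inclusion $\TT\setminus\widehat\TT\subseteq \RR(0;\TT,\widehat\TT)$ are inherited from~\eqref{A:dlr}.

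The plan for the second assertion mimics the proof of Lemma~\ref{lemma:dlr2rel}. Fix an arbitrary $\delta>0$ and, by the compatibility condition~\eqref{eq:unifconv}, pick a refinement $\widehat\TT\in\T$ of $\TT$ such that $\dist[\widehat\TT]{u}{U(\widehat\TT)}\le \delta$. The quasi-triangle inequality (with constant $\c{triangle}$) and the compatibility of $\dist{\cdot}{\cdot}$ then yield
\begin{align*}
\c{triangle}^{-1}\,\dist[\TT]{u}{U(\TT)}
 = \c{triangle}^{-1}\,\dist[\widehat\TT]{u}{U(\TT)}
 \le \dist[\widehat\TT]{u}{U(\widehat\TT)} + \dist[\widehat\TT]{U(\widehat\TT)}{U(\TT)}
 \le \delta + \dist[\widehat\TT]{U(\widehat\TT)}{U(\TT)}.
\end{align*}

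The main new ingredient compared to Lemma~\ref{lemma:dlr2rel} is the treatment of the second term: weak discrete reliability~\eqref{B:dlr} for arbitrary $\eps>0$ combined with the trivial bound $\sum_{T\in\RR(\eps;\TT,\widehat\TT)}\eta_T(\TT;U(\TT))^2 \le \eta(\TT;U(\TT))^2$ gives
\begin{align*}
\dist[\widehat\TT]{U(\widehat\TT)}{U(\TT)}^2
 \le \bigl(\eps + \c{dlr}(\eps)^2\bigr)\,\eta(\TT;U(\TT))^2,
\end{align*}
and the elementary inequality $\sqrt{a+b}\le\sqrt{a}+\sqrt{b}$ allows to split this into an additive bound in $\eta(\TT;U(\TT))$. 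Taking the limit $\delta\to 0$ and then the infimum over $\eps>0$ produces reliability with the constant $\c{reliable}=\inf_{\eps>0}(\eps + \c{triangle}\c{dlr}(\eps))$ after the cosmetic reparametrisation $\eps\leftrightarrow\sqrt{\eps}$ absorbed into the definition of $\c{dlr}(\cdot)$.

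No step is really hard here; the only subtlety is that the extra term $\eps\,\eta(\TT;U(\TT))^2$ in~\eqref{B:dlr} ---which is not present in~\eqref{A:dlr}--- survives the argument and must be absorbed by exploiting the freedom to choose $\eps>0$ arbitrarily small, so that $\c{dlr}(\eps)$ is allowed to blow up as $\eps\to 0$ without destroying reliability. This is exactly what the infimum over $\eps>0$ in the final constant encodes.
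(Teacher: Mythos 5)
Your proof is correct and mirrors the approach the paper indicates (the paper's own proof is only a pointer to Lemma~\ref{lemma:dlr2rel} ``with obvious modifications''). You correctly use the compatibility/further-approximation property~\eqref{eq:unifconv} to introduce a fine refinement $\widehat\TT$, the quasi-triangle inequality, and weak discrete reliability~\eqref{B:dlr} bounded by the full estimator, then pass to the limit and take the infimum over $\eps$.

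One small remark: your scrutiny of the constant is, if anything, more careful than the paper's statement warrants. The literal computation yields $\c{reliable}=\inf_{\eps>0}\c{triangle}\bigl(\sqrt{\eps}+\c{dlr}(\eps)\bigr)$, and no substitution $\eps\leftrightarrow\sqrt{\eps}$ (nor $\eps\leftrightarrow\c{triangle}\sqrt{\eps}$) turns this exactly into the claimed $\inf_{\eps>0}\bigl(\eps+\c{triangle}\c{dlr}(\eps)\bigr)$, because the factor $\c{triangle}$ on the $\sqrt{\eps}$-term does not disappear and $\c{dlr}$ would be evaluated at the reparametrised argument. The paper's stated constant is thus slightly loose; what matters for every subsequent use of the lemma is only that some finite $\c{reliable}$ exists, which your argument certainly delivers. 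You are right to flag the discrepancy as cosmetic rather than a gap.
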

\begin{proof}
The first statement is obvious. The proof of the second statement follows the lines of the proof of Lemma~\ref{lemma:dlr2rel} with obvious modifications.
\end{proof}

%------------------------------------------------------------------------------
\subsection{Locally equivalent weighted error estimator}%\label{section:kpatch}
%------------------------------------------------------------------------------
The presentation in~\cite{ks} concerns locally equivalent FEM error estimators which implies~\eqref{eq:glob} and the 
equivalence~\eqref{dp:doerfler}. To prove this,
assume that 
\begin{subequations}\label{eq:loc}
\begin{align}\label{eq:loca}
\varrho_\tau(\TT;U(\TT))^2
&\leq\c{loc}\sum_{T\in\omega^k(\TT;\TT(\tau))}\eta_{T}(\TT;U(\TT))^2,\\
\eta_T(\TT;U(\TT))^2&\leq\c{loc}\sum_{\genfrac{}{}{0pt}{2}{\tau\in\II(\TT)}{\TT(\tau)\cap\omega^k(\TT;T)\neq\emptyset}}\varrho_{\tau}(\TT;U(\TT))^2 \label{eq:locb}
\end{align}
\end{subequations}
for some \revision{fixed} $k\in\N$ and a universal constant $\setc{loc}\geq 1$. 

\begin{lemma}\label{dp:lemma:locequiv}
The local equivalence~\eqref{eq:loc} implies~\eqref{eq:glob} with 
$\c{dp3} = \c{loc}\,\max\{\c{dp1},\c{dp2}\}$. Moreover,~\eqref{dp1:doerfler}
implies~\eqref{dp2:doerfler} with~$\c{dp4}=\c{dp1}\c{dp2}\c{loc}^2$ and vice versa.
\end{lemma}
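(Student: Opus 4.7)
The plan is to derive all assertions from (eq:loc) by double-counting and elementary rearrangements of sums, using the cardinality bounds on $\TT(\tau)$ and on the preimage sets from Section~\ref{section:index}. No additional input is required; everything reduces to bookkeeping with the patch relation.

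First I would prove the global equivalence (eq:glob). Summing~\eqref{eq:loca} over all $\tau\in\II(\TT)$ yields
\begin{align*}
\varrho(\TT;U(\TT))^2 \le \c{loc}\sum_{\tau\in\II(\TT)}\sum_{T\in\omega^k(\TT;\TT(\tau))}\eta_T(\TT;U(\TT))^2.
\end{align*}
Exchanging the order of summation and using that $T\in\omega^k(\TT;\TT(\tau))$ is symmetric to $\TT(\tau)\cap\omega^k(\TT;T)\neq\emptyset$, the multiplicity with which each $\eta_T$ occurs is bounded by $|\{\tau\in\II(\TT)\,:\,\TT(\tau)\cap\omega^k(\TT;T)\neq\emptyset\}|\le\c{dp2}|\omega^k(\TT;T)|$, which is absorbed into the generic constant. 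This gives $\varrho(\TT;U(\TT))^2\lesssim\c{loc}\c{dp2}\eta(\TT;U(\TT))^2$. The reverse inequality is obtained analogously from~\eqref{eq:locb}: exchanging the sums, the multiplicity with which each $\varrho_\tau$ appears is $|\omega^k(\TT;\TT(\tau))|\le\c{patchbound}|\TT(\tau)|\le\c{patchbound}\c{dp1}$.

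For the implication \eqref{dp1:doerfler}$\,\Rightarrow\,$\eqref{dp2:doerfler}, I would start from the assumed D\"orfler estimate for $\varrho$, bound each marked $\varrho_\tau^2$ from above by~\eqref{eq:loca}, and note that $\bigcup_{\tau\in\MM}\omega^k(\TT;\TT(\tau))=\omega^k(\TT;\TT(\MM))=\widetilde\MM$, so that the resulting sum of $\eta_T^2$ is supported in $\widetilde\MM$. Double counting then gives
\begin{align*}
\theta\,\varrho(\TT;U(\TT))^2 \le \c{loc}\c{dp2}\sum_{T\in\widetilde\MM}\eta_T(\TT;U(\TT))^2.
\end{align*}
Finally I apply the already-established global bound $\eta(\TT;U(\TT))^2\le\c{loc}\c{dp1}\varrho(\TT;U(\TT))^2$ to the left-hand side, yielding~\eqref{dp2:doerfler} with $\widetilde\theta=\theta/(\c{loc}^2\c{dp1}\c{dp2})$, i.e.\ $\c{dp4}=\c{loc}^2\c{dp1}\c{dp2}$. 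The reverse implication is fully symmetric: start from~\eqref{dp2:doerfler}, apply~\eqref{eq:locb} to each $\eta_T^2$ with $T\in\widetilde\MM$, and observe that all indices $\tau$ appearing in the resulting double sum satisfy $\TT(\tau)\cap\omega^k(\TT;\widetilde\MM)\neq\emptyset$ and hence belong to $\MM$ as defined in Section~\ref{section:meshwidtherrest}~(ii). Double counting together with the reverse global bound $\varrho(\TT;U(\TT))^2\le\c{loc}\c{dp2}\eta(\TT;U(\TT))^2$ then yields~\eqref{dp1:doerfler} with the same constant.

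There is no genuine obstacle here; the only delicate point is keeping track of which counting bound ($\c{dp1}$ or $\c{dp2}$) applies in each direction of double counting, and exploiting the symmetry $T\in\omega^k(\TT;T')\Leftrightarrow T'\in\omega^k(\TT;T)$ to turn a patch of a patch into an enlarged index set on the other side. All estimates are linear, so the product $\c{loc}^2\c{dp1}\c{dp2}$ for $\c{dp4}$ emerges directly from chaining the two equivalences, matching the stated constant.
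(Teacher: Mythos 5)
Your proof is correct and takes essentially the same route as the paper: establish bounds for partial sums $\sum_{\tau\in\Sigma}\varrho_\tau^2$ and $\sum_{T\in\SS}\eta_T^2$ by double counting (exchanging sums, invoking the symmetry $T\in\omega^k(\TT;T')\Leftrightarrow T'\in\omega^k(\TT;T)$, and bounding multiplicities via $\c{dp1}$, $\c{dp2}$, $\c{patchbound}$), then specialize to $\Sigma=\II(\TT)$, $\SS=\TT$ for the global equivalence and chain the D\"orfler implication through the global bound. The only cosmetic difference is that you attribute the multiplicity constants $\c{dp1}$ and $\c{dp2}$ to the opposite directions from the paper's intermediate displays (your assignment is in fact the consistent one), but since $\c{dp3}$ and $\c{dp4}$ are symmetric in $\c{dp1},\c{dp2}$ the final constants agree; both you and the paper also quietly absorb a benign factor $\c{patchbound}$ into the generic constant.
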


\begin{proof}
For all $\Sigma\subseteq\II(\TT)$ and $\SS\subseteq\TT$, the local equivalence~\eqref{eq:loc}
yields
\begin{subequations}\label{eq2:loc}
\begin{align}\label{eq2:loca}
\sum_{\tau\in\Sigma}\varrho_\tau(\TT;U(\TT))^2
&\leq\c{dp1}\c{loc}\sum_{T\in\omega^k(\TT;\TT(\Sigma))}\eta_{T}(\TT;U(\TT))^2,
\\
\sum_{T\in\SS}\eta_T(\TT;U(\TT))^2
&\leq\c{dp2}\c{loc}\sum_{\genfrac{}{}{0pt}{2}{\tau\in\II(\TT)}{\TT(\tau)\cap\omega^k(\TT;\SS)\neq\emptyset}}\varrho_{\tau}(\TT;U(\TT))^2. \label{eq2:locb}
\end{align}
\end{subequations}%
For $\SS=\TT$ and $\Sigma=\II(\TT)$, this shows the global equivalences~\eqref{eq:glob}
with $\c{dp3} = \max\{\c{dp1}\c{loc},\c{dp2}\c{loc}\}$.
Moreover, D\"orfler marking~\eqref{dp1:doerfler} for $\varrho(\cdot)$ yields
\begin{align*}
 \theta\,\eta(\TT,h(\TT);U(\TT))^2
 &\le \c{dp2}\c{loc}\,\theta\,\varrho(\TT;U(\TT))^2
 \le \c{dp2}\c{loc}\,\sum_{\tau\in\MM}\varrho_\tau(\TT;U(\TT))^2
 \\&\le \c{dp1}\c{dp2}\c{loc}^2\!\!\!\sum_{T\in\omega^k(\TT;\TT(\MM))}\!\!\!\eta_T(\TT,h(\TT);U(\TT))^2.
\end{align*}
This leads to the D\"orfler marking~\eqref{dp2:doerfler} with 
$\widetilde\theta=\c{dp1}^{-1}\c{dp2}^{-1}\c{loc}^{-2}\,\theta$
and $\widetilde\MM = \omega^k(\TT;\TT(\MM))$. The converse implication follows analogously.
\end{proof}

%------------------------------------------------------------------------------
\subsection{Main result}
%------------------------------------------------------------------------------
The following two theorems are the main result of this section.
Note that the global equivalence~\eqref{eq:glob} of the error estimators implies
\begin{align}
 \norm{(\eta(\cdot),U(\cdot))}{\B_s} \simeq \norm{(\varrho(\cdot),U(\cdot))}{\B_s}.
\end{align}
In particular, $R$-linear convergence and optimal convergence rates do not 
depend on the particular estimator $\eta(\TT_\ell,h(\TT_\ell);U(\TT_\ell))$ or $\varrho(\TT_\ell;U(\TT_\ell))$ considered. To avoid \revision{additional} constants, the main
theorems are therefore formulated with respect to $\eta(\TT_\ell,h(\TT_\ell);U(\TT_\ell))$, although $\varrho(\TT_\ell;U(\TT_\ell))$ is used to \revision{drive} the 
mesh-refinement.
%\begin{remark}\label{rem:approxclass}
%The global equivalence~\eqref{eq:glob} of the error estimators implies
%\begin{align}
% \norm{(\eta(\cdot),U(\cdot))}{\B_s} \simeq \norm{(\varrho(\cdot),U(\cdot))}{\B%_s}.
%\end{align}
%In particular, $R$-linear convergence and optimal convergence do not depend on %the estimator $\eta(\TT_\ell,h(\TT_\ell);U(\TT_\ell))$ or $\varrho(\TT_\ell;U(\%TT_\ell))$ considered.
%\end{remark}%

\begin{theorem}\label{thm:main3}
In addition to the assumptions of Section~\ref{section8:assumption}, suppose that $\eta(\cdot)$ satisfies stability~\eqref{A:stable}, reduction~\eqref{A:reduction}, and general quasi-orthogonality~\eqref{A:qosum}. Then, Algorithm~\ref{algorithmrho} guarantees  {\rm (i)}--{\rm (ii)}.
\begin{itemize}
\item[(i)] Weak discrete reliability~\eqref{B:dlr} \revision{or} reliability~\eqref{A:reliable} imply $R$-linear convergence of the estimator in the sense that there exists $0<\setr{conv}<1$ and $\setc{conv}>0$ such that
\begin{align}\label{eq:thm:main3:conv1}
\eta(\TT_{\ell+j},h(\TT_{\ell+j});U(\TT_{\ell+j}))^2\leq \c{conv}\r{conv}^{j}\,\eta(\TT_\ell,h(\TT_\ell);U(\TT_\ell))^2\quad\text{for all }j,\ell\in\N.
\end{align}
In particular, this yields
\begin{align}\label{eq:thm:main3:conv2}
\c{reliable}^{-1} \dist[\TT_\ell]{u}{U(\TT_\ell)} \le \eta(\TT_\ell,h(\TT_\ell); U(\TT_\ell))
 \leq \c{conv}^{1/2}\r{conv}^{\ell/2}\,\eta(\TT_0,h(\TT_0);U(\TT_0))\quad\text{for all }\ell\ge 0.
\end{align}
\item[(ii)] Weak discrete reliability~\eqref{B:dlr} and 
$0<\theta<\c{dp4}^{-1}\,\overline\theta_\star$ with
\begin{align}\label{eq:overlinethetastar}
\overline\theta_\star := \sup_{\eps>0}\frac{1-\c{stable}^2\,\eps}{1+\c{stable}^2\c{dlr}(\eps)^2}
\end{align}
imply quasi-optimal convergence of the estimator
in the sense \revision{that}
\begin{align}\label{eq:thm:main3:opt1}
c_{\rm opt}\norm{(\eta(\cdot),U(\cdot))}{\B_s} \leq\sup_{\ell\in\N_0}\frac{\eta(\TT_\ell,h(\TT_\ell);U(\TT_\ell))}{(|\TT_\ell|-|\TT_0|+1)^{-s}}\leq\c{optimal}\norm{(\eta(\cdot),U(\cdot))}{\B_s}
\end{align}
for all $s>0$.
\end{itemize}
The constants $\c{conv}, \r{conv} >0$ depend only on $\c{stable}, \q{reduction}, \c{reduction}, \c{qosum}(\epsqo)>0$ as well as on $\theta$.
Furthermore, the constant $\setc{optimal}>0$ depends only on %$\c{invsum},\c{lemopthelp2}>0$, and 
$C_{\rm min}, \c{refined},\c{mesh},\c{stable},\c{dlr}$, $\c{reduction}, \c{qosum}(\epsqo), \q{reduction}>0$ as well as on~$\theta$ and $s$, while $c_{\rm opt}>0$ depends only on $C_{\rm son}$.
\end{theorem}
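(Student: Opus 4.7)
The plan is to reduce Theorem~\ref{thm:main3} to the abstract framework of Theorem~\ref{thm:main} applied to the mesh-width-weighted estimator $\widehat\eta(\TT) := \eta(\TT, \widehat h(\TT); U(\TT))$, exploiting the D\"orfler equivalence~\eqref{dp:doerfler} and the modified mesh-width function $h(\TT,k)$ of Section~\ref{section:modh}.

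For part (i), I would first invoke~\eqref{dp:doerfler}(i): the set $\MM_\ell$ selected in Algorithm~\ref{algorithmrho} via~\eqref{eq:doerflerrho} yields $\widetilde\MM_\ell := \omega^k(\TT_\ell; \TT_\ell(\MM_\ell))$ satisfying the D\"orfler criterion for $\eta$ with parameter $\theta/\c{dp4}$. The critical step is then an estimator reduction analog of Lemma~\ref{lem:estconv}: when $\widehat h = h(\TT,k)$, the $k$-patch contractivity (Proposition~\ref{prop:htilde} with factor $\q{htc} < 1$) guarantees $\widehat h(\TT_{\ell+1})|_T \leq \q{htc}\,\widehat h(\TT_\ell)|_T$ for all $T \in \widetilde\MM_\ell$, because every such element lies in the $k$-patch of some $T' \in \TT_\ell(\MM_\ell)$ that has been refined (cf.~\eqref{refinement:sons:measure}). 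Combining homogeneity~\eqref{A:kernelhomo} to gain a contraction factor on $\widetilde\MM_\ell$ with stability~\eqref{A:kernelstab} to control $\TT_\ell \setminus \widetilde\MM_\ell$ up to $\dist[\TT_{\ell+1}]{U(\TT_{\ell+1})}{U(\TT_\ell)}$, and applying the D\"orfler criterion on $\widetilde\MM_\ell$ together with a Young inequality as in Lemma~\ref{lem:estconv}, yields
\begin{equation*}
\widehat\eta(\TT_{\ell+1})^2 \leq \widehat q\, \widehat\eta(\TT_\ell)^2 + \widehat C\, \dist[\TT_{\ell+1}]{U(\TT_{\ell+1})}{U(\TT_\ell)}^2
\end{equation*}
for some $0 < \widehat q < 1$ and $\widehat C > 0$. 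By Lemma~\ref{lemma:wdlr2rel}, weak discrete reliability~\eqref{B:dlr} implies reliability~\eqref{A:reliable}; together with general quasi-orthogonality~\eqref{A:qosum}, Proposition~\ref{prop:Rconv} produces R-linear convergence of $\widehat\eta$. The pointwise equivalence $h(\TT,k) \simeq h(\TT)$ and homogeneity~\eqref{A:kernelhomo} transfer this to $\eta(\cdot, h(\cdot); U(\cdot))$, establishing~\eqref{eq:thm:main3:conv1}--\eqref{eq:thm:main3:conv2}; the case $\widehat h = h(\TT)$ is handled the same way via this equivalence.

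For part (ii), the key ingredient is an optimal D\"orfler marking analog of Proposition~\ref{prop:doerfler}. Using stability~\eqref{A:stable} and weak discrete reliability~\eqref{B:dlr} with a Young inequality, I would show that for $0 < \kappa_0 < 1$ sufficiently small, every refinement $\widehat\TT$ of $\TT$ with $\eta(\widehat\TT)^2 \leq \kappa_0 \eta(\TT)^2$ gives a set $\RR(\eps;\TT,\widehat\TT)$ satisfying the D\"orfler criterion for $\eta$ with parameter $\overline\theta_\star$ from~\eqref{eq:overlinethetastar}; the additive $\eps\,\eta(\TT;U(\TT))^2$ term in~\eqref{B:dlr} is what forces the supremum in~\eqref{eq:overlinethetastar} in place of the simpler bound $\theta_\star$ of Proposition~\ref{prop:doerfler}. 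The equivalence~\eqref{dp:doerfler}(ii) then translates this into the D\"orfler criterion for $\varrho$ with parameter $\overline\theta_\star/\c{dp4}$, which is the precise range $0<\theta<\c{dp4}^{-1}\overline\theta_\star$ of the theorem. Combining the R-linear convergence of part~(i), the inverse-summability of Lemma~\ref{lem:Rconv}, the minimal cardinality of $\MM_\ell$ (giving $|\MM_\ell| \lesssim |\RR(\eps;\TT_\ell,\widehat\TT)|$), and the mesh-closure estimate~\eqref{refinement:closure}, the argument of Lemma~\ref{lem:optimality} and Proposition~\ref{prop:optimality} yields the upper bound in~\eqref{eq:thm:main3:opt1}; the lower bound follows verbatim from the proof of Theorem~\ref{thm:main}~(ii).

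The principal obstacle is proving the estimator reduction in part~(i): Algorithm~\ref{algorithmrho} only guarantees that $\TT_\ell(\MM_\ell)$ is refined, whereas the D\"orfler criterion for $\eta$ holds on the strictly larger set $\widetilde\MM_\ell = \omega^k(\TT_\ell; \TT_\ell(\MM_\ell))$. Standard reduction~\eqref{A:reduction} only gives contraction on the refined elements $\TT_\ell \setminus \TT_{\ell+1}$, which may miss most of $\widetilde\MM_\ell$. The modified mesh-width function $h(\TT,k)$ circumvents this mismatch by being contractive on the entire $k$-patch of any refined element; coupled with homogeneity~\eqref{A:kernelhomo}, this turns the D\"orfler criterion on $\widetilde\MM_\ell$ into a genuine reduction of $\widehat\eta$. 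Without this device, one would be forced to enlarge the set of refined elements a priori (as in~\cite{ks}), yielding unnecessary refinements and a suboptimal result.
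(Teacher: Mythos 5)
Your proposal is essentially the same as the paper's proof: both parts reduce to the abstract machinery of Section~\ref{section:optimality} by passing to the modified estimator $\eta(\cdot,h(\cdot,k);\cdot)$, using the D\"orfler equivalence~\eqref{dp:doerfler} to transfer marking between $\varrho(\cdot)$ and $\eta(\cdot)$, the $k$-patch contraction of Proposition~\ref{prop:htilde} together with homogeneity~\eqref{A:kernelhomo} and stability~\eqref{A:kernelstab} to obtain the estimator reduction of Lemma~\ref{lemma:etaestconv1}, Lemma~\ref{lemma:wdlr2rel} and Proposition~\ref{prop:Rconv} for part~(i), and the weak-discrete-reliability analog of Proposition~\ref{prop:doerfler} (i.e.\ Lemma~\ref{lemma:etaestconv2} with the supremum~\eqref{eq:overlinethetastar}) followed by the argument of Proposition~\ref{prop:optimality} for part~(ii). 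The only cosmetic difference is that the paper's Lemma~\ref{lemma:etaestconv1} splits the sum around $\omega^k(\TT_{\ell+1};\TT_{\ell+1}\setminus\TT_\ell)$ and only afterward inserts the inclusion $\omega^k(\TT_\ell;\TT_\ell(\MM_\ell))\subseteq\omega^k(\TT_\ell;\TT_\ell\setminus\TT_{\ell+1})$ to apply the D\"orfler criterion, whereas you phrase the split directly in terms of $\widetilde\MM_\ell$; this changes nothing of substance.
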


\revision{The proof of Theorem~\ref{thm:main3} follows in Section~\ref{section:modh}--\ref{section:proof3} below. An analogous optimality result can also be obtained for the error under the assumption that the error estimator is efficient. }

\begin{theorem}\label{thm:mainerr3}
In addition to the assumptions of Section~\ref{section8:assumption}, supposethat $\eta(\cdot)$ satisfies \eqref{A:stable}--\eqref{A:qosum},~\eqref{B:dlr} as well as efficiency~\eqref{A:efficient} and quasi-monotonicity of oscillations and error~\eqref{eq:errorquasimon}.
 Then, $0<\theta<\c{dp4}^{-1}\,\overline\theta_\star$ with $\overline\theta_\star$ from~\eqref{eq:inexact:theta} implies quasi-optimal convergence of the error
\begin{align}\label{eq:thm:mainerr3}
\begin{split}
 \c{optimal}^{-1}\c{reliable}^{-1}\c{efficient}^{-1}\norm{(u,U(\cdot))}{\A_s}&\leq\sup_{\ell\in\N_0}\frac{\dist[\TT_\ell]{u}{ U(\TT_\ell)}}{(|\TT_\ell| - |\TT_0|+1)^{-s}}+\norm{\osc(\cdot)}{\O_s}\\
 &\leq (\c{optimal}\c{reliable}C_{\rm apx}+1)(\norm{(u,U(\cdot))}{\A_s}+ \norm{\osc(\cdot)}{\O_s})
 \end{split}
\end{align}
for all $s>0$.
The constant $\setc{optimal}>0$ is defined in Theorem~\ref{thm:main3}.
\end{theorem}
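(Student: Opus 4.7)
The plan is to proceed in direct analogy to the proof of Theorem~\ref{thm:mainerr} of Section~\ref{section:approxclass}, substituting Theorem~\ref{thm:main3} for Theorem~\ref{thm:main} and invoking the discrete-reliability/reliability translation of Lemma~\ref{lemma:wdlr2rel}. First, I would apply Lemma~\ref{lem:mon1} to obtain quasi-monotonicity~\eqref{eq:mon} of $\eta(\cdot)$: the axioms~\eqref{A:stable}, \eqref{A:reduction}, and weak discrete reliability~\eqref{B:dlr} are at hand, and \eqref{B:dlr} implies the ordinary reliability~\eqref{A:reliable} via Lemma~\ref{lemma:wdlr2rel} as well as the hypothesis of Lemma~\ref{lem:mon1} (which only uses the discrete reliability through quasi-monotonicity bounds that go through for the weak version). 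Together with efficiency~\eqref{A:efficient} and the quasi-monotonicity~\eqref{eq:errorquasimon} of error and oscillations, Proposition~\ref{prop:charAprox} then yields
\begin{align*}
 \c{reliable}^{-1}\norm{(u,U(\cdot))}{\A_s}\leq \norm{(\eta(\cdot),U(\cdot))}{\B_s}\leq C_{\rm apx}(\norm{(u,U(\cdot))}{\A_s} + \norm{\osc(\cdot)}{\O_s}).
\end{align*}

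Next, the assumption $0<\theta<\c{dp4}^{-1}\overline\theta_\star$ places us in the regime of Theorem~\ref{thm:main3}~(ii), whose estimator-level quasi-optimality
\begin{align*}
 c_{\rm opt}\norm{(\eta(\cdot),U(\cdot))}{\B_s} \leq\sup_{\ell\in\N_0}\frac{\eta(\TT_\ell,h(\TT_\ell);U(\TT_\ell))}{(|\TT_\ell|-|\TT_0|+1)^{-s}}\leq\c{optimal}\norm{(\eta(\cdot),U(\cdot))}{\B_s}
\end{align*}
combines with the previous two-sided bound to give
\begin{align*}
 c_{\rm opt}\c{reliable}^{-1}\norm{(u,U(\cdot))}{\A_s} \leq\sup_{\ell\in\N_0}\frac{\eta(\TT_\ell,h(\TT_\ell);U(\TT_\ell))}{(|\TT_\ell|-|\TT_0|+1)^{-s}}\leq\c{optimal}C_{\rm apx}(\norm{(u,U(\cdot))}{\A_s}+\norm{\osc(\cdot)}{\O_s}).
\end{align*}

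Finally, I would translate between estimator and error in the middle quantity. Reliability~\eqref{A:reliable} (again obtained from~\eqref{B:dlr} via Lemma~\ref{lemma:wdlr2rel}) yields
\begin{align*}
 \sup_{\ell\in\N_0}\frac{\dist[\TT_\ell]{u}{U(\TT_\ell)}}{(|\TT_\ell|-|\TT_0|+1)^{-s}}\leq \c{reliable}\sup_{\ell\in\N_0}\frac{\eta(\TT_\ell,h(\TT_\ell);U(\TT_\ell))}{(|\TT_\ell|-|\TT_0|+1)^{-s}},
\end{align*}
while efficiency~\eqref{A:efficient} produces the reverse direction
\begin{align*}
 \c{efficient}^{-1}\sup_{\ell\in\N_0}\frac{\eta(\TT_\ell,h(\TT_\ell);U(\TT_\ell))}{(|\TT_\ell|-|\TT_0|+1)^{-s}}\leq\sup_{\ell\in\N_0}\frac{\dist[\TT_\ell]{u}{U(\TT_\ell)}}{(|\TT_\ell|-|\TT_0|+1)^{-s}}+\norm{\osc(\cdot)}{\O_s}.
\end{align*}
Chaining the four inequalities and absorbing constants delivers~\eqref{eq:thm:mainerr3}. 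The proof is a direct assembly of results already proved in the manuscript; no genuinely new obstacle appears, and the only bookkeeping issue is to verify that every hypothesis invoked (in particular those of Lemma~\ref{lem:mon1}, Proposition~\ref{prop:charAprox}, and Theorem~\ref{thm:main3}~(ii)) is available under the weakened discrete reliability~\eqref{B:dlr}, which is indeed the case since \eqref{B:dlr} implies~\eqref{A:reliable} and suffices for the quasi-monotonicity argument in Lemma~\ref{lem:mon1}.
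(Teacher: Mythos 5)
Your proof is correct and follows essentially the same route as the paper's (very terse) argument, which simply says that "the arguments of the proof of Theorem~\ref{thm:mainerr} apply": you spell out the chain -- Lemma~\ref{lem:mon1} for quasi-monotonicity (noting that weak discrete reliability~\eqref{B:dlr} suffices there since it is only used to bound $\dist[\widehat\TT]{U(\widehat\TT)}{U(\TT)}^2$ by a multiple of $\eta(\TT;U(\TT))^2$), then Proposition~\ref{prop:charAprox}, then Theorem~\ref{thm:main3}~(ii), then reliability and efficiency -- exactly as in the proof of Theorem~\ref{thm:mainerr} but with Theorem~\ref{thm:main3}~(ii) replacing Theorem~\ref{thm:main}~(ii).
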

\begin{proof}
Since the error estimator $\varrho(\cdot)$ and $\eta(\cdot)$ are equivalent, the arguments of the proof of Theorem~\ref{thm:mainerr} apply and prove the statement.
\end{proof}

This section concludes with an overview on its main arguments.
In general, D\"orfler marking~\eqref{eq:doerflerrho} for $\varrho(\cdot)$ 
does not imply D\"orfler marking~\eqref{eq:doerfler} for $\eta(\cdot)$
with $\TT_\ell(\MM_\ell)$, but may be satisfied with the larger set $\omega^k(\TT_\ell;\TT_\ell(\MM_\ell))$ by
virtue of~\eqref{dp:doerfler}. To ensure the estimator 
reduction~\eqref{eq:estconv} for $\eta(\cdot)$ and to simultaneously avoid the refinement of $T\in\omega^k(\TT_\ell;\TT_\ell(\MM_\ell))$ (as proposed in~\cite{ks}) the analysis of this section modifies $\eta(\cdot)$ by changing the mesh-size function 
$h(\cdot)\simeq h(\cdot,k)$
such that the
resulting error estimator $\eta(\cdot,h(\cdot,k);\cdot)$ 
satisfies~\eqref{eq:estconv} although only  $\TT_\ell(\MM_\ell)$
is refined. Since $\eta(\cdot)$ and $\eta(\cdot,h(\cdot,k);\cdot)$ are even
$\TT$-elementwise equivalent, all properties transfer to $\eta(\cdot,h(\cdot,k);\cdot)$ and lead to $R$-linear convergence for $\eta(\cdot,h(\cdot,k);\cdot)$
and therefore for $\eta(\cdot)$. The optimality analysis, utilizes $\eta(\cdot)$ to obtain corresponding results for $\varrho(\cdot)$
via the global equivalence~\eqref{eq:glob}.

%------------------------------------------------------------------------------
\subsection{Equivalent mesh-width function}\label{section:modh}
%------------------------------------------------------------------------------
The following equivalent mesh-size function is contracted on a patch if at least one element domain of the patch is refined. Its design only requires a mesh-refinement strategy which ensures uniform $\gamma$-shape regularity.
The following proposition generalizes a result from~\cite{ffkmp:part1}.

\begin{proposition}\label{prop:htilde}
Given any $k\in\N$, there exists a modified mesh-width function $h(\cdot,k):\, \T\to L^\infty(\Omega)$ which satisfies~{\rm (i)--(iii)}.
\begin{itemize}
 \item[(i)] Equivalence: For all $\TT\in\T$, it holds
\begin{align}\label{eq:htildeequiv}
 \c{htildeequiv}^{-1}h(\TT)\leq h(\TT,k)\leq h(\TT) \quad \text{almost everywhere in } \bigcup\TT.
\end{align}
\item[(ii)] Contraction on the $k$-patch: All refinements $\widehat\TT\in\T$ of a triangulation $\TT\in\T$ satisfy
\begin{align}\label{eq:htildecontraction}
h(\widehat\TT,k)|_T \leq \q{htc}h(\TT,k)|_T \quad\text{for all }T\in\omega^k(\TT;\TT\setminus\widehat\TT).
\end{align}
\item[(iii)] Monotonicity: All refinements $\widehat\TT\in\T$ of a triangulation $\TT\in\T$ satisfy
\begin{align}\label{eq:htildemon}
h(\widehat\TT,k)\leq h(\TT,k) \quad \text{ almost everywhere in }\bigcup\TT.
\end{align}
\end{itemize}
The constants $\setc{htildeequiv}\geq 1$ and $0<\setq{htc}<1$ depend only on the $\gamma$-shape regularity of the meshes in $\T$, on $k\in\N$, as well as on $\r{refinereduction}$.
\end{proposition}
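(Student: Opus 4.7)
My plan is to construct $h(\cdot,k)$ explicitly as a piecewise-constant (and hence $L^\infty$) function on every $\TT\in\T$ via a localised, weighted-minimum formula over the combinatorial $k$-patch, and to verify the three assertions in the order (i), (iii), (ii). A natural candidate, generalising the $k=1$ construction of~\cite{ffkmp:part1}, is
\begin{equation*}
 h(\TT,k)|_T := \min_{T'\in\omega^k(\TT;T)} q^{k-d_\TT(T,T')}\, h(\TT)|_{T'},\qquad T\in\TT,
\end{equation*}
where $d_\TT(T,T')\in\{0,1,\ldots,k\}$ is the least $j$ with $T'\in\omega^j(\TT;T)$ (so $d_\TT(T,T)=0$) and $q\in(\gamma^{-1/d},\,1)$ is a parameter to be calibrated in terms of $\gamma$, $k$, and $\q{refinereduction}$.

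The equivalence (i) comes first. The choice $T'=T$ gives $h(\TT,k)|_T\le q^k\,h(\TT)|_T\le h(\TT)|_T$. For the lower bound, iterated $\gamma$-shape regularity~\eqref{refinement:shapereg} yields $h(\TT)|_{T'}\ge\gamma^{-j/d}\,h(\TT)|_T$ for every $T'\in\omega^j(\TT;T)$, so that every candidate is bounded below by $q^k (q\gamma^{1/d})^{-j}\,h(\TT)|_T$. Since $q\gamma^{1/d}>1$, this factor is strictly decreasing in $j$, and minimisation over $0\le j\le k$ gives $h(\TT,k)|_T\ge\gamma^{-k/d}\,h(\TT)|_T$; hence $\c{htildeequiv}$ depends only on $\gamma$ and $k$. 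Monotonicity (iii) comes next. Each $\widehat T\in\widehat\TT$ lies in a unique ancestor $T\in\TT$, and every $T'\in\omega^k(\widehat\TT;\widehat T)$ is contained in some $T''\in\omega^k(\TT;T)$ with $d_{\widehat\TT}(\widehat T,T')\le d_\TT(T,T'')$ (combinatorial neighbours are preserved under refinement) and with $h(\widehat\TT)|_{T'}\le h(\TT)|_{T''}$ (by the essential disjointness $T''=\bigcup_{T'\subset T''}T'$ combined with \eqref{refinement:sons:measure}). Every candidate in the fine-mesh minimum is therefore dominated by one in the coarse-mesh minimum, which yields $h(\widehat\TT,k)|_{\widehat T}\le h(\TT,k)|_T$ and hence (iii).

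The main obstacle is the contraction (ii), because the required decrease has to hold uniformly over every $T\in\omega^k(\TT;\TT\setminus\widehat\TT)$, irrespective of whether the refined element realises the coarse minimum in $h(\TT,k)|_T$. By assumption some $T^*\in(\TT\setminus\widehat\TT)\cap\omega^k(\TT;T)$ exists; each successor $T^*_c\in\widehat\TT$ of $T^*$ satisfies $h(\widehat\TT)|_{T^*_c}\le\q{refinereduction}^{1/d}\,h(\TT)|_{T^*}$ by~\eqref{refinement:sons:measure} and lies in $\omega^k(\widehat\TT;T)$ at combinatorial distance $j'\le j:=d_\TT(T,T^*)$. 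Substituting this $T^*_c$ into the fine-mesh minimum yields the candidate bound
\begin{equation*}
 h(\widehat\TT,k)|_T \le q^{k-j'}\,h(\widehat\TT)|_{T^*_c} \le \q{refinereduction}^{1/d}\,q^{k-j}\,h(\TT)|_{T^*}.
\end{equation*}
Since $q^{k-j}\,h(\TT)|_{T^*}$ is itself a candidate in the definition of $h(\TT,k)|_T$, it exceeds $h(\TT,k)|_T$, but not by a controlled factor in general; a direct combination with (i) only gives $h(\widehat\TT,k)|_T\le\q{refinereduction}^{1/d}\,\c{htildeequiv}^2\,h(\TT,k)|_T$, which forces $\q{htc}<1$ only for $\c{htildeequiv}$ close to $1$. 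The quantitatively delicate part, and where I expect to spend most of the technical effort, is the calibration of $q$ so that $\q{htc}$ stays strictly below $1$ uniformly; for large $k$ I would follow~\cite{ffkmp:part1} and either iterate the one-step construction or replace the plain minimum by a distance-graded aggregation (e.g.\ a geometric mean), thereby trading the $k$-dependence of $\c{htildeequiv}$ for an explicit contraction factor arising from~\eqref{refinement:sons:measure}.
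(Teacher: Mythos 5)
Your direct, one-shot formula $h(\TT,k)|_T := \min_{T'\in\omega^k(\TT;T)} q^{k-d_\TT(T,T')}\, h(\TT)|_{T'}$ does not work, and the difficulty runs deeper than the calibration of $q$ that you flag at the end.

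First, the monotonicity argument (iii) has a logical inversion. To prove $h(\widehat\TT,k)|_{\widehat T}\le h(\TT,k)|_T$, i.e.\ $\min_{A}f\le\min_{B}g$, you must exhibit, for every coarse candidate $T''\in B$ (in particular for the coarse \emph{minimiser}), a fine candidate $T'\in A$ whose value is at most $g(T'')$. You instead argue that every fine candidate is dominated by \emph{some} coarse candidate; this only yields $\min_A f\le g(T'')$ for a particular $T''$, which is weaker than $\min_A f\le\min_B g$. Second, the geometric claim used inside that argument, $d_{\widehat\TT}(\widehat T,T')\le d_\TT(T,T'')$, is false; the inequality goes the other way. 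Taking ancestors of a chain $\widehat T=\widehat S_0,\ldots,\widehat S_j=T'$ in $\widehat\TT$ gives a chain $T=S_0,\ldots,S_j=T''$ in $\TT$, so $d_\TT(T,T'')\le d_{\widehat\TT}(\widehat T,T')$, and refinement can \emph{strictly increase} combinatorial distance (e.g.\ bisect the middle interval of three collinear 1D elements: the outer two move from distance $2$ to distance $3$). With the corrected inequality, the factors $q^{k-d}$ and $h(\cdot)|_{T}$ pull in opposite directions, and one can arrange that an element realising the coarse minimum leaves the fine $k$-patch entirely, so the fine minimum \emph{increases}. Thus (iii) genuinely fails for your construction, not just for lack of a sharper estimate. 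Finally, as you note yourself, (ii) is left unresolved: your bound $h(\widehat\TT,k)|_T\le\r{refinereduction}^{1/d}\c{htildeequiv}^2\,h(\TT,k)|_T$ is useless whenever $\c{htildeequiv}$ is not close to $1$, which is exactly the regime of interest (large $\gamma$ or $k$).

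The underlying obstruction is that a formula depending only on $\TT$ and on $k$-patch combinatorics cannot ``see'' how often a neighbourhood has recently been refined, which is what drives contraction. The paper's construction is of a fundamentally different type: it first proves (Steps 1--2) that an element's $k$-patch can be touched at most $n_{\rm max}$ times between two refinements of the element itself, then defines $\widetilde h$ \emph{recursively along a refinement history}, multiplying by the fixed factor $\r{refinereduction}^{1/(n_{\rm max}+1)}$ each time the $k$-patch is touched and resetting to $h(\TT_{\ell+1})|_T$ when $T$ itself is refined. Contraction (ii) is built into the recursion, equivalence (i) comes from the uniform counter bound, and monotonicity (iii) is immediate. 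Step 4 then removes the apparent history-dependence by minimising over all admissible histories $\T(\TT)$, leaving a function of $\TT$ alone. You would need to rebuild your proof along these lines; the direct weighted-minimum formula cannot be repaired by tuning $q$.
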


\begin{proof}
 The $\gamma$-shape regularity of the meshes in $\T$ implies that the mesh-size ratio of the elements in the $k$-patch is uniformly bounded in the sense that
\begin{align}\label{eq:meshbound}
 \c{meshbound}^{-1}\leq h(\TT)|_T/h(\TT)|_{T^\prime} \leq \c{meshbound}\quad \text{for all }T^\prime\in\omega^k(\TT;T),\,T \in\TT,\,\TT\in\T.
\end{align}
The constant $\setc{meshbound}>0$ depends only on the $\gamma$-shape regularity and on $k\in \N$.
Moreover, the number of element domains in the $k$-patch is bounded with~\eqref{eq:patchbound}, i.e.
\begin{align}\label{eq:meshbound2}
 |\omega^k(\TT;T)|\leq \c{patchbound} \quad\text{for all }T \in\TT,\,\TT\in\T.
\end{align}

The first three steps of the proof consider a sequence of consecutive triangulations $(\TT_\ell)_{\ell\in\N}\subset\T$ such that $\TT_{\ell+1}$ is a refinement of $\TT_\ell$.

\textbf{Step~1 }
proves that an element domain $T^\prime \in \omega^k(\TT_\ell;T)$ cannot be refined arbitrarily often (and still be in the $k$-patch of $T$) without refining $T\in\TT_\ell$ itself.
 Suppose that there exist consecutively refined elements 
\begin{align*}
T^\prime=T_0^\prime\supsetneqq T_1^\prime\supsetneqq \ldots \supsetneqq T_N^\prime \quad\text{with } T_i^\prime\in\omega^k(\TT_{\ell+m_i};T) \text{ for all }i=0,\ldots,N
\end{align*}
with a strictly monotone sequence $m_{i+1}>m_i>m_0= 0$, $i=0,\ldots,N-1$  (Note that, in particular $T\in\TT_{\ell+m_i}$ for all $i=0,\ldots,N$).
Assumption~\eqref{refinement:sons:measure} implies $h(\TT_{\ell+m_N})|_{T_N^\prime} \leq \r{refinereduction}^N h(\TT)|_{T_0^\prime}$. The estimate~\eqref{eq:meshbound} and the fact $T\in\TT_{\ell+m_N}$ yield
\begin{align}\label{eq:refineboundfirst}
 h(\TT_{\ell})|_T=h(\TT_{\ell+m_N})|_T \leq \c{meshbound} h(\TT_{\ell+m_N})|_{T_N^\prime}\leq \c{meshbound} \r{refinereduction}^{N} h(\TT_{\ell})|_{T_0^\prime} = \c{meshbound}^2 \r{refinereduction}^{N} h(\TT_{\ell})|_T.
\end{align}
This implies $N\leq N_0$ for the maximal $N_0\in\N$ with $1\leq \c{meshbound}^2\r{refinereduction}^{N_0}$. Note that $N_0\in\N$ solely depends on $\c{meshbound}$ (and hence on $\gamma$ and $k\in\N$ as well as on $\r{refinereduction}$), but neither on $T\in\TT_\ell$ nor on $\TT_\ell\in\T$.

\textbf{Step 2 }
provides a bound on the \revision{number of} refinements which may take place in the $k$-patch of $T$ without refining $T$ itself.
Suppose that
\begin{align}\label{eq:htildehelp}
 \omega^k(\TT_{\ell+m_i};T)\cap (\TT_{\ell+m_i}\setminus\TT_{\ell+m_i+1})\neq\emptyset \quad \text{for }i=1,\ldots,n_T
\end{align}
for a strictly monotone sequence $m_{i+1}>m_i>m_0= 0$, $i=1,\ldots,n_T$. This means that at least $n_T$ elements are refined in the $k$-patch of $T$ without $T$ itself being refined. Introduce counters $c(T^\prime)=0$ for all $T^\prime \in\omega^k(\TT_\ell;T)$ and apply the following algorithm.
\begin{quote}
\textbf{for $i=1,\ldots,n_T$ do}
\begin{itemize}
 \item Determine the unique ancestor $T^\prime\in\omega^k(\TT_\ell;T)$ of each 
 \begin{align*}
 T^{\prime\prime} \in \omega^k(\TT_{\ell+m_i};T)\cap (\TT_{\ell+m_i}\setminus\TT_{\ell+m_i+1})
 \end{align*}
 and increment its counter $c(T^\prime) \mapsto c(T^\prime)+1$.
\end{itemize}
\end{quote}
The bound~\eqref{eq:meshbound2} and the fact that at least one counter is incremented in each iteration of the loop show that there exists at least one counter $c(T^\prime)\geq n_T/\c{patchbound}$ for some $T^\prime\in\omega^k(\TT_\ell;T)$. The definition of the  above algorithm implies the existence of consecutively refined elements $T^\prime=T_0^\prime\supsetneqq T_1^\prime\supsetneqq \ldots \supsetneqq T_{c(T^\prime)}^\prime$ with $T_i^\prime \in \omega^k(\TT_{\ell+m_i};T)$. This and Step~1 show
\begin{align*}
 n_T/\c{patchbound}\leq c(T^\prime)\leq N_0.
\end{align*}
Hence $n_T\leq n_{\rm max}:=N_0\c{patchbound}$ is uniformly bounded and the bound $n_{\rm max}$ depends only on $\gamma$-shape regularity, $\r{refinereduction}$, and on $k\in\N$.

\textbf{Step~3 } successively defines a preliminary modified mesh-width function $\widetilde h(\TT_\ell,k)$ for the particular sequence $\TT_\ell$ of meshes. For $\ell=0$, set $\widetilde h(\TT_0,k)=h(\TT_0)$. For $\ell\geq 0$ and for all $T\in\TT_\ell$ set
\begin{align*}
 \widetilde h(\TT_{\ell+1},k)|_T := \begin{cases}  h(\TT_{\ell+1})|_T & T\in\TT_\ell\setminus \TT_{\ell+1},\\
                              \r{refinereduction}^{1/(n_{\rm max}+1)}\,\widetilde h(\TT_\ell,k)|_T & T\in \omega^k(\TT_\ell;\TT_\ell\setminus \TT_{\ell+1})\setminus (\TT_\ell\setminus \TT_{\ell+1}),\\
			      \widetilde h(\TT_\ell,k)|_T & T\in \TT_\ell\setminus \omega^k(\TT_\ell;\TT_\ell\setminus \TT_{\ell+1}).
                             \end{cases}
\end{align*}
The claim~\eqref{eq:htildeequiv} follows from 
\begin{align}\label{eq:htildeequivpre}
 \r{refinereduction}^{n_{\rm max}/(n_{\rm max}+1)} h(\TT_\ell)|_T\leq \widetilde h(\TT_\ell,k)|_T \leq h(\TT_\ell)|_T\quad\text{for all }T\in\TT.
\end{align}
The upper bound \revision{in~\eqref{eq:htildeequivpre}} follows immediately by mathematical induction on $\ell\in\N$.
The lower \revision{bound in~\eqref{eq:htildeequivpre}} follows by contradiction. Consider an element domain $T\in\TT_j$, $j\in\N$, with
\begin{align}\label{eq:contrastart}
 \widetilde h(\TT_j,k)|_T < \r{refinereduction}^{n_{\rm max}/(n_{\rm max}+1)} h(\TT_j)|_T.
\end{align}  
Let $\ell \le j$ be an index with $\widetilde h(\TT_\ell, k)|_T = h(\TT_\ell)|_T$. Such an index always exists. To see this, assume that the element domain $T$ is refined at some point, i.e. $T \in \TT_{\ell-1}\backslash \TT_\ell$. Then, $\widetilde h(\TT_\ell,k)|T = h(\TT_\ell)|_T$ by definition of $\widetilde h$. Otherwise, assume that $T$ is never refined. Then, the definition states $\widetilde h(\TT_0, k) = h(\TT_0)$.
%With the convention $\TT_{-1}:=\emptyset$, there exist an index $\ell\leq j$ such that $T\in\TT_{\ell-1}\setminus\TT_{\ell}$. By definition of the modified mesh-width function, this implies $\widetilde h(\TT_{\ell},k)|_T = h(\TT_{\ell})|_T$. 
Hence, to obtain~\eqref{eq:contrastart} there must exist at least $n_{\rm max}+1$ indices $\ell+m_i< j$ with~\eqref{eq:htildehelp}. In terms of Step~2, this means $n_T\geq n_{\rm max}+1$. This contradiction  proves~\eqref{eq:htildeequivpre}.

To prove the contraction estimate~\eqref{eq:htildecontraction} for $\widetilde h(\TT_\ell,k)$, distinguish two cases. If $T\in\TT_\ell\setminus \TT_{\ell+1}$, then, with the lower bound in~\eqref{eq:htildeequivpre}, it holds
\begin{align}\label{eq:hhatcont1}
\begin{split}
 \widetilde h(\TT_{\ell+1},k)|_T &= h(\TT_{\ell+1})|_T\leq \r{refinereduction}\, h(\TT_\ell)|_T \\
&\leq \r{refinereduction}\,\r{refinereduction}^{-n_{\rm max}/(n_{\rm max}+1)}\, \widetilde h(\TT_\ell,k)|_T=\r{refinereduction}^{1/(n_{\rm max}+1)}\, \widetilde h(\TT_\ell,k).
\end{split}
\end{align}
If $T\in\omega^k(\TT_\ell;\TT_\ell\setminus \TT_{\ell+1})\setminus (\TT_\ell\setminus \TT_{\ell+1})$, then, it holds
\begin{align}\label{eq:hhatcont2}
 \widetilde h(\TT_{\ell+1},k)|_T =\r{refinereduction}^{-1/(n_{\rm max}+1)}\,\widetilde h(\TT_\ell,k)|_T. 
\end{align}
Each case leads to some contraction with constant $\q{htc}=\r{refinereduction}^{-1/(n_{\rm max}+1)} \in (0,1)$. 

For $T\in\TT_\ell \setminus \omega^k(\TT_\ell;\TT_\ell\setminus\TT_{\ell+1})$, the definition shows
\begin{align*}
 \widetilde h(\TT_{\ell+1},k)|_T=\widetilde h(\TT_\ell,k)|_T.
\end{align*}
This implies $\widetilde h(\TT_{\ell+1},k)\leq \widetilde h(\TT_\ell,k)$ almost everywhere.

\textbf{Step~4 } improves the preliminary modified mesh-width function $\widetilde h(\TT_\ell,k)$ by removing the dependence on the sequence of meshes $\TT_0,\TT_1,\ldots$ which lead to $\TT_\ell$. So far, for $T\in\TT_\ell$, it holds
\begin{align*}
 \widetilde h(\TT_\ell,k)|_T =\widetilde h(\TT_0,\ldots,\TT_\ell;k)|_T.
\end{align*}
Define the set of all sequences which lead to a particular mesh $\TT\in\T$, i.e.
\begin{align*}
 \T(\TT):=\set{(\TT_0,\ldots,\TT_\ell=\TT)}{ \ell\in\N,\,\TT_{j+1}\text{ is a refinement of }\TT_j \text{ for all }j=0,\ldots,\ell-1}.
\end{align*}
Define $h(\TT,k)\in \PP^0(\TT)$ by
\begin{align*}
h(\TT,k)|_T := \min_{(\TT_0,\ldots,\TT_\ell)\in\T(\TT)} \widetilde h(\TT_0,\ldots,\TT_\ell,k)|_T \quad \text{ for all }T\in\TT\in\T.
\end{align*}
Note that it is valid to take the minimum in the definition above, since the set $\T(\TT)$ is finite up to mesh repetition, i.e. $\TT_{j+1}=\TT_j$.
Equivalence~\eqref{eq:htildeequiv} follows from the fact that all the $\widetilde h(\TT_0,\ldots,\TT_\ell;k)$ are equivalent with the same constants as shown in~\eqref{eq:htildeequivpre}.
The contraction property~\eqref{eq:htildecontraction} can be seen for $T\in\omega^k(\TT;\TT\setminus\widehat\TT)$
\begin{align}\label{eq:htilde1}
  h(\widehat\TT,k)|_T \leq \widetilde h(\TT_0^*,\ldots,\TT_\ell^*,\widehat\TT,k)|_T \leq \q{htc} \widetilde h(\TT_0^*,\ldots,\TT_\ell^*,k) = \q{htc} h(\TT,k)|_T,
\end{align} 
where $(\TT_0^*,\ldots,\TT_\ell^*=\TT)\in\T(\TT)$ is chosen such that $ h(\TT,k)|_T = \widetilde h(\TT_0^*,\ldots,\TT_\ell^*)|_T$.
Finally, monotonicity~\eqref{eq:htildemon} follows with the same arguments that lead to~\eqref{eq:htilde1} by replacing $\rho$ with $1$.
This concludes the proof.
\end{proof}

%------------------------------------------------------------------------------
\subsection{Proof of Theorem~\ref{thm:main3}}\label{section:proof3}
%------------------------------------------------------------------------------
This section transfers the convergence and quasi-optimality results for $\eta(\cdot)$ 
to the locally equivalent estimator $\varrho(\cdot)$ with the help a third error 
estimator.

\begin{lemma}\label{lem:errestequiv}
There exists a constant $\setc{errestequiv}\geq 1$ 
which depends only on $\c{htildeequiv}$ and on the constants $r_+$ and $r_-$ in the 
homogeneity~\eqref{A:kernelhomo},
such that all $T\in\TT\in\T$ and all $V\in\XX(\TT)$ satisfy 
\begin{align}\label{eq:errestequiv}
\c{errestequiv}^{-1} \eta_T(\TT,h(\TT);V)^2\leq \eta_T(\TT,h(\TT,k);V)^2\leq  \eta_T(\TT,h(\TT);V)^2.
\end{align}
In particular, the assumptions general quasi-orthogonality~\eqref{A:qosum}, reliability~\eqref{A:reliable}, weak discrete reliability~\eqref{B:dlr}, and efficiency~\eqref{A:efficient} hold true for $\eta(\cdot,h(\cdot,k);\cdot)$ if and only if their corresponding counterpart holds true for 
$\eta(\cdot,h(\cdot);\cdot)$. Moreover, D\"orfler marking~\eqref{dp2:doerfler}
for $\eta(\cdot,h(\cdot);\cdot))$ and some set
$\widetilde\MM\subseteq\TT$ and $0<\widetilde\theta<1$ implies D\"orfler marking
\begin{align}\label{eq:etakdoerfler}%\label{dp1:doerfler}
 \overline\theta\,\eta(\TT,h(\TT,k);U(\TT))^2
 \le \sum_{T\in\widetilde\MM} \eta_T(\TT,h(\TT,k);U(\TT))^2
\end{align}
for $\eta(\cdot,h(\cdot,k);\cdot))$ with the same set $\widetilde\MM$ and $\overline\theta:=\c{errestequiv}^{-1}\,\widetilde\theta$.
\end{lemma}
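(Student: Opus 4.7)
The plan is to prove~\eqref{eq:errestequiv} first by reading it as a consequence of the homogeneity axiom~\eqref{A:kernelhomo} and the equivalence~\eqref{eq:htildeequiv} of $h(\TT,k)$ and $h(\TT)$, and then to observe that the elementwise equivalence mechanically transfers reliability, weak discrete reliability, efficiency, general quasi-orthogonality, and the D\"orfler marking property.

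For~\eqref{eq:errestequiv}, I would define, for each $T\in\TT$, the scalar factor $\alpha_T := h(\TT,k)|_T / h(\TT)|_T$, so that $h(\TT,k)|_T = \alpha_T\, h(\TT)|_T$. By the upper bound in~\eqref{eq:htildeequiv}, $\alpha_T\in(0,1]$, and by the lower bound $\alpha_T\ge \c{htildeequiv}^{-1}$. Therefore $\alpha := \alpha_T\in L^\infty(T;[0,1])$ with $\|\alpha\|_{L^\infty(T)}\ge \c{htildeequiv}^{-1}$, and the homogeneity axiom~\eqref{A:kernelhomo} yields
\begin{align*}
 \c{htildeequiv}^{-r_-}\,\eta_T(\TT,h(\TT);V)
 \le \eta_T(\TT,\alpha\,h(\TT);V)
 = \eta_T(\TT,h(\TT,k);V)
 \le \eta_T(\TT,h(\TT);V),
\end{align*}
where the upper bound uses $\|\alpha\|_{L^\infty(T)}^{r_+}\le 1$. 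Squaring proves~\eqref{eq:errestequiv} with $\c{errestequiv}:=\c{htildeequiv}^{2r_-}$.

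Next, the four axioms transfer by inspection. For reliability~\eqref{A:reliable}, combining the lower bound of~\eqref{eq:errestequiv} summed over $T\in\TT$ with $\dist[\TT]{u}{U(\TT)}\le \c{reliable}\,\eta(\TT,h(\TT);U(\TT))$ yields reliability with constant $\c{reliable}\,\c{errestequiv}^{1/2}$; the converse uses the upper bound. Weak discrete reliability~\eqref{B:dlr} transfers in exactly the same way, with $\c{dlr}(\eps)$ and $\eps$ rescaled by factors of $\c{errestequiv}$ and $\c{errestequiv}^{1/2}$, since~\eqref{eq:errestequiv} applies both to the local contributions in $\RR(\eps;\TT,\widehat\TT)$ and to the global estimator on the right-hand side. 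Efficiency~\eqref{A:efficient} transfers via the upper bound $\eta(\TT,h(\TT,k);\cdot)\le\eta(\TT,h(\TT);\cdot)$ and the converse via the lower bound. Finally, general quasi-orthogonality~\eqref{A:qosum} depends on the error estimator only through the right-hand side $\c{qosum}(\epsqo)\,\eta(\TT_\ell;U(\TT_\ell))^2$, so~\eqref{eq:errestequiv} summed over $T\in\TT_\ell$ shows that~\eqref{A:qosum} for one version of the estimator implies~\eqref{A:qosum} for the other with constant $\c{qosum}(\epsqo)$ replaced by $\c{errestequiv}\,\c{qosum}(\epsqo)$.

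For the D\"orfler implication, assume~\eqref{dp2:doerfler} holds for $\eta(\TT,h(\TT);\cdot)$ with set $\widetilde\MM$ and parameter $\widetilde\theta$. Applying the lower bound of~\eqref{eq:errestequiv} termwise to $T\in\widetilde\MM$ and the upper bound globally gives
\begin{align*}
 \widetilde\theta\,\eta(\TT,h(\TT,k);U(\TT))^2
 \le \widetilde\theta\,\eta(\TT,h(\TT);U(\TT))^2
 \le \sum_{T\in\widetilde\MM}\eta_T(\TT,h(\TT);U(\TT))^2
 \le \c{errestequiv}\sum_{T\in\widetilde\MM}\eta_T(\TT,h(\TT,k);U(\TT))^2,
\end{align*}
which is~\eqref{eq:etakdoerfler} with $\overline\theta=\c{errestequiv}^{-1}\widetilde\theta$. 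There is no real obstacle here: the entire lemma is essentially bookkeeping once~\eqref{eq:errestequiv} is established, and the only place where care is needed is to correctly identify the factor $\alpha$ as a function bounded in $[\c{htildeequiv}^{-1},1]$ so that~\eqref{A:kernelhomo} can be applied with its stated range.
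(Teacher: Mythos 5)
Your proof is correct and follows the same route as the paper: define $\alpha = h(\TT,k)/h(\TT)\in\PP^0(\TT)$, note $\c{htildeequiv}^{-1}\le\alpha\le 1$ from~\eqref{eq:htildeequiv}, apply homogeneity~\eqref{A:kernelhomo}, and then observe that the remaining statements are bookkeeping consequences of~\eqref{eq:errestequiv}. One small remark in your favour: the paper states $\c{errestequiv}=\c{htildeequiv}^{r_-}$, but because~\eqref{eq:errestequiv} compares \emph{squared} local indicators, squaring the homogeneity estimate $\c{htildeequiv}^{-r_-}\eta_T(\TT,h(\TT);V)\le\eta_T(\TT,h(\TT,k);V)$ gives $\c{errestequiv}=\c{htildeequiv}^{2r_-}$, exactly as you computed.
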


\begin{proof}
The function $\alpha=h(\TT,k)/h(\TT)\in \PP^0(\TT)$ satisfies 
$\c{htildeequiv}^{-1} \le \norm{\alpha}{L^\infty(\bigcup\TT)} \leq 1$ because of~\eqref{eq:htildeequiv}. Therefore, homogeneity~\eqref{A:kernelhomo} with $\widehat h(\TT) = h(\TT)$
proves~\eqref{eq:errestequiv} with $\c{errestequiv} = \c{htildeequiv}^{r_-}$. 
The remaining statements follow with~\eqref{eq:errestequiv}.
\end{proof}

\begin{lemma}\label{lemma:etaestconv1}
Algorithm~\ref{algorithmrho} enforces for all $\ell\geq 0$ the estimator reduction%
\begin{align}\label{eq:estconvrho}%
\begin{split}%
 \eta(\TT_{\ell+1},h(\TT_{\ell+1}&,k);U(\TT_{\ell+1}))^2\\%
 &\le \q{estconv}\,\eta(\TT_\ell,h(\TT_\ell,k);U(\TT_\ell))^2 + \c{estconv}
 \,\dist[\TT_{\ell+1}]{U(\TT_{\ell+1})}{U(\TT_\ell)}^2.%
\end{split}%
\end{align}%
The constants $0<\q{estconv}<1$ and $\c{estconv}>0$ depend only on 
$\c{stab2},\q{htc},$ as well as $\c{patchbound},\c{dp4},\c{loc},\c{errestequiv}>0$, 
%the constants in~\eqref{A:kernelstab}, 
 and on the marking parameter $0<\theta<1$ of the D\"orfler marking~\eqref{eq:doerflerrho} from Proposition~\ref{prop:htilde}.
\end{lemma}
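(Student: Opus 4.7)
The plan is to mimic the proof of Lemma~\ref{lem:estconv} but with the equivalent mesh-width function $h(\cdot,k)$ replacing $h(\cdot)$, so that the contractivity property~\eqref{eq:htildecontraction} of $h(\cdot,k)$ on the full $k$-patch $\omega^k(\TT_\ell;\TT_\ell\setminus\TT_{\ell+1})$ can be exploited. The four ingredients are: stability~\eqref{A:kernelstab} (to pass from $\TT_{\ell+1}$ to $\TT_\ell$ at the cost of a perturbation $\dist[\TT_{\ell+1}]{U(\TT_{\ell+1})}{U(\TT_\ell)}$), homogeneity~\eqref{A:kernelhomo} (to convert the pointwise mesh-width contraction into a contraction of local estimator contributions), the D\"orfler marking~\eqref{eq:doerflerrho} used in Algorithm~\ref{algorithmrho}, and the equivalence~\eqref{dp:doerfler}\&Lemma~\ref{lem:errestequiv} (to transfer the D\"orfler property from $\varrho(\cdot)$ to $\eta(\cdot,h(\cdot,k);\cdot)$).

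First, since $h(\TT_{\ell+1},k)\in\PP^0(\TT_{\ell+1})$ and $h(\TT_{\ell+1},k)\le h(\TT_{\ell+1})\le h(\TT_\ell)$ by~\eqref{eq:htildeequiv} and~\eqref{eq:htildemon}, the stability axiom~\eqref{A:kernelstab} applied with $\widehat\TT=\TT_{\ell+1}$, $\TT=\TT_\ell$, $\widehat\SS=\TT_{\ell+1}$, $\SS=\TT_\ell$, $\widehat V = U(\TT_{\ell+1})$, $V = U(\TT_\ell)$, and Young's inequality yield, for any $\delta>0$,
\begin{align*}
 \eta(\TT_{\ell+1},h(\TT_{\ell+1},k);U(\TT_{\ell+1}))^2
 \le (1+\delta)\,\eta(\TT_\ell,h(\TT_{\ell+1},k);U(\TT_\ell))^2
 + (1+\delta^{-1})\c{stab2}^2\,\dist[\TT_{\ell+1}]{U(\TT_{\ell+1})}{U(\TT_\ell)}^2.
\end{align*}

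Second, the construction of $h(\cdot,k)$ in Proposition~\ref{prop:htilde} guarantees $h(\TT_{\ell+1},k)|_T=h(\TT_\ell,k)|_T$ for every $T\in\TT_\ell\setminus\omega^k(\TT_\ell;\TT_\ell\setminus\TT_{\ell+1})$, while~\eqref{eq:htildecontraction} yields $h(\TT_{\ell+1},k)\le\q{htc}\,h(\TT_\ell,k)$ pointwise on each $T\in\omega^k(\TT_\ell;\TT_\ell\setminus\TT_{\ell+1})$. The homogeneity axiom~\eqref{A:kernelhomo} applied elementwise with the scaling factor $\alpha = h(\TT_{\ell+1},k)/h(\TT_\ell,k)\in L^\infty(T;[0,1])$ therefore gives
\begin{align*}
 \eta(\TT_\ell,h(\TT_{\ell+1},k);U(\TT_\ell))^2
 \le \eta(\TT_\ell,h(\TT_\ell,k);U(\TT_\ell))^2
 - (1-\q{htc}^{2r_+})\!\!\sum_{T\in\omega^k(\TT_\ell;\TT_\ell\setminus\TT_{\ell+1})}\!\!\eta_T(\TT_\ell,h(\TT_\ell,k);U(\TT_\ell))^2.
\end{align*}

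Third, the D\"orfler marking~\eqref{eq:doerflerrho} for $\varrho(\cdot)$ chosen in Algorithm~\ref{algorithmrho} implies, by assumption~\eqref{dp:doerfler}(i), the D\"orfler marking~\eqref{dp2:doerfler} for $\eta(\cdot,h(\cdot);\cdot)$ with parameter $\widetilde\theta=\c{dp4}^{-1}\theta$ and set $\widetilde\MM=\omega^k(\TT_\ell;\TT_\ell(\MM_\ell))$. Lemma~\ref{lem:errestequiv} transports this to the same set $\widetilde\MM$ for the modified estimator $\eta(\cdot,h(\cdot,k);\cdot)$ with parameter $\overline\theta=\c{errestequiv}^{-1}\c{dp4}^{-1}\theta$. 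Since step~(iv) of Algorithm~\ref{algorithmrho} refines the marked element domains, i.e.\ $\TT_\ell(\MM_\ell)\subseteq\TT_\ell\setminus\TT_{\ell+1}$, the inclusion $\widetilde\MM=\omega^k(\TT_\ell;\TT_\ell(\MM_\ell))\subseteq\omega^k(\TT_\ell;\TT_\ell\setminus\TT_{\ell+1})$ converts the D\"orfler property into the lower bound
\begin{align*}
 \sum_{T\in\omega^k(\TT_\ell;\TT_\ell\setminus\TT_{\ell+1})}\!\!\eta_T(\TT_\ell,h(\TT_\ell,k);U(\TT_\ell))^2
 \ge \overline\theta\,\eta(\TT_\ell,h(\TT_\ell,k);U(\TT_\ell))^2.
\end{align*}

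The combination of the three displayed estimates yields
\begin{align*}
 \eta(\TT_{\ell+1},h(\TT_{\ell+1},k);U(\TT_{\ell+1}))^2
 \le (1+\delta)\bigl(1-(1-\q{htc}^{2r_+})\overline\theta\bigr)\,\eta(\TT_\ell,h(\TT_\ell,k);U(\TT_\ell))^2
 + (1+\delta^{-1})\c{stab2}^2\,\dist[\TT_{\ell+1}]{U(\TT_{\ell+1})}{U(\TT_\ell)}^2.
\end{align*}
A sufficiently small choice of $\delta>0$ makes $\q{estconv}:=(1+\delta)(1-(1-\q{htc}^{2r_+})\overline\theta)<1$, and $\c{estconv}:=(1+\delta^{-1})\c{stab2}^2$ concludes~\eqref{eq:estconvrho}. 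The main subtlety in this argument is the bookkeeping in step~2: one must be careful that the contraction~\eqref{eq:htildecontraction} of the equivalent mesh-width function $h(\cdot,k)$ extends to the whole patch (including the successors of refined elements), so that the homogeneity estimate truly applies to the larger patch $\omega^k(\TT_\ell;\TT_\ell\setminus\TT_{\ell+1})$ that absorbs the $k$-patch enlargement incurred when converting D\"orfler marking for $\varrho(\cdot)$ to D\"orfler marking for $\eta(\cdot,h(\cdot,k);\cdot)$. This is precisely what Proposition~\ref{prop:htilde} was designed to deliver.
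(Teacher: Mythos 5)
Your proof is correct and uses the same four ingredients as the paper (stability \eqref{A:kernelstab}, homogeneity \eqref{A:kernelhomo}, the contraction/monotonicity from Proposition~\ref{prop:htilde}, and the two-step transfer of D\"orfler marking from $\varrho(\cdot)$ to $\eta(\cdot,h(\cdot,k);\cdot)$), but you order the first two differently, which is a small genuine variant. The paper first splits the sum over $\TT_{\ell+1}$ into the $k$-patch $\omega^k(\TT_{\ell+1};\TT_{\ell+1}\setminus\TT_\ell)$ and its complement, and then applies \eqref{A:kernelstab} \emph{twice} --- once to each piece, with $\SS=\omega^k(\TT_\ell;\TT_\ell\setminus\TT_{\ell+1})$, $\widehat\SS=\omega^k(\TT_{\ell+1};\TT_{\ell+1}\setminus\TT_\ell)$ for the first, using the observation that $\bigcup\SS=\bigcup\widehat\SS$. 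This produces $\c{estconv}=2(1+\delta^{-1})\c{stab2}^2$. You instead apply \eqref{A:kernelstab} once with $\SS=\TT_\ell$, $\widehat\SS=\TT_{\ell+1}$ (for which $\bigcup\SS=\bigcup\widehat\SS$ is trivial), land on $\eta(\TT_\ell,h(\TT_{\ell+1},k);U(\TT_\ell))$, and only then decompose $\TT_\ell$ into patch and complement inside the homogeneity step. This avoids the (correct but not entirely obvious) domain-coincidence of the two $k$-patches, invokes stability only once, and gives the marginally sharper constant $\c{estconv}=(1+\delta^{-1})\c{stab2}^2$; the contraction factor $\q{estconv}=(1+\delta)\big(1-(1-\q{htc}^{2r_+})\overline\theta\big)$ is the same. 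One minor inaccuracy: you assert the \emph{equality} $h(\TT_{\ell+1},k)|_T=h(\TT_\ell,k)|_T$ off the $k$-patch, whereas Proposition~\ref{prop:htilde}(iii) only supplies $h(\TT_{\ell+1},k)\le h(\TT_\ell,k)$ there (the final $h(\cdot,k)$ is a minimum over refinement histories, so equality need not hold); however, your estimate only uses the one-sided bound via homogeneity with $\norm\alpha{L^\infty(T)}\le 1$, so the argument is unaffected.
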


\begin{proof}
First, the estimator is split into a contracting and a non-contracting part
\begin{align}\label{eq:estconvhelp1}
 \eta(\TT_{\ell+1},h(\TT_{\ell+1},k);U(\TT_{\ell+1}))^2
 &= \sum_{T\in\omega^k(\TT_{\ell+1};\TT_{\ell+1}\backslash\TT_{\ell})}\eta_T(\TT_{\ell+1},h(\TT_{\ell+1},k);U(\TT_{\ell+1}))^2\\
 &\quad\quad+ \sum_{T\in\TT_{\ell+1}\setminus\omega^k(\TT_{\ell+1};\TT_{\ell+1}\backslash\TT_{\ell})}\eta_T(\TT_{\ell+1},h(\TT_{\ell+1},k);U(\TT_{\ell+1}))^2.\nonumber
 \end{align}
In the following, stability~\eqref{A:kernelstab} comes into play. Note that $\SS=\omega^k(\TT_{\ell};\TT_{\ell}\backslash\TT_{\ell+1})$ and $\widehat\SS=\omega^k(\TT_{\ell+1};\TT_{\ell+1}\backslash\TT_{\ell})$ satisfy $\bigcup\SS=\bigcup\widehat\SS$. Moreover, due to~\eqref{eq:htildeequiv}, we have $\alpha=h(\TT,k)\leq h(\TT)$ in~\eqref{A:kernelstab}.  The Young inequality and~\eqref{A:kernelstab} imply (for each $\delta>0$) that
 \begin{align*}
  \sum_{T\in\omega^k(\TT_{\ell+1};\TT_{\ell+1}\backslash\TT_{\ell})}\eta_T(\TT_{\ell+1}&,h(\TT_{\ell+1},k);U(\TT_{\ell+1}))^2\\
  &\leq (1+\delta) \sum_{T\in\omega^k(\TT_{\ell};\TT_{\ell}\backslash\TT_{\ell+1})} \eta_T(\TT_\ell,h(\TT_{\ell+1},k);U(\TT_\ell))^2\\
  &\qquad +(1+\delta^{-1})\c{stab2}^2\dist[\TT_{\ell+1}]{U(\TT_{\ell+1})}{U(\TT_\ell)}^2.
  \end{align*}
  Homogeneity~\eqref{A:kernelhomo} with $\alpha = h(\TT_{\ell+1}, k)/h(\TT_{\ell},k)$ and $\widehat h(\TT) = h(\TT_\ell,k)$, and the contraction~\eqref{eq:htildecontraction} yield
    \begin{align*}
   \sum_{T\in\omega^k(\TT_{\ell+1};\TT_{\ell+1}\backslash\TT_{\ell})}\eta_T(\TT_{\ell+1}&,h(\TT_{\ell+1},k);U(\TT_{\ell+1}))^2\\
    &\leq (1+\delta)\q{htc}^{2r_+} \sum_{T\in\omega^k(\TT_{\ell};\TT_{\ell}\backslash\TT_{\ell+1})} \eta_T(\TT_\ell,h(\TT_{\ell},k);U(\TT_\ell))^2\\
  &\qquad +(1+\delta^{-1})\c{stab2}^2\dist[\TT_{\ell+1}]{U(\TT_{\ell+1})}{U(\TT_\ell)}^2.
 \end{align*}
 The second term on the right-hand side of~\eqref{eq:estconvhelp1} is similarly estimated by use of monotonicity~\eqref{eq:htildemon} instead of~\eqref{eq:htildecontraction}. This proves
 \begin{align}\label{eq:stabhelp}
\begin{split}
  \sum_{T\in\TT_{\ell+1}\setminus\omega^k(\TT_{\ell+1};\TT_{\ell+1}\backslash\TT_{\ell})}&\eta_T(\TT_{\ell+1},h(\TT_{\ell+1},k);U(\TT_{\ell+1}))^2\\
  &  \leq  (1+\delta)\sum_{T \in \TT_\ell \setminus \omega^k(\TT_\ell; \TT_\ell\setminus \TT_{\ell+1})}
  \eta_T(\TT_\ell,h(\TT_\ell,k);U(\TT_\ell))^2\\
   &\quad\quad+  (1+\delta^{-1}) \c{stab2}^2\dist[\TT_{\ell+1}]{U(\TT_{\ell+1})}{U(\TT_\ell)}^2.
\end{split}
 \end{align}
Assumption~\eqref{dp:doerfler} and Lemma~\ref{lem:errestequiv} imply that
$\eta(\TT_\ell,h(\TT_\ell,k),\cdot)$ satisfies the D\"orfler marking~\eqref{eq:etakdoerfler} with $\widetilde\MM=\omega^k(\TT_\ell;\TT_\ell(\MM_\ell))$
and $\overline\theta=\c{dp4}^{-1}\c{errestequiv}^{-1}\,\theta$.  
 Therefore, $\omega^k(\TT_\ell;\TT_\ell(\MM_\ell))\subseteq\omega^k(\TT_\ell;\TT_{\ell}\backslash\TT_{\ell+1})$, and the sum of the last two estimates yields
 for $\c{estconv}:=2(1+\delta^{-1})\c{stab2}^2$
 \begin{align*}
 \eta(\TT_{\ell+1},h(\TT_{\ell+1},k);U(\TT_{\ell+1}))^2&\leq(1+\delta)\eta(\TT_\ell,h(\TT_{\ell},k);U(\TT_\ell))^2\\
&\qquad %-(1-(1+\delta)\q{htc}^{2r})
-(1+\delta)(1-\q{htc}^{2r_+})
\sum_{T\in\omega^k(\TT_\ell;\TT_{\ell}\backslash\TT_{\ell+1})}\eta_T(\TT_\ell,h(\TT_{\ell},k);U(\TT_\ell))^2\\
&\qquad + \c{estconv}\dist[\TT_{\ell+1}]{U(\TT_{\ell+1})}{U(\TT_\ell)}^2\\
 &\le \big((1+\delta)-(1+\delta)(1-\q{htc}^{2r_+})
%(1-(1+\delta)\q{htc}^{2r_+})
 \overline\theta\big)\eta(\TT_\ell,h(\TT_{\ell},k);U(\TT_\ell))^2\\
&\quad\quad + \c{estconv}\dist[\TT_{\ell+1}]{U(\TT_{\ell+1})}{U(\TT_\ell)}^2.
\end{align*}
For sufficiently small $\delta>0$, this proves~\eqref{eq:estconvrho} with $\q{estconv}=(1+\delta)\big(1-(1-\q{htc}^{2r_+})\big)
 \overline\theta%(1+\delta)-(1-(1+\delta)\q{htc}^{2r_+})\widetilde\theta_k
 <1$. 
\end{proof}

\begin{proof}[Proof of Theorem~\ref{thm:main3} (i)]
According to Lemma~\ref{lem:errestequiv} and Lemma~\ref{lemma:wdlr2rel}, $\eta(\cdot,h(\cdot,k);\cdot)$ satisfies general quasi-orthogonality~\eqref{A:qosum} and reliability~\eqref{A:reliable}, and $\eta(\TT_\ell,h(\TT_\ell,k);U(\TT_\ell))$ satisfies the estimator reduction~\eqref{eq:estconvrho}. With~\eqref{eq:estconv} replaced by~\eqref{eq:estconvrho}, Proposition~\ref{prop:Rconv} proves $R$-linear convergence of $\eta(\TT_\ell,h(\TT_\ell,k);U(\TT_\ell))$ to zero, i.e.~\eqref{eq:propRconv} holds
with $\eta(\cdot)$ replaced by $\eta(\TT_\ell,h(\TT_\ell,k);U(\TT_\ell))$.
Since there holds $\eta(\TT_\ell,h(\TT_\ell,k);U(\TT_\ell))\le\eta(\TT_\ell,h(\TT_\ell);U(\TT_\ell))\lesssim\eta(\TT_\ell,h(\TT_\ell,k);U(\TT_\ell))$, the
reliability~\eqref{A:reliable} concludes the proof.
\end{proof}

The following lemma shows that the weak discrete reliability axiom~\eqref{B:dlr} guarantees the optimality of the D\"orfler marking. In particular, the main results
of Section~\ref{section:optimality} and Section~\ref{section:inexact} remain valid 
with~\eqref{A:dlr} replaced by~\eqref{B:dlr}.

\begin{lemma}\label{lemma:etaestconv2}
Suppose that $\eta(\cdot)$ satisfies the weak discrete reliability axiom~\eqref{B:dlr}. 
Then, for all  $0<\theta_0<\overline\theta_\star$ with $\overline\theta_\star$ from~\eqref{eq:overlinethetastar}
there exists some $0<\kappa_0<1$ and $\eps_0>0$ such that 
for all $0<\widetilde\theta\le\theta_0$ and all refinements $\widehat\TT\in\T$ of $\TT\in\T$, 
the assumption
\begin{subequations}\label{eq:equivestoptdoerfler}
\begin{align}
%\begin{split}
 \eta(\widehat\TT,h(\TT);U(\widehat\TT))^2 &\le \kappa_0\eta(\TT,h(\TT);U(\TT))^2
\end{align}
implies
\begin{align}
 \widetilde\theta\,\eta(\TT,h(\TT);U(\TT))^2
 &\le \sum_{T\in\RR(\eps_0;\TT,\widehat\TT)}\eta_T(\TT,h(\TT);U(\TT))^2
%\end{split}
\end{align}
\end{subequations}
with $\RR(\eps_0;\TT,\widehat\TT)$ from~\eqref{B:dlr}.  
The constants $\eps_0$ and $\kappa_0$ depend only on $\theta_0$ and $\overline\theta_\star$.
\end{lemma}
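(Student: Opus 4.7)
My plan is to mimic the proof of Proposition~\ref{prop:doerfler}~(i)-(ii), replacing the use of discrete reliability~\eqref{A:dlr} by the weak discrete reliability~\eqref{B:dlr}. The crucial new feature is that the right-hand side of~\eqref{B:dlr} contains an unavoidable term $\eps\,\eta(\TT;U(\TT))^2$ which must be absorbed into the left-hand side, and this is exactly what forces the new contraction threshold $\overline\theta_\star$ to take the form~\eqref{eq:overlinethetastar} rather than the value $\theta_\star=(1+\c{stable}^2\c{dlr}^2)^{-1}$ from Proposition~\ref{prop:doerfler}.

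Concretely, I would split $\eta(\TT,h(\TT);U(\TT))^2$ into contributions from $\TT\setminus\widehat\TT$ and $\TT\cap\widehat\TT$, apply Young's inequality with parameter $\delta>0$ to the second sum, and invoke stability~\eqref{A:stable} together with $\TT\setminus\widehat\TT\subseteq\RR(\eps;\TT,\widehat\TT)$ to arrive at
\begin{align*}
 \eta(\TT,h(\TT);U(\TT))^2
 &\leq \sum_{T\in\RR(\eps;\TT,\widehat\TT)}\eta_T(\TT,h(\TT);U(\TT))^2 \\
 &\quad+ (1+\delta)\,\eta(\widehat\TT,h(\TT);U(\widehat\TT))^2
 + (1+\delta^{-1})\c{stable}^2\dist[\widehat\TT]{U(\widehat\TT)}{U(\TT)}^2.
\end{align*}
The weak discrete reliability~\eqref{B:dlr} then bounds the last term, and the hypothesis $\eta(\widehat\TT,h(\TT);U(\widehat\TT))^2\leq \kappa_0\,\eta(\TT,h(\TT);U(\TT))^2$ controls the second. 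Absorbing the resulting $\eps$-term on the left yields
\begin{align*}
 \frac{1-(1+\delta^{-1})\c{stable}^2\eps-(1+\delta)\kappa_0}{1+(1+\delta^{-1})\c{stable}^2\c{dlr}(\eps)^2}\,\eta(\TT,h(\TT);U(\TT))^2
 \leq \sum_{T\in\RR(\eps;\TT,\widehat\TT)}\eta_T(\TT,h(\TT);U(\TT))^2,
\end{align*}
so it suffices to show that, given $\theta_0<\overline\theta_\star$, one may choose $\eps_0>0$, $\delta>0$, and $\kappa_0\in(0,1)$ so that the prefactor on the left is at least~$\theta_0$.

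The final step, which I see as the only delicate point, is the parameter selection: since $\theta_0<\overline\theta_\star=\sup_{\eps>0}\tfrac{1-\c{stable}^2\eps}{1+\c{stable}^2\c{dlr}(\eps)^2}$, pick $\eps_0>0$ with $\tfrac{1-\c{stable}^2\eps_0}{1+\c{stable}^2\c{dlr}(\eps_0)^2}>\theta_0$, then let $\delta\to\infty$ while choosing $\kappa_0$ so that $(1+\delta)\kappa_0\to0$ (e.g.\ $\kappa_0=\delta^{-2}$). In this limit the prefactor tends to $\tfrac{1-\c{stable}^2\eps_0}{1+\c{stable}^2\c{dlr}(\eps_0)^2}>\theta_0$, so some fixed admissible pair $(\delta,\kappa_0)$ does the job. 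Fixing such a triple $(\eps_0,\delta,\kappa_0)$ depending only on $\theta_0$ and $\overline\theta_\star$ establishes~\eqref{eq:equivestoptdoerfler} for every $\widetilde\theta\leq\theta_0$, which concludes the proof.
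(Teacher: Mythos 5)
Your proposal follows the paper's own proof step for step: the same decomposition of $\eta(\TT,h(\TT);U(\TT))^2$ into refined and non-refined contributions, Young's inequality with parameter $\delta$, stability~\eqref{A:stable}, weak discrete reliability~\eqref{B:dlr}, and the absorption of the $\eps$-term, leading to the identical prefactor $\bigl(1-(1+\delta)\kappa_0-(1+\delta^{-1})\c{stable}^2\eps\bigr)\big/\bigl(1+(1+\delta^{-1})\c{stable}^2\c{dlr}(\eps)^2\bigr)$. Your parameter selection (fix $\eps_0$ near the supremum in $\overline\theta_\star$, then take $\delta$ large with $\kappa_0\to 0$) is exactly the paper's argument, just phrased slightly more explicitly, so the proof is correct and not genuinely different.
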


\begin{proof}
Recall from the definition of~\eqref{A:kernelstab} that stability~\eqref{A:stable} holds with $\c{stable}=\c{stab2}$.
The proof of the lemma follows that of Proposition~\ref{prop:doerfler}~(ii) with free variables
$\eps_0>0$ and $\kappa_0$ to be fixed below.
The Young inequality and stability~\eqref{A:stable} show
\begin{align*}
 \eta(\TT,h(\TT);U(\TT))^2
 &= \sum_{T\in\TT\backslash\widehat\TT}\eta_T(\TT,h(\TT);U(\TT))^2
 + \sum_{T\in\TT\cap\widehat\TT}\eta_T(\TT,h(\TT);U(\TT))^2\\
 & \le \sum_{T\in\TT\backslash\widehat\TT}\eta_T(\TT,h(\TT);U(\TT))^2
 + (1+\delta)\sum_{T\in\TT\cap\widehat\TT}\eta_T(\widehat\TT,h(\widehat\TT);U(\widehat\TT))^2\\
 &\quad\quad
 + (1+\delta^{-1})\c{stable}^2\dist[\widehat\TT]{U(\widehat\TT)}{U(\TT)}^2 =: \text{RHS}.
 \end{align*}
 Recall $\TT\backslash\widehat\TT\subseteq\RR(\eps_0;\TT,\widehat\TT)$.
The application of the weak discrete reliability~\eqref{B:dlr} and the assumption 
$\eta(\widehat\TT,h(\widehat\TT);U(\widehat\TT))^2 \le \kappa_0\eta(\TT,h(\TT);U(\TT))^2$ yield
 \begin{align*}
\text{RHS} 
&\le \big((1+\delta)\kappa_0+(1+\delta^{-1})\c{stable}^2\,\eps_0\big)\,\eta(\TT,h(\TT);U(\TT))^2\\
 &\quad\quad + \big(1+(1+\delta^{-1})\c{stable}^2\c{dlr}(\eps_0)^2\big)\,\sum_{T\in\RR(\eps_0;\TT,\widehat\TT)}\eta_T(\TT,h(\TT);U(\TT))^2.
\end{align*}
Some rearrangements prove
\begin{align*}
 \frac{1-(1+\delta)\kappa_0-(1+\delta^{-1})\c{stable}^2\,\eps_0}{1+(1+\delta^{-1})\c{stable}^2\c{dlr}(\eps_0)^2}\,\eta(\TT,h(\TT);U(\TT))^2
 \le \sum_{T\in\RR(\eps_0;\TT,\widehat\TT)}\eta_T(\TT,h(\TT);U(\TT))^2.
\end{align*}
For arbitrary $0<\theta_0<\overline\theta_\star$,
fix $\eps_0>0$, choose $\delta>0$ sufficiently large and then determine $0<\kappa_0<1$ 
with
\begin{align*}
 \theta_0 = \frac{1-(1+\delta)\kappa_0-(1+\delta^{-1})\c{stable}^2\,\eps_0}{1+(1+\delta^{-1})\c{stable}^2\c{dlr}(\eps_0)^2}
 < \frac{1-\c{stable}^2\,\eps_0}{1\!+\!\c{stable}^2\c{dlr}(\eps_0)^2}
 \le \sup_{\eps>0}\frac{1-\c{stable}^2\,\eps}{1\!+\!\c{stable}^2\c{dlr}(\eps)^2}
 = \overline\theta_\star.
\end{align*}
The claim follows for $\theta_0$ and hence for all $0<\widetilde\theta\le \theta_0$.
\end{proof}

\begin{proof}[Proof of Theorem~\ref{thm:main3} (ii)]
Recall linear convergence~\eqref{eq:thm:main3:conv1} of $\eta(\TT_\ell,h(\TT_\ell);U(\TT_\ell))$.
By assumption, $0<\theta<\c{dp4}^{-1}\,\overline\theta_\star$, and set $\widetilde\theta:=\c{dp4}\,\theta<\overline\theta_\star$.
The proof follows that of Proposition~\ref{prop:optimality} with
the difference that the set of marked indices (and hence elements) is determined by 
$\varrho(\cdot)$ instead of $\eta(\cdot,h(\cdot);\cdot)$.
First, Lemma~\ref{lemma:etaestconv2} provides the means to use 
Lemma~\ref{lem:optimality}. Then, $\RR(\eps_0;\TT_\ell,\widehat\TT)$ and hence its superset
$\widetilde\MM:=\omega^k(\TT_\ell;\RR(\eps_0;\TT_\ell,\widehat\TT))$
satisfy the D\"orfler marking~\eqref{eq:doerfler} for $\eta(\cdot,h(\cdot);\cdot)$ with $\widetilde\theta$. 
Second, by assumption~\eqref{dp:doerfler}, $\MM:=\set{\tau\in\II(\TT_\ell)}{\TT_\ell(\tau)\cap\omega^k(\TT_\ell;\widetilde\MM)\neq\emptyset}$ 
satisfies the D\"orfler marking~\eqref{eq:doerflerrho} for $\varrho(\cdot)$ with
$\c{dp4}^{-1}\,\widetilde\theta = \theta$. According to the almost minimal cardinality of 
$\MM_\ell$, assumption~\eqref{ass:dp1}, and uniform shape regularity, it follows that
\begin{align*}
 |\MM_{\ell}| \le |\MM| 
 \le \c{dp2}\,|\omega^k(\TT_\ell;\widetilde\MM)|
 \simeq |\widetilde\MM|
 \simeq|\widetilde\RR(\eps_0;\TT_\ell,\widehat\TT)|.
\end{align*}
The remaining steps are verbatim to the proof of Proposition~\ref{prop:optimality} and are therefore omitted.
\end{proof}

\section{Locally Equivalent Error Estimators for the Poisson Problem}\label{section:examples3}
%%%%%%%%%%%%%%%%%%%%%%%%%%%
\noindent
This section applies the analysis of the previous one to 
a \revision{specific} model problem, where the adaptive algorithm is steered by 
some locally equivalent and possibly non-residual error estimator.
This improves the work~\cite{ks}, where all patches of marked element domains are refined. Theorem~\ref{thm:main3} states optimal convergence behaviour of Algorithm~\ref{algorithmrho}, where solely the element domains associated to marked indices are refined.

%%%%%%%%%%%%%%%%%%%%%
\subsection{Poisson model problem}
%%%%%%%%%%%%%%%%%%%%%
In the spirit of~\cite{ks}, consider the Poisson model 
problem~\eqref{ex:poisson} in $\Omega\subseteq \R^d$,
\begin{align*}
-\Delta u &= f\quad\text{in }\Omega
\quad\text{and}\quad
 u=0\quad\text{on }\Gamma,
\end{align*}
and recall the weak formulation~\eqref{eq:poisson:bilinear}, the FE
discretization~\eqref{eq:poisson:discrete} by means of piecewise polynomials
$\SS^p_0(\TT)=\PP^p(\TT)\cap H^1_0(\Omega)$
of degree $p\ge1$, as well as the definition of $\dist\cdot\cdot:= \norm{\nabla\cdot}{L^2(\Omega)}$.
The residual error 
estimator $\eta(\cdot)$ with local contributions 
\begin{align}\label{exx:poisson:estimator}
 \eta_T(\TT;V)^2
 = \eta_T(\TT,h(\TT);V)^2 := h_T^2\,\norm{f+\Delta_\TT V}{L^2(T)}^2
 + h_T\,\norm{[\partial_nV]}{L^2(\partial T\cap\Omega)}^2
\end{align}
with $h_T:=h(\TT)|_T = |T|^{1/d}$ for all $T\in\TT$
and $\Delta_\TT$ the $\TT$-elementwise Laplacian serves as a theoretical tool. With newest vertex bisection (NVB) the assumptions~\eqref{refinement:sons}--\eqref{refinement:overlay} as well as uniform $\gamma$-shape regularity~\eqref{refinement:shapereg} and all further assumptions of Section~\ref{section:kpatch} are satisfied.

\begin{proposition}
In addition to the axioms~\eqref{A:stable}--\eqref{A:dlr},
the residual error \revision{estimator~\eqref{exx:poisson:estimator}} satisfies efficiency~\eqref{A:efficient},
homogeneity~\eqref{A:kernelhomo}
with $r_+=1/2$ and $r_-=1$, as well as stability~\eqref{A:kernelstab}.
\end{proposition}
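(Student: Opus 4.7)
The first four axioms \eqref{A:stable}--\eqref{A:dlr} and efficiency \eqref{A:efficient} were already verified in Proposition~\ref{prop:ex:poisson:assumptions}, so the plan is to establish only the homogeneity~\eqref{A:kernelhomo} with the prescribed exponents $r_+=1/2$, $r_-=1$, and the strengthened stability~\eqref{A:kernelstab}.

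For homogeneity, I would argue by direct computation on each fixed element $T\in\TT$. Since the modified mesh-width functions arising in Section~\ref{section:modh} are piecewise constant, it suffices to treat $\alpha\in[0,1]$ constant on $T$; the $L^\infty$-case is reduced to this by pointwise inequalities. Abbreviating $R:=\norm{f+\Delta_\TT V}{L^2(T)}$ and $J:=\norm{[\partial_n V]}{L^2(\partial T\cap\Omega)}$ and $\widehat h_T:=\widehat h(\TT)|_T$, the definition \eqref{exx:poisson:estimator} gives $\eta_T(\TT,\alpha\widehat h(\TT);V)^2=\alpha^2\widehat h_T^2R^2+\alpha\widehat h_TJ^2$. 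The upper bound follows from $\alpha^2\le\alpha$ for $\alpha\in[0,1]$:
\begin{align*}
\eta_T(\TT,\alpha\widehat h(\TT);V)^2\le\alpha\bigl(\widehat h_T^2R^2+\widehat h_TJ^2\bigr)=\alpha\,\eta_T(\TT,\widehat h(\TT);V)^2,
\end{align*}
i.e.\ $r_+=1/2$ after taking square roots. The lower bound uses $\alpha\ge\alpha^2$:
\begin{align*}
\eta_T(\TT,\alpha\widehat h(\TT);V)^2\ge\alpha^2\widehat h_T^2R^2+\alpha^2\widehat h_TJ^2=\alpha^2\,\eta_T(\TT,\widehat h(\TT);V)^2,
\end{align*}
which after square roots yields $r_-=1$.

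For the strengthened stability~\eqref{A:kernelstab}, I would first use the reverse triangle inequality in $\ell^2$ to reduce the claim to
\begin{align*}
\Big|\Big(\sum_{T\in\widehat\SS}\eta_T(\widehat\TT,\widehat h(\TT);\widehat V)^2\Big)^{1/2}-\Big(\sum_{T\in\SS}\eta_T(\TT,\widehat h(\TT);V)^2\Big)^{1/2}\Big|^2\le\sum_{T^*\in\SS}\bigl|A(T^*)-B(T^*)\bigr|^2,
\end{align*}
with $A(T^*)^2:=\sum_{T'\in\widehat\SS,\,T'\subseteq T^*}\eta_{T'}(\widehat\TT,\widehat h(\TT);\widehat V)^2$ and $B(T^*)^2:=\eta_{T^*}(\TT,\widehat h(\TT);V)^2$; this grouping is well-defined since $\bigcup\widehat\SS=\bigcup\SS$ and $\widehat\TT$ refines $\TT$. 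The crucial step is an element-level comparison, obtained by inserting $\pm V$ into the residual and jump expressions and invoking the triangle inequality: $|A(T^*)-B(T^*)|$ is bounded by contributions of the form $\widehat h_{T'}\norm{\Delta_{\widehat\TT}(\widehat V-V)}{L^2(T')}$ and $\widehat h_{T'}^{1/2}\norm{[\partial_n(\widehat V-V)]}{L^2(\partial T'\cap\Omega)}$ plus data/Laplacian terms that cancel because $V$ is polynomial on $T^*\supseteq T'$.

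The hypothesis $\widehat h(\TT)\le h(\TT)$ and shape regularity allow to replace $\widehat h_{T'}$ by $h_{T'}$ (with $h_{T'}\le h_{T^*}$) wherever needed, and then the standard $\TT$-piecewise inverse and trace inequalities for the polynomial $\widehat V-V\in\PP^p(\widehat\TT)$ bound each such contribution by $\norm{\nabla(\widehat V-V)}{L^2(T')}$, respectively by $\norm{\nabla(\widehat V-V)}{L^2(\omega_{\widehat\TT}(T'))}$ on a uniformly bounded patch. Summing over $T'\subseteq T^*$ and then over $T^*\in\SS$ and invoking the finite overlap of these patches yields the bound $\c{stab2}\norm{\nabla(\widehat V-V)}{L^2(\Omega)}=\c{stab2}\,\dist[\widehat\TT]{\widehat V}{V}$. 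The main technical obstacle is essentially bookkeeping: organizing the grouping of refined elements and tracking that the jump terms on facets interior to $T^*$ (which do not appear in $B(T^*)$) are absorbed via inverse estimates on the polynomial $\widehat V-V$, while jump contributions on $\partial T^*$ are handled exactly as in the standard proof of \eqref{A:stable} carried out in Proposition~\ref{prop:ex:poisson:assumptions}.
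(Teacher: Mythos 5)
Your proposal matches the paper's (very terse) proof: the authors dismiss homogeneity~\eqref{A:kernelhomo} as ``obvious'' and attribute stability~\eqref{A:kernelstab} to the triangle inequality plus standard inverse estimates ``analogously to~\cite[Corollary~3.4]{ckns}'', and your proof spells out exactly those arguments. The homogeneity computation via $\alpha^2\le\alpha\le 1$ on each element is correct and recovers $r_+=1/2$, $r_-=1$; the $\ell^2$ reverse-triangle reduction and the grouping of $\widehat\SS$ by ancestors $T^*\in\SS$ are the right organizing device for~\eqref{A:kernelstab}, and the cancellation of the $f$- and $\Delta_\TT V$-terms on the sub-elements $T'\subseteq T^*$ is the key observation.

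One point in the stability sketch is imprecise and worth tightening. The inverse and trace inequalities for $\widehat V-V\in\PP^p(\widehat\TT)$ are applied on \emph{fine} elements $T'\in\widehat\TT$ (and their $\widehat\TT$-patches), so to absorb the resulting factor $|T'|^{-1/d}$ one needs the weight bound $\widehat h(\TT)|_{T'}\lesssim |T'|^{1/d}=h(\widehat\TT)|_{T'}$. The hypothesis $\widehat h(\TT)\le h(\TT)$ that you invoke only gives $\widehat h(\TT)|_{T'}\le h(\TT)|_{T^*}=|T^*|^{1/d}$, which leaves an uncontrolled ratio $(|T^*|/|T'|)^{1/d}$ when $T^*$ has been bisected several times inside $\widehat\TT$ — the phrase ``replace $\widehat h_{T'}$ by $h_{T'}$ (with $h_{T'}\le h_{T^*}$)'' blurs this. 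The argument actually uses $\widehat h(\TT)\le h(\widehat\TT)$, which \emph{does} hold in every application of~\eqref{A:kernelstab} in this paper (one always takes $\widehat h(\TT)=h(\widehat\TT,k)\le h(\widehat\TT)$ by Proposition~\ref{prop:htilde}(i)), and is arguably what the axiom is meant to encode; you should state this stronger inequality explicitly rather than appealing to the literal wording of~\eqref{A:kernelstab}.
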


\begin{proof}
Proposition~\ref{prop:ex:poisson:assumptions} verifies 
the axioms~\eqref{A:stable}--\eqref{A:dlr} as well as efficiency~\eqref{A:efficient}. Stability~\eqref{A:kernelstab} is well-known and follows by use of the 
triangle inequality as well as standard inverse estimates analogously to 
the proof of~\cite[Corollary~3.4]{ckns}. The homogeneity~\eqref{A:kernelhomo} 
is obvious.
\end{proof} 

The following sections concern different error estimators $\varrho(\cdot)$ which
are equivalent to $\eta(\cdot)$ and fit into the framework of Section~\ref{section:locequiv}.
Section~\ref{section:examples3:meshsize} studies the influence of equivalent choices 
of the mesh-size function $h(\TT)$ for the residual error estimator, Section~\ref{section:dp:facet-based} concerns
a facet-based formulation of $\eta(\cdot)$, while Section~\ref{section:est:zz}
analyzes recovery-based error estimators.
%All these examples are shown to fit into the abstract frame developed.
Further examples for the lowest-order case $p=1$, which also fit in the frame of the analysis from Section~\ref{section:locequiv},  are found in~\cite{ks}.

%%%%%%%%%%%%%%%%%%%%%
\subsection{Estimator based on equivalent mesh-size function}
%%%%%%%%%%%%%%%%%%%%%
\label{section:examples3:meshsize}
Instead of $h_T=|T|^{1/d}$ for weighting the local contributions of
$\eta(\cdot)$, one can also use the local diameter $\diam(T)$. This leads to
\begin{align*}%\label{exx2:poisson:estimator}
 \varrho_T(\TT;V)^2 
 = \eta_T(\TT,\widehat h(\TT);V)^2 
 := \diam(T)^2\,\norm{f+\Delta V}{L^2(T)}^2
 + \diam(T)\,\norm{[\partial_nV]}{L^2(\partial T\cap\Omega)}^2
\end{align*}
with the modified mesh-width function $\widehat h(\TT)|_T:=\diam(T)$. 
This variant of $\eta(\cdot)$ is usually found in textbooks as 
e.g.~\cite{ao00,v96}.
The uniform $\gamma$-shape regularity~\eqref{refinement:shapereg}
of newest vertex bisection leads to $h(\TT) \le \widehat h(\TT) 
\le \gamma\,h(\TT)$. In particular, $\eta(\cdot)$ and 
$\varrho(\cdot)$ are elementwise equivalent and so match all the assumptions of
Section~\ref{section:meshwidtherrest}.

\begin{proposition}
The estimators $\eta(\cdot)$ and $\varrho(\cdot)$ are globally
equivalent in the sense that~\eqref{eq:glob} holds with $\c{dp3}=\gamma^2$. Moreover, the
equivalence of D\"orfler marking~\eqref{dp:doerfler} holds with $k=1$, $\MM=\widetilde\MM$,
and $\c{dp4}=\gamma^2$.\qed
\end{proposition}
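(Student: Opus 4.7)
The approach is elementary: derive an elementwise equivalence between $\eta_T(\TT;V)^2$ and $\varrho_T(\TT;V)^2$ from the $\gamma$-shape regularity, and then transfer this equivalence first to the global estimators and subsequently to the D\"orfler marking criteria. No technical obstacle is expected; everything follows from uniformly comparable mesh-size functions.

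First, I would exploit~\eqref{refinement:shaperegular} in the form $h(\TT)|_T = |T|^{1/d} \le \diam(T) \le \gamma\,|T|^{1/d}$. Since $\varrho_T(\TT;V)^2$ and $\eta_T(\TT;V)^2$ differ only by replacing the weight $h_T$ by $\diam(T)$ in the volume and jump contributions, a direct termwise comparison yields
\begin{align*}
\eta_T(\TT;V)^2 \le \varrho_T(\TT;V)^2 \le \gamma^2\,\eta_T(\TT;V)^2
\quad\text{for every }T\in\TT,\;V\in\SS^p_0(\TT).
\end{align*}
The worst case factor $\gamma^2$ stems from the volume residual; the jump term produces only $\gamma$. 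Summing over $T\in\TT$ immediately gives the global equivalence~\eqref{eq:glob} with $\c{dp3}=\gamma^2$.

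For the D\"orfler equivalence with $k=1$, $\MM = \widetilde\MM$, and $\c{dp4}=\gamma^2$, I would argue in two symmetric steps. Assuming~\eqref{dp1:doerfler}, chain the elementwise lower bound with the global upper bound via
\begin{align*}
\sum_{T\in\MM}\eta_T(\TT;V)^2
\ge \gamma^{-2}\sum_{T\in\MM}\varrho_T(\TT;V)^2
\ge \gamma^{-2}\theta\,\varrho(\TT;V)^2
\ge \gamma^{-2}\theta\,\eta(\TT;V)^2,
\end{align*}
which is~\eqref{dp2:doerfler} for the set $\widetilde\MM = \MM$ with $\widetilde\theta = \theta/\gamma^2 = \c{dp4}^{-1}\theta$. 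The converse implication is verbatim with the roles of $\eta$ and $\varrho$ exchanged, using $\varrho_T^2\ge\eta_T^2$ locally and $\varrho^2\le\gamma^2\eta^2$ globally. In particular, the patch operator $\omega^1(\TT;\cdot)$ need not enlarge the set, since the equivalence is genuinely elementwise. This concludes the plan.
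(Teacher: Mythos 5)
Your proposal is correct and matches the paper's intended argument: the paper states the proposition with an immediate \qed, relying precisely on the elementwise equivalence $h(\TT)\le\widehat h(\TT)\le\gamma\,h(\TT)$ from~\eqref{refinement:shaperegular}, and your termwise comparison and chaining of inequalities fill in exactly that routine step. In particular your observation that the patch enlargement $\omega^1$ is not actually needed (so $\widetilde\MM=\MM$ suffices) is the same point the paper encodes by writing $\MM=\widetilde\MM$ in the statement.
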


\begin{consequence}
Convergence and optimal rates for the adaptive algorithm steered by the 
residual error estimator in the sense of Theorem~\ref{thm:main} 
and Theorem~\ref{thm:main3} resp. Theorem~\ref{thm:mainerr3} hold independently of the equivalent mesh-width
function chosen.\qed
\end{consequence}

%%%%%%%%%%%%%%%%%%%%%
\subsection{Facet-based formulation of residual error estimator}
%%%%%%%%%%%%%%%%%%%%%
\label{section:dp:facet-based}
For a given triangulation $\TT\in\T$, let $\EE(\TT)$ denote the corresponding set
of facets which lie inside $\Omega$, i.e.\ for each $E\in\EE(\TT)$ there
are two unique elements $T,T'\in\TT$ with $T\neq T'$ and $E=T\cap T'$.
Let $\omega(\TT;E):= \{T,T'\}$ and $\bigcup\omega(\TT;E)=T\cup T'$ denote 
the patch of $E\in\EE(\TT)$.
Assume that each element $T\in\TT$ has 
at most one facet on the boundary $\Gamma=\partial\Omega$ which is a \revision{minor additional} assumption on the 
initial mesh $\TT_0$ to exclude pathological cases. 
In particular, each element $T\in\TT$ has at least one node $z\in\KK(\TT)$ 
inside $\Omega$.
For each facet $E\in\EE(\TT)$, let $F_E\in\PP^{p-1}(\bigcup\omega(\TT;E))$ be the unique 
polynomial of degree $p-1$ such that
\begin{align}\label{dp:def:FE}
 \norm{\Delta_\TT V - f - F_E}{L^2(\bigcup\omega(\TT;E))} 
 = \min_{F\in\PP^{p-1}(\bigcup\omega(\TT;E))}\norm{\Delta_\TT V - f - F}{L^2(\bigcup\omega(\TT;E))}. 
\end{align}
With the introduced notation, consider the following facet-based variant of the residual error estimator~\eqref{exx:poisson:estimator}
\begin{subequations}
\begin{align}
 \varrho(\TT;V)^2 &= \sum_{E\in\EE(\TT)}\varrho_E(\TT;V)^2,\\
 \varrho_E(\TT;V)^2 
 &= \diam(E)^2\,\norm{\Delta_\TT V - f - F_E}{L^2(\bigcup\omega(\TT;E))}^2
 + \diam(E)\,\norm{[\partial_nV]}{L^2(E)}^2.
\end{align}
\end{subequations}
Convergence and quasi-optimality for this estimator is directly proved for $d=2$ and
$p=1$ in~\cite{dirichlet2d} via the technical and non-obvious
observation that the edge oscillations are contractive~\cite{pp,page}. The novel approach of this paper
generalizes the mentioned works to arbitrary dimension $d\ge2$ and 
polynomial degree $p\ge1$.

For each facet $E=T\cap T'\in\EE(\TT)$, define $\TT(E):=\{T,T'\}$. In other words
if the edge $E\in\EE(\TT)$ is marked in step~(iii) of Algorithm~\ref{algorithmrho}, the elements of the patch of $E$ will be refined. This does not necessarily imply that the
facet $E$ is refined.
To apply Theorem~\ref{thm:main3} and thus derive convergence with quasi-optimal
rates, it remains to show that $\varrho(\cdot)$ and $\eta(\cdot)$ meet
the assumptions of Section~\ref{section8:assumption}.

\begin{proposition}\label{dp:prop1}
The estimators $\eta(\cdot)$ and $\varrho(\cdot)$ are globally
equivalent~\eqref{eq:glob}. Moreover, equivalence of D\"orfler 
marking~\eqref{dp:doerfler} holds with $k=0$. The constants 
$\c{dp3},\c{dp4}>0$ depend only on $\T$, the polynomial degree
$p\ge1$, and the use of newest vertex bisection.
\end{proposition}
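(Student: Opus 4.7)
My plan is to prove the global equivalence~\eqref{eq:glob} and then derive the Dörfler marking equivalence~\eqref{dp:doerfler} by combining the best-approximation property of $F_E$ with the classical Verfürth bubble-function argument, crucially using that $U(\TT)$ is the Galerkin solution of~\eqref{eq:poisson:discrete}.

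First I would handle the easy direction $\varrho(\TT;U(\TT)) \lesssim \eta(\TT;U(\TT))$. Since $F_E \in \PP^{p-1}(\bigcup\omega(\TT;E))$ is the $L^2$-best approximation of $\Delta_\TT U(\TT) - f$ on the patch, choosing the competitor $F=0$ gives
\begin{align*}
\|\Delta_\TT U(\TT) - f - F_E\|_{L^2(\bigcup\omega(\TT;E))} \le \|\Delta_\TT U(\TT) - f\|_{L^2(\bigcup\omega(\TT;E))}.
\end{align*}
Combined with $\diam(E) \simeq h_T \simeq h_{T'}$ for $\omega(\TT;E) = \{T,T'\}$ (uniform $\gamma$-shape regularity), this yields the local bound $\varrho_E^2 \lesssim \eta_T^2 + \eta_{T'}^2$, and summation with uniformly bounded overlap gives the global inequality with $\c{dp3}$ depending only on $\T$ and $p$.

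The reverse direction $\eta(\TT;U(\TT)) \lesssim \varrho(\TT;U(\TT))$ is the main technical step. For each $T\in\TT$ I apply the Verfürth bubble-function technique with the element bubble $b_T$ (product of barycentric coordinates) and the local $L^2$-projection $\Pi_T R_T \in \PP^{p-1}(T)$ of $R_T := f + \Delta U(\TT)$. Testing the continuous weak formulation against $\phi := b_T \Pi_T R_T \in H^1_0(\Omega)$ and invoking the bubble norm-equivalence on $\PP^{p-1}(T)$ together with an inverse estimate yields
\begin{align*}
h_T \|R_T\|_{L^2(T)} \lesssim \|\nabla(u - U(\TT))\|_{L^2(T)} + h_T \|R_T - \Pi_T R_T\|_{L^2(T)}.
\end{align*}
Since $F_E|_T$ is a valid competitor in the local $L^2$-minimization defining $\Pi_T R_T$, the oscillation term is dominated by $\|R_T - F_E\|_{L^2(\bigcup\omega(\TT;E))} \lesssim \varrho_E/\diam(E)$ for any edge $E$ with $T \in \omega(\TT;E)$. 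The jump contributions in $\eta_T$ and $\varrho_E$ match directly (each interior facet belongs to at most two elements). Summing over $T$ gives
\begin{align*}
\eta(\TT;U(\TT))^2 \lesssim \|\nabla(u - U(\TT))\|_{L^2(\Omega)}^2 + \varrho(\TT;U(\TT))^2,
\end{align*}
and the error term is absorbed via the reliability $\|\nabla(u-U(\TT))\|^2 \lesssim \varrho(\TT;U(\TT))^2$, which follows from a standard Braess-Verfürth-type argument exploiting that $\varrho$ encodes both jump and data-oscillation contributions.

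Finally, the Dörfler marking equivalence with $k=0$ is derived from these local bounds combined with~\eqref{eq:glob}. In the forward implication, summing the local estimate $\varrho_E^2 \lesssim \eta_T^2 + \eta_{T'}^2$ over $E \in \MM$ with bounded overlap yields $\theta\,\varrho^2 \lesssim \sum_{T\in\TT(\MM)}\eta_T^2$, and the global equivalence converts the left-hand side into a multiple of $\theta\,\eta^2$, giving the $\eta$-Dörfler property with parameter $\widetilde\theta := \theta/\c{dp4}$. The reverse implication is analogous, using the element-local estimate from Step~2 and reliability to absorb the error term. The principal obstacle is exactly this non-local error term $\|\nabla(u-U(\TT))\|_{L^2(T)}$, which cannot be bounded purely element-wise by neighbouring $\varrho_E$; passing through the global reliability of $\varrho$ is the standard resolution, at the cost of a harmless inflation of $\c{dp4}$ and relying on the Galerkin property of $U(\TT)$.
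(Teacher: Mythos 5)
Your easy direction $\varrho_E(\TT;U(\TT))^2 \lesssim \eta_T(\TT;U(\TT))^2 + \eta_{T'}(\TT;U(\TT))^2$ is correct and matches the paper's proof of~\eqref{eq:loca}. The gap lies in the reverse direction and, consequently, in the Dörfler marking equivalence~\eqref{dp:doerfler}(ii). You use the classical Verfürth element-bubble argument against the \emph{continuous} weak formulation, which inevitably produces the error term $\|\nabla(u-U(\TT))\|_{L^2(T)}$ on the right-hand side. That term is not local in the estimator $\varrho(\cdot)$; it can only be bounded via the \emph{global} reliability $\|\nabla(u-U(\TT))\|\lesssim\varrho(\TT;U(\TT))$. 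For the global equivalence~\eqref{eq:glob} this is harmless, but it destroys the Dörfler equivalence. Concretely, given $\widetilde\theta\,\eta(\TT;U(\TT))^2\le\sum_{T\in\widetilde\MM}\eta_T(\TT;U(\TT))^2$, your chain of inequalities yields
\begin{align*}
 \widetilde\theta\c{dp3}^{-1}\varrho(\TT;U(\TT))^2 \lesssim \|\nabla(u-U(\TT))\|_{L^2(\bigcup\widetilde\MM)}^2 + \sum_{\tau\in\MM}\varrho_\tau(\TT;U(\TT))^2 \lesssim \varrho(\TT;U(\TT))^2 + \sum_{\tau\in\MM}\varrho_\tau(\TT;U(\TT))^2,
\end{align*}
and since the implicit constant in the reliability estimate is $\ge1$ while $\widetilde\theta<1$, nothing can be absorbed: the Dörfler property for $\varrho(\cdot)$ with a $\theta$ independent of $\widetilde\MM$ does not follow. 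In short, you never establish the genuinely local bound~\eqref{eq:locb}, which is exactly what Lemma~\ref{dp:lemma:locequiv} requires.

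The paper avoids this by a different device (Lemma~\ref{dp:lemma1}): rather than an element bubble of high polynomial degree tested against the continuous problem, it tests the \emph{discrete} Galerkin formulation~\eqref{eq:poisson:discrete} with $V=\phi_z F_z\in\SS^p_0(\TT)$, where $\phi_z$ is the nodal hat function of an interior node $z$ and $F_z\in\PP^{p-1}(\bigcup\omega(\TT;z))$ is the patch-wise best approximation of the residual. Since $V$ is an admissible discrete test function, elementwise integration by parts converts the volume residual integral directly into a facet-jump integral over $\bigcup\Sigma(\TT;z)$, and no term $\|\nabla(u-U(\TT))\|$ ever appears. The resulting bound $h_T^2\|\res(\TT)\|_{L^2(T)}^2\lesssim h_T\|[\partial_n U(\TT)]\|_{L^2(\bigcup\Sigma(\TT;z))}^2 + h_T^2\|\res(\TT)-F_z\|_{L^2(\bigcup\omega(\TT;z))}^2$ is strictly local, and Lemma~\ref{dp:lemma2} then converts the node oscillation into facet oscillations, giving~\eqref{eq:locb} with $k=0$. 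The lesson is that the discrete-test-function trick with $\phi_z F_z$ of total degree $p$ is not an optional optimization; it is what keeps the equivalence local and makes the Dörfler-marking transfer possible.
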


The proof requires some technical lemmas and some further notation: For an interior node $z\in\KK(\TT)\cap\Omega$ of
$\TT$, define the star $\Sigma(\TT;z):=\set{E\in\EE(\TT)}{z\in E}$ as well
as the patch $\omega(\TT;z):=\set{T\in\TT}{z\in T}$. To abbreviate notation, write $\bigcup\omega(\TT;z):=\bigcup_{T\in\omega(\TT;z)}T$. Finally, 
$F_z\in\PP^{p-1}(\bigcup\omega(\TT;z))$ denotes the unique polynomial of degree 
$p-1$ such that
\begin{align}\label{dp:osc:node}
 \norm{\Delta_\TT V - f - F_z}{L^2(\bigcup\omega(\TT;z))} 
 = \min_{F\in\PP^{p-1}(\bigcup\omega(\TT;z))}\norm{\Delta_\TT V - f - F}{L^2(\bigcup\omega(\TT;z))}. 
\end{align}
To abbreviate notation, write $\res(\TT):=\Delta_\TT U(\TT) - f$ for
the residual.

\begin{lemma}\label{dp:lemma1}
Any interior node $z\in\KK(\TT)\cap\Omega$ and $T\in\TT$ with
$z\in T$ satisfies
\begin{align}\label{dp:est1}
\begin{split}
 \c{est1}^{-1}h_T^2\,&\norm{\res(\TT)}{L^2(T)}^2
% \\&
 \le h_T\,\norm{[\partial_nU(\TT)]}{L^2(\bigcup\Sigma(\TT;z))}^2
 + h_T^2\,\norm{\res(\TT)-F_z}{L^2(\bigcup\omega(\TT;z))}^2.
\end{split}
\end{align}
The constant $\setc{est1}>0$ depends only on $\gamma$-shape regularity 
and hence on $\T$.
\end{lemma}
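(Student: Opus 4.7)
The plan is to employ a standard bubble-function argument adapted to the nodal patch $\bigcup\omega(\TT;z)$, using the hat function $\phi_z\in\SS^1_0(\TT)\subseteq \SS^p_0(\TT)$ associated with the interior node $z\in\KK(\TT)\cap\Omega$. The key point is that $\phi_z$ is supported in $\bigcup\omega(\TT;z)$, vanishes on $\partial\bigcup\omega(\TT;z)$, and satisfies $0\le \phi_z\le 1$. Since $F_z\in\PP^{p-1}(\bigcup\omega(\TT;z))$ is a single polynomial on the connected patch and $\phi_z$ is continuous and piecewise affine, the product $\phi_z F_z$ lies in $\SS^p_0(\TT)\cap H^1_0(\Omega)$ and is thus an admissible discrete test function.

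First, I would decompose $\res(\TT)=F_z+(\res(\TT)-F_z)$ on $T\subset \bigcup\omega(\TT;z)$ and estimate, by the triangle inequality,
\begin{align*}
\norm{\res(\TT)}{L^2(T)}^2\le 2\norm{F_z}{L^2(\bigcup\omega(\TT;z))}^2+2\norm{\res(\TT)-F_z}{L^2(\bigcup\omega(\TT;z))}^2.
\end{align*}
So it remains to control the polynomial part $\norm{F_z}{L^2(\bigcup\omega(\TT;z))}$. To this end, I would exploit the bubble equivalence $\norm{F_z}{L^2(\bigcup\omega(\TT;z))}^2\simeq \int_{\bigcup\omega(\TT;z)} F_z^2\,\phi_z\,dx$, valid on the finite-dimensional space $\PP^{p-1}(\bigcup\omega(\TT;z))$ with constants depending only on the shape regularity and the local number of elements around $z$ (hence only on $\T$ and $p$).

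Next, rewrite the weighted integral as $\int F_z^2\phi_z\,dx=\int \res(\TT)\,F_z\phi_z\,dx+\int(F_z-\res(\TT))\,F_z\phi_z\,dx$. The second summand is bounded via Cauchy--Schwarz and $\phi_z\le 1$ by $\norm{\res(\TT)-F_z}{L^2(\bigcup\omega(\TT;z))}\norm{F_z}{L^2(\bigcup\omega(\TT;z))}$. For the first summand, the element-wise integration by parts together with the Galerkin orthogonality for the admissible test function $\phi_z F_z\in\SS^p_0(\TT)$ turns the volume term into boundary contributions,
\begin{align*}
\int_{\bigcup\omega(\TT;z)}\res(\TT)\,F_z\phi_z\,dx=\pm\sum_{E\in\Sigma(\TT;z)}\int_E [\partial_n U(\TT)]\,F_z\phi_z\,ds,
\end{align*}
where only facets with $z\in E$ enter because $\phi_z$ vanishes on the remaining facets of $\bigcup\omega(\TT;z)$.

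The final and main technical step consists in the scaling/trace estimate $\norm{F_z\phi_z}{L^2(E)}\le \norm{F_z}{L^2(E)}\lesssim h_T^{-1/2}\norm{F_z}{L^2(\bigcup\omega(\TT;z))}$, valid on polynomials and proved by standard scaling to the reference element together with $\gamma$-shape regularity (so that $h_T\simeq h_{T'}$ for all $T'\in\omega(\TT;z)$). The main obstacle here is \emph{not} any single step but the book-keeping of constants: the bubble equivalence, the inverse trace estimate, and the elementwise comparison $h_T\simeq h_{T'}$ on the patch must all be traced back to the $\gamma$-shape regularity of $\T$, so that the final constant $\c{est1}$ depends only on $\T$ (and on $p$). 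Combining the two bounds and applying Young's inequality to absorb $\norm{F_z}{L^2(\bigcup\omega(\TT;z))}$ on the left-hand side yields
\begin{align*}
\norm{F_z}{L^2(\bigcup\omega(\TT;z))}^2\lesssim \norm{\res(\TT)-F_z}{L^2(\bigcup\omega(\TT;z))}^2+h_T^{-1}\norm{[\partial_nU(\TT)]}{L^2(\bigcup\Sigma(\TT;z))}^2.
\end{align*}
Multiplying by $h_T^2$ and inserting into the triangle-inequality bound from above gives~\eqref{dp:est1} and concludes the proof.
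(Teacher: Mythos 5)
Your proof is correct and follows essentially the same route as the paper: same nodal bubble function $\phi_z$, same use of $\phi_z F_z\in\SS^p_0(\TT)$ as a Galerkin test function after $\TT$-elementwise integration by parts, same inverse-type trace estimate $\|F_z\|_{L^2(\bigcup\Sigma(\TT;z))}\lesssim h_T^{-1/2}\|F_z\|_{L^2(\bigcup\omega(\TT;z))}$, and same final triangle/Young absorption. The only cosmetic difference is that you apply the triangle inequality $\|\res(\TT)\|_{L^2(T)}\le\|F_z\|+\|\res(\TT)-F_z\|$ at the start whereas the paper does it at the end, and you phrase the $L^2$-equivalence $\|F_z\|^2\simeq\int\phi_zF_z^2$ as a two-sided bubble estimate whereas the paper only needs and states the direction $\|F_z\|^2\lesssim\|\phi_z^{1/2}F_z\|^2$; these are the same content.
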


\begin{proof} 
Consider the nodal basis function $\phi_z\in\SS^1(\TT)$ characterized by 
$\phi_z(z)=1$ and $\phi_z(z')=0$ for all $z'\in\KK(\TT)$ with $z\neq z'$.
In particular, $\text{supp}(\phi_z)= \bigcup\omega(\TT;z)$. 
%Let
%$\res(\TT):=\Delta_\TT U(\TT)+f$ and 
Let $\Pi_{p-1}(\TT):\,L^2(\bigcup\omega(\TT;z))\to \PP^{p-1}(\bigcup\omega(\TT;z))$ be the $L^2$-orthogonal projection and
note that $F_z=\Pi_{p-1}(\TT)\res(\TT)$. 
A scaling argument and $\norm{\phi_z}{L^\infty(\Omega)}=1$ prove
\begin{align*}
\norm{F_z&}{L^2(\bigcup\omega(\TT;z))}^2
\lesssim\norm{\phi_z^{1/2}F_z}{L^2(\bigcup\omega(\TT;z))}^2\\
&= \int_{\bigcup\omega(\TT;z)}\res(\TT)\phi_zF_z\,dx
%\\&\qquad
- \int_{\bigcup\omega(\TT;z)}\big((1-\Pi_{p-1}(\TT))\res(\TT)\big)\phi_zF_z\,dx\\
% \int_{\bigcup\omega(\TT;z)} \big(\Pi_{p-1}(\TT)\res(\TT)\big)^2\,dx
%& \lesssim  
%  \int_{\bigcup\omega(\TT;z)} \big(\Pi_{p-1}(\TT)\res(\TT)\big)^2\phi_z\,dx\\
&\le \int_{\bigcup\omega(\TT;z)}\res(\TT)\phi_zF_z\,dx
%\\&\qquad
+\normLtwo{(1-\Pi_{p-1}(\TT))\res(\TT)}{\bigcup\omega(\TT;z)}\normLtwo{F_z}{\bigcup\omega(\TT;z)}.
 %&=\int_{\omega(\TT;z)} (\Delta_\TT U(\TT)+f)\,\res(\TT)\phi_z\,dx
 %+\int_{\omega(\TT;z)} (f_z-f)\res(\TT)\phi_z.
 \end{align*}
 Consider the first term on the right-hand side
 %With $\Pi_{p-1}(\TT):\,L^2(\bigcup\omega(\TT;z))\to \PP^{p-1}(\bigcup\omega(\TT;z))$ denoting the $L^2$-orthogonal projection, we see 
%  \begin{align*}
%  \big| \int_{\bigcup\omega(\TT;z)}&(\Delta_\TT U(\TT)+ f)\,\res(\TT)\phi_z\,dx\big|\\
%  &\leq  \big| \int_{\bigcup\omega(\TT;z)} (\Delta_\TT U(\TT)+ f)\,\big(\Pi_{p-1}(\TT)\res(\TT)\big)\phi_z\,dx\big|\\
%  &\qquad+ \big| \int_{\bigcup\omega(\TT;z)}(\Delta_\TT U(\TT)+ f)\,\big((1-\Pi_{p-1}(\TT))\res(\TT)\big)\phi_z\,dx\big|\\
%  &=: |A|+|B|
%  \end{align*}
%  For $A$, 
and use that $V:=\phi_zF_z\in\SS^p_0(\TT)$ is a suitable test function. With the Galerkin formulation~\eqref{eq:poisson:discrete} and elementwise integration by parts, it follows that
 \begin{align*}
  \int_{\bigcup\omega(\TT;z)} &\res(\TT)\phi_zF_z\,dx
  = \int_{\bigcup\omega(\TT;z)} \res(\TT)V\,dx\\
  &= \int_{\bigcup\omega(\TT;z)} \Delta_\TT U(\TT)\,V\,dx+
   \int_{\bigcup\omega(\TT;z)} \nabla U(\TT)\cdot\nabla V\,dx\\
   &= \int_{\bigcup\Sigma(\TT;z)}[\partial_n U(\TT)]\,\phi_zF_z \,dx\\
   &\leq \normLtwo{[\partial_n U(\TT)]}{\bigcup\Sigma(\TT;z)}\normLtwo{F_z}{\bigcup\Sigma(\TT;z)}.
 \end{align*}
Since $F_z\in\PP^{p-1}(\bigcup\omega(\TT;z))$, an inverse-type inequality 
with $h_z:=\diam(\bigcup\omega(\TT;z))$ shows 
\begin{align*}
 \normLtwo{F_z}{\bigcup\Sigma(\TT;z)}
 \lesssim h_z^{-1/2}\,\norm{F_z}{L^2(\bigcup\omega(\TT;z))}.
%  \Big|\int_{\bigcup\omega(\TT;z)} \res(\TT)\big(\Pi_{p-1}(\TT)\res(\TT)\big)\phi_z\,dx\Big|\lesssim h_z^{-1/2} \normLtwo{[\partial_n U(\TT)]}{\bigcup\Sigma(\TT;z)}\normLtwo{\Pi_{p-1}(\TT)\res(\TT)}{\bigcup\omega(\TT;z)}.
\end{align*}
The hidden constant depends only on $\gamma$-shape 
regularity~\eqref{refinement:shapereg} and hence on $\T$.
The combination of the previous arguments implies
\begin{align*}
\norm{F_z&}{L^2(\bigcup\omega(\TT;z))}^2
\lesssim \big(h_z^{-1/2}\,\normLtwo{[\partial_n U(\TT)]}{\bigcup\Sigma(\TT;z)} 
+\normLtwo{\res(\TT)-F_z}{\bigcup\omega(\TT;z)}\big)
\normLtwo{F_z}{\bigcup\omega(\TT;z)}.
\end{align*}
The triangle inequality together with $h_z\simeq h_T$ proves
\begin{align*}%\label{eq4:estimator:recovery}%\label{eq:edgeosc}
\begin{split}
%h_T^2\normLtwo{\Delta_\TT U(\TT)+&f-F_z}{\bigcup\omega(\TT;z)}^2\leq 
h_T^2\normLtwo{\Delta_\TT U(\TT)+f}{T}^2
&\lesssim h_T^2\normLtwo{F_z}{\bigcup\omega(\TT;z)}^2+ h_T^2\normLtwo{\res(\TT)-F_z}{\bigcup\omega(\TT;z)}\\
&\lesssim h_T\normLtwo{[\partial_n U(\TT)]}{\bigcup\Sigma(\TT;z)}^2
+h_T^2\normLtwo{\res(\TT)-F_z}{\bigcup\omega(\TT;z)}^2.
%\\
%&\lesssim \sum_{T'\in\omega(\TT;T)}\eta_T(\TT;U(\TT))^2.
\end{split}
\end{align*}
This concludes the proof.
\end{proof}

The following lemma shows that edge oscillations~\eqref{dp:def:FE} and node oscillations~\eqref{dp:osc:node} are equivalent on patches.
\begin{lemma}\label{dp:lemma2}
Any interior node $z\in\KK(\TT)\cap\Omega$ and $T\in\TT$ with
$z\in T$ satisfies
\begin{align}\label{dp:est2}
\begin{split}
 \c{est2}^{-1}\,\norm{\res(\TT)-F_z}{L^2(\bigcup\omega(\TT;z))}^2
 &\le \sum_{E\in\Sigma(\TT;z)}\norm{\res(\TT)-F_E}{L^2(\bigcup\omega(\TT;E))}^2
 \\&
 \le \c{est3}\,\norm{\res(\TT)-F_z}{L^2(\bigcup\omega(\TT;z))}^2.
\end{split}
\end{align}
The constants $\setc{est2},\setc{est3}>0$ depend only on $\T$, the polynomial
degree $p\ge1$, and the use of newest vertex bisection.
\end{lemma}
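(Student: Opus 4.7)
The plan is to exploit that $\PP^{p-1}$ in the definition~\eqref{eq:polynomials} is a \emph{piecewise} polynomial space, so that the $L^2$-orthogonal projection onto $\PP^{p-1}$ on any union of elements decouples elementwise. Concretely, both $F_E$ from~\eqref{dp:def:FE} and $F_z$ from~\eqref{dp:osc:node} restrict on each element $T$ to the elementwise $L^2$-best approximation of $\res(\TT)|_T$ by polynomials of degree $p-1$ on $T$. Hence $F_E|_T = F_z|_T$ whenever $T\in\omega(\TT;E)\subseteq\omega(\TT;z)$. With this identification, both inequalities reduce to elementwise book-keeping controlled by $\gamma$-shape regularity~\eqref{refinement:shapereg}.

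For the upper bound, I would rewrite the left-hand sum via
\begin{align*}
 \sum_{E\in\Sigma(\TT;z)}\norm{\res(\TT)-F_E}{L^2(\bigcup\omega(\TT;E))}^2
 = \sum_{E\in\Sigma(\TT;z)}\sum_{T\in\omega(\TT;E)}\norm{\res(\TT)-F_z}{L^2(T)}^2
\end{align*}
and interchange the order of summation. For each $T\in\omega(\TT;z)$, the multiplicity $|\{E\in\Sigma(\TT;z):T\in\omega(\TT;E)\}|$ is precisely the number of facets of $T$ that contain $z$, which equals $d$ on a simplex and is in any case uniformly bounded by $\gamma$-shape regularity. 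This yields $\c{est3}$ depending only on $\T$.

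For the lower bound, I would choose for each $T\in\omega(\TT;z)$ one facet $E(T)\in\Sigma(\TT;z)$ with $T\in\omega(\TT;E(T))$; such $E(T)$ exists because the simplex $T$ has $z$ as a vertex and thus $d$ of its facets contain $z$. Using $F_{E(T)}|_T=F_z|_T$ I obtain
\begin{align*}
 \norm{\res(\TT)-F_z}{L^2(T)}^2
 = \norm{\res(\TT)-F_{E(T)}}{L^2(T)}^2
 \le \norm{\res(\TT)-F_{E(T)}}{L^2(\bigcup\omega(\TT;E(T)))}^2.
\end{align*}
Summation over $T\in\omega(\TT;z)$ then produces the desired bound, with $\c{est2}$ given by the maximum multiplicity with which a single facet $E$ may be selected, namely $|\omega(\TT;E)|=2$.

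The only potential obstacle is a notational one: one must recognize that the minimizers in~\eqref{dp:def:FE} and~\eqref{dp:osc:node} live in the \emph{piecewise} polynomial space and therefore coincide element by element. Once this observation is in place, the lemma becomes pure combinatorics on the star around $z$. The assumption $z\in\KK(\TT)\cap\Omega$ ensures that every facet incident to $z$ has an interior relative interior point, so $\Sigma(\TT;z)\subseteq\EE(\TT)$ and the sum in the statement is genuinely over interior facets as required.
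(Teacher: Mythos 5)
Your proof rests on a misreading of the notation: in~\eqref{dp:def:FE} and~\eqref{dp:osc:node}, the space $\PP^{p-1}(\bigcup\omega(\TT;E))$ (respectively $\PP^{p-1}(\bigcup\omega(\TT;z))$) consists of \emph{single global polynomials} of degree $\le p-1$ on the domain $\bigcup\omega(\TT;E)$, not of piecewise polynomials relative to the triangulation. The paper distinguishes these two spaces explicitly in its own proof of the lemma, observing $\PP^{p-1}(\bigcup\omega(\TT;E))\subset\PP^{p-1}(\omega(\TT;E))$, where the argument $\omega(\TT;E)$ is a set of elements and hence indexes the (strictly larger) piecewise space. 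Consequently the $L^2$-orthogonal projection onto $\PP^{p-1}(\bigcup\omega(\TT;E))$ does \emph{not} decouple elementwise, and your key identity $F_E|_T = F_z|_T$ fails: $F_E$ is the best global-polynomial fit over the two elements sharing $E$, while $F_z$ is the best global-polynomial fit over the whole node patch, and these generally differ even on their common domain. This matters precisely because the edge oscillation $\norm{\res(\TT)-F_E}{L^2(\bigcup\omega(\TT;E))}$ is meant to detect whether $\res(\TT)$ is a single polynomial \emph{across} $E$; under your piecewise reading that quantity would reduce to elementwise data oscillation and the whole facet-based construction would be pointless.

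With the correct reading, the upper bound in~\eqref{dp:est2} still goes through easily, since $F_z|_{\bigcup\omega(\TT;E)}$ is an admissible competitor in~\eqref{dp:def:FE}, giving $\norm{\res(\TT)-F_E}{L^2(\bigcup\omega(\TT;E))}\le\norm{\res(\TT)-F_z}{L^2(\bigcup\omega(\TT;z))}$, and one then sums over $E\in\Sigma(\TT;z)$ using the uniformly bounded cardinality of the star. But your lower bound collapses: the claimed equality $\norm{\res(\TT)-F_z}{L^2(T)}=\norm{\res(\TT)-F_{E(T)}}{L^2(T)}$ is false, and indeed $F_z$ can fit $\res(\TT)$ strictly worse on $T$ than $F_{E(T)}$ does, with no multiplicative bound accessible by combinatorics alone. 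The paper proceeds entirely differently: it first assumes $f\in\PP^{p-1}(\TT)$, notes that both sides of~\eqref{dp:est2} define seminorms on the finite-dimensional space $\PP^{p-1}(\omega(\TT;z))$ whose common kernel is exactly $\PP^{p-1}(\bigcup\omega(\TT;z))$, and deduces equivalence by compactness plus a scaling argument (using that newest vertex bisection produces only finitely many patch shapes); the general $f\in L^2(\Omega)$ is then handled via an elementwise Pythagoras decomposition with the piecewise $L^2$-projection $\Pi(\TT)$. This seminorm-equivalence step is the genuine content of the lower bound, and it has no elementary combinatorial replacement.
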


\begin{proof}
The upper bound \revision{in~\eqref{dp:est2}} follows from
\begin{align*}
 \norm{\res(\TT)-F_E}{L^2(\bigcup\omega(\TT;E))}
 \le \norm{\res(\TT)-F_z}{L^2(\bigcup\omega(\TT;E))}
 \le \norm{\res(\TT)-F_z}{L^2(\bigcup\omega(\TT;z))}
\end{align*}
for all $E\in\Sigma(\TT;z)$ and the fact that the cardinality 
$|\Sigma(\TT;z)|$ is uniformly bounded in terms of the uniform shape 
regularity constant $\gamma$. 

The lower bound \revision{in~\eqref{dp:est2}} is first proved for a piecewise polynomial
$f\in\PP^{p-1}(\TT)$. This yields $\res(\TT) \in \PP^{p-1}(\TT)$. We employ equivalence of seminorms on finite dimensional spaces and scaling arguments.
Note that both terms in~\eqref{dp:est2} define seminorms on $\PP^{p-1}(\omega(\TT;z))$
with the kernel $\PP^{p-1}(\bigcup\omega(\TT;z))$ and hence are
equivalent with constants $\c{est2},\c{est3}>0$.
A scaling argument proves that these constants depend
only on the shape of $\bigcup\omega(\TT;E)$ or $\bigcup\Sigma(\TT;z)$.
Since newest vertex bisection only leads to finitely many shapes of 
triangles and hence patches and facet stars, this proves that 
$\c{est2}$ and $\c{est3}$ depend only on $\T$, $p$, and the use of newest 
vertex bisection.

It remains to prove the lower bound \revision{in~\eqref{dp:est2}} for general
$f\in L^2(\Omega)$. Let $\Pi(\TT):L^2(\Omega)\to\PP^{p-1}(\TT)$ denote
the $L^2$-projection \revision{so that} $F(\TT) = \Pi(\TT)\res(\TT)$ \revision{is
the unique solution to} 
\begin{align*}
 \norm{\res(\TT)-F(\TT)}{L^2(T)}
 = \min_{F\in\PP^{p-1}(T)} \norm{\res(\TT)-F}{L^2(T)}
 \quad\text{for all }T\in\TT.
\end{align*}
Note that $\PP^{p-1}(\bigcup\omega(\TT;E))\subset\PP^{p-1}(\omega(\TT;E))$.
Since $F_E$ and $F(\TT)$ are the corresponding $L^2$-orthogonal projections
of $\res(\TT)$, this yields
\begin{align}
 \norm{F(\TT) - F_E}{L^2(\bigcup\omega(\TT;E))} 
 = \min_{F\in\PP^{p-1}(\bigcup\omega(\TT;E))}\norm{F(\TT) - F}{L^2(\bigcup\omega(\TT;E))}.
\end{align} 
According to the $\TT$-elementwise Pythagoras theorem and the foregoing 
discussion for a $\TT$-piecewise polynomial $f$, it follows
\begin{align*}
 \norm{\res(\TT)-&F_z}{L^2(\bigcup\omega(\TT;z))}^2
 = \norm{\res(\TT)-F(\TT)}{L^2(\bigcup\omega(\TT;z))}^2
 + \norm{F(\TT)-F_z}{L^2(\bigcup\omega(\TT;z))}^2 
 \\&
 \lesssim \sum_{E\in\Sigma(\TT;z)}
 \big(\norm{\res(\TT)-F(\TT)}{L^2(\bigcup\omega(\TT;E))}^2
 + \norm{F(\TT)-F_E}{L^2(\bigcup\omega(\TT;E))}^2\big)
 \\&
 = \sum_{E\in\Sigma(\TT;z)}\norm{\res(\TT)-F_E}{L^2(\bigcup\omega(\TT;E))}^2.
\end{align*}
This concludes the proof.
\end{proof}

\begin{proof}[Proof of Proposition~\ref{dp:prop1}]
According to Lemma~\ref{dp:lemma:locequiv}, it remains to verify~\eqref{eq:loc}.
The uniform $\gamma$-shape regularity~\eqref{refinement:shapereg} yields
$h_E=\diam(E) \simeq h_T$ for all $E\in\EE(\TT)$ and $T\in\TT$ with
$E\subseteq T$. Hence
\begin{align*}
 \varrho_E(\TT;U(\TT))^2
 &= h_E^2\,\norm{\res(\TT)-F_E}{L^2(\bigcup\omega(\TT;E))}^2
 + h_E\,\norm{[\partial_nU(\TT)}{L^2(E)}^2\\
 &\lesssim \sum_{T\in\omega(\TT;E)}\big(
  h_E^2\norm{\res(\TT)}{L^2(T)}^2 
  + h_E\,\norm{[\partial_nU(\TT)}{L^2(\partial T\cap\Omega)}^2
 \big)\\
 &\simeq \sum_{T\in\TT(E)}\eta_T(\TT;U(\TT))^2.
\end{align*}
This proves~\eqref{eq:loca}.
For each interior node $z\in\KK(\TT)\cap\Omega$ of $T\in\TT$, 
Lemma~\ref{dp:lemma1} and~\ref{dp:lemma2} imply
\begin{align*}
 \eta_T(\TT,h(\TT);U(\TT))^2
   &= h_T^2\,\norm{\res(\TT)}{L^2(T)}^2 
   + h_T\,\norm{[\partial_nU(\TT)]}{L^2(\partial T\cap\Omega)}^2\\
   &\le h_T^2\,\norm{\res(\TT)-F_z}{L^2(\bigcup\omega(T;z))}^2 
   + h_T\,\norm{[\partial_nU(\TT)]}{L^2(\bigcup\Sigma(\TT;z))}^2\\ 
   &\simeq \sum_{E\in\Sigma(\TT;z)}\big(
   h_T^2\,\norm{\res(\TT)-F_E}{L^2(\bigcup\omega(T;E))}^2 
   + h_T\,\norm{[\partial_nU(\TT)]}{L^2(E)}^2\big)\\
   &\simeq \sum_{E\in\Sigma(\TT;z)}\varrho_E(\TT;U(\TT))^2.
\end{align*}
Since $\Sigma(\TT;z) \subseteq \set{E\in\EE(\TT)}{E\cap T\neq\emptyset}$,
this concludes the proof of~\eqref{eq:locb}. 
\end{proof}

\begin{remark}
This section concerns the natural choice $\TT(E)=\{T,T'\}$ for 
$E=T\cap T'\in\EE(\TT)$ for the relation between the index set $\EE(\TT)$
and the elements $\TT$. Remarkably, the abstract analysis of 
Section~\ref{section:locequiv} would even guarantee convergence with 
optimal rates, for fixed $k\in\N_0$, if $\TT(E)$ is an \emph{arbitrary} nonempty subset of $\omega^k(\TT(E))$.\qed
\end{remark}

\begin{consequence}
Convergence and optimal rates for the adaptive algorithm in the sense of Theorem~\ref{thm:main} 
and Theorem~\ref{thm:main3} resp.\ Theorem~\ref{thm:mainerr3} hold even for the facet-based error estimator.\qed
\end{consequence}
Numerical examples that underline the above result can be found in for 2D and lowest-order elements in~\cite{fpp}. Moreover, numerical examples for the obstacle problem with the facet-based estimator are found in~\cite{pp,page}.
%%%%%%%%%%%%%%%%%%%%%%%
\subsection{Recovery-based error estimator $\boldsymbol{\varrho(\cdot)}$}
%%%%%%%%%%%%%%%%%%%%%%%
\label{section:est:zz}
In this section, we consider recovery-based error estimators for FEM
which are occasionally also called ZZ-estimators after Zienkiewicz and 
Zhu~\cite{zz}. These estimators are popular in computational science and engineering because of their implementational ease and striking performance in many applications. Reliability has independently
shown by~\cite{rod94,cb02} for lowest-order elements $p=1$ and
later generalized to higher-order elements $p\ge1$ in~\cite{cb02p}. 
For the lowest-order case, convergence and quasi-optimality of the related
adaptive mesh-refining algorithm has been analyzed in~\cite{ks}. In the following, the result of~\cite{ks} is reproduced and even generalized to higher-order elements $p\geq 1$. Moreover, the abstract analysis of Section~\ref{section:locequiv} removes the artificial refinements~\cite{ks}.

Adopt the definition of $\varrho(\cdot)$ and the notation of Section~\ref{section:dp:facet-based} and let $G(\TT):\,L^2(\Omega) \to \SS^p_0(\TT)$ denote the local 
averaging operator which is defined as follows:
\begin{itemize}
 \item For lowest-order polynomials $p=1$, define $G(\TT)(v)\in\SS^1_0(\TT)$ by
 \begin{align*}
  G(\TT)(v)(z):= \frac{1}{|\omega(\TT;z)|}\,\int_{\bigcup\omega(\TT;z)} v\,dx\quad\text{for all inner nodes }z\in\KK(\TT)\cap\Omega.
 \end{align*}
\item For the general case $p\ge1$, define $G(\TT) = J(\TT): H^1_0(\Omega)\to \SS^p_0(\TT)$ as the Scott-Zhang projection from~\cite{scottzhang}.
\end{itemize}
Based on $G(\TT)$, the local estimator contributions of 
the recovery-based error estimator $\varrho(\cdot)$ read
\begin{align}\label{eq:recest}
\begin{split}
\varrho_\tau(\TT;U(\TT))^2 
&:=
\begin{cases}
 \normLtwo{(1 - G(\TT)) \nabla U(\TT)}{T}^2
 &\text{for }\tau = T \in\TT,\\
 \diam(E)^2\,\norm{\Delta_\TT U(\TT)-f-F_E}{L^2(E)}^2\quad
 &\text{for }\tau = E \in\EE(\TT).
\end{cases}
\end{split}
\end{align}
Note that $\II(\TT) = \TT \cup \EE(\TT)$ with respect to the abstract
notation of Section~\ref{section:locequiv}. We define $\TT(T) = \{T\}$
for $T\in\TT$ and $\TT(E) = \{T,T'\}$ for $E=T\cap T'\in\EE(\TT)$.

\begin{proposition}\label{dp:prop2}
For general polynomial degree $p\ge1$, the error estimators $\eta(\cdot)$ and 
$\varrho(\cdot)$ satisfy the local equivalences~\eqref{eq:loc} for $k=2$.
\end{proposition}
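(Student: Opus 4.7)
The plan is to verify the two local equivalences~\eqref{eq:loca}--\eqref{eq:locb} separately by splitting the index set $\II(\TT)=\TT\cup\EE(\TT)$ into its element part (the recovery contribution) and its facet part (the oscillation contribution), and I abbreviate $\res(\TT):=\Delta_\TT U(\TT)-f$ throughout. I read the $L^2(E)$ in~\eqref{eq:recest} as the natural patch norm $L^2(\bigcup\omega(\TT;E))$ consistent with the definition of $F_E\in\PP^{p-1}(\bigcup\omega(\TT;E))$ from~\eqref{dp:def:FE}. The overall strategy is: for~\eqref{eq:loca} the edge oscillations are trivially dominated by the residual, and the recovery term is dominated by the jump part of $\eta$ via the standard ZZ equivalence; for~\eqref{eq:locb} the volume residual in $\eta_T$ is absorbed by edge oscillations via Lemma~\ref{dp:lemma1} and Lemma~\ref{dp:lemma2}, while the jump part is absorbed by the recovery contributions via the converse ZZ equivalence.

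For the upper bound~\eqref{eq:loca} the facet contributions follow because $F_E$ is the $L^2$-orthogonal projection of $\res(\TT)$ onto $\PP^{p-1}(\bigcup\omega(\TT;E))$, so $\gamma$-shape regularity together with $\diam(E)\simeq h_T$ for $T\in\omega(\TT;E)$ yields
\begin{align*}
\varrho_E(\TT;U(\TT))^2\le \diam(E)^2\,\norm{\res(\TT)}{L^2(\bigcup\omega(\TT;E))}^2\lesssim \sum_{T'\in\omega(\TT;E)}\eta_{T'}(\TT;U(\TT))^2,
\end{align*}
and $\omega(\TT;E)\subseteq\omega^2(\TT;\TT(E))$ concludes this part. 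For the element contributions $\varrho_T(\TT;U(\TT))=\norm{(1-G(\TT))\nabla U(\TT)}{L^2(T)}$, I would invoke the classical recovery equivalence of~\cite{cb02} and its higher-order counterpart~\cite{cb02p}, which localizes on one-patches around $T$ to
\begin{align*}
\norm{(1-G(\TT))\nabla U(\TT)}{L^2(T)}^2\lesssim \sum_{\substack{E\in\EE(\TT)\\ E\subset\bigcup\omega(\TT;T)}}\diam(E)\,\norm{[\partial_nU(\TT)]}{L^2(E)}^2.
\end{align*}
Every such $E$ is a facet of some $T'\in\omega(\TT;T)\subseteq\omega^2(\TT;T)$, so the right-hand side is dominated by $\sum_{T'\in\omega^2(\TT;T)}\eta_{T'}(\TT;U(\TT))^2$, which gives~\eqref{eq:loca} with $k=2$.

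For the lower bound~\eqref{eq:locb} fix $T\in\TT$ and pick an interior node $z\in\KK(\TT)\cap\Omega$ of $T$, which exists by the mild assumption on $\TT_0$ in Section~\ref{section:dp:facet-based}. Combining Lemma~\ref{dp:lemma1} with Lemma~\ref{dp:lemma2} yields
\begin{align*}
h_T^2\,\norm{\res(\TT)}{L^2(T)}^2\lesssim h_T\,\norm{[\partial_nU(\TT)]}{L^2(\bigcup\Sigma(\TT;z))}^2+\sum_{E\in\Sigma(\TT;z)}\varrho_E(\TT;U(\TT))^2,
\end{align*}
so the volume part of $\eta_T$ is already controlled by edge oscillations from the star of $z$ plus a jump term. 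That jump term, together with $h_T\,\norm{[\partial_nU(\TT)]}{L^2(\partial T\cap\Omega)}^2$, is then absorbed by the converse direction of the ZZ equivalence of~\cite{cb02,cb02p},
\begin{align*}
\sum_{\substack{E\in\EE(\TT)\\ E\subset\partial T'\cap\Omega,\,T'\in\omega(\TT;T)}}\!\!\!\diam(E)\,\norm{[\partial_nU(\TT)]}{L^2(E)}^2\lesssim \sum_{T''\in\omega^2(\TT;T)}\norm{(1-G(\TT))\nabla U(\TT)}{L^2(T'')}^2,
\end{align*}
which by locality of $G(\TT)$ only reads from the two-patch of $T$. Since every index $\tau\in\II(\TT)$ that appears on the right satisfies $\TT(\tau)\cap\omega^2(\TT;T)\neq\emptyset$, this proves~\eqref{eq:locb} with $k=2$.

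The main obstacle is the recovery equivalence that I invoke in both directions. For $p=1$ and $G=$ nodal averaging it is the classical argument of~\cite{rod94,cb02} based on edge-patch scaling and norm equivalence on a fixed finite-dimensional space of shape functions; for $p\ge2$ and $G=J(\TT)$ the Scott-Zhang projection it is carried out in~\cite{cb02p} and relies on the locality $J(\TT)v|_T$ depending only on $v|_{\omega(\TT;T)}$, together with the fact that $J(\TT)$ reproduces discrete polynomial traces up to order $p-1$ on each facet. The patch radius $k=2$ is essentially tight in this analysis: the recovery contribution on $T$ already reads from $\omega(\TT;T)$ while the edge oscillation on $E\subset\partial T$ already reaches into $\omega(\TT;E)$, and combining the two in the lower bound~\eqref{eq:locb} accumulates precisely one additional layer of neighbors.
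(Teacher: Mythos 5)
Your proof is correct and mirrors the paper's argument: both verify~\eqref{eq:loca} by bounding $\varrho_E$ directly through the best-approximation property of $F_E$ and bounding $\varrho_T$ through a local recovery--jump equivalence, and both verify~\eqref{eq:locb} by running Lemma~\ref{dp:lemma1} and Lemma~\ref{dp:lemma2} to convert the volume residual into edge oscillations plus jumps and then absorbing the jumps by the converse recovery--jump equivalence. The only point of departure is that where you cite~\cite{rod94,cb02,cb02p} for the recovery--jump equivalence in both directions, the paper establishes it internally as Lemma~\ref{dp:lemma3} via a kernel argument on the finite-dimensional patch spaces together with the finitely-many-patch-shapes property of newest vertex bisection; your one-patch localization for the upper bound is slightly sharper than the two-patch version in~\eqref{dp:est4}, but since $\omega(\TT;T)\subseteq\omega^2(\TT;T)$ both deliver $k=2$.
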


The proof requires the following lemma which states that the normal jumps
are locally equivalent to averaging. The result is well-known 
for the lowest-order case, \revision{and} its proof is included for the convenience
of the reader.

\begin{lemma}\label{dp:lemma3}
For some interior node $z\in\KK(\TT)\cap\Omega$, it holds
\begin{align}\label{dp:est4}
\begin{split}
 \c{est4}^{-1}\,h_T\,\normLtwo{[\partial_n U(\TT)]}{\bigcup\Sigma(\TT;z)}^2
 &\le \normLtwo{(1 - G(\TT))\nabla U(\TT)}{\bigcup\omega(\TT;z)}^2
 \\&
 \le \c{est5}
  \sum_{z^\prime\in\Sigma(\TT;z)\cap\KK(\TT)\cap\Omega} h_{z'}
 \normLtwo{[\partial_n U(\TT)]}{\bigcup\Sigma(\TT;z^\prime)}^2.
\end{split}
\end{align}
The constants $\setc{est4},\setc{est5}>0$ depend only on $\T$, the polynomial
degree $p\ge1$, and the use of newest vertex bisection.
\end{lemma}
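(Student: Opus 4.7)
The plan is to establish the two inequalities separately, each time exploiting that $G(\TT)\nabla U(\TT)\in\SS^p_0(\TT)$ is globally continuous across the facets $E\in\Sigma(\TT;z)$.

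For the lower bound, I would set $W:=\nabla U(\TT)-G(\TT)\nabla U(\TT)$. Continuity of $G(\TT)\nabla U(\TT)$ across each facet $E=T\cap T'\in\Sigma(\TT;z)$ gives the identity $[\partial_n U(\TT)]|_E = [W\cdot n]|_E$. The triangle inequality together with the discrete trace inequality $\normLtwo{V}{\partial K}^2\lesssim h_K^{-1}\normLtwo{V}{K}^2$ for polynomial $V$ of bounded degree on a simplex $K$, applied to $W|_T$ and $W|_{T'}$ separately (both being polynomials of degree at most $p$), yields $h_T\normLtwo{[\partial_n U(\TT)]}{E}^2 \lesssim \normLtwo{W}{T\cup T'}^2$. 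Summation over $E\in\Sigma(\TT;z)$ together with the uniform shape regularity $h_T\simeq h_{T'}$ throughout the patch $\omega(\TT;z)$ concludes the lower estimate.

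For the upper bound, I would argue via equivalence of seminorms on finite-dimensional spaces, combined with a scaling argument in the style of the proof of Lemma~\ref{dp:lemma2}. Consider the finite-dimensional space $X_z := \{V|_{\bigcup\omega^2(\TT;z)} : V\in\SS^p_0(\TT)\}$ of admissible restrictions, where the slightly enlarged patch $\omega^2(\TT;z)$ is taken to absorb the (non-strictly-local) stencil of the Scott-Zhang operator $G(\TT)=J(\TT)$ on $\omega(\TT;z)$. Both sides of~\eqref{dp:est4} induce seminorms on $X_z$ sharing a common kernel: if all jumps $[\partial_n V]$ vanish on the interior facets of $\omega^2(\TT;z)$, then $\nabla V$ is continuous there and thus belongs to $\SS^{p-1}(\TT)^d\subset\SS^p(\TT)^d$ on a neighbourhood of $\omega(\TT;z)$, so the reproduction property $G(\TT)(\nabla V)=\nabla V$ inside $\omega(\TT;z)$ annihilates the left-hand seminorm as well. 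Since newest vertex bisection produces only finitely many reference patch shapes (Section~\ref{section:refinement:examples}), the standard finite-dimensional seminorm-equivalence argument on each reference shape, transferred to the physical patch by scaling (which produces the weights $h_T$ on the left and $h_{z'}$ on the right via $h_T\simeq h_{z'}$), yields the bound with constants depending only on $p$, on $\gamma$, and on the bisection rule.

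The main obstacle is the kernel identification in the upper bound. For $p=1$, averaging reduces to a weighted vertex mean of piecewise-constant gradients, and vanishing edge jumps trivially force this mean to coincide with each $\nabla U(\TT)|_T$; for $p\ge 2$, the cleanest route is via the Scott-Zhang reproduction of all continuous functions in $\SS^p(\TT)^d$. This forces the argument onto a sufficiently enlarged patch (such as $\omega^2(\TT;z)$) so that the locality of $J(\TT)$ transports the vanishing-jump hypothesis into the reproduction identity on $\omega(\TT;z)$; once that bookkeeping is fixed, nothing beyond routine scaling remains.
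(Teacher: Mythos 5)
For the lower bound, your route is genuinely different from, and cleaner than, the paper's: instead of the uniform seminorm-equivalence argument (which the paper uses for both inequalities at once), you exploit that $G(\TT)\nabla U(\TT)\in\SS^p_0(\TT)^d$ has no normal jump, so $[\partial_n U(\TT)]|_E = [(1-G(\TT))\nabla U(\TT)\cdot n]|_E$, and then apply the elementwise discrete trace inequality. This is constructive and yields explicit constants, whereas the scaling argument only guarantees their existence.

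For the upper bound you follow the paper's strategy (seminorm equivalence on a finite-dimensional space, scaling, and the NVB finitely-many-patch-shapes argument), but the kernel bookkeeping is off. You posit as ``common kernel'' the subspace on which all jumps vanish on every interior facet of $\omega^2(\TT;z)$; but this coincides with neither the kernel of the left seminorm (which only forces vanishing jumps on interior facets of $\omega(\TT;z)$) nor the kernel of the right one (which only forces vanishing jumps on $\bigcup_{z'\in\Sigma(\TT;z)\cap\KK(\TT)\cap\Omega}\Sigma(\TT;z')$). What the inequality requires is that the kernel of the right seminorm be contained in the kernel of the left one; showing instead that a \emph{subset} of the right kernel lies in the left kernel does not suffice. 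The repair is as in the paper's own proof: vanishing jumps on $\Sigma(\TT;z')$ alone already forces $\nabla V\in\SS^{p-1}(\omega(\TT;z'))^d$, since every interior facet of $\omega(\TT;z')$ contains $z'$; the Scott-Zhang dof-functional at $z'$ is supported in $\omega(\TT;z')$; and $\SS^{p-1}(\TT)\subset\SS^p(\TT)$ then yields reproduction of the dof-value at each interior node $z'$ of $\overline{\bigcup\omega(\TT;z)}$, hence $G(\TT)\nabla V=\nabla V$ on $\bigcup\omega(\TT;z)$. Your choice of $\omega^2(\TT;z)$ as the ambient finite-dimensional space to accommodate the nonlocal Scott-Zhang stencil is correct, but the jump hypothesis in the kernel argument must be confined to the specific facets appearing in the right-hand side.
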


\begin{proof}
We use equivalence of seminorms on finite dimensional spaces  and 
scaling arguments.
% (compare e.g.~\cite[Section~2.2.4]{ks}). 
%Let $z\in\KK(\TT)\cap\Omega$ be an inner node $z\in\KK(\TT)\cap\Omega$.
To prove~\eqref{dp:est4}, it thus suffices to show that the chain of inequalities holds true if one term is zero. 

First, assume $(1 - G(\TT))\nabla U(\TT)=0$ on $\bigcup\omega(\TT;z)$. This implies $\nabla U(\TT)\in\SS^p(\omega(\TT;z))$ and hence $[\partial_n U(\TT)]=0$ on $\bigcup\Sigma(\TT;z)$. 

Second, assume $[\partial_n U(\TT)]=0$ on $\bigcup\Sigma(\TT;z^\prime)$ for all inner nodes $z^\prime$ of $\Sigma(\TT;z)$. This shows that the normal jumps of $\nabla U(\TT)$ are zero over $\bigcup\Sigma(\TT;z^\prime)$. Since $U(\TT)\in H^1(\Omega)$, the tangential jumps of $\nabla U(\TT)$  also vanish over $\Sigma(\TT;z^\prime)$. Altogether, this implies $\nabla U(\TT)\in \SS^{p-1}(\omega(\TT;z^\prime))$ for all $z^\prime$. If the Scott-Zhang projection defines the averaging, $G(\TT)(\nabla U(\TT)(z^\prime)$ depends only on $\nabla U(\TT)|_{\omega(\TT;z^\prime)}$, this implies $G(\TT)\nabla U(\TT)=\nabla U(\TT)$. In the particular case $p=1$ and patch averaging, $\nabla U(\TT)$ is constant on $\omega(\TT;z^\prime)$. In any case, we thus derive $(1-G(\TT))\nabla U(\TT) = 0$ on $\bigcup\omega(\TT;z)$. 

The constants in~\eqref{dp:est4} depend on the shapes of patches 
$\bigcup\omega(\TT;z')$ involved.
Since NVB leads to only finitely many patch shapes, we deduce that the 
these constants depend only on the polynomial degree $p\in\N$ and on $\T$.
\end{proof}

\begin{proof}[Proof of Proposition~\ref{dp:prop2}]
In order to prove the local equivalence~\eqref{eq:loc} of $\varrho(\cdot)$
and $\eta(\cdot)$, let $z\in\KK(\TT)\cap\Omega$ be an interior node
of $T\in\TT$. The upper estimate in~\eqref{dp:est4} yields
\begin{align*}
 \varrho_T(\TT;U(\TT)^2
 \lesssim \sum_{T'\in\omega^2(\TT;T)}\eta_{T'}(\TT;U(\TT))^2.
\end{align*}
For $E=T'\cap T\in\EE(\TT)$, it holds
\begin{align*}
 \varrho_E(\TT;U(\TT))^2
 \le 2\,\norm{\res(\TT)}{L^2(T)}^2 + 2\,\norm{\res(\TT)}{L^2(T')}^2
 \le 2\,\sum_{T'\in\omega(\TT;T)}\eta_{T'}(\TT;U(\TT))^2.
\end{align*}
The combination of the last two estimates proves~\eqref{eq:loca}. The proof of~\eqref{eq:locb} employs
Lemma~\ref{dp:lemma1} and~\ref{dp:lemma2} as well as the lower bound in~\eqref{dp:est4}. 
For an interior node $z\in\KK(\TT)\cap\Omega$ of $T\in\TT$, it follows
\begin{align*}
 \eta_T(\TT;U(\TT)^2
 &\lesssim h_T\,\norm{[\partial_n U(\TT)]}{L^2(\bigcup\Sigma(\TT;z))}^2
 + h_T^2\,\sum_{E\in\Sigma(\TT;z)}\norm{\res(\TT)-F_E}{L^2(\bigcup\omega(\TT;E)}^2\\
 &\lesssim \sum_{\genfrac{}{}{0pt}{2}{\tau\in\TT\cup\EE(\TT)}{\tau\cap T\neq\emptyset}}
 \varrho_\tau(\TT;U(\TT))^2.
\end{align*}
This concludes the proof.
\end{proof}

\begin{consequence}
The adaptive algorithm leads to convergence with quasi-optimal rate
for the estimator $\varrho(\cdot)$ in the sense of Theorem~\ref{thm:main3}. For 
lowest-order elements $p=1$, Theorem~\ref{thm:mainerr3} states optimal rates for the discretization error, while for higher-order elements $p\geq 1$, additional regularity of
$f$ has to be imposed, e.g., $f\in H^1(\Omega)$ for $p=2$. \qed
\end{consequence}
\def\operator#1{{\mathcal{#1}}}
\def\matrix#1{{\boldsymbol{#1}}}
\def\div{{\rm div\;}}
%%%%%%%%%%%%%%%%%%%%%%%%%%%
\section{Adaptive FEM for Nonlinear Model Problems}
\label{section:nonlinear}
%%%%%%%%%%%%%%%%%%%%%%%%%%%
In this section, we give three examples of adaptive FEM for nonlinear problems. Each problem relies on different approaches, however, all fit into the abstract analysis of Section~\ref{section:optimality} resp. Section~\ref{section:locequiv}. 
\subsection{Conforming FEM for certain strongly-monotone operators}
In this section, we consider a possibly nonlinear  generalization of the model problem of Section~\ref{section:ex:nonsymm}. 
On a Lipschitz domain $\Omega\subset \R^d$, $d=2,3$, consider the nonlinear, second-order PDE
\begin{align}\label{ex:nonlin:strong}
\begin{split}
\operator{L}u(x):= -\div\matrix{A}\big(x,\nabla u(x)\big) + g\big(x,u(x),\nabla u(x)\big)&=f\quad\text{in }\Omega,\\
u&=0\quad\text{on }\Gamma=\partial\Omega.
\end{split}
\end{align}
The work~\cite{gmz12} considers strongly monotone operators $\operator{L}$ with $\matrix{A}(x,\nabla u(x))\linebreak=\alpha(x,|\nabla u(x)|^2)\,\nabla u(x)$ with $\alpha(\cdot,\cdot)\in\R$ and $g(x,u(x),\nabla u(x))=0$.
The discretization consists of first-order polynomials. Although the analysis is, in principle, not limited to the lowest-order case, this avoids further regularity assumptions on the nonlinearity of the operator $\operator{L}$ to guarantee reduction~\eqref{A:reduction} of the estimator.
In the frame of strongly monotone operators, suppose the coefficient functions to satisfy
\begin{subequations}
\begin{align}
 \normLtwo{\matrix{A}(\cdot,\nabla v)-\matrix{A}(\cdot,\nabla w)}{\Omega}&\leq \c{nllip}\normLtwo{\nabla(v-w)}{\Omega},\label{eq:smon1a}\\
 \normLtwo{g(\cdot,w,\nabla v)-g(\cdot,w,\nabla v)}{\Omega}&\leq \c{nllip}\normLtwo{\nabla(v-w)}{\Omega}\label{eq:smon1b}\\
\c{nlelliptic} \normLtwo{\nabla(v-w)}{\Omega}^2&\leq  \dual{\operator{L}v-\operator{L}w}{v-w}\label{eq:smon2}
\end{align}
\end{subequations}
for all $v,w\in H^1_0(\Omega)$ and some constants $\setc{nllip},\setc{nlelliptic}>0$. Here and throughout this paper, $\dual{\cdot}{\cdot}$ is the dual pairing of $H^1_0(\Omega)$ and $H^{-1}(\Omega)$ and all differential operators are understood in the weak sense.
Note that~\eqref{eq:smon1a}--\eqref{eq:smon1b} implies, in particular, that the operator $\operator{L}:H^1_0(\Omega)\to H^{-1}(\Omega):=H^1_0(\Omega)^*$ is Lipschitz continuous. Together with~\eqref{eq:smon2} and $f\in L^2(\Omega)$ the main theorem on strongly monotone operators~\cite[Theorem~26.A]{zeidler} guarantees that the weak form
\begin{align}\label{ex:nonlin:weak}
\dual{\operator{L} u}{v}:=\int_\Omega \matrix{A}(x,\nabla u(x))\cdot \nabla v+g(x,u(x),\nabla u(x))v\,dx=\int_\Omega fv\,dx\quad\text{for all }v\in H^1_0(\Omega)
\end{align}
admits a unique solution $u\in H^1_0(\Omega)$.
The discretization of~\eqref{ex:nonlin:weak} as well as the notation follow Section~\ref{section:ex:nonsymm}.
For a given regular triangulation $\TT$, consider 
$\XX(\TT) := \SS^1_0(\TT) := \PP^1(\TT)\cap H^1_0(\Omega)$
with $\PP^p(\TT)$ from~\eqref{eq:polynomials}. 
The discrete formulation also fits in the framework of strongly monotone operators and
\begin{align}\label{ex:nonlin:discrete}
\dual{\operator{L}U(\TT)}{V}=\int_\Omega fV\,dx\quad\text{for all }V\in\SS^1_0(\TT)
\end{align}
admits a unique solution $U(\TT)\in\SS^1_0(\TT)$.
Define the symmetric error-measure $\dist[\TT]{v}{w}=\dist{v}{w}:=\dual{\operator{L}v-\operator{L}w}{v-w}$, which is equivalent to the $H^1$-norm in the sense that
\begin{align}\label{ex:nonlin:equiv}
\normLtwo{\nabla(v-w)}{\Omega}\lesssim \dist{v}{w}\lesssim \normLtwo{\nabla(v-w)}{\Omega}\quad\text{for all }v,w\in H^1_0(\TT).
\end{align}
Therefore, $\dist{\cdot}{\cdot}$ satisfies the quasi-triangle inequality with $\c{triangle}>0$ which depends only on $\operator{L}$ and $\Omega$.
With $\XX:=H^1_0(\Omega)$, all the assumptions of Section~\ref{sec:setting} are satisfied.
and the C\'ea lemma~\eqref{eq:cea} holds with the constant $\c{cea}=2\c{nllip}/\c{nlelliptic}$.

For ease of notation, set $\operator{A}v:= -\div\matrix{A}(\cdot,\nabla v(\cdot))$ as well as $\operator{K}:=\operator{L}-\operator{A}$.
To define the error estimator and to verify our axioms of adaptivity, suppose that $\matrix{A}:\,\Omega\times\R^d\to\R^d$ is Lipschitz continuous and $\operator{L}:\, H_0^1(\Omega)\to H^{-1}(\Omega)$ as well as $\operator{A}:\, H_0^1(\Omega)\to H^{-1}(\Omega)$ to be twice Fr\'echet differentiable
\begin{align}\label{eq:frechet}
 \begin{split}
  D\operator{L},D\operator{A}:&\, H^1_0(\Omega)\to L(H^1_0(\Omega),H^{-1}(\Omega)),\\
  D^2\operator{L},D^2\operator{A}:&\, H^1_0(\Omega)\to L\big(H^1_0(\Omega),L(H^1_0(\Omega),H^{-1}(\Omega))\big).
 \end{split}
\end{align}
Assume that the second derivative is bounded locally around the solution $u$ of~\eqref{ex:nonlin:weak} i.e., there exists $\eps_{\ell oc}>0$ with \definec{nlbound}
\begin{align}\label{eq:nlbounded}
\begin{split}
 \c{nlbound}:=\sup_{\genfrac{}{}{0pt}{2}{v\in H^1_0(\Omega)}{\normLtwo{\nabla(u-v)}{\Omega}<\eps_{\ell oc}}}\Big(\norm{&D^2\operator{L}(v)}{L\big(H^1_0(\Omega),L(H^1_0(\Omega),H^{-1}(\Omega))\big)}\\
&+\norm{D^2\operator{A}(v)}{L\big(H^1_0(\Omega),L(H^1_0(\Omega),H^{-1}(\Omega))\big)}\Big)<\infty.
\end{split}
\end{align}
Assume that $D\operator{A}(v):\,H^1_0(\Omega)\to H^{-1}(\Omega)$ is symmetric for all $v\in H^1_0(\Omega)$ in the sense that $\dual{D\operator{A}(v)w_1}{w_2}=\dual{w_1}{D\operator{A}(v)w_2}$ for all $w_1,w_2\in H^1_0(\Omega)$.

The residual error estimator is similar to the linear case~\eqref{ex:nonsymm:estimator} and reads
\begin{align}\label{ex:nonlin:estimator}
\eta_T(\TT;V):=h_T^2\normLtwo{\operator{L}|_TV - f}{T}^2 + h_T\normLtwo{[\matrix{A}(\cdot,\nabla V)\cdot n]}{\partial T\cap\Omega}^2
\end{align}
for all $V\in\SS^1_0(\TT)$ and $T\in\TT$, see~\cite[Section~6.5]{nonsymm}.

Suppose
newest vertex bisection (NVB) so that the assumptions~\eqref{refinement:sons}--\eqref{refinement:overlay} as well as uniform $\gamma$-shape regularity~\eqref{refinement:shapereg} hold.

While the axioms stability~\eqref{A:stable}, reduction~\eqref{A:reduction},  and discrete reliability~\eqref{A:dlr} follow from the same arguments as for the linear case, the general quasi-orthogonality requires some additional analysis.

\begin{proposition}\label{prop:ex:nonlin:assumptions}
The conforming discretization of~\eqref{ex:nonlin:discrete} with
residual error estimator~\eqref{ex:nonlin:estimator} satisfies
stability~\eqref{A:stable}, reduction~\eqref{A:reduction}
with $\q{reduction}=2^{-1/d}$, generalized 
quasi-orthogonality~\eqref{A:qosum},  and discrete reliability~\eqref{A:dlr} 
with $\RR(\TT,\widehat\TT) = \TT\backslash\widehat\TT$.
The constants $\c{stable},\c{reduction},\c{reliable},\c{qosum}(\epsqo),\c{dlr}>0$
depend only on the polynomial degree $p\in\N$ and the shape regularity and hence
on $\T$.
\end{proposition}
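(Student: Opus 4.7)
The plan is to adapt the proofs of~\eqref{A:stable}--\eqref{A:dlr} from the linear non-symmetric template of Proposition~\ref{prop:ex:nonsymm:assumptions} to the nonlinear setting, relying on the Lipschitz bounds~\eqref{eq:smon1a}--\eqref{eq:smon1b}, strong monotonicity~\eqref{eq:smon2}, the norm equivalence~\eqref{ex:nonlin:equiv}, and the second-derivative bound~\eqref{eq:nlbounded}. For stability~\eqref{A:stable} and reduction~\eqref{A:reduction}, I would argue verbatim as in Proposition~\ref{prop:ex:poisson:assumptions}. For any subset $\mathcal{S}\subseteq\TT\cap\widehat\TT$, the volume part $h_T\|\operator{L}|_T V - f\|_{L^2(T)}$ and the jump part $h_T^{1/2}\|[\matrix{A}(\cdot,\nabla V)\cdot n]\|_{L^2(\partial T\cap\Omega)}$ are split via the triangle inequality; the Lipschitz continuity of $\matrix{A}$ and $g$ combined with the inverse estimate $\|\div(\matrix{A}(\cdot,\nabla V)-\matrix{A}(\cdot,\nabla \widehat V))\|_{L^2(T)}\lesssim h_T^{-1}\|\nabla(V-\widehat V)\|_{L^2(T)}$ and a standard trace inverse estimate absorb the discrete difference into $\dist[\widehat\TT]{\widehat V}{V}$ via~\eqref{ex:nonlin:equiv}. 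The reduction~\eqref{A:reduction} exploits additionally that every successor $T'\in\widehat\TT$ of $T\in\TT\setminus\widehat\TT$ satisfies $h_{T'}\le 2^{-1/d}h_T$, yielding $\q{reduction}=2^{-1/d}$.

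Discrete reliability~\eqref{A:dlr} follows with the ansatz $E:=U(\widehat\TT)-U(\TT)\in\XX(\widehat\TT)$ and the Scott-Zhang quasi-interpolant $J_\TT E\in\XX(\TT)$, which satisfies $(E-J_\TT E)|_T=0$ for $T\in\TT\cap\widehat\TT$ together with the usual local first-order approximation estimates on $\RR(\TT,\widehat\TT):=\TT\setminus\widehat\TT$. Strong monotonicity~\eqref{eq:smon2} and the nested Galerkin equations for $U(\TT)$ and $U(\widehat\TT)$ give
\begin{align*}
 \c{nlelliptic}\,\|\nabla E\|_{L^2(\Omega)}^2
 \le \dual{\operator{L}U(\widehat\TT)-\operator{L}U(\TT)}{E}
 = \dual{f-\operator{L}U(\TT)}{E-J_\TT E},
\end{align*}
and elementwise integration by parts on the right-hand side produces precisely the local volume residuals and interior jump terms of $U(\TT)$ supported on $\RR(\TT,\widehat\TT)$. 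A Cauchy-Schwarz argument together with the weighted Scott-Zhang estimates and~\eqref{ex:nonlin:equiv} absorbs $\|\nabla E\|_{L^2(\Omega)}$ and provides~\eqref{A:dlr}.

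The main obstacle is the general quasi-orthogonality~\eqref{A:qosum}, which is proved along the lines of~\cite{nonsymm} and Proposition~\ref{prop:ex:nonsymm:assumptions}. First, the estimator reduction from Lemma~\ref{lem:estconv} together with a~priori convergence of the discrete approximations (which holds in the present strongly monotone setting and is transferred via Corollary~\ref{cor:convergence}) yields $U(\TT_\ell)\to u$ in $H^1_0(\Omega)$, so that eventually $U(\TT_\ell)$ lies in the neighborhood of $u$ in which~\eqref{eq:nlbounded} applies. Second, Taylor expansion around $u$ gives, for $v,w$ close to $u$,
\begin{align*}
 \operator{A}v-\operator{A}w
 &= D\operator{A}(u)(v-w) + \rho_{\operator A}(v,w),
 \\
 \operator{L}v-\operator{L}w
 &= D\operator{L}(u)(v-w) + \rho_{\operator L}(v,w),
\end{align*}
where~\eqref{eq:nlbounded} implies the quadratic bound $\|\rho_\bullet(v,w)\|_{H^{-1}(\Omega)}\lesssim (\|\nabla(u-v)\|_{L^2(\Omega)}+\|\nabla(u-w)\|_{L^2(\Omega)})\|\nabla(v-w)\|_{L^2(\Omega)}$. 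Since $D\operator{A}(u)$ is symmetric and, by~\eqref{eq:smon2}, elliptic, the linearized bilinear form $b_u(v,w):=\dual{D\operator{A}(u)v}{w}$ is equivalent to the $H^1_0$-scalar product, and the associated Pythagoras identity holds for the symmetrized residual. The nonlinear Galerkin identity $\dual{\operator{L}U(\TT_{k+1})-\operator{L}U(\TT_k)}{U(\TT_{k+1})-U(\TT_k)}=0$, combined with the Taylor expansion and the equivalence $\dist{\cdot}{\cdot}\simeq b_u(\cdot,\cdot)^{1/2}$, yields a Pythagoras-type identity for $b_u$ up to a remainder that is quadratic in $\dist{u}{U(\TT_k)}+\dist{u}{U(\TT_{k+1})}$; the non-self-adjoint part stemming from $D\operator{K}(u):=D\operator{L}(u)-D\operator{A}(u)$ is treated exactly as the compact perturbation $\matrix b\cdot\nabla$ in Proposition~\ref{prop:ex:nonsymm:assumptions} via the weak-to-strong argument of~\cite[Lemma~6, Proposition~7]{nonsymm}. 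Summing from some $\ell_0$ large enough and absorbing the quadratic remainder into the $\epsqo\,\dist{u}{U(\TT_k)}^2$ term for sufficiently small $\epsqo>0$, reliability~\eqref{A:reliable} (implied by~\eqref{A:dlr} through Lemma~\ref{lemma:dlr2rel}) bounds the right-hand side by $\eta(\TT_\ell;U(\TT_\ell))^2$; the finitely many pre-asymptotic indices $0\le\ell<\ell_0$ are treated as in Proposition~\ref{prop:ex:nonsymm:assumptions} with the convention $\infty\cdot 0=0$, giving~\eqref{A:qosum}.
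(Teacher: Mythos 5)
Your overall plan for stability, reduction, and discrete reliability matches the paper's (which defers to the Poisson and nonsymmetric templates and the techniques of Stevenson/CKNS via the norm equivalence~\eqref{ex:nonlin:equiv}), and your treatment of the compact part $\operator{K}$ via the weak-to-strong argument of \cite{nonsymm} is exactly what the paper does. The choice to linearize once and for all at $u$ rather than (as the paper does) at the current iterate $U(\TT_{\ell+1})$ is a legitimate variation since both lie in the same neighbourhood covered by~\eqref{eq:nlbounded}; it trades the paper's two-step ``swap'' identity for a single fixed symmetric form $b_u$, but leads to the same structure and constants.

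There is, however, one step that is simply false as written: the ``nonlinear Galerkin identity'' $\dual{\operator{L}U(\TT_{k+1})-\operator{L}U(\TT_k)}{U(\TT_{k+1})-U(\TT_k)}=0$. By strong monotonicity~\eqref{eq:smon2}, the left-hand side equals $\dist{U(\TT_{k+1})}{U(\TT_k)}^2\gtrsim \|\nabla(U(\TT_{k+1})-U(\TT_k))\|_{L^2(\Omega)}^2$, so if it vanished you would be forced to conclude $U(\TT_{k+1})=U(\TT_k)$, which makes the whole quasi-orthogonality argument trivial and vacuous. The Galerkin orthogonality available in this nested, conforming setting is $\dual{\operator{L}u-\operator{L}U(\TT_{k+1})}{V}=0$ for all $V\in\SS^1_0(\TT_{k+1})$; taking $V=U(\TT_{k+1})-U(\TT_k)\in\SS^1_0(\TT_{k+1})$ gives $\dual{\operator{L}u-\operator{L}U(\TT_{k+1})}{U(\TT_{k+1})-U(\TT_k)}=0$. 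This is the cross term that vanishes exactly. The other cross term $\dual{\operator{L}U(\TT_{k+1})-\operator{L}U(\TT_k)}{u-U(\TT_{k+1})}$ does \emph{not} vanish; this is precisely what the Taylor-plus-symmetry swap (or, in your version, the $b_u$-Pythagoras decomposition) is needed for, turning it into $\dual{u-U(\TT_{k+1})}{\operator{L}U(\TT_{k+1})-\operator{L}U(\TT_k)}$ up to quadratic remainders which are then absorbed into $\epsqo\dist{u}{U(\TT_k)}^2$. Once you replace the wrong identity with the correct one, the rest of your sketch (C\'ea, compactness of $\operator{K}$, summation from $\ell_0$, convention $\infty\cdot 0=0$ for the finitely many pre-asymptotic indices) goes through as in the paper. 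A smaller technical point: the claim $(E-J_\TT E)|_T=0$ for \emph{all} $T\in\TT\cap\widehat\TT$ is slightly too strong --- with the standard Scott--Zhang construction one only obtains this for elements whose entire patch consists of unrefined elements, so the support of $E-J_\TT E$ is contained in $\omega(\TT;\TT\setminus\widehat\TT)$; obtaining $\RR(\TT,\widehat\TT)=\TT\setminus\widehat\TT$ without the extra layer requires the refined bookkeeping in the sense of \cite{ckns}.
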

\begin{proof}
Stability~\eqref{A:stable} and reduction~\eqref{A:reduction} follow similarly as for the Poisson model problem
from Section~\ref{section:ex:poisson} by use of Lipschitz continuity of $\matrix{A}:\, \Omega\times \R^d \to \R^d$. The equivalence~\eqref{ex:nonlin:equiv} allows to apply the techniques of~\cite{stevenson07,ckns} to prove discrete reliability~\eqref{A:dlr} which implies reliability~\eqref{A:reliable} according to Lemma~\ref{lemma:dlr2rel}. As in the proof of Proposition~\ref{prop:ex:nonsymm:assumptions}, a priori convergence already implies convergence $U(\TT_\ell)\to u$ as $\ell\to\infty$.

The proof of the general quasi-orthogonality~\eqref{A:qosum}, mimics the proof of Proposition~\ref{prop:ex:nonsymm:assumptions} (see~\cite[Section~6.5]{nonsymm} for details). The Taylor approximation in the neighbourhood of the solution $u$ of~\eqref{ex:nonlin:weak} and boundedness~\eqref{eq:nlbounded} lead to (cf. e.g.~\cite[Theorem~6.5]{cupr})
\begin{align}\label{eq:taylordef}
\normHme{\operator{A}v-\operator{A}w-D\operator{A}(v)(v-w)}{\Omega}\leq C_{\rm Taylor} \normLtwo{\nabla(v-w)}{\Omega}^2
\end{align}
for all $v,w\in H^1_0(\Omega)$ with $\normLtwo{\nabla(u-v)}{\Omega}+\normLtwo{\nabla(u-w)}{\Omega}\leq\eps_{\ell oc}$. The constant $C_{\rm Taylor}>0$ depends only on $\operator{L}$. Since $D\operator{A}(v)$ is symmetric and since  $U(\TT_\ell)\to u$, there exists $\ell_0\in\N$ such that $U(\TT_\ell)$ and $U(\TT_{\ell+1})$ are sufficiently close to $u$ for all $\ell\geq \ell_0$. According to the C\'ea lemma~\eqref{eq:cea} and $U(\TT_\ell)\in\SS^1_0(\TT_{\ell+1})$, this yields
\begin{align*}
\Big|\dual{\operator{A}u&-\operator{A}U(\TT_{\ell+1})}{U(\TT_{\ell+1})-U(\TT_\ell)}\Big|\\
&\leq
\Big|\dual{u-U(\TT_{\ell+1})}{D\operator{A}(U(\TT_{\ell+1}))\big(U(\TT_{\ell+1})-U(\TT_\ell)\big)}\Big|+C \normLtwo{\nabla(u-U(\TT_\ell))}{\Omega}^3\\
&\leq 
\Big|\dual{u-U(\TT_{\ell+1})}{\operator{A}U(\TT_{\ell+1}))-\operator{A}U(\TT_\ell)}\Big|+C^\prime \normLtwo{\nabla(u-U(\TT_\ell))}{\Omega}^3.
\end{align*}
Here, $C\simeq  \c{cea}C_{\rm Taylor}>0$  and $C^\prime\simeq \c{cea}^3 C_{\rm Taylor}>0$ depend only on $\operator{L}$.
For any $\delta>0$, there exists $\ell_0\in\N$ such that for all $\ell \geq \ell_0$
\begin{align}\label{ex:nonlin:help1}
\begin{split}
\Big|\dual{\operator{A}u&-\operator{A}U(\TT_{\ell+1})}{U(\TT_{\ell+1})-U(\TT_\ell)}\Big|\\
&\leq 
\Big|\dual{u-U(\TT_{\ell+1})}{\operator{A}U(\TT_{\ell+1}))-\operator{A}U(\TT_\ell)}\Big|+\delta\normLtwo{\nabla(u-U(\TT_\ell))}{\Omega}^2.
\end{split}
\end{align}

The weak convergence to zero of the sequence $e_\ell := (u-U_\ell)/\norm{\nabla(u-U_\ell)}{L^2(\Omega)}\in H^1_0(\Omega)$ is proved in~\cite[Lemma~17]{nonsymm} and allows for~\eqref{ex:nonlin:help1} also for the (nonlinear) compact operator $\operator{K}$ in the sense that, for all $\ell\geq \ell_0$
\begin{align*}
\Big|\dual{\operator{K}u&-\operator{K}U(\TT_{\ell+1})}{U(\TT_{\ell+1})-U(\TT_\ell)}\Big|\\
&\leq 
\Big|\dual{u-U(\TT_{\ell+1})}{\operator{K}U(\TT_{\ell+1}))-\operator{K}U(\TT_\ell)}\Big|+\delta\normLtwo{\nabla(u-U(\TT_\ell))}{\Omega}^2.
\end{align*}
The remaining parts of the proof follow those of Proposition~\ref{prop:ex:nonsymm:assumptions} (see~\cite[Section~6.5]{nonsymm} for details). Given some $\eps>0$, there exists $\ell_0\in\N$ such that
\begin{align*}
\dist{U(\TT_{\ell+1})}{U(\TT_\ell)}^2\leq (1-\eps)^{-1}\dist{u}{U(\TT_\ell)}^2-\dist{u}{U(\TT_{\ell+1})}^2\quad\text{for all }\ell\geq\ell_0.
\end{align*}
Combined with the equivalence~\eqref{ex:nonlin:equiv}, this allows for the
general quasi-orthogonality~\eqref{A:qosum} and so concludes the proof.
\end{proof}

\begin{consequence}
The adaptive algorithm leads to convergence with quasi-optimal rate for
the estimator $\eta(\cdot)$ in the sense of 
Theorem~\ref{thm:main}~(i)--(ii).\qed
\end{consequence}

%%%%%%%%%%%%%%%%%%%%%%%%%%%%%%%%%%%%%%%%%%%%%%%%%%%%%%%%%%%%%%%%%%%%%%%%%%%%%%%%%%
%%%%%%%%%%%%%%%%%%%%%%%%%%%%%%%%%%%%%%%%%%%%%%%%%%%%%%%%%%%%%%%%%%%%%%%%%%%%%%%%%%
%%%%%%%%%%%%%%%%%%%%%%%%%%%%%%%%%%%%%%%%%%%%%%%%%%%%%%%%%%%%%%%%%%%%%%%%%%%%%%%%%%
%%%%%%%%%%%%%%%%%%%%%%%%%%%%%%%%%%%%%%%%%%%%%%%%%%%%%%%%%%%%%%%%%%%%%%%%%%%%%%%%%%
%------------------------------------------------------------------------------
\subsection{Conforming FEM for the $p$-Laplacian}
\label{section:ex:plaplace}
%------------------------------------------------------------------------------
The $p$-Laplacian allows for a review of the results of~\cite{bdk} in terms of the abstract framework from Section~\ref{section:locequiv}. Since the lower error bound is not required, this paper provides some slight improvement.
The analysis allows generalizations to $N$-functions as in~\cite{bdk}.

Consider the energy minimization problem
\begin{align}\label{ex:plaplace:min}
\operator{J}(u)=\min_{v\in W^{1,p}_0(\Omega)}\operator{J}(v)\quad\text{with}\quad\operator{J}(v):=\frac{1}{p}\int_\Omega |\nabla v|^p\,dx - \int_\Omega fv\,dx
\end{align}
for $p>1$ and $W^{1,p}_0(\Omega)$ equipped with the norm $\norm{v}{W^{1,p}(\Omega)}:=\big( \norm{v}{L^p(\Omega)}^2+\norm{\nabla v}{L^p(\Omega)}^2\big)^{1/2}$. The direct method of the calculus of variations yields existence and strict convexity of $\operator{J}(\cdot)$ even uniqueness of the solution $u\in W^{1,p}_0(\Omega)$.
With the nonlinearity 
\begin{align*}
\matrix{A}:\R^d\to\R^d,\;\matrix{A}(Q)=|Q|^{p-2}Q,
\end{align*}
the Euler-Lagrange equations associated to~\eqref{ex:plaplace:min} read
\begin{align}\label{ex:plaplaceweak}
 \dual{\operator{L}u}{v}
 = \int_\Omega \matrix{A}(\nabla u) \cdot \nabla v =\int_\Omega fv\,dx
 \quad\text{for }u,v\in\XX:=W^{1,p}_0(\Omega).
\end{align}
Define $\matrix{F}(Q):=|Q|^{p/2-1}Q$ for all $Q\in\R^d$ as well as the error measure
\begin{align}\label{eq:quasimetric}
\dist[\TT]{v}{w}=\dist{v}{w}:=\normLtwo{F(|\nabla v|)-F(|\nabla w|)}{\Omega} \quad\text{for all }v,w\in W^{1,p}_0(\Omega).
\end{align}
The error measure $\dist{\cdot}{\cdot}$ is symmetric and satisfies the quasi-triangle inequality and coercivity
\begin{align*}
\dual{\operator{L}v-\operator{L}w}{v-w} \simeq \normLtwo{F(|\nabla v|)-F(|\nabla w|)}{\Omega}^2\quad\text{for all }v,w\in W^{1,p}_0(\Omega),
\end{align*}
with hidden constants which depend solely on $p> 1$. 

The discretization of~\eqref{ex:plaplaceweak} and the notation follows Section~\ref{section:ex:poisson}.
For a given regular triangulation $\TT$,
we consider the lowest-order Courant finite element space
$\XX(\TT) := \SS^1_0(\TT) := \PP^1(\TT)\cap H^1_0(\Omega)$
with $\PP^1(\TT)$ from~\eqref{eq:polynomials}. 
Arguing as in the continuous case, the minimization problem 
\begin{align}\label{ex:plaplace:discmin}
\operator{J}(U(\TT))=\min_{V\in\SS^1_0(\TT)}\operator{J}(V)
\end{align}
admits a unique discrete solution $U(\TT)\in\SS^1_0(\TT)$, which satisfies
\begin{align}\label{ex:plaplace:discrete}
 \dual{\operator{L}U(\TT)}{V} = \int_\Omega fV\,dx
 \quad\text{for all }V\in\SS^1_0(\TT).
\end{align}
All assumptions
of Section~\ref{sec:setting} are satisfied with
newest vertex bisection (NVB).
The residual error estimator $\varrho(\cdot)$ reads
\begin{align}\label{ex:plaplace:estimator}
\varrho_T(\TT;V)^2:=h_T^2\int_T \big(|\nabla V|^{p-1}+ h_T|f|\big)^{q-2}|f|^2\, dx
+ h_T\normLtwo{[\matrix{F}(\nabla V)\cdot n]}{\partial T\cap\Omega}^2
\end{align}
for all $T\in\TT$ and all $V\in\SS^1_0(\TT)$~\cite[Section~3.2]{bdk}. 
Since the error estimator $\varrho(\cdot)$ is associated with elements, $\II(\TT)=\TT$ in the notation of Section~\ref{section:locequiv}.
Since the first term of $\varrho(\cdot)$ depends nonlinearly on $V$,~~\cite[Section~3.2]{bdk} introduces an equivalent error estimator $\eta(\cdot)$ with local contributions
\begin{align*}
\eta_T(\TT,h(\TT);V)^2:=h_T^2\int_T \big(|\nabla u|^{p-1}+ h_T|f|\big)^{q-2}|f|^2\, dx
+ h_T\normLtwo{[\matrix{F}(\nabla V)\cdot n]}{\partial T\cap\Omega}^2
\end{align*}
for all $T\in\TT$ and all $V\in\SS^1_0(\TT)$. Note that $\eta(\cdot)$ can only serve as a theoretical tool as it employs the unknown solution $u$.

\begin{proposition}\label{prop:ex:plaplace:assumptions1}
The error estimators $\eta(\cdot)$ and $\varrho(\cdot)$ are globally equivalent in the sense of~\eqref{eq:glob} and they satisfy the equivalence of D\"orfler marking~\eqref{dp:doerfler} with $k=0$.
Moreover, $\eta(\cdot)$ satisfies the axioms homogeneity~\eqref{A:kernelhomo}, stability~\eqref{A:kernelstab}, reduction~\eqref{A:reduction} with $\q{reduction}=2^{-1/d}$, general quasi-orthogonality~\eqref{A:qosum},  and weak discrete reliability~\eqref{B:dlr} with $\RR(\eps;\TT,\widehat\TT) = \TT\backslash\widehat\TT$.
The constants $\c{stab},\c{reduction},\c{qosum}(\epsqo),\c{reliable},\c{dlr}(\eps)>0$ %,\c{efficient}>0
depend only on $\T$ as well as on $p>1$.
\end{proposition}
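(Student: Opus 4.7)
The plan is to verify each axiom by porting the analytic results of~\cite{bdk} into the abstract framework of Section~\ref{section:locequiv}, using the quasi-norm distance~\eqref{eq:quasimetric} as the error measure. Since $\varrho(\cdot)$ is the computable estimator while $\eta(\cdot,h(\TT);\cdot)$ uses the unknown exact gradient $|\nabla u|$, I first handle the equivalences relating the two, and then verify the axioms only for the convenient choice $\eta(\cdot)$. The main obstacle will be the general quasi-orthogonality~\eqref{A:qosum}, since Galerkin orthogonality fails in the nonlinear setting; this will be overcome by exploiting the convex energy functional $\operator{J}$.

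For the global equivalence~\eqref{eq:glob} and equivalence of D\"orfler marking with $k=0$, I would use the pointwise quasi-norm estimate from~\cite[Lemma~5.1]{bdk} that compares $(|\nabla u|^{p-1}+h_T|f|)^{q-2}$ with $(|\nabla V|^{p-1}+h_T|f|)^{q-2}$ up to multiplicative constants depending only on $p>1$. Since this equivalence holds elementwise, both~\eqref{eq:glob} and~\eqref{dp:doerfler} for $k=0$ with $\MM=\widetilde\MM$ and some constant $\c{dp4}>0$ follow at once. For the homogeneity axiom~\eqref{A:kernelhomo}, I would insert $\alpha\in(0,1]$ into the first term $(\alpha h_T)^2\int_T(|\nabla u|^{p-1}+\alpha h_T|f|)^{q-2}|f|^2\,dx$ and use the elementary pointwise bounds $\alpha(|\nabla u|^{p-1}+h_T|f|)\le|\nabla u|^{p-1}+\alpha h_T|f|\le|\nabla u|^{p-1}+h_T|f|$; combining this with the $\alpha$-scaling of the jump term yields~\eqref{A:kernelhomo} with exponents $r_+=1/2$ and $r_-=\max\{1,q/2\}$.

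Stability~\eqref{A:kernelstab} and reduction~\eqref{A:reduction} follow along the lines of Section~\ref{section:ex:poisson}: the triangle inequality, Lipschitz continuity of $\matrix{F}$, and the standard discrete trace and inverse estimates in $L^2$ yield~\eqref{A:kernelstab}, while~\eqref{A:reduction} with $\q{reduction}=2^{-1/d}$ is immediate from the newest-vertex-bisection reduction $h_{T'}\le 2^{-1/d}h_T$ for any son $T'$ of a refined $T$, together with the fact that the weight $(|\nabla u|^{p-1}+h_T|f|)^{q-2}$ is monotone in $h_T$. Weak discrete reliability~\eqref{B:dlr} is essentially the discrete reliability proven in~\cite[Theorem~5.4]{bdk}: their estimate bounds $\dist[\widehat\TT]{U(\widehat\TT)}{U(\TT)}^2$ by the sum of $\varrho$-indicators over $\TT\setminus\widehat\TT$; the elementwise equivalence passes this to $\eta(\cdot)$, and the $\eps$-perturbation in~\eqref{B:dlr} is absorbed trivially by choosing $\eps=0$ if one prefers, or in a more refined analysis by splitting the nonlinear right-hand side as in~\cite{bdk}.

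The remaining and most delicate step is the general quasi-orthogonality~\eqref{A:qosum}. Here I would exploit that $U(\TT)$ is the unique energy minimizer of~\eqref{ex:plaplace:discmin} on $\XX(\TT)\subseteq\XX(\widehat\TT)$ together with the quadratic energy-distance equivalence $\operator{J}(v)-\operator{J}(u)\simeq\dist{u}{v}^2$ for all $v\in W^{1,p}_0(\Omega)$, a convexity property valid for the $p$-Laplace energy and collected in~\cite{bdk}. Combining these two facts for nested discrete spaces produces a Pythagoras-like estimate
\begin{align*}
\dist{u}{U(\widehat\TT)}^2+\dist{U(\widehat\TT)}{U(\TT)}^2\lesssim\dist{u}{U(\TT)}^2,
\end{align*}
which immediately implies~\eqref{A:qoa} with $\mu\equiv 0$ and in particular gives~\eqref{A:qosum} for every $\epsqo>0$ via Lemma~\ref{lem:qo} and reliability (Lemma~\ref{lemma:wdlr2rel}). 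The hidden constant depends only on $p>1$ and thus enters $\c{qosum}(\epsqo)$ as required. With all six items verified, the proposition follows.
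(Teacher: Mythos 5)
Your outline of the global equivalence, the equivalence of D\"orfler marking, homogeneity, stability, and reduction is essentially correct in spirit (the exact citations into~\cite{bdk} differ from the paper, but the argument is the same). However, two steps do not survive scrutiny.

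The critical gap concerns the general quasi-orthogonality~\eqref{A:qosum}. You propose to derive the one-step Pythagoras-like inequality
\begin{align*}
\dist{u}{U(\widehat\TT)}^2+\dist{U(\widehat\TT)}{U(\TT)}^2\lesssim\dist{u}{U(\TT)}^2
\end{align*}
from the energy equivalences $\operator{J}(v)-\operator{J}(u)\simeq\dist{u}{v}^2$ and $\operator{J}(U(\TT))-\operator{J}(U(\widehat\TT))\simeq\dist{U(\widehat\TT)}{U(\TT)}^2$, and then to feed this into~\eqref{A:qoa} with $\mu\equiv 0$ and Lemma~\ref{lem:qo}. This does not work: the hidden constants in $\simeq$ are not $1$, so the inequality above holds with some $C>1$ on the right-hand side. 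Axiom~\eqref{A:qoa} requires the coefficient of $\dist[\TT]{u}{U(\TT)}^2$ to be $1+\eps$ for \emph{arbitrarily small} $\eps>0$, and a $\lesssim$-estimate with $C>1$ simply cannot produce that. Consequently Lemma~\ref{lem:qo} cannot be invoked, and the resulting $\epsqo$ is not guaranteed to lie below the threshold $\eps_{\rm qo}^\star(\theta)$ in~\eqref{A:qosum}. The paper avoids this by telescoping the energy directly:
\begin{align*}
\sum_{k=\ell}^N \dist{U(\TT_{k+1})}{U(\TT_k)}^2
\lesssim \sum_{k=\ell}^N \big(\operator{J}(U(\TT_k))-\operator{J}(U(\TT_{k+1}))\big)
= \operator{J}(U(\TT_\ell))-\operator{J}(U(\TT_{N+1}))
\leq \operator{J}(U(\TT_\ell))-\operator{J}(u)
\lesssim \dist{u}{U(\TT_\ell)}^2 ,
\end{align*}
so the norm-energy equivalence is invoked only at the two endpoints of the sum and no constant accumulates. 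This yields~\eqref{A:qosum} with $\epsqo=0$ directly; it is precisely why the weak axiom~\eqref{A:qosum}, rather than~\eqref{A:qo}, is the right formulation here.

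The secondary issue is your remark that the $\eps$-term in~\eqref{B:dlr} can be ``absorbed trivially by choosing $\eps=0$''. The $\eps$-perturbation is not optional: the computable estimator $\varrho(\cdot)$ depends nonlinearly on $U(\TT)$ while $\eta(\cdot)$ is weighted by the unknown $|\nabla u|$, so the discrete reliability of~\cite{bdk} for $\varrho(\cdot)$, when translated to $\eta(\cdot)$ via~\cite[Proposition~4.2]{bdk}, produces an additional $\delta\,\dist{u}{U(\TT)}^2$ term for every $\delta>0$ with a $\delta$-dependent constant. One must keep this $\eps$-term and absorb it via reliability later (exactly what~\eqref{B:dlr} and Lemma~\ref{lemma:wdlr2rel} are designed for); setting $\eps=0$ is not available without reproducing the strong discrete reliability~\eqref{A:dlr}, which is not known for the $p$-Laplacian.
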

\begin{proof}
The global equivalence~\eqref{eq:glob} is proved in~\cite[Corollary~4.3]{bdk}. The equivalence of D\"orfler marking~\eqref{dp:doerfler} is part of the proof of~\cite[Lemma~4.6]{bdk}.
The homogeneity~\eqref{A:kernelhomo} follows as in Section~\ref{section:examples3} with $r_+=1/2$ and $r_-=1$. Since the first term of $\eta(\cdot)$ does not depend on the argument $V$, standard inverse estimates as for the linear case prove stability~\eqref{A:kernelstab} as in~\cite[Proposition~4.4]{bdk} and Proposition~\ref{prop:ex:poisson:assumptions}. Reduction~\eqref{A:reduction} follows with the arguments from Proposition~\ref{prop:ex:poisson:assumptions} as in~\cite[Lemma~4.6]{bdk}. 

The discrete reliability~\eqref{A:dlr} for $\varrho(\cdot)$ with $\RR(\TT,\widehat\TT) = \TT\backslash\widehat\TT$
follows from~\cite[Lemma~3.7]{bdk}. Together with the equivalence from~\cite[Proposition~4.2]{bdk}, there holds for all $\delta>0$
\begin{align*}
\dist{U(\widehat\TT)}{U(\TT)}^2&\leq\c{dlr} \sum_{T\in\RR(\TT,\widehat\TT)}\varrho_T(\TT;U(\TT))^2\\
&\leq \c{dlr}C_\delta \sum_{T\in\RR(\TT,\widehat\TT)}\eta_T(\TT,h(\TT);U(\TT))^2 + \c{dlr}\delta \dist{u}{U(\TT)}^2.
\end{align*}
The constant $C_\delta>0$ is defined in~\cite[Proposition~4.2]{bdk}.
This proves weak discrete reliability~\eqref{B:dlr} with $\RR(\eps;\widehat\TT,\TT):=\TT\setminus\widehat\TT$ and $\c{dlr}(\eps):=\c{dlr}C_\delta$ and $\delta=\eps/\c{dlr}$ and particularly implies reliability~\eqref{A:reliable} as proved in Lemma~\ref{lemma:wdlr2rel}.

The general quasi-orthogonality~\eqref{A:qosum} follows from the fact that the equivalence of the error measure to the energy of the problem with $\epsqo=0$ and $\c{qosum}>0$ independent of $\epsqo$. As stated in~\cite[Lemma~3.2]{bdk}, each arbitrary refinement $\widehat\TT\in\T$ of $\TT\in\T$ satisfies
\begin{align*}
\operator{J}(U(\widehat\TT))-\operator{J}(u)&\simeq \dist{u}{U(\widehat\TT)}^2,\\
\operator{J}(U(\TT))-\operator{J}(U(\widehat\TT))&\simeq \dist{U(\widehat\TT)}{U(\TT)}^2
\end{align*}
with hidden constants, which depend only on $p>1$. This immediately implies for all $\ell\leq N\in\N$ that
\begin{align*}
\sum_{k=\ell}^N \dist{U(\TT_{k+1})}{U(\TT_k)}^2&\lesssim \sum_{k=\ell}^N \operator{J}(U(\TT_\ell))-\operator{J}(U(\TT_{\ell+1}))\\
&=\operator{J}(U(\TT_\ell))-\operator{J}(U(\TT_{N+1}))\\
&\leq \operator{J}(U(\TT_\ell))-\operator{J}(u)\lesssim \dist{u}{U(\TT_\ell)}^2.
\end{align*}
Together with reliability~\eqref{A:reliable}, this implies~\eqref{A:qosum} with $\epsqo=0$, and $0<\c{qosum}(0)<\infty$ depend only on $p>1$ and $\c{reliable}$.
\end{proof}

\begin{consequence}
The adaptive algorithm leads to convergence with quasi-optimal rate for
the estimator $\varrho(\cdot)$ in the sense of 
Theorem~\ref{thm:main3}~(i)--(ii).\qed
\end{consequence}
Numerical examples for 2D that underline the above result can be found in~\cite{bdk}.
%
%%%%%%%%%%%%%%%%%%%%
\subsection{Conforming FEM for some elliptic eigenvalue problem}\label{section:eigenvalue}
%%%%%%%%%%%%%%%%%%%%
This subsection is devoted to the optimal adaptive computation of an
eigenpair $(\lambda,u)\in \XX:=\R\times V$ for 
$V:=H^1_0(\Omega)$ with energy norm $| \cdot |_{H^1(\Omega)}= a(\cdot,\cdot)^{1/2}$ 
for the energy scalar product $a(\cdot,\cdot)$ (denoted $b(\cdot,\cdot)$ in (5.2))
\begin{align*}
 a(v,w):=\int_\Omega \nabla v\cdot\nabla w\,dx \quad\text{for all }v,w\in H^1_0(\Omega)
\end{align*}
and the $L^2$-scalar product $b(\cdot,\cdot)$ 
\begin{align*}
 b(v,w):=\int_\Omega vw\,dx\quad\text{for all }v,w\in H^1_0(\Omega)
 \end{align*}
of the model eigenvalue problem 
\[
-\Delta u = \lambda u\quad\textrm{in }\Omega
\qquad\textrm{and}\qquad u= 0\phantom{u}\quad\textrm{on }\partial\Omega.
\]
The weak form of the eigenvalue problem reads
\begin{align}\label{eig:weak}
a(u,v)=\lambda b(u,v)\quad\text{for all }v\in V.
\end{align}
On the continuous level, there exists a countable number of such eigenpairs with positive eigenvalues  ordered increasingly which essentially depend on the polyhedral bounded Lipschitz domain $\Omega\subset\R^d$.

Throughout this subsection, let $(\lambda,u)$ denote one fixed eigenpair with the
$\nu$-th  {\em simple eigenvalue}   $\lambda$ 
and corresponding eigenvector 
normalized via $\norm{u}{L^2(\Omega)}=1$ \revision{(the first eigenvalue for $\nu=1$ is always simple~\cite{evansPDE}).}
For simplicity, this subsection presents an analysis for the conforming FEM of order $p$
with $V(\mathcal{T}):= \SS^p_0(\TT)$,
which embeds the optimality results of~\cite{dxz,cg12} in the general setting of this paper. 

Let  the number $\nu$ of the simple eigenvalue $\lambda$ be fixed throughout this section and kept constant also \revision{on any} discrete level (without extra notation for this) and suppose  
$\norm{h(\TT_0)}{L^\infty(\Omega)}$ is so small that
the discrete eigenvalue problem has at least $\nu$ degrees of freedom and pick the
discrete eigenpair  
\[
U(\mathcal{T}):=(\lambda(\mathcal{T}), u(\mathcal{T}))\in  \XX(\mathcal{T}):=\R\times V(\mathcal{T})
\] 
(of that fixed number $\nu$) with $\norm{ u(\mathcal{T})}{L^2(\Omega)}=1$ and 
\begin{align}\label{eig:discrete}
a(u(\mathcal{T}),V)=\lambda(\mathcal{T})\,  b(u(\mathcal{T}),V)\quad\text{for all }V\in V(\mathcal{T})=\SS^p_0(\TT).
\end{align}
Notice that the discrete eigenvalue 
$(\lambda(\mathcal{T})$ of number $\nu$) is simple  for sufficiently small 
$\norm{h(\TT_0)}{L^\infty(\Omega)}$
%
%\footnote{berflssige Anmerkungen zum berprfen und einfgen irgendwo in der Einleitung? bitte:
%Whenever nonlinear problems have multiple solutions, the 
%unique choice has to be guaranteed by initial conditions in the sense 
%that the mesh-size   $|| h_0||_\infty$ of the 
%initial triangulation $\mathcal{T}_0$ has to be sufficiently small. } 
and 
that the adaptive algorithm is supposed to solve the algebraic eigenvalue problems 
exactly with an (arbitrary) choice of the sign of  $u(\mathcal{T})$ (which 
does {\em not} enter the adaptive algorithm below but is assumed in the error measures to be somehow selected aligned with $u$). In fact, given 
any $\mathcal{T}\in\TT$ and $(\mu,V)\in \XX(\mathcal{T}):=\R\times V(\mathcal{T})$, 
the residual-based a~posteriori error estimator consists of the local contributions  
\begin{align*}
\eta_T(\TT; (\mu,V))^2 &:= 
h_T^2\,\norm{\mu V+  \Delta_\TT V}{L^2(T)}^2
 + h_T\,\norm{[\partial_nV]}{L^2(\partial T\cap\Omega)}^2.
\end{align*}
The axioms (A1)-(A4) follow 
for the error measure  $\dist[\TT]\cdot\cdot$ defined (independently of $\mathcal{T}$) by
\[
\dist[\TT]{(\lambda,u)}{(\mu,v)}:= \Bigl( \norm{\lambda u-\mu v}{L^2(\Omega)}^2+
 | u- v|_{H^1(\Omega)}^2
\Bigr)^{1/2}\quad\text{for all }(\lambda,u),(\mu,v)\in \XX .
\]
The arguments which imply optimal convergence of the adaptive algorithm are essentially contained in~\cite{dxz,cg12} while the introduction of the error measure to allow for the abstract framework of this paper is a novel ingredient.

\begin{proposition}\label{prop:ex:eigenvalues} Given positive integers $\nu$ and 
$p$ such that the  $\nu$-th  eigenvalue $\lambda$  is simple and provided the initial mesh-size 
$\norm{h(\TT_0)}{L^\infty(\Omega)}$ of the  initial triangulation $\mathcal{T}_0$ is sufficiently small, 
the $p$-th order conforming finite element discretization~\eqref{eig:discrete} of the eigenvalue problem~\eqref{eig:weak} with
the above  residual-based error estimator  \revision{satisfy}
stability~\eqref{A:stable}, reduction~\eqref{A:reduction}
with $\q{reduction}=2^{-1/d}$, general quasi-orthogonality~\eqref{A:qosum}, discrete reliability~\eqref{A:dlr} with  $\RR(\TT,\widehat\TT) = \TT\backslash\widehat\TT$,
and efficiency~\eqref{A:efficient} with $\osc(\TT;U(\TT))=0$.
The constants $\c{stable}$,
$\c{reduction}$, $\c{qosum}, \c{dlr},\c{eff}>0$
depend only on  $\mathbb{T}$ and the polynomial degree $p\in\N$.
\end{proposition}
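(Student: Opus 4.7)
\noindent
The plan is to exploit the fact that, for a fixed discrete eigenpair $(\mu,V)=(\lambda(\TT),u(\TT))\in\XX(\TT)$, the local estimator contribution $\eta_T(\TT;(\mu,V))$ is precisely the residual estimator of Section~\ref{section:ex:poisson} for the Poisson problem with right-hand side $\mu V$. Hence the axioms~\eqref{A:stable}--\eqref{A:reduction} and efficiency~\eqref{A:efficient} will follow by mimicking Proposition~\ref{prop:ex:poisson:assumptions} once we verify that perturbations of both the eigenvector and the eigenvalue are controlled by the error measure $\dist[\TT]\cdot\cdot$. The two components $\norm{\lambda u-\mu v}{L^2(\Omega)}$ and $|u-v|_{H^1(\Omega)}$ of $\dist[\TT]\cdot\cdot$ were chosen precisely for this purpose: the first governs perturbations of the volume residual $\mu V+\Delta_\TT V$, while the second governs the $\TT$-piecewise Laplacian of $V$ and the normal jumps via standard trace and inverse estimates.

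For stability~\eqref{A:stable} on $\mathcal{S}\subseteq\TT\cap\widehat\TT$, I would apply the triangle inequality to $\eta_T(\widehat\TT;(\widehat\mu,\widehat V))-\eta_T(\TT;(\mu,V))$, split the difference of volume residuals as $(\widehat\mu\widehat V-\mu V)+\Delta_\TT(\widehat V-V)$, and bound these contributions by $\norm{\widehat\mu\widehat V-\mu V}{L^2(T)}$ plus an inverse estimate applied to the piecewise polynomial $\widehat V-V$. The boundary jump term is treated by the trace inverse estimate. Reduction~\eqref{A:reduction} on $\widehat\TT\setminus\TT$ follows from the contraction $h_{\widehat T}\le 2^{-1/d}h_T$ of newest vertex bisection plus a Young-inequality argument, exactly as in~\cite{ckns}, yielding $\q{reduction}=2^{-1/d}$. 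Efficiency with vanishing oscillations is proved by the classical bubble-function technique of Verf\"urth: the standard lower bound reads $\eta_T(\TT;U(\TT))^2\lesssim |u-u(\TT)|_{H^1(\omega_T)}^2+h_T^2\norm{\lambda u-\lambda(\TT)u(\TT)}{L^2(\omega_T)}^2$, and the second summand is absorbed into $\dist[\TT]{(\lambda,u)}{(\lambda(\TT),u(\TT))}^2$ with the $L^2$-component by use of $h_T\le\norm{h(\TT_0)}{L^\infty(\Omega)}$. No separate oscillation term remains because the ``data'' $\lambda(\TT)u(\TT)$ is itself a $\TT$-piecewise polynomial.

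The hard part will be discrete reliability~\eqref{A:dlr} and general quasi-orthogonality~\eqref{A:qosum}, since the eigenvalue equation fails to provide an exact Galerkin orthogonality between $V(\TT)$ and $V(\widehat\TT)$. For~\eqref{A:dlr} with $\RR(\TT,\widehat\TT)=\TT\setminus\widehat\TT$, I would test the discrete equation
\[
a(u(\widehat\TT)-u(\TT),W)=\lambda(\widehat\TT)b(u(\widehat\TT),W)-\lambda(\TT)b(u(\TT),W)
\]
for $W\in V(\widehat\TT)$ with $W:=u(\widehat\TT)-u(\TT)-J(\TT)(u(\widehat\TT)-u(\TT))$, where $J(\TT)$ is the Scott-Zhang quasi-interpolation into $V(\TT)\subseteq V(\widehat\TT)$. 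By construction $W$ vanishes on non-refined elements so that element-wise integration by parts produces only the residual contributions on $\TT\setminus\widehat\TT$, while the right-hand side decomposes as $\lambda(\widehat\TT)b(u(\widehat\TT)-u(\TT),W)+(\lambda(\widehat\TT)-\lambda(\TT))b(u(\TT),W)$; both extra terms are of higher order because $\norm{u(\widehat\TT)-u(\TT)}{L^2(\Omega)}\lesssim\norm{h(\TT_0)}{L^\infty(\Omega)}|u(\widehat\TT)-u(\TT)|_{H^1(\Omega)}$ (duality) and $|\lambda(\widehat\TT)-\lambda(\TT)|\lesssim|u(\widehat\TT)-u(\TT)|_{H^1(\Omega)}^2$ (the classical quasi-quadratic bound for simple eigenvalues, e.g.\ from the min-max principle), so they are absorbed for sufficiently small $\norm{h(\TT_0)}{L^\infty(\Omega)}$. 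The $L^2$-component $\norm{\lambda(\widehat\TT)u(\widehat\TT)-\lambda(\TT)u(\TT)}{L^2(\Omega)}$ of $\dist[\widehat\TT]\cdot\cdot$ is controlled analogously by the same quasi-quadratic estimate together with the duality bound on $\norm{u(\widehat\TT)-u(\TT)}{L^2(\Omega)}$.

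Finally, quasi-orthogonality~\eqref{A:qosum} will be obtained via Lemma~\ref{lem:qo} by establishing~\eqref{A:qo} for the quantity $\mu(\TT):=\eta(\TT;U(\TT))$ itself: expanding $a(u-u(\TT),u-u(\TT))=a(u-u(\widehat\TT),u-u(\widehat\TT))+a(u(\widehat\TT)-u(\TT),u(\widehat\TT)-u(\TT))+2a(u-u(\widehat\TT),u(\widehat\TT)-u(\TT))$, the cross-term is evaluated via the continuous and discrete eigenvalue equations to $(\lambda-\lambda(\widehat\TT))b(u,u(\widehat\TT)-u(\TT))$ plus a similar $L^2$-coupling term, both bounded by $\eps\,\dist{u}{U(\widehat\TT)}^2+C(\eps)\mu(\TT)^2$ using quasi-quadratic eigenvalue convergence and the control of $L^2$-errors by $h_{\max}\,H^1$-errors. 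This matches the format of~\eqref{A:qoa}--\eqref{A:qob} and, together with reliability (implied by~\eqref{A:dlr} via Lemma~\ref{lemma:dlr2rel}), completes the axiomatic verification.
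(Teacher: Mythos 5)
Your plan for stability, reduction and efficiency is sound and in the spirit of the paper's own proof: the additional volume term $h_T\,\|\widehat\mu\widehat V-\mu V\|_{L^2(T)}$ is exactly why the error measure $\dist[\TT]\cdot\cdot$ carries the $L^2$-component $\|\lambda u-\mu v\|_{L^2(\Omega)}$, and the observation that oscillations vanish because $\lambda(\TT)u(\TT)\in\PP^p(\TT)$ is correct (the paper simply cites [cg13, Lemma~4.2]).

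For discrete reliability, your Scott--Zhang testing function is a plausible route, but it is not what the paper does, and it cannot deliver $\RR(\TT,\widehat\TT)=\TT\setminus\widehat\TT$ exactly: since $J(\TT)$ averages over element patches, $W=(1-J(\TT))(u(\widehat\TT)-u(\TT))$ only vanishes on elements whose entire patch is unrefined, so you would get a patch $\omega(\TT;\TT\setminus\widehat\TT)$ rather than $\TT\setminus\widehat\TT$. To match the stated set one needs the local Lagrange interpolation as in~\cite{ckns}. The paper instead combines the algebraic identities of Lemma~\ref{prop:ex:eigenvalues}ff.\ (eqs.~\eqref{eq:eigenvalue0}--\eqref{eq:eigenvalue2}) with the discrete superconvergence estimate~\eqref{eq:eigenvalue4} and the residual bound $|u(\widehat\TT)-u(\TT)|_{H^1(\Omega)}^2\lesssim\|\text{Res}(\TT;U(\TT))\|_{V(\widehat\TT)^*}^2$ from~\cite{cg12}, which already encapsulates the absorption of the higher-order eigenvalue perturbations. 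Also, your handling of the term $(\lambda(\widehat\TT)-\lambda(\TT))\,b(u(\TT),W)$ is imprecise: as written it yields a cubic-in-$H^1$-error bound that is only absorbable once you invoke a smallness argument; the paper packages this into the $o(\cdot)$-assumptions~\eqref{eq:eigenvalue3}--\eqref{eq:eigenvalue4}.

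The genuine gap is in your quasi-orthogonality argument. After the $a(\cdot,\cdot)$-Pythagoras expansion, you bound the cross term by $\eps\,\dist{u}{U(\widehat\TT)}^2 + C(\eps)\,\mu(\TT)^2$ with $\mu(\TT):=\eta(\TT;U(\TT))$ and assert this matches the form of~\eqref{A:qoa}--\eqref{A:qob}. It does not: axiom~\eqref{A:qoa} requires the additive term to be of telescoping type $C(\eps)\big(\mu(\TT)^2-\mu(\widehat\TT)^2\big)$, and Lemma~\ref{lem:qo} critically relies on the telescoping sum $\sum_k\big(\mu(\TT_k)^2-\mu(\TT_{k+1})^2\big)\le\mu(\TT_\ell)^2$. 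A non-telescoping $+\,C(\eps)\mu(\TT_k)^2$ produces $\sum_k \eta(\TT_k;U(\TT_k))^2$, which you cannot bound at this point without circularity (it is precisely what $R$-linear convergence, a \emph{consequence} of~\eqref{A:qosum}, would give). The paper circumvents this by exploiting the Rayleigh--Ritz monotonicity $\lambda(\TT_\ell)\ge\lambda(\TT_{\ell+1})\ge\lambda$: combining \eqref{eq:eigenvalue0}, \eqref{eq:eigenvalue4}, and \eqref{eq:eigenvalue5} it establishes the two-sided equivalence~\eqref{eq:eigenvalue6},
\begin{align*}
\dist[\TT]{U(\widehat\TT)}{U(\TT)}^2 \simeq \lambda(\TT)-\lambda(\widehat\TT),
\end{align*}
so that $\sum_{k=\ell}^{N}\dist[\TT]{U_{k+1}}{U_k}^2\lesssim\lambda_\ell-\lambda_{N+1}\lesssim\dist[\TT]{U_{N+1}}{U_\ell}^2$ telescopes in the eigenvalue, and discrete reliability then controls the right-hand side by $\eta(\TT_\ell;U(\TT_\ell))^2$, yielding~\eqref{A:qosum} directly with $\epsqo=0$. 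Without some monotone, telescoping auxiliary quantity (here, $\lambda(\TT_\ell)$), your cross-term decomposition alone cannot deliver the general quasi-orthogonality.
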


The proof of the proposition requires two straight-forward algebraic identities.

\begin{lemma}
Suppose that $(\mu,v) :=(\lambda(\TT),u(\TT)) \in \XX(\TT)$  and  
$ (\widehat \mu,\widehat v):=(\lambda(\widehat\TT),u(\widehat\TT))\in \XX(\widehat \TT)$ denote  the discrete eigenpairs 
with respect to  some refinement $\widehat{\mathcal{T}}\in\mathbb{T} $ 
of  $\mathcal{T}\in\mathbb{T}$
and let  $(\lambda,u)$ denote the exact eigenpair. Then, it holds
\begin{eqnarray}
| \widehat v- v|_{H^1(\Omega)}^2  &=&\mu-\widehat\mu +
\widehat\mu \norm{\widehat v-v  }{L^2(\Omega)}^2\ge \mu-\widehat\mu\ge 0,
\label{eq:eigenvalue0}
%\\  \nonumber
%| \widehat v- v|_{H^1(\Omega)}^2 
%&=&\mu-\widehat\mu +| u- v|_{H^1(\Omega)}^2- |u- \widehat v|_{H^1(\Omega)}^2, \\
%\label{eq:eigenvalue1}
%&&-\lambda || u- v||_{L^2(\Omega)}^2+\lambda|| u- \widehat v  ||_{L^2(\Omega)}^2,
\\  \label{eq:eigenvalue2}
\norm{ \widehat\mu \widehat v -\mu  v }{L^2(\Omega)}^2&=& (\widehat\mu-\mu)^2+\mu\widehat\mu\,
\norm{ \widehat v - v}{L^2(\Omega)}^2\le (\mu-\widehat\mu)  | \widehat v- v|_{H^1(\Omega)}^2  .
\end{eqnarray}
\end{lemma}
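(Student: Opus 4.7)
The plan is to read off all three scalar products appearing in the two identities from the discrete eigenvalue equations and the $L^2$-normalisation, and then reduce everything to the single quantity $b(\widehat v, v)$. The crucial structural input is the conformity of refinement, $V(\TT)\subseteq V(\widehat\TT)$, so that $v=u(\TT)$ is an admissible test function for the discrete eigenvalue equation on $\widehat\TT$. Testing $\widehat v=u(\widehat\TT)$ against $v$ in~\eqref{eig:discrete} yields the Galerkin-type relation
\begin{align*}
a(\widehat v,v)=\widehat\mu\, b(\widehat v,v),
\end{align*}
while testing each discrete eigenpair against itself together with $b(v,v)=b(\widehat v,\widehat v)=1$ gives $a(v,v)=\mu$ and $a(\widehat v,\widehat v)=\widehat\mu$.

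For~\eqref{eq:eigenvalue0}, I would expand
\begin{align*}
 |\widehat v-v|_{H^1(\Omega)}^2 = a(\widehat v,\widehat v)-2a(\widehat v,v)+a(v,v) = \widehat\mu+\mu-2\widehat\mu\,b(\widehat v,v),
\end{align*}
and use the polarisation identity $\|\widehat v-v\|_{L^2(\Omega)}^2 = 2-2b(\widehat v,v)$, i.e.\ $b(\widehat v,v)=1-\tfrac12\|\widehat v-v\|_{L^2(\Omega)}^2$, to substitute. Collecting terms immediately produces $|\widehat v-v|_{H^1}^2=\mu-\widehat\mu+\widehat\mu\|\widehat v-v\|_{L^2}^2$, which is~\eqref{eq:eigenvalue0}. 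The nonnegativity $\mu-\widehat\mu\ge0$ is then a consequence of the Courant--Fischer min--max principle applied to the nested spaces $V(\TT)\subseteq V(\widehat\TT)$ (so that the $\nu$-th discrete eigenvalue decreases under refinement), and the bound $|\widehat v-v|_{H^1}^2\ge\mu-\widehat\mu$ is immediate because $\widehat\mu\|\widehat v-v\|_{L^2}^2\ge0$.

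For~\eqref{eq:eigenvalue2}, the same computation but weighted gives
\begin{align*}
 \|\widehat\mu\widehat v-\mu v\|_{L^2(\Omega)}^2 = \widehat\mu^2+\mu^2-2\mu\widehat\mu\, b(\widehat v,v),
\end{align*}
and substituting the same expression for $b(\widehat v,v)$ produces the first equality $(\widehat\mu-\mu)^2+\mu\widehat\mu\|\widehat v-v\|_{L^2}^2$ in one line. The final inequality then follows by using~\eqref{eq:eigenvalue0} to eliminate $\|\widehat v-v\|_{L^2}^2$ on the right-hand side and exploiting the sign information $\mu\ge\widehat\mu>0$ established above; this is where the algebraic care is needed.

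The main obstacle is essentially expository rather than technical: all ingredients are algebraic consequences of conformity, normalisation, and the min--max characterisation. The one place where one must be deliberate is the passage from the explicit identity for $\|\widehat\mu\widehat v-\mu v\|_{L^2}^2$ to the $|\widehat v-v|_{H^1}^2$-bound in~\eqref{eq:eigenvalue2}, because one must keep track of the two distinct eigenvalues $\mu,\widehat\mu$ and not lose the prefactor $(\mu-\widehat\mu)$. Everything else is bilinear-form bookkeeping.
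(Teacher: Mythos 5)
Your derivation of the two identities is correct and, up to packaging, coincides with the paper's: both reduce to the Galerkin relation $a(\widehat v,v)=\widehat\mu\,b(\widehat v,v)$ (available because $V(\TT)\subseteq V(\widehat\TT)$) together with the $L^2$-polarisation $b(\widehat v,v)=1-\tfrac12\norm{\widehat v-v}{L^2(\Omega)}^2$; the paper writes the latter as $b(\widehat v,\widehat v-v)=\tfrac12\norm{\widehat v-v}{L^2(\Omega)}^2=b(v-\widehat v,v)$ and squares $\widehat\mu(\widehat v-v)+(\widehat\mu-\mu)v$, but it is the same computation. The monotonicity $\lambda\le\widehat\mu\le\mu$ from the Rayleigh--Ritz/Courant--Fischer principle is also as in the paper.

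The gap is in the step you flag as needing ``algebraic care'' but do not carry out, and it matters. Eliminate $\norm{\widehat v-v}{L^2(\Omega)}^2$ from the first equality of \eqref{eq:eigenvalue2} using \eqref{eq:eigenvalue0}, i.e.\ $\widehat\mu\norm{\widehat v-v}{L^2(\Omega)}^2=|\widehat v-v|_{H^1(\Omega)}^2-(\mu-\widehat\mu)$; you obtain
\begin{align*}
 \norm{\widehat\mu\widehat v-\mu v}{L^2(\Omega)}^2
 =(\mu-\widehat\mu)^2+\mu\bigl(|\widehat v-v|_{H^1(\Omega)}^2-(\mu-\widehat\mu)\bigr)
 =\mu\,|\widehat v-v|_{H^1(\Omega)}^2-\widehat\mu\,(\mu-\widehat\mu),
\end{align*}
and hence $\norm{\widehat\mu\widehat v-\mu v}{L^2(\Omega)}^2\le\mu\,|\widehat v-v|_{H^1(\Omega)}^2$. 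This is \emph{not} the inequality printed in \eqref{eq:eigenvalue2}: the bound $\norm{\widehat\mu\widehat v-\mu v}{L^2(\Omega)}^2\le(\mu-\widehat\mu)\,|\widehat v-v|_{H^1(\Omega)}^2$ would force $\mu-\widehat\mu\ge|\widehat v-v|_{H^1(\Omega)}^2$, which contradicts \eqref{eq:eigenvalue0} whenever $\widehat v\neq v$. So the right-hand side of the last inequality should read $\mu\,|\widehat v-v|_{H^1(\Omega)}^2$, and that is precisely what is used in the downstream estimate \eqref{eq:eigenvalue5} (together with $\mu\le\lambda(\TT_0)$). The paper's own proof of this step has the same sign slip in the final displayed identity. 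The lesson for your write-up: the substitution you outline is the right move, but you must actually carry it out — ``exploiting the sign information'' will not rescue the stated prefactor $(\mu-\widehat\mu)$, and performing the algebra is exactly what reveals that the correct prefactor is $\mu$.
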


\begin{proof}
The  Rayleigh-Ritz principle for the conforming discretizations leads to 
\[
\lambda = | u |_{H^1(\Omega)}^2  \le \widehat\mu\le \mu 
:=\lambda(\mathcal{T})=| u(\mathcal{T})|^2_{H^1(\Omega)}\le \lambda(\TT_0).
\]
In particular,  the differences of discrete eigenvalues in \eqref{eq:eigenvalue0}-\eqref{eq:eigenvalue2} are all non-negative.
Direct calculations with $b(\widehat v + v, \widehat v - v)=0$ from 
$\norm{\widehat v}{L^2(\Omega)}=1=\norm{v}{L^2(\Omega)}$  prove
\begin{equation}
\label{eq:eigenvalue2a}
 b( \widehat v, \widehat v- v)=\frac 12  \norm{ \widehat v- v}{L^2(\Omega)}^2=b(v-\widehat v,v).
\end{equation}
The eigenvalue relations $(| \widehat v |^2_{H^1(\Omega)}=\widehat\mu$ etc.)
show
\[
| \widehat v- v|^2_{H^1(\Omega)}=\mu-\widehat \mu  + 2 a(\widehat v,\widehat v-v )
=\mu-\widehat \mu  + 2 \widehat \mu b(\widehat v,\widehat v-v).
\]
Together with the first equation in \eqref{eq:eigenvalue2a}, 
this implies \eqref{eq:eigenvalue0},
%The quasi-orthogonality of   [CGSINUM2012] is recast here in compact notation 
%as \eqref{eq:eigenvalue1}. The proof is via simple algebraic calculations based on the %eigenpair relations and utilizes \eqref{eq:eigenvalue0} in 
which has been used before, e.g., in~\cite{cg12}. The  
left-hand side of \eqref{eq:eigenvalue2}  equals
% \[
% \widehat \mu\,  b( \widehat v, \widehat \mu\widehat v-2 v) +  \mu\, b(v,\mu v)=
%  a(\widehat v, \widehat \mu\widehat v-2 \mu v)+ a(v,\mu v)
%  = \widehat \mu^2+\mu^2-2\mu \,a(\widehat v,v).
% \]
% On the other hand, with the second last displayed identity again in the end, it holds
% \[
% 2\mu\widehat\mu - 2\mu \, a(\widehat v,v)=2\mu \, a(\widehat v, \widehat v -v)=2\mu\widehat\mu\, b(\widehat v, \widehat v -v)
% =\mu\widehat\mu\, \norm{\widehat v -v  }{L^2(\Omega)}^2.
% \]
% The combination of the preceding two displayed identities proves the equality in \eqref{eq:eigenvalue2}. 
\[
\norm{\widehat\mu(\widehat v-v)
+(\widehat\mu-\mu)v}{L^2(\Omega)}^2    
=(\widehat\mu-\mu)^2 
+2 \widehat\mu (\widehat\mu-\mu)\, 
b(\widehat v-v,v)
+ \widehat\mu^2 \norm{ \widehat v-v}{L^2(\Omega)}^2  .
\]
This and the second equation in \eqref{eq:eigenvalue2a}, prove
the equality in \eqref{eq:eigenvalue2}.

The substitution of $\mu-\widehat\mu$ from 
\eqref{eq:eigenvalue0} in one factor of $(\mu-\widehat\mu)^2$ in \eqref{eq:eigenvalue2}
proves 
\[
\norm{\widehat\mu \widehat v -\mu  v }{L^2(\Omega)}^2+\widehat\mu^2   \norm{\widehat v -v  }{L^2(\Omega)}^2 =
(\mu-\widehat\mu)| \widehat v- v|_{H^1(\Omega)}^2.
\]
This concludes the proof of the inequality in  \eqref{eq:eigenvalue2}.
\end{proof}

{\em Proof of Proposition~\ref{prop:ex:eigenvalues}.}
The stability~\eqref{A:stable}  follows as in Proposition~\ref{prop:ex:poisson:assumptions}
for $(\widehat\mu,\widehat v)\in \XX(\widehat\TT)$ and $ (\mu,v) \in \XX(\TT)$
up to sums of squares of some additional terms  
\[
h_T\,\norm{\widehat\mu\widehat v - \mu v}{L^2(T)}\quad\text{for }T\in\SS\subset\TT. 
\]
Those extra terms motivate the error measure $\dist[\TT]\cdot\cdot$ and,
because of $h_T\le   \norm{h(\TT_0)}{L^\infty(\Omega)}$,  lead to the proof of 
\eqref{A:stable} without  additional difficulty. The proof of the  
reduction~\eqref{A:reduction} with $\q{reduction}=2^{-1/d}$ 
follows the same lines and hence is not outlined here.

The remaining parts of the proof require a brief  discussion on a sufficiently small mesh size $\norm{h(\TT_0)}{L^\infty(\Omega)}$  of the initial  triangulation $\mathcal{T}_0$. Textbook analysis~\cite{sfbook} proves  
the uniqueness of the  algebraic eigenpair  $(\lambda(\mathcal{T}),U(\mathcal{T}))$
for sufficiently small  $\norm{ h(\TT_0)}{L^\infty(\Omega)}$ and that 
the direction of $\pm U(\mathcal{T})$ of the discrete \revision{eigenfunction} $U(\mathcal{T})$
can and will be chosen in alignment to $u$ (via $b(u, u(\TT))>0$  in this proof)  
such that
\begin{equation}
\label{eq:eigenvalue3}
\norm{u- u(\mathcal{T})}{L^2(\Omega)}^2
\le o(\norm{h(\TT)}{L^\infty(\Omega)}) \, | u- u(\mathcal{T})|_{H^1(\Omega)}^2
\end{equation}
holds for some Landau symbol with $\lim_{\delta\to 0} o(\delta)=0$ 
uniformly for all triangulations $\mathcal{T}$ $\in\mathbb{T}$.  
A less well-known discrete analog of  \eqref{eq:eigenvalue3} for 
all refinements $\widehat{\mathcal{T}}\in\mathbb{T} $ of  $\mathcal{T}\in\mathbb{T}$ 
with mesh-size $h(\TT)\in P_0(\TT)$ reads
\begin{equation}
\label{eq:eigenvalue4}
\norm{ u(\widehat{\mathcal{T}})- u(\mathcal{T})}{L^2(\Omega)}^2
\le o(\norm{h(\TT) }{L^\infty(\Omega)}) \, | u(\widehat{\mathcal{T}})- u(\mathcal{T})|_{H^1(\Omega)}^2.
\end{equation}
The proof of \eqref{eq:eigenvalue4} follows from elliptic regularity and 
the combination of~\cite[Lemma~3.3--3.4]{cg12} for a simple eigenvalue 
$\lambda$. 
Without loss of generality, we may and will suppose that the function $o(\delta)$ is monotone increasing in $\delta$ so that  $o(\norm{ h(\TT)}{L^\infty(\Omega)} )\le o(\norm{h(\TT_0)}{L^\infty(\Omega)})\le  1/(2\lambda(\TT_0))$. 
The reliability of the error estimators requires some sufficiently small mesh-size as well. 
For some sufficiently small mesh-size $\norm{ h(\TT_0)}{L^\infty(\Omega)}$ of the  initial 
triangulation $\mathcal{T}_0$, \cite[Lemma~3.5]{cg12} reads, in the above notation, as 
\[
 | u(\widehat{\mathcal{T}})- u(\mathcal{T})|_{H^1(\Omega)}^2
 \lesssim  \norm{\text{Res}(\TT; U(\TT))}{V(\widehat\TT)^*}^2
\]
in terms of the discrete dual norm $\norm{\cdot}{V(\widehat\TT)^*}$.
It is a standard argument in the linear theory of Section~4.1 to estimate the 
discrete dual norm of the residual % with $(\mu,v):= U(\TT)$ 
\[
\text{Res}(\TT; U(\TT)):= b(\lambda(\TT)\, u(\TT) ,\cdot)-a(u(\TT) ,\cdot)\in V(\widehat\TT)^*
\] 
(just replace $f$ on the right hand side by  
$\lambda(\TT)\, u(\TT)$ in the Poisson model problem) by
\[
 \norm{\text{Res}(\TT; U(\TT))}{V(\widehat\TT)^*}^2\lesssim 
 \sum_{T\in\RR(\TT,\widehat\TT)}\eta_T(\TT;U(\TT))^2
\]
with $\RR(\TT,\widehat\TT):=\widehat\TT\setminus\TT$.
The combination of the aforementioned estimates verifies 
\[
 | u(\widehat{\mathcal{T}})- u(\mathcal{T}) |_{H^1(\Omega)}^2
\lesssim 
 \sum_{T\in\RR(\TT,\widehat\TT)}\eta_T(\TT;U(\TT))^2.
\]
The inequality in \eqref{eq:eigenvalue2} and $\widehat\mu\le\mu\le\lambda(\TT_0)$ prove 
for $(\mu,v):= U(\TT)$ and $(\widehat \mu,\widehat v):= U(\widehat \TT)$ that
\begin{align}
\begin{split}
\dist[\TT]{U(\widehat\TT)}{U(\TT)}^2&= 
 \norm{\widehat\mu \widehat v-\mu v }{L^2(\Omega)}^2+
 | \widehat v- v|_{H^1(\Omega)}^2 
 %&= & (\widehat\mu-\mu)^2+\mu\widehat\mu\,||  \widehat v - v||_{L^2(\Omega)}^2 
%+ | \widehat v- v|_{H^1(\Omega)}^2\\
%&\le&  (1+\mu-\widehat\mu)| \widehat v- v|_{H^1(\Omega)}^2
%+\mu\widehat\mu\,||  \widehat v - v||_{L^2(\Omega)}^2 \\
%&\le&  (1+\lambda(\TT_0) )| \, \widehat v- v|_{H^1(\Omega)}^2
%+ \lambda(\TT_0) \,||  \widehat v - v||_{L^2(\Omega)}^2 \\
%\le (1+\lambda(\TT_0))\, |\widehat v- v|_{H^1(\Omega)}^2
\\ \label{eq:eigenvalue5}
&\le  (1+\lambda(\TT_0))| u(\widehat{\mathcal{T}})- u(\mathcal{T}) |_{H^1(\Omega)}^2.
\end{split}
\end{align}
The combination of the previous two displayed estimates proves 
the  discrete reliability~\eqref{A:dlr} with a constant $\c{reliable}$ which depends 
only on $\mathbb{T}$. 

The convergence of the conforming finite element discretization is understood from Textbook analysis~\cite{sfbook} or \eqref{eq:eigenvalue3} 
and so  Lemma~\ref{lemma:dlr2rel} reveals reliability  \eqref{A:reliable}.
Efficiency is proved in~\cite[Lemma~4.2]{cg13} for general $p\ge1$ and relies on sufficiently small $\norm{h_0}{L^\infty(\Omega)}\ll1$.

The proof of  the general quasi-orthogonality~\eqref{A:qosum} with $\epsqo=0$ starts 
with a combination of  \eqref{eq:eigenvalue0} and \eqref{eq:eigenvalue4}
with $o(\norm{h(\TT)}{L^\infty(\Omega)} )\le o(\norm{h(\TT_0)}{L^\infty(\Omega)})\le  1/(2\lambda(\TT_0))$. This proves
\[
\left| | u(\widehat{\mathcal{T}})- u(\mathcal{T})|_{H^1(\Omega)}^2
-\lambda(\TT) + \lambda(\widehat\TT)\right|\le 
\frac 12 | u(\widehat{\mathcal{T}})- u(\mathcal{T})|_{H^1(\Omega)}^2.
\]
The first conclusion is the equivalence 
\[
 | u(\widehat{\mathcal{T}})- u(\mathcal{T})|_{H^1(\Omega)}^2\simeq
 \lambda(\TT) - \lambda(\widehat\TT).
\]
With \eqref{eq:eigenvalue5}, the second equivalence is,  for 
all refinements $\widehat{\mathcal{T}}\in\mathbb{T} ~\eqref{A:efficient}$ of  $\mathcal{T}\in\mathbb{T}$,
that
\begin{equation}
\label{eq:eigenvalue6}
\dist[\TT]{U(\widehat\TT)}{U(\TT)}^2\simeq
 \lambda(\TT) - \lambda(\widehat\TT).
\end{equation}
Exploit the equivalence \eqref{eq:eigenvalue6}  in the proof of the general 
quasi-orthogonality 
with $(\lambda_k,u_k):=U_k:= U(\TT_k)$  to verify,
%with the last displayed formula  for 
%$ (\lambda_{k+1},u_{k+1}):=(\widehat \mu,\widehat v):=U_{k+1}:= U(\TT_{k+1})$  
%and continuous with \eqref{eq:eigenvalue1}  to verify
%\begin{eqnarray*}
%&& \hspace{-1cm} \dist[\TT]{U_{k+1}}{U_k}^2/ (1+\lambda(\TT_0)^2)
%\le |u_{k+1}-u_k|^2_{H^1(\Omega)} \\
%&=&|  u- u_k |_{H^1(\Omega)}^2  - |  u- u_{k+1}  |_{H^1(\Omega)}^2
%-\lambda\,||  u - u_k||_{L^2(\Omega)}^2 
%+\lambda\,||  u-u_{k+1} ||_{L^2(\Omega)}^2  .
%\end{eqnarray*}
for  any  $\ell, N\in \N_0$ with $N\ge \ell$, that
\[
\sum_{k=\ell}^N  \dist[\TT]{U_{k+1}}{U_k}^2\lesssim 
\lambda_\ell-\lambda_{N+1}\lesssim \dist[\TT]{U_{N+1}}{U_\ell}^2.
\]
The combination with  the  discrete reliability~\eqref{A:dlr}
concludes the proof of the 
general quasi-orthogonality~\eqref{A:qosum} with $\epsqo=0$ such that $\c{qosum}$ only depends on $\mathbb{T}$. \hfill $\Box$

\begin{consequence}
Given sufficiently small $\norm{h(\TT_0)}{L^\infty(\Omega)}$, the adaptive algorithm leads to convergence with quasi-optimal rate 
%for the estimator $\eta(\cdot)$ 
in the sense of Theorem~\ref{thm:main} and Theorem~\ref{thm:mainerr}.\qed
\end{consequence}

Numerical examples can be found in~\cite{cg13,cg12} with the generalization to inexact
solve and even optimal computational complexity under realistic assumptions on the
performance of the underlying algebraic eigenvalue solver~\cite{cg13}.

This section focussed on a simple eigenvalue $\lambda$ while 
clusters of eigenvalues require a simultaneous adaptive mesh-refinement with respect to all
affected eigenvectors \cite{Gallistl2013} beyond the scope of this paper.
An optimal nonconforming adaptive FEM has recently been analyzed in~\cite{ccgs13} \revision{with guaranteed lower eigenvalue bounds.}
%
%==============================================================================
\section{\revision{Non-Trivial} Boundary Conditions}
%==============================================================================
\label{section:boundary}

\def\meas{\text{meas}}
\def\rhs{RHS}
\newcommand{\eqdef}{\mathrel{\widehat{=}}}
%\subsection{Model problem}

\noindent
The literature on
adaptive finite elements focusses on 
homogeneous Dirichlet conditions with the only exception of~\cite{mns03,dirichlet2d,dirichlet3d}. This section extends the previous results to non-homogeneous boundary conditions of mixed Dirichlet-Neumann-Robin
type where inhomogeneous Dirichlet conditions enforce some additional 
discretization error. The present section improves~\cite{dirichlet3d} and shows that standard D\"orfler marking~\eqref{eq:doerfler} leads to convergence with
quasi-optimal rates if the Scott-Zhang projection~\cite{scottzhang} is used 
for the discretization of the Dirichlet data~\cite{dirichlet3d,sv}. The 
heart of the analysis is the application of the modified mesh-width function 
$h(\TT,k)$ from Proposition~\ref{prop:htilde}.

\subsection{Model problem}
%------------------------------------------------------------------------------
The Laplace model problem in $\R^d$ for $d\geq 2$ with mixed Dirichlet-Neumann-Robin boundary conditions splits the boundary $\Gamma$
of the Lipschitz domain $\Omega\subset \R^d$ into three (relatively) open and pairwise disjoint boundary parts $\partial\Omega=\overline{\Gamma_D\cup \Gamma_N \cup \Gamma_R}$. Given data $f\in L^2(\Omega)$, $g_D\in H^{1}(\Gamma_D)$, 
$\phi_N\in L^2(\Gamma_N)$, $\phi_R \in L^2(\Gamma_R)$, and $\alpha \in L^\infty(\Gamma_R)$ with $\alpha\geq\alpha_0>0$ almost everywhere on $\Gamma_R$, the problem seeks $u\in H^1(\Omega)$ with
\begin{subequations}
\label{eq:mixedbc}
\begin{align}
  -\Delta u &= f\qquad\text{in }\Omega,\label{eq:mixedbca}\\
  u&=g_D\quad\; \text{on }\Gamma_D,\label{eq:mixedbcb}\\
  \partial_n u &=\phi_N\quad\;\text{on }\Gamma_N,\label{eq:mixedbcc}\\
  \phi_R - \alpha u&=\partial_n u\quad\text{on }\Gamma_R.\label{eq:mixedbcd}
\end{align}
\end{subequations}
The presentation focusses on the case that $|\Gamma_D|,|\Gamma_R|>0$, with possibly $\Gamma_N=\emptyset$. The cases $\Gamma_D=\emptyset$ and $|\Gamma_R|>0$, $|\Gamma_D|>0$ and $\Gamma_R=\emptyset$, as well as the pure Neumann problem $\Gamma_N=\partial\Omega$ are also covered by the abstract analysis of Sections~\ref{sec:setting}--\ref{section:optimality}.

%------------------------------------------------------------------------------
\subsection{Weak formulation}
%------------------------------------------------------------------------------
The weak formulation of~\eqref{eq:mixedbc} seeks $u\in \XX := H^1(\Omega)$  such that 
\begin{subequations}\label{eq:mixedbcweak}
\begin{align}
\label{eq:mixedbcweak:trace}
u=g_D \text{ on }\Gamma_D \text{ in the sense of traces }
\end{align}
and all $v\in H^1_{D}(\Omega):= \set{v\in H^1(\Omega)}{v=0\text{ on }\Gamma_D}$ satisfy
\begin{align}
\label{eq:mixedbcweak:form}
b(u,v):=\int_\Omega\nabla u\cdot \nabla v\,dx + \int_{\Gamma_R}\alpha u v\,ds
 = \rhs(v)
\end{align}
with 
\begin{align}
\label{eq:mixedbcweak:rhs}
\rhs(v):=\int_\Omega f v\,dx + \int_{\Gamma_N}\phi_N v\,ds + \int_{\Gamma_R}\phi_R v\,ds.
\end{align}
\end{subequations}
Since $|\Gamma_R|>0$ and $\alpha \ge \alpha_0>0$, the norm
$\norm{\cdot}{}:=b(\cdot,\cdot)^{1/2}$ is equivalent to the $H^1(\Omega)$-norm. 

Let  $u_D\in H^1(\Omega)$ with $u_D|_\Gamma=g_D$ be an arbitrary lifting of
the given Dirichlet data and set $u_0 := u - u_D \in H^1_D(\Omega)$. Then,~\eqref{eq:mixedbcweak} is equivalent to seek $u_0\in H^1_{D}(\Omega)$ with
\begin{align}\label{eq:mixedbcaux}
 b(u_0,v)=\rhs(v)-b( u_D,v)
\quad\text{for all }v\in H^1_D(\Omega).
\end{align}
According to the Lax-Milgram theorem, the auxiliary problem~\eqref{eq:mixedbcaux} 
admits a unique solution $u_0 \in H^1(\Omega)$ and thus $u:= u_0 + u_D$ is the unique solution of ~\eqref{eq:mixedbcweak}.

%------------------------------------------------------------------------------
\subsection{FEM discretization and approximation of Dirichlet data}
%------------------------------------------------------------------------------
Assume the initial
triangulation $\TT_0$, and hence all triangulations $\TT\in \T $ of $\Omega$, to 
resolve the boundary conditions in the sense that for all facets $E\subset \partial\Omega$ on 
the boundary, there holds $E\subseteq \overline\gamma$ for some $\gamma\in\{\Gamma_D,\Gamma_N,\Gamma_R\}$ and suppose newest vertex bisection.
Let $\XX(\TT) = \SS^p(\TT) := \PP^p(\TT)\cap H^1(\Omega)$ 
and $\SS^p_D(\TT) := \PP^p(\TT)\cap H^1_D(\Omega)$ with fixed polynomial order $p\ge1$ and $\PP^p(\TT)$ from~\eqref{eq:polynomials} above.
To discretize the given Dirichlet data $g_D$, for any given mesh $\TT\in\T$, choose an approximation 
\begin{align*}
G_D(\TT)\in\SS^p(\TT|_{\Gamma_D}):=\set{V|_{\Gamma_D}}{V\in\SS^p(\TT)}
\end{align*}
of the Dirichlet data $g_D$. Here and throughout this section, let $\TT|_{\Gamma_D}:=\set{T|_{\Gamma_D}}{T\in\TT}$ denote the restriction of the volume mesh to the Dirichlet boundary $\Gamma_D$, and $\SS^p(\TT|_{\Gamma_D})$ is the discrete trace space. A convenient way to choose this approximation independently of the spatial dimension is the Scott-Zhang projection $J(\TT):H^1(\Omega) \to \SS^p(\TT)$ from~\cite{scottzhang}. The formal definition also allows for an operator $J(\TT|_{\Gamma_D}): L^2(\Gamma_D) \to \SS^{p}(\TT|_{\Gamma_D})$ on the boundary. The reader is referred to~\cite{dirichlet3d} for details and further discussions.

The discrete counterpart of~\eqref{eq:mixedbcweak} seeks $U(\TT)\in\SS^p(\TT)$ such that%
\begin{subequations}\label{eq:mixedbcdiscrete}%
\begin{align}
 U(\TT)|_{\Gamma_D} &= G_D(\TT),\\
b(U(\TT),V)&=f(V)
 \quad\text{for all }V\in\SS^p_D(\TT).
\end{align}
\end{subequations}
As in the continuous case,~\eqref{eq:mixedbcdiscrete} admits
a unique solution and satisfies all assumptions
of Section~\ref{sec:setting} with $\dist[\TT]vw = \norm{v-w}{}$
and $\c{triangle}=1$.

%------------------------------------------------------------------------------
\subsection{Quasi-optimal convergence}
%------------------------------------------------------------------------------
The derivation of the residual-based error estimator $\eta(\TT,\cdot)$ follows similarly
to the homogeneous case and differs only by adding an oscillation term to control the approximation of the Dirichlet data~\cite{dirichlet3d, bcd, dirichlet2d, sv}. With the local mesh-width function $h(\TT)$ from Section~\ref{section:locequiv},
the local contributions read
\begin{align*}
 \eta_T(\TT;V)&:=\normLtwo{h(\TT)(f+\Delta_\TT V)}{T}^2 +\normLtwo{h(\TT)^{1/2}[\partial_n V]}{\partial T\cap\Omega}^2\\
 &\qquad+ \normLtwo{h(\TT)^{1/2}(\phi_R - \alpha V -\partial_n V)}{\partial T\cap\Gamma_R}^2
 +\normLtwo{h(\TT)^{1/2}(\phi_N-\partial_n V)}{\partial T\cap\Gamma_N}^2\\
 &\qquad+ {\rm dir}_T(\TT)^2,
\end{align*}
where
\begin{align*}
{\rm dir}_T(\TT):=\normLtwo{h(\TT)^{1/2}(1-\Pi_{p-1}(\TT|_{\Gamma_D}))\nabla_\Gamma g_D}{\partial T\cap\Gamma_D}
\end{align*}
and $\Pi_{p-1}(\TT|_{\Gamma_D}):\,L^2(\Gamma_D)\to \PP^{p-1}(\TT|_{\Gamma_D}) := \set{V|_{\Gamma_D}}{V\in\PP^{p-1}(\TT)}$ is the (piecewise) $L^2$-orthogonal projection, and 
$\nabla_\Gamma(\cdot)$ denotes the surface gradient. 

For each facet $E\subset\partial\Omega$, there exists a unique element 
$T\in\TT$ such that $E\subset\partial T$. In particular, $h(\TT)$ also induces a local 
mesh-size function on $\gamma\in\{\Gamma_D,\Gamma_N,\Gamma_R\}$.

The following proposition shows that inhomogeneous (and mixed) boundary data 
fit in the framework of our abstract analysis. Emphasis is on the novel
quasi-orthogonality~\eqref{A:qosum} which improves the analysis of~\cite{dirichlet3d} on separate D\"orfler marking.
The novel mesh-size function $h(\TT,k)$ establishes optimal convergence
of Algorithm~\ref{algorithm} with the standard D\"orfler marking~\eqref{eq:doerfler}.

\begin{proposition}\label{prop:inhom}
The estimator $\eta(\cdot)$ satisfies stability~\eqref{A:stable}, reduction~\eqref{A:reduction}, quasi-orthogonality~\eqref{A:qosum}, discrete reliability~\eqref{A:dlr}, and efficiency~\eqref{A:efficient}. The discrete reliability~\eqref{A:dlr} holds with $\RR(\TT,\widehat\TT):=\omega^5(\TT;\widehat\TT\setminus\TT)$ (as defined in Section~\ref{section:kpatch}), and the oscillation terms in the efficiency axiom~\eqref{A:efficient} reads
\begin{align}\label{eq:mixedbceff}
\begin{split}
\eff{\TT}{U(\TT)}^2&:={\rm dir}(\TT)^2+ \min_{F\in\PP^{p-1}(\TT)}\normLtwo{h(\TT)(f-F)}{\Omega}^2\\
&\qquad+ \min_{\Phi\in\PP^{p-1}(\TT|_{\Gamma_N})}\normLtwo{h(\TT)^{1/2}(\phi_N-\Phi)}{\Gamma_N}^2\\
&\qquad+ \min_{\Phi\in\PP^{p-1}(\TT|_{\Gamma_R})}\normLtwo{h(\TT)^{1/2}(\phi_R-\Phi)}{\Gamma_R}^2.
\end{split}
\end{align}
%Assuming additional regularity of the data, i.e.\ $\phi_N\in H^{1/2}(T)$ and $\alpha\in W^1_\infty(T):=\set{v\in L^\infty(T)}{\nabla v\in \big(L^\infty(T))^d}$ %for all $T\in\TT_0$, we obtain $s_{\rm hot}\geq 1/d$ in~\eqref{A:efficientb}.
\end{proposition}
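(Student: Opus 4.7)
The plan is to verify the five axioms in turn, reducing each to the techniques already established for the homogeneous Dirichlet case in Section~\ref{section:ex:poisson} plus the additional arguments from~\cite{dirichlet3d} needed to control the Dirichlet-data approximation term.

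For stability~\eqref{A:stable} and reduction~\eqref{A:reduction} I would proceed exactly as in the proof of Proposition~\ref{prop:ex:poisson:assumptions}: triangle inequality together with scaled trace and inverse estimates bounds the difference of interior residuals and interior jumps, while the same arguments applied to the $\Gamma_N$ and $\Gamma_R$ contributions handle the weighted Neumann and Robin residuals (note that $V \mapsto \alpha V$ is bounded in $L^2(\Gamma_R)$ since $\alpha\in L^\infty(\Gamma_R)$). The Dirichlet oscillation term ${\rm dir}_T(\TT)$ depends only on $g_D$ and the mesh, so it contributes $0$ to the left-hand side of~\eqref{A:stable} on non-refined elements, and on refined boundary facets it satisfies $h(\widehat\TT)^{1/2}|_{\widehat T}\le 2^{-1/(2(d-1))}h(\TT)^{1/2}|_T$, which provides the reduction factor $\q{reduction}<1$ for~\eqref{A:reduction}.

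For discrete reliability~\eqref{A:dlr} I would follow the strategy of~\cite[Section~6]{dirichlet3d}. Split $U(\widehat\TT)-U(\TT) = W + D$, where $D\in\SS^p(\widehat\TT)$ is a lifting of $G_D(\widehat\TT)-G_D(\TT)$ into the bulk (obtained via the Scott-Zhang extension, which is supported only on elements adjacent to refined Dirichlet facets), and $W := U(\widehat\TT)-U(\TT)-D\in\SS^p_D(\widehat\TT)$. The bulk part $W$ is estimated by the usual Stevenson-type argument: insert a Scott-Zhang quasi-interpolant $V-J(\TT)V$ into the Galerkin identity for $b(W,\cdot)$ on $\SS^p_D(\widehat\TT)$, and use locality of $J(\TT)$ together with element-wise scaled residual and trace estimates to bound $\norm{W}{}$ by the volume and facet contributions of $\eta_T(\TT;U(\TT))$ over $\omega^k(\TT;\widehat\TT\setminus\TT)$ for a fixed $k$. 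The lifting $D$ satisfies $\norm{D}{} \lesssim {\rm dir}(\TT)|_{\text{refined facets}}$ by stability of the Scott-Zhang extension. Taking into account the support of $J(\TT)$ (two-ring dependency) plus one additional layer for facet evaluations yields the claim with $\RR(\TT,\widehat\TT)=\omega^5(\TT;\widehat\TT\setminus\TT)$.

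The main obstacle is the quasi-orthogonality~\eqref{A:qosum}, because $\SS^p(\TT_\ell)\not\subseteq\SS^p(\TT_{\ell+1})$ due to differing Dirichlet approximations, so the classical Pythagoras theorem fails. I would argue as follows: writing $U(\TT_{\ell+1})-U(\TT_\ell) = W_\ell + D_\ell$ with $W_\ell\in\SS^p_D(\TT_{\ell+1})$ and $D_\ell$ a Scott-Zhang lifting of $G_D(\TT_{\ell+1})-G_D(\TT_\ell)$, the Galerkin identity for $W_\ell$ yields a perturbed Pythagoras
\[
\norm{U(\TT_{\ell+1})-U(\TT_\ell)}{}^2 \le \norm{u-U(\TT_\ell)}{}^2 - \norm{u-U(\TT_{\ell+1})}{}^2 + C\,\norm{D_\ell}{}\,\norm{u-U(\TT_{\ell+1})}{}
\]
with $\norm{D_\ell}{}\lesssim\big(\mu(\TT_\ell)^2-\mu(\TT_{\ell+1})^2\big)^{1/2}$, where $\mu(\TT)$ is defined via the equivalent mesh-size function $h(\TT,k)$ from Proposition~\ref{prop:htilde} applied to ${\rm dir}(\TT)$, so that $\mu(\TT)$ contracts on refined boundary facets while remaining globally equivalent to ${\rm dir}(\TT)\le\eta(\TT;U(\TT))$. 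A Young inequality absorbs the mixed term into $\epsqo\norm{u-U(\TT_{\ell+1})}{}^2$ for any prescribed $\epsqo>0$, and summation telescopes to give~\eqref{A:qosum} via Lemma~\ref{lem:qo}. This is the step where the monotonicity and contraction properties of $h(\cdot,k)$ from Section~\ref{section:modh} are essential. Finally, efficiency~\eqref{A:efficient} with oscillations~\eqref{eq:mixedbceff} follows by the standard Verf\"urth bubble-function technique for the volume, interior-jump, Neumann and Robin contributions, while the Dirichlet oscillation ${\rm dir}(\TT)$ is contained in $\eff{\TT}{U(\TT)}$ by definition.
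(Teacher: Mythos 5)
Your proposal takes essentially the same route as the paper: stability and reduction by the standard triangle-inequality/inverse-estimate arguments (with ${\rm dir}_T$ inert under stability and reducing with the mesh-size), discrete reliability via the Scott--Zhang lifting decomposition of $U(\widehat\TT)-U(\TT)$ from~\cite{dirichlet3d}, quasi-orthogonality through a perturbed Pythagoras identity whose extra Dirichlet-data term is cast as a telescoping difference $\mu(\TT)^2-\mu(\widehat\TT)^2$ using the modified mesh-size $h(\cdot,k)$ of Proposition~\ref{prop:htilde}, and efficiency by bubble functions. Only a cosmetic slip: with $h(\TT)|_T=|T|^{1/d}$ and $|\widehat T|\le|T|/2$ the correct reduction on refined elements is $h(\widehat\TT)^{1/2}\le 2^{-1/(2d)}h(\TT)^{1/2}$ (yielding $\q{reduction}=2^{-1/d}$ after squaring), not $2^{-1/(2(d-1))}$.
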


\begin{proof}
Efficiency~\eqref{A:efficient} can be found in\cite{bcd, sv} or~\cite[Proposition 3]{dirichlet3d}. The proof of~\eqref{eq:mixedbceff} follows similarly to that of Proposition~\ref{prop:ex:poisson:assumptions} and exploits that 
$\Delta_\TT U(\TT)|_T$ is a polynomial of degree $\leq p-2$.

The proofs of stability~\eqref{A:stable} and reduction~\eqref{A:reduction} are verbatim to the case with $\Gamma_R=\emptyset$ from~\cite[Proposition~11]{dirichlet3d}. 
The proof of discrete reliability~\eqref{A:dlr} is more involved, however, the difficulties arise only due to the approximation of the Dirichlet data and the non-local $H^{1/2}(\Gamma_D)$-norm.
The proof in~\cite[Proposition 21]{dirichlet3d} for $\Gamma_R = \emptyset$ generalizes to the present case. 

It remains to verify the quasi-othogonality~\eqref{A:qo} which implies~\eqref{A:qosum} by virtue of Lemma~\ref{lem:qo}. 
Recall the modified mesh-size function $h(\TT,5)$ 
and the patch $\omega^5(\TT;\TT\backslash\widehat\TT)\subseteq\TT$ from Section~\ref{section:locequiv} for $k=5$. 
It is proved in~\cite[Lemma~20]{dirichlet3d} for $\Gamma_R=\emptyset$ that there holds for all $\epsqo>0$
 \begin{align}\label{eq:dirichlet3d}
 \begin{split}
  \enorm{U(\widehat\TT)-U(\TT)}^2&\leq \enorm{u-U(\TT)}^2 -(1-\epsqo)\enorm{u-U(\widehat\TT)}^2\\
  &\qquad+ C_{\rm pyth}\epsqo^{-1}\normHeh{(J(\widehat\TT|_{\Gamma_D})-J(\TT|_{\Gamma_D}))g_D}{\Gamma_D}^2,
  \end{split}
 \end{align}
 where $C_{\rm pyth}>0$ depends only on $\T$ and $\Gamma_D$. Although~\cite{dirichlet3d} considers $\Gamma_R=\emptyset$ and hence $\enorm{\cdot}=\normLtwo{\nabla(\cdot)}{\Omega}$, the proof transfers to the present case.

The focus in the derivation of~\eqref{A:qo} is the last term on the right-hand side $\mu(\TT)^2-\mu(\widehat \TT)^2$.
First, let $\omega_D^5(\TT;\TT\backslash\widehat\TT)\subseteq \TT|_{\Gamma_D}$
denote the set of all facets $E$ of $\TT$ with $E \subseteq \overline\Gamma_D\cap\bigcup\omega^5(\TT;\TT\backslash\widehat\TT)$.
It is part of the proof of~\cite[Proposition 21]{dirichlet3d} that there exists a uniform constant $\setc{szdrel}>0$ such that any mesh $\TT\in\T$ and all refinements $\widehat\TT$ of $\TT$ satisfy
\begin{align*}
  \normHeh{(J(\widehat\TT|_{\Gamma_D})-J(\TT|_{\Gamma_D}))v}{\Gamma_D}\leq \c{szdrel}\normLtwo{h(\TT)^{1/2}(1-\Pi_{p-1}(\TT|_{\Gamma_D}))\nabla_\Gamma v}{\bigcup\omega^5_D(\TT;\TT\setminus\widehat\TT)}
\end{align*}
for all $v\in H^{1}(\Gamma_D)$. We note that this estimate hinges on the use of
newest vertex bisection in the sense that the the constant $\c{szdrel}>0$ depends
on the shape of all possible patches. For newest vertex bisection, only finitely many
pairwise different patch shapes can occur.

Secondly, this estimate is applied for $v = g_D$. The definition of $h(\TT,5)$ 
in Proposition~\ref{prop:htilde} implies
 \begin{align*}
  h(\widehat\TT,5) &\le h(\TT,5)
 \quad\hspace*{3.7mm}\text{pointwise on all $T\in\TT$},\\
  h(\widehat\TT,5) &\leq \q{htc}h(\TT,5)
 \quad \text{pointwise on all }T\in\omega^5(\TT;\TT\setminus\widehat\TT),
 \end{align*}
for some independent constant $0< \q{htc} <1$. Hence
\begin{align*}
 (1-\q{htc}) \, h(\TT,5)|_{\bigcup\omega^5(\TT;\TT\setminus\widehat\TT)}
\leq h(\TT,5)-h(\widehat\TT,5)
\quad\text{pointwise on all }T\in\TT.
\end{align*}
This implies
\begin{align*}
& (1-\q{htc})\normLtwo{h(\TT,5)^{1/2}(1-\Pi_{p-1}(\TT|_{\Gamma_D}))\nabla_\Gamma g_D}{\bigcup\omega_D^5(\TT;\TT\setminus\widehat\TT)}^2\\
  &\leq  \normLtwo{h(\TT,5)^{1/2}(1-\Pi_{p-1}(\TT|_{\Gamma_D}))\nabla_\Gamma g_D}{\Gamma_D}^2-
    \normLtwo{h(\widehat\TT,5)^{1/2}(1-\Pi_{p-1}(\TT|_{\Gamma_D}))\nabla_\Gamma g_D}{\Gamma_D}^2.
\end{align*}
This \revision{and} the elementwise best-approximation property of 
$\Pi_{p-1}(\widehat\TT|_{\Gamma_D})$ prove that
\begin{align*}
\normLtwo{h(\widehat\TT,5)^{1/2}(1-\Pi_{p-1}(\widehat\TT|_{\Gamma_D}))\nabla_\Gamma g_D}{\Gamma_D}^2\leq \normLtwo{h(\widehat\TT,5)^{1/2}(1-\Pi_{p-1}(\TT|_{\Gamma_D}))\nabla_\Gamma g_D}{\Gamma_D}^2.
\end{align*}
With $h(\TT)\leq \c{htildeequiv} h(\TT,5)$
from Proposition~\ref{prop:htilde}, this implies
\begin{align*}
(1-\q{htc})\c{htildeequiv}^{-1}\normLtwo{&h(\TT)^{1/2}(1-\Pi_{p-1}(\TT{|_{\Gamma_D}}))\nabla_\Gamma g_D}{\bigcup\omega_D^5(\TT;\TT\setminus\widehat\TT)}^2\leq\mu(\TT)^2-\mu(\widehat\TT)^2.
  \end{align*}
The combination of the previous arguments leads to
 \begin{align*}
   \normHeh{(J(\widehat\TT|_{\Gamma_D})-J(\TT|_{\Gamma_D}))g_D}{\Gamma_D}^2\le \c{szdrel}\c{htildeequiv}(1-\q{htc})^{-1}\,\big( \mu(\TT)^2-\mu(\widehat\TT)^2\big).
 \end{align*}
 Since 
$\mu(\TT)^2 \le \sum_{T\in\TT} \osc_T(\TT)^2 \le \eta(\TT;U(\TT))$, there
also holds~\eqref{A:qob}.
This concludes the proof.
\end{proof}

\begin{remark}\label{rem:gammaR}
We briefly comment on the case $\Gamma_R=\emptyset$ with
\begin{align*}
\enorm{v}^2:=\norm{\nabla v}{L^2(\Omega)}^2+\norm{v}{H^{1/2}(\Gamma_D)}^2\neq b(v,v)
\end{align*}
The Rellich compactness theorem guarantees that 
$\norm\cdot{}$ is an equivalent norm in $H^1(\Omega)$.
The combination with~\cite[Lemma~20]{dirichlet3d} (i.e.~\eqref{eq:dirichlet3d} with $\norm{\cdot}{}=\normLtwo{\nabla(\cdot)}{\Omega}$) 
proves for sufficiently small $\epsqo\ll1$ that
 \begin{align}\label{eq:dirichlet3dmod}
 \begin{split}
 \enorm{U(\widehat\TT)-U(\TT)}^2&\leq \normLtwo{\nabla(u-U(\TT))}{\Omega}^2 -(1-\epsqo)\normLtwo{\nabla(u-U(\widehat\TT))}{\Omega}^2\\
  &\qquad+ \widetilde{C}_{\rm pyth}\epsqo^{-1}\normHeh{(J(\widehat\TT|_{\Gamma_D})-J(\TT|_{\Gamma_D}))g_D}{\Gamma_D}^2.
  \end{split}
 \end{align}
With~\eqref{eq:dirichlet3dmod} instead of~\eqref{eq:dirichlet3d}, the arguments in the proof of Proposition~\ref{prop:inhom} remain valid.
\end{remark}

The adaptive FEM for the mixed boundary value boundary~\eqref{eq:mixedbc} satisfies all assumptions of the abstract framework.

\begin{consequence}
The adaptive algorithm leads to convergence with quasi-optimal rate
for the estimator $\eta(\TT;U(\TT))$ in the sense of Theorem~\ref{thm:main}. For quasi-optimal rates of the discretization error in the sense of Theorem~\ref{thm:mainerr}, additional regularity of the data has to be imposed for higher-order elements $p \ge 1$, cf.\ Consequence~\ref{con:conforming}. \qed
\end{consequence}

Numerical examples which underline the above result can be found for 2D in~\cite{fpp} and for 3D in~\cite{dirichlet3d}.

%%%%%%%%%%%%%%%%%%%%%%%%%%%%%%%%%%%%%%%%%%%%%%%%%%%%%%%%%%%%%%%%%%%%%%%%%%%%%%%%%%%%%%%%%%%%%%%%%%%%%%%%%%%%%%%%%%%%%%%
%%%%%%%%%%%%%%%%%%%%%%%%%%%%%%%%%%%%%%%%%%%%%%%%%%%%%%%%%%%%%%%%%%%%%%%%%%%%%%%%%%%%%%%%%%%%%%%%%%%%%%%%%%%%%%%%%%%%%%%
%%%%%%%%%%%%%%%%%%%%%%%%%%%%%%%%%%%%%%%%%%%%%%%%%%%%%%%%%%%%%%%%%%%%%%%%%%%%%%%%%%%%%%%%%%%%%%%%%%%%%%%%%%%%%%%%%%%%%%%
%%%%%%%%%%%%%%%%%%%%%%%%%%%%%%%%%%%%%%%%%%%%%%%%%%%%%%%%%%%%%%%%%%%%%%%%%%%%%%%%%%%%%%%%%%%%%%%%%%%%%%%%%%%%%%%%%%%%%%%
\bigskip
\textbf{Acknowledgement}: The authors (MF, MP, DP) acknowledge financial support through the Austrian Science Fund (FWF) under grant P21732 as well as through the Viennese Science and Technology Fund (WWTF) under grant MA09-29.

\bibliographystyle{elsarticle-num}
\bibliography{literature}
\end{document}